\documentclass[12pt]{amsart}

\usepackage{amssymb, amsmath, mathtools, enumerate}

\usepackage[all]{xy}

\usepackage[colorlinks=true,pagebackref,hyperindex]{hyperref}

\usepackage[utf8]{inputenc}
\usepackage[T1]{fontenc}

\usepackage{marginnote}

\usepackage{amscd}

\usepackage{tikz}

\usepackage{mathrsfs}

\usepackage{amsopn}

\usepackage{mathtools, geometry, fullpage}

\newtheorem{Theoremx}{Theorem}
 
\newtheorem{Theorem}{Theorem}[section]
\newtheorem{Lemma}[Theorem]{Lemma}
\newtheorem{Corollary}[Theorem]{Corollary}
\newtheorem{Proposition}[Theorem]{Proposition}

\newtheorem{Question}[Theorem]{Question}

\newtheorem*{Theorem*}{Theorem}

\theoremstyle{remark}
\newtheorem{Remark}[Theorem]{Remark}

\DeclareMathOperator{\height}{ht\,}

\DeclareMathOperator{\Ann}{Ann\,}

\DeclareMathOperator{\Hom}{Hom\,}
\DeclareMathOperator{\Spec}{Spec\,}

\DeclareMathOperator{\im}{im\,}
\DeclareMathOperator{\rk}{rk\,}

\DeclareMathOperator{\coker}{coker\,}

\DeclareMathOperator{\frk}{frk}
\DeclareMathOperator{\divisor}{div}

\def\p{\mathfrak{p}}
\def\q{\mathfrak{q}}
\def\m{\mathfrak{m}}
\def\a{\mathfrak{a}}
\def\e{e_{HK}}

\def\R{\mathbb{R}}
\def\Q{\mathbb{Q}}

\def\N{\mathbb{N}}
\def\O{\mathcal{O}}

\def\vaprhi{\varphi}

\newcommand{\sC}{\mathcal{C}}
\newcommand{\sD}{\mathcal{D}}
\renewcommand{\:}{\colon}
\newcommand{\blank}{\underline{\hskip 10pt}}

\title{$F$-signature and Hilbert-Kunz Multipicity: \\ a combined approach and comparison}
\author{Thomas Polstra}
\email{tmpxv3@mail.missouri.edu}
\address{Department of Mathematics, University of Missouri-Columbia, Columbia, MO 65211}
\author{Kevin Tucker}
\email{kftucker@uic.edu}
\address{Department of Mathematics\\ University of Illinois at Chicago\\Chicago\\  IL 60607}
\thanks{The second author is grateful to the NSF for partial support under Grants DMS \#1419448 and \#1602070, and for a fellowship from the Sloan Foundation. }

%\classification{13A35}

\begin{document}

\begin{abstract} We present a unified approach to the study of \mbox{$F$-signature}, Hilbert-Kunz multiplicity, and related limits governed by Frobenius and Cartier linear actions in positive characteristic commutative algebra.  We introduce general techniques that give vastly simplified proofs of existence, semicontinuity, and positivity. Furthermore, we give an affirmative answer to a question of Watanabe and Yoshida allowing the $F$-signature to be viewed as the infimum of relative differences in the Hilbert-Kunz multiplicites of the cofinite ideals in a local ring. 
\end{abstract}

\maketitle

\section{Introduction}
Throughout this paper, we shall assume all rings $R$ are commutative and Noetherian with prime characteristic $p > 0$. Central to the study of such rings is the use of the Frobenius or $p$-th power endomorphism $F \: R \to R$ defined by $r \mapsto r^p$ for all $r \in R$.  Following the result of Kunz \cite{KunzCharLocalRings} characterizing regularity by the flatness of Frobenius, it has long been understood that the behavior of Frobenius governs the singularities of such rings. In this article, we will be primarily concerned with two important numerical invariants  that measure the failure of flatness for the iterated Frobenius: the Hilbert-Kunz multiplicity \cite{Monsky83} and the $F$-signature \cite{SmithVandenBergh,HunekeLeuschke}.  Our aim is to revisit a number of core results about these invariants -- existence \cite{Tucker2012}, semicontinuity \cite{SmirnovUSC,PolstraSemicont}, positivity \cite{HHTams,AbLeu} -- and provide vastly simplified proofs, which in turn yield new and important results.  In particular, we confirm the suspicion of Watanabe and Yoshida allowing the \mbox{$F$-signature} to be viewed as the infimum of relative differences in the Hilbert-Kunz multiplicites of the cofinite ideals in a local ring \cite[Question~1.10]{WY}.

For the sake of simplicity in introducing Hilbert-Kunz multiplicity and $F$-signature, assume that $(R,\m,k)$ is a complete local domain of dimension $d$ and $k=k^{1/p}$ is perfect. If $I \subseteq R$ is an ideal with finite colength $\ell_R(R/I) < \infty$, the Hilbert-Kunz multiplicity along $I$ is defined by $e_{HK}(I) =\lim_{e \to \infty} \frac{1}{p^{ed}} \ell_R(R/I^{[p^e]})$ where $I^{[p^e]} = (F^e(I))$ is the expansion of $I$ along the $e$-iterated Frobenius.  Note that, unlike the case for the Hilbert-Samuel multiplicity, the function $e \mapsto \ell_R(R/I^{[p^e]})$ is far from polynomial in $p^e$ \cite{HanMonskySurprise}. The existence of Hilbert-Kunz limits was shown by Monsky \cite{Monsky83}, and it has recently been shown by Brenner that there exist irrational Hilbert-Kunz multiplicities \cite{BrennerIrratHK}.
Of particular interest is the Hilbert-Kunz multiplicity along the maximal ideal $\m$ denoted by $e_{HK}(R) = e_{HK}(\m)$, where
it is known that $e_{HK}(R) \geq 1$ with equality  if and only if $R$ is regular  \cite[Theorem~1.5]{WYHK1}. More generally, if the Hilbert-Kunz multiplicity of $R$ is sufficiently small, then $R$ is Gorenstein and strongly F-regular \cite[Proposition~2.5]{BlickleEnescu}  \cite[Corollary~3.6]{AberbachEnescuLower}.  Recently, it has been shown that the Hilbert-Kunz multiplicity determines an upper semicontinuous $\R$-valued function on ring spectra \cite{SmirnovUSC}.

Another useful perspective comes from viewing the Hilbert-Kunz function $\ell_R(R/\m^{[p^e]}) = \mu_R(R^{1/p^e})$ as measuring the minimal number of generators of the finitely generated $R$-module $R^{1/p^e}$, the ring of $p^{e}$-th roots of $R$ inside an algebraic closure of its fraction field.  As the rank $\rk_{R}(R^{1/p^e})$ of this torsion free $R$-module is $p^{ed}$, we see immediately that $R^{1/p^e}$ is free -- and hence $F^e$ is flat and $R$ is regular -- if and only if $\mu_R(R^{1/p^e}) = p^{ed}$.  Simliarly, letting $a_e=\frk(R^{1/p^e})$ denote the largest rank of a free summand  appearing in an $R$-module direct sum decomposition of $R^{1/p^e}$, we have that $R^{1/p^e}$ is free if and only if $a_e = p^{ed}$. The limit $s(R) = \lim_{e \to \infty} \frac{1}{p^{ed}} \frk(R^{1/p^e}) $ was first studied in \cite{SmithVandenBergh} and revisited in \cite{HunekeLeuschke}, where it was coined the $F$-signature and shown to exist for Gorenstein rings; existence in full generality was first shown by the second author in \cite{Tucker2012}.  Aberbach and Leuschke  \cite{AbLeu} have shown that the positivity of the $F$-signature characterizes the notion of strong $F$-regularity introduced by Hochster and Huneke \cite{HHTams} in their celebrated study of tight closure \cite{HHJams}.  Recently, it has been shown by the first author that the $F$-signature determines a lower semicontinuous $\R$-valued function on ring spectra \cite{PolstraSemicont}; an unpublished and independent proof was simultaneously found and shown to experts by the second author, and has been incorporated into this article.

At the heart of this work is an elementary argument simultaneously proving the existence of Hilbert-Kunz multiplicity and $F$-signature  (Theorem~\ref{easyproof}), derived from basic properties of the functions $\mu_R(\blank)$ and $\frk_R(\blank)$.  Building upon the philosophy introduced in \cite{Tucker2012}, we further attempt to carefully track the uniform constants controlling convergence in the proof and throughout this article.  To that end, we rely heavily on the uniform bounds for Hilbert-Kunz functions over ring spectra  (Theorem~\ref{polstrabound}) shown in the work of the first author \cite[Theorem~4.3]{PolstraSemicont}.  In particular, this also allows us to give vastly simplified proofs of the upper semicontinuity of Hilbert-Kunz multiplicity (Theorem~\ref{HKsemicont}) and lower semicontinuity of the $F$-signature  (Theorem~\ref{Fsigsemicont}).

The $F$-signature has also long been known to be closely related to the relative differences $e_{HK}(I) - e_{HK}(J)$ in Hilbert-Kunz multiplicities for ideals $I \subseteq J$ with finite colength \cite[Theorem~15]{HunekeLeuschke} (see Lemma~\ref{Iecontainscolons}).  The infimum among these differences was studied independently by Watanabe and Yoshida \cite{WY}, and the connection to $F$-signature was made by Yao \cite{Yao}.   Moreover, Watanabe and Yoshida expected and formally questioned if the infimum of the relative Hilbert-Kunz differences was equal to the $F$-signature in \cite[Question~1.10]{WY}.  We confirm their suspicions in Section \ref{Section F-signature and Minimal Relative Hilbert-Kunz Multiplicity}.

\begin{Theoremx}[Corollary \ref{WY Type result corollary}] 
\label{introWYresult}
If $(R,\m,k)$ is an F-finite local ring, then
 \[
 s(R) = \inf_{\substack{I \subseteq J  \subseteq R, \;\ell_R(R/I) < \infty  \\ I \neq J, \; \ell_R(R/J) < \infty} }\frac{e_{HK}(I) - e_{HK}(J)}{\ell_R(J/I)} = \inf_{\substack{I \subseteq R, \; \ell_R(R/I) < \infty \\  x \in R, \; ( I : x )=\m}} e_{HK}(I) - e_{HK}((I,x)).
 \]
\end{Theoremx}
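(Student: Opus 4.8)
The plan is to establish both equalities by showing a chain of inequalities, using the $F$-signature's characterization as a limit of $\frk_R(R^{1/p^e})/p^{ed}$ together with the relationship between free ranks and colengths of ideals of the form $(I^{[p^e]} : x)$. First I would reduce to the case of a complete local domain with perfect residue field by standard base-change arguments (completion does not change $s(R)$, $e_{HK}$, or colengths, and one may pass to a faithfully flat extension with perfect residue field); the $F$-finite hypothesis is what makes $R^{1/p^e}$ a finitely generated $R$-module and guarantees the limits exist by Theorem~\ref{easyproof}.

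The central observation, which I expect is recorded earlier as Lemma~\ref{Iecontainscolons}, is that for a finite-colength ideal $I$ and an element $x$ with $(I:x) = \m$, one has the identity $e_{HK}(I) - e_{HK}((I,x)) = \lim_{e\to\infty} \frac{1}{p^{ed}}\ell_R\big((I^{[p^e]} : x^{p^e})/I^{[p^e]}\big)$, and moreover that $\ell_R\big((I^{[p^e]} : x^{p^e})/I^{[p^e]}\big)$ counts (up to a bounded error, or exactly in the domain case after suitable normalization) the number of free summands in a decomposition of $R^{1/p^e}$ that can be "split off along $x$" relative to $I$. Concretely, a surjection $R^{1/p^e} \twoheadrightarrow R$ splitting off a free summand corresponds to an element of $\Hom_R(R^{1/p^e}, R)$ hitting a unit, and sharpening this to keep track of the ideal $I$ and the socle element $x$ yields the comparison. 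Thus the rightmost infimum is $\geq s(R)$ term-by-term (each difference $e_{HK}(I) - e_{HK}((I,x))$ overcounts the asymptotic free rank), while choosing $I = \m^{[p^e]}$ (or a cofinal family approximating the splitting ideal $I_e$ of $R^{1/p^e}$) and an appropriate socle element $x$ shows the infimum is $\leq \frac{a_e}{p^{ed}} + o(1)$, hence $\leq s(R)$ in the limit. The middle infimum is squeezed between the other two: it is trivially $\leq$ the rightmost one (the pairs $I \subseteq (I,x)$ with $(I:x)=\m$ are a special case, with $\ell_R(J/I) = 1$), and it is $\geq s(R)$ because any pair $I \subseteq J$ can be connected by a chain $I = I_0 \subsetneq I_1 \subsetneq \cdots \subsetneq I_n = J$ with $\ell_R(I_{j+1}/I_j) = 1$ — so $I_{j+1} = (I_j, x_j)$ with $(I_j : x_j) = \m$ — and then the difference quotient $\frac{e_{HK}(I) - e_{HK}(J)}{\ell_R(J/I)}$ is a weighted average of the atomic differences $e_{HK}(I_j) - e_{HK}(I_{j+1})$, each of which is $\geq s(R)$.

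The main obstacle I anticipate is the lower bound $e_{HK}(I) - e_{HK}((I,x)) \geq s(R)$, i.e., showing that no single pair beats the $F$-signature. The naive free-summand count gives the reverse inequality easily; for this direction one needs that a decomposition of $R^{1/p^e}$ into free summands can be arranged so that \emph{all but $a_e$-many} of the generators are "used up" by the non-free part in a way compatible with the colength computation for $(I^{[p^e]}:x^{p^e})$ — equivalently, that $\ell_R\big((I^{[p^e]}:x^{p^e})/I^{[p^e]}\big) \geq a_e$ for every admissible $I, x$. This should follow by producing, from a maximal free decomposition $R^{1/p^e} \cong R^{a_e} \oplus M_e$, enough independent splittings landing in the relevant colon module; the uniform convergence bounds of Theorem~\ref{polstrabound} are then what upgrade the resulting asymptotic inequalities to statements about the genuine limits $s(R)$ and $e_{HK}$, and ensure the error terms are $o(p^{ed})$ uniformly. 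Once this atomic lower bound is in hand, the averaging argument for the middle infimum and the cofinal-family argument for the upper bound are routine, and taking infima/limits in the correct order closes the chain $s(R) \leq \text{(middle)} \leq \text{(right)} \leq s(R)$.
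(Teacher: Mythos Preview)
You have the chain-of-inequalities structure right, and the averaging argument reducing the middle infimum to the rightmost one is correct. But you have misidentified where the difficulty lies. The inequality $e_{HK}(I) - e_{HK}((I,x)) \geq s(R)$ for every admissible pair is the \emph{easy} direction: if $r \in (I^{[p^e]}:x^{p^e})$ and $\phi \in \Hom_R(R^{1/p^e},R)$, then $\phi(r^{1/p^e}) \cdot x \in I$, so $\phi(r^{1/p^e}) \in (I:x) = \m$; thus $(I^{[p^e]}:x^{p^e}) \subseteq I_e^{\mathrm{F\text{-}sig}}$ and the length inequality is immediate. This is exactly Lemma~\ref{Iecontainscolons} and needs no uniform bounds or free-summand bookkeeping.

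The genuine obstacle is the \emph{upper} bound: producing pairs $(I,x)$ with $e_{HK}(I) - e_{HK}((I,x))$ arbitrarily close to $s(R)$. Your suggestion to ``choose $I = \m^{[p^e]}$'' does not work, because the infimum ranges over \emph{fixed} pairs $(I,x)$ while $e_{HK}(I) - e_{HK}((I,x))$ is itself a limit in a separate Frobenius variable; you cannot let $I$ vary with that variable. The paper instead takes an approximately Gorenstein sequence $\{J_t\}$ with socle generators $\delta_t$, so that $I_e^{\mathrm{F\text{-}sig}} = (J_{t}^{[p^e]}:\delta_{t}^{p^e})$ for $t \gg 0$ depending on $e$ (Lemma~\ref{approxgorcolons}). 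One then needs $\lim_t \bigl(e_{HK}(J_t) - e_{HK}((J_t,\delta_t))\bigr) = s(R)$, which amounts to interchanging $\lim_e$ and $\lim_t$ on the array $\frac{1}{p^{ed}}\ell_R\bigl(R/(J_t^{[p^e]}:\delta_t^{p^e})\bigr)$. This interchange (Theorem~\ref{WY Type result}) is precisely where the uniform constant from Theorem~\ref{MainExistenceTheoremIdealSequences}~(i.) enters: it gives $\eta_t - \frac{1}{p^{ed}}\ell_R\bigl(R/(J_t^{[p^e]}:\delta_t^{p^e})\bigr) \leq C/p^e$ with $C$ \emph{independent of $t$}, and without this uniformity the limit swap fails. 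Your proposal mentions neither approximately Gorenstein sequences nor this limit interchange, and deploys the uniform bounds in the direction that does not require them. (A minor slip: the relevant length is $\ell_R\bigl(R/(I^{[p^e]}:x^{p^e})\bigr)$, not $\ell_R\bigl((I^{[p^e]}:x^{p^e})/I^{[p^e]}\bigr)$.)
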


Much interest in  relative differences of Hilbert-Kunz multiplicities stems from the result of Hochster and Huneke that such differences can be used to detect instances of tight closure \cite[Theorem~8.17]{HHJams}.
Together with the characterization of strong $F$-regularity in terms of the positivity of the $F$-signature \cite{AbLeu}, these two results are at the core of the use of asymptotic Frobenius techniques to measure singularities. Building off of previous work of the second author \cite{Fsigpairs1,TestIdealsFiniteMaps,TestIdealsExplicitFiniteMaps}, we present new and highly simplified proofs of these results.  In particular, the first proof of Theorem~\ref{Aberbach Leuschke} gives a readily computable lower bound for the $F$-signature of a strongly $F$-regular ring.

Using standard reduction to characteristic $p>0$ techniques, the singularities governed by Frobenius in positive characteristic commutative algebra have been closely related to those appearing in complex algebraic geometry.  This connection has motivated several important generalized settings for tight closure.  Hara and Watanabe have defined tight closure for divisor pairs \cite{HaraWatanabe} (\textit{cf.} \cite{Takagi}), and Hara and Yoshida have done the same for ideals with a real coefficient \cite{HaraYoshida}; both of these works build on the important results of Smith \cite{Smith}, Hara \cite{Hara}, and Mehta and Srinivas \cite{MehtaSrinivas}.  Previous work of the second author \cite{Fsigpairs1,Fsigpairs2} has extended the notion of $F$-signature to these settings and beyond, incorporating the notion of a Cartier subalgebra (see \cite{pinversemapssurvey}).  We work to extend all of our results as far as possible to these settings, including new and simplified proofs of existence (Theorem~\ref{Dsignatureexists}), semicontinuity (Theorems~\ref{Dsignaturesemicont}, \ref{ateefsigexists}, \ref{deltafsigexists}), and positivity (Corollaries~ \ref{deefsigpositivity}, \ref{Length Criterion for at-tight closure} (ii.), \ref{Length Criterion for Delta closure} (ii.)); many of these results have not appeared previously.  In particular, while Hilbert-Kunz theory of divisor and ideal pairs has not been introduced, we are able to to give an analogue of relative Hilbert-Kunz differences in these settings (Corollaries~\ref{Length Criterion for at-tight closure} (i.), \ref{Length Criterion for Delta closure} (i.)).  Once more, the $F$-signatures can be viewed as a minimum among the relative Hilbert-Kunz differences (Corollaries~\ref{a^t signature and WY result}, \ref{Delta signature and WY result}).

To give an overview of our methods, for each $e\in\N$ let $I_e^{\mathrm{HK}}=\m^{[p^e]}$ and $I_e^{\mathrm{F-sig}}=( r\in R \mid \psi(r^{1/p^e})\in\m\mbox{ for all }\psi\in\Hom_R(R^{1/p^e},R))$. As $k$ is perfect, straightforward computations show that $\frac{1}{p^{ed}}\mu(R^{1/p^e})=\frac{1}{p^{ed}}\ell(R/I_e^{\mathrm{HK}})$ and $\frac{1}{p^{ed}}\frk(R^{1/p^e})=\frac{1}{p^{ed}}\ell(R/I_e^{\mathrm{F-sig}})$, so that the Hilbert-Kunz multiplicity and the F-signature can be understood by studying properties of sequences of ideals $\{I_e\}_{e \in \N}$ satisfying properties similar to those of  $\{I_e^{\mathrm{HK}}\}_{e \in \N}$ and $\{I_e^{\mathrm{F-sig}}\}_{e \in \N}$.

\begin{Theoremx}[Theorems \ref{Limits exist combined}, \ref{Aberbach Leuschke type Theorem 2}] 
\label{introlimitsofsequences}
Let $(R,\m,k)$ be an F-finite local domain of dimension $d$, and consider  a sequence of ideals $\{I_e\}_{e \in \N}$ such that $\m^{[p^e]}\subseteq I_e$ for each $e$. Then the limit $\eta= \lim_{e \to \infty} \frac{1}{p^{ed}}\ell(R/I_e)$ exists provided one of the following conditions holds.
 \begin{enumerate}[(i.)]
 \item There exists $0\not=c\in R$ so that $c I_e^{[p]}\subseteq I_{e+1}$ for all $e\in\N$.
 \item There exists a nonzero $\psi\in\Hom_R(R^{1/p},R)$ so that $\psi(I_{e+1}^{1/p})\subseteq I_e$ for all $e\in\N$.
 \end{enumerate}
Moreover, in case (ii.), we have $\eta>0$ if and only if $\bigcap_{e\in\N} I_e=0$.
\end{Theoremx}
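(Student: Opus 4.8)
The plan is to establish the existence of the limit $\eta$ via a single underlying mechanism: in both cases (i.) and (ii.), produce a uniform constant $C$ so that the sequence $a_e := \frac{1}{p^{ed}}\ell(R/I_e)$ satisfies an approximate monotonicity estimate $|a_{e+1} - a_e| \le C/p^e$ (or better, a one-sided bound plus boundedness), which forces $\{a_e\}$ to be Cauchy. This is the same strategy used to prove Theorem~\ref{easyproof}, so the bulk of the work is showing that hypotheses (i.) and (ii.) each furnish the needed comparison between consecutive terms. The two cases are formally dual: (i.) says the $I_e$ behave like a ``Frobenius-nested'' sequence (as $\{\m^{[p^e]}\}$ does, with $c = 1$), while (ii.) says they behave like the $F$-signature ideals $\{I_e^{\mathrm{F\text{-}sig}}\}$, where the Cartier operator $\psi$ moves $p$-th roots down a level. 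I expect to invoke Theorem~\ref{polstrabound} (the uniform Hilbert-Kunz bound of the first author) to control the error terms: since $\m^{[p^e]} \subseteq I_e$, we have $\ell(R/I_e) \le \ell(R/\m^{[p^e]})$, and the difference $\ell(R/\m^{[p^e]}) - p^{ed} e_{HK}(R)$ is $O(p^{e(d-1)})$ uniformly, which after dividing by $p^{ed}$ gives the $O(1/p^e)$ control on the relevant discrepancies.

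For case (i.), the key step is the inclusion $c I_e^{[p]} \subseteq I_{e+1}$, which gives a surjection $R/(I_e^{[p]}, \operatorname{Ann} c) \twoheadrightarrow cR/cI_e^{[p]}$ and hence a comparison $\ell(R/I_{e+1}) \le \ell(R/c I_e^{[p]}) = \ell(R/I_e^{[p]}) + \ell(I_e^{[p]}/cI_e^{[p]})$. Since $R$ is a domain, $\ell(I_e^{[p]}/cI_e^{[p]})$ is bounded by $\ell(R/cR^{[p^?]})$-type quantities controlled via Theorem~\ref{polstrabound}; meanwhile $\ell(R/I_e^{[p]})$ compares to $p^d \ell(R/I_e)$ up to lower-order terms (again by the uniform bound applied to the finite-colength ideal $I_e$, or by a direct argument using $\m^{[p^e]} \subseteq I_e$). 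Dividing by $p^{(e+1)d}$ yields $a_{e+1} \le a_e + O(1/p^e)$; combined with the trivial lower bound $a_e \ge 0$ and boundedness $a_e \le e_{HK}(R)$, and a symmetric inequality in the other direction, this makes $\{a_e\}$ Cauchy. For case (ii.), dually, $\psi(I_{e+1}^{1/p}) \subseteq I_e$ means the map $\psi$ descends to $R^{1/p}/I_{e+1}^{1/p} \to R/I_e$; tensoring the $F$-signature-style setup and counting using that $\psi$ generates a rank-one (generically) piece of $\Hom_R(R^{1/p}, R)$ gives $p^d \ell(R/I_{e+1}) \ge p \cdot \ell(R/I_e)$ minus an error term controlled by how far $\psi$ is from surjective after localizing — again $O(p^{e(d-1)})$ by Theorem~\ref{polstrabound} — so $a_{e+1} \ge a_e - O(1/p^e)$, and a matching upper estimate comes from the general nonincreasing-up-to-error behavior of such sequences.

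For the positivity statement in case (ii.), the plan is: if $\bigcap_e I_e \ne 0$, pick $0 \ne c \in \bigcap_e I_e$; then $\ell(R/I_e) \le \ell(R/cR)$... no — rather, $cR \subseteq I_e$ forces $\ell(R/I_e) \le \ell(R/cR + \text{something})$, but the cleaner route is that $c \in I_e$ for all $e$ implies, via the iterated version of $\psi$, that $c^{1/p^e}$-type elements land in $I_0 = I_1 = \cdots$ after applying composites of $\psi$, and one shows $I_e \supseteq (c, \text{terms}) $ has colength bounded by a constant independent of $e$ times $p^{e(d-1)}$, so $a_e \to 0$. Conversely, if $\bigcap_e I_e = 0$, one must produce a lower bound $a_e \ge \delta > 0$; the idea is that $\psi$ being nonzero means it is surjective after inverting a single element $c$, so $I_e \subseteq \m^{[p^e]} : c^{\text{something}}$... more precisely one shows $I_e \subseteq (\text{a fixed proper ideal})^{[p^e]}$-ish so that $\ell(R/I_e) \ge \ell(R/\text{that})$ stays a positive fraction of $p^{ed}$ — this is exactly the Aberbach–Leuschke positivity mechanism, and **the hard part will be** exactly this converse: extracting a uniform positive lower bound, which requires knowing that the descent maps $\psi$ cannot collapse $\ell(R/I_e)$ to lower order unless the $I_e$ eventually stabilize away from $0$. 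I would handle this by choosing $c$ with $\psi$ surjective on $R_c$, observing that then $I_e \not\subseteq \m$ would be needed to kill colength — more carefully, that $(I_e :_R c^N) $ for suitable $N$ contains a fixed power of $\m$ uniformly, giving $\ell(R/I_e) \ge \ell(R/(\m^{[p^e]} + c^N R))$ which is $\gtrsim p^{ed}$ unless $c$ is forced into $\bigcap I_e$.
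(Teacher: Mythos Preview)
Your existence sketch has the right shape --- a one-sided estimate $a_{e+1} \le a_e + C/p^e$ in case (i.), or $a_e \le a_{e+1} + C/p^e$ in case (ii.), together with boundedness forces convergence, exactly as in Lemma~\ref{Sequence Lemma}. But the mechanism you propose for producing these estimates is vague and partly incorrect. In case (i.) you write $\ell(R/I_{e+1}) \le \ell(R/I_e^{[p]}) + \ell(I_e^{[p]}/cI_e^{[p]})$ and claim both pieces are controlled; however $\ell(I_e^{[p]}/cI_e^{[p]})$ is not obviously $O(p^{e(d-1)})$ (it is not $\ell(R/(c,\m^{[p^{e+1}]}))$), and the assertion that $\ell(R/I_e^{[p]})$ compares to $p^d\ell(R/I_e)$ up to lower order is itself the nontrivial content you are trying to prove. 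The paper handles both issues at once via Lemma~\ref{Map Lemma}: one fixes a short exact sequence $0 \to R^{\oplus p^\gamma} \xrightarrow{\Phi} R^{1/p} \to M \to 0$ with $\dim M < d$ and $\Phi(R^{\oplus p^\gamma}) \subseteq (cR)^{1/p}$; then $\Phi$ descends to $(R/I_e)^{\oplus p^\gamma} \to (R/I_{e+1})^{1/p}$ with cokernel a quotient of $M/\m^{[p^e]}M$, and Lemma~\ref{Well known bound} (not Theorem~\ref{polstrabound}, which is only needed for uniformity over $\Spec(R)$) gives the bound. Case (ii.) is dual, using a sequence $0 \to R^{1/p} \xrightarrow{\Psi} R^{\oplus p^\gamma} \to N \to 0$ in which every component of $\Psi$ is a pre-multiple of $\psi$. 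Note also that there is no ``symmetric inequality in the other direction'' from hypothesis (i.) alone --- you only get one side, but that already suffices by Lemma~\ref{Sequence Lemma}~(i.).

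The genuine gap is in the positivity converse. Your proposed endgame, $\ell(R/I_e) \ge \ell(R/(\m^{[p^e]} + c^N R)) \gtrsim p^{ed}$, has the inequality backwards: the ideal $(\m^{[p^e]}, c^N)$ has colength $O(p^{e(d-1)})$, not $\asymp p^{ed}$, since $\dim R/(c^N) = d-1$. What is actually needed is a containment $I_{e+e_0} \subseteq (\m^{[p^e]}:c)$ for some fixed $e_0$ and $c$, so that $\ell(R/I_{e+e_0}) \ge \ell(R/(\m^{[p^e]}:c))$, and then Lemma~\ref{colonsamelimit} identifies the normalized limit of the right side with $e_{HK}(R) > 0$. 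Producing that containment is the real work, and requires three ingredients absent from your sketch: Proposition~\ref{MainHHLemma}, which furnishes $N$ and $c$ with $\psi^e((xR)^{1/p^e}) \not\subseteq \m^N$ whenever $x \not\in (\m^{[p^e]}:c)$ (proved via the Cohen--Gabber structure theorem and a trace argument); the Hartshorne--Speiser--Gabber stabilization (Theorem~\ref{Hartshorne Speiser}), used to pass from $\bigcap I_e = 0$ to a \emph{descending} chain $(I_e:\sigma)$ with zero intersection; and Chevalley's Lemma, to locate $e_0$ with $(I_{e_0}:\sigma) \subseteq \m^N$. Without these, there is no route from the purely asymptotic hypothesis $\bigcap I_e = 0$ to the uniform inclusion $I_{e+e_0} \subseteq (\m^{[p^e]}:c)$.
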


\noindent
It is easy to check that the sequences $\{I_e^{\mathrm{HK}}\}_{e \in \N}$ or $\{I_e^{\mathrm{F-sig}}\}_{e \in \N}$ satisfy both conditions (i.) and (ii.) above (with \emph{any} choice of $c$ and $\psi$).  Our results on positivity come from the stated criterion for sequences of ideals satisfying (ii.), while the relative Hilbert-Kunz statements largely come through careful tracking of uniform constants bounding the growth rates for sequences satisfying (i.).

The article is organized as follows. Section~\ref{Section Preliminaries} recalls a number of preliminary statements about rings in positive characteristic; in particular, we review properties of $F$-finite rings which will be the primary setting of this article.  We also discuss  elementary properties of the maximal free rank (see Lemma~\ref{Simple Lemma}) which will be used in the unified proof of the existence of the Hilbert-Kunz multiplicity and the $F$-signature provided in Section \ref{Section F-signature and Hilbert-Kunz Multiplicity Revisited}.  The remainder of this Section~\ref{Section F-signature and Hilbert-Kunz Multiplicity Revisited} is largely devoted to the new proofs of the semicontinuity. Section~\ref{Section Limits via p-linear and p^-1-linear maps} is the technical heart of the paper, and generalizes the methods of the previous sections for Hilbert-Kunz multiplicity and $F$-signature alone to the various sequences of ideals satisfying the conditions in Theorem~\ref{introlimitsofsequences} above.

Having dispatched with the results on existence and semicontinuity, we turn in Section~\ref{Section Positivity} to a discussion of positivity statements. In particular, two simple proofs of the positivity of the $F$-signature for strongly $F$-regular rings appear at the beginning of this section.  The relative Hilbert-Kunz criteria for tight closure along a divisor or ideal pair appear at the end of this section, together with a discussion of the positivity of the $F$-splitting ratio. 
Section~\ref{Section F-signature and Minimal Relative Hilbert-Kunz Multiplicity} is devoted to relating the F-signature with minimal relative Hilbert-Kunz differences giving a proof of Theorem \ref{introWYresult}, and we conclude in Section~\ref{Section Open Questions} with a discussion of a number of related open questions.

\section{Preliminaries}\label{Section Preliminaries}

Throughout this paper, we shall assume all rings $R$ are commutative with a unit, Noetherian,
and have prime characteristic $p > 0$. If $\p \in \Spec(R)$, we let  $k(\p) = R_\p / \p R_\p$ denote  the corresponding residue field.  A local ring is a triple $(R, \m, k)$ where $\m$ is the
unique maximal ideal of the ring $R$ and $k = k(\m) = R/\m$. 

\subsection{Maximal Free Rank} For any ring $R$ and $R$-module $M$, $\ell_R(M)$ denotes the length of $M$, and $\mu_R(M)$ denotes the minimal number of generators of $M$.  When ambiguity is unlikely, we freely omit the subscript $R$ from these and similar notations.   We define the \emph{maximal free rank} $\frk_R(M)$ of $M$ to be the maximal rank of a free $R$-module quotient of $M$.   As a surjection onto a free module necessarily admits a section, $\frk_R(M)$ is also the maximal rank of a free direct summand of $M$. Equivalently, $M$ admits a direct sum decomposition $M = R^{\oplus \frk_R(M)} \oplus N$ where $N$ has no free direct summands -- a property characterized by $\phi(N) \neq R$ for all $\phi \in \Hom_R(N, R)$.  It is immediate that $\frk_R(M) \leq \mu_R(M)$ with equality if and only if $M$ is a free $R$-module.

When $R$ is a domain, we will use $\rk_R(M)$ to denote the torsion-free rank of $M$. It is easily checked that
\begin{equation}
\label{easyrankinequalities}
\frk_R(M) \leq \rk_R(M) \leq \mu_R(M)
\end{equation}
and both inequalities are strict when $M$ is not free.  Notice that, whereas $\mu(\blank)$ is  sub-additive on short exact sequences over local rings, this is not the case for $\frk(\blank)$ and motivates the following lemma.

\begin{Lemma}\label{Simple Lemma} Let $(R,\m,k)$ be a local ring. 
\begin{enumerate}[(i.)]
\item
If $M_1$ and $M_2$ are $R$-modules, then $\frk(M_1 \oplus M_2) = \frk(M_1) + \frk(M_2)$. 
\item
If $M$ is an $R$-module with $M' \subseteq M$ a submodule and $M'' = M/M'$, then 
$$\frk(M'') \leq \frk(M)\leq \frk(M')+\mu(M'').$$
\end{enumerate}
\end{Lemma}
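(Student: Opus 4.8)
The plan is to reduce both parts to a single elementary principle for the local ring $(R,\m,k)$: a surjection onto a free module splits, so $\frk_R(M)\geq n$ precisely when there exist $R$-linear maps $f\colon M\to R^n$ and $g\colon R^n\to M$ with $f\circ g=\mathrm{id}_{R^n}$. The first thing I would do is upgrade this to the more flexible characterization
\[
\frk_R(M)=\sup\bigl\{\,\rank_k\bigl(\overline{f\circ g}\bigr)\ :\ n\in\N,\ f\in\Hom_R(M,R^n),\ g\in\Hom_R(R^n,M)\,\bigr\},
\]
where $\overline{(\,\cdot\,)}$ denotes reduction modulo $\m$. One inequality is immediate (take $f\circ g=\mathrm{id}$). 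For the other, given $f,g$ with $\rank_k(\overline{f\circ g})=r$, I would lift to $\mathrm{GL}_n(R)$ the row and column operations putting $\overline{f\circ g}$ in the form $\left(\begin{smallmatrix}I_r&0\\ 0&0\end{smallmatrix}\right)$ — a matrix over $R$ reducing to an invertible matrix over $k$ is itself invertible — and then compose with the projection onto, and inclusion of, the first $r$ coordinates; this produces $h\colon M\to R^r$ and $\ell\colon R^r\to M$ with $\overline{h\circ\ell}=I_r$, whence $h\circ\ell$ is a surjective, and therefore bijective, endomorphism of $R^r$ by Nakayama, and $R^r$ splits off $M$. This passage to linear algebra over $k$ followed by a return to $R$ via Nakayama is the one genuine ingredient, and the only place where locality of $R$ enters.

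Given this, part (i) is short. The inequality $\frk(M_1\oplus M_2)\geq\frk(M_1)+\frk(M_2)$ holds because the direct sum of a free summand of $M_1$ and a free summand of $M_2$ is a free summand of $M_1\oplus M_2$. For the reverse, I take arbitrary $f\colon M_1\oplus M_2\to R^n$ and $g\colon R^n\to M_1\oplus M_2$, decompose them componentwise as $f=(f_1,f_2)$ and $g=\binom{g_1}{g_2}$ with $f_i\colon M_i\to R^n$, $g_i\colon R^n\to M_i$, and note $f\circ g=f_1\circ g_1+f_2\circ g_2$ in $\End_R(R^n)$. Reducing modulo $\m$ and using subadditivity of rank of matrices over the field $k$ gives $\rank_k(\overline{f\circ g})\leq\rank_k(\overline{f_1\circ g_1})+\rank_k(\overline{f_2\circ g_2})\leq\frk(M_1)+\frk(M_2)$; taking the supremum over $n,f,g$ yields the claim. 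Subadditivity of rank over $k$ is exactly what compensates for the failure of $\frk(\,\cdot\,)$ to be subadditive on short exact sequences.

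For part (ii), the inequality $\frk(M'')\leq\frk(M)$ is immediate: composing the quotient $M\twoheadrightarrow M''$ with a surjection $M''\twoheadrightarrow R^n$ realizing $\frk(M'')$ exhibits $R^n$ as a quotient, hence (by splitting) a summand, of $M$. For the bound $\frk(M)\leq\frk(M')+\mu(M'')$, I would fix a surjection $\phi\colon M\twoheadrightarrow F$ onto a free module of rank $n=\frk(M)$ and put $F'=\phi(M')\subseteq F$. Then $F'$ is a quotient of $M'$, so $\frk(F')\leq\frk(M')$ by the inequality just established, while $\phi$ induces a surjection $M''=M/M'\twoheadrightarrow F/F'$, so $\mu(F/F')\leq\mu(M'')$. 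The remaining point is $\frk(F')\geq n-\mu(F/F')$: reducing modulo $\m$, the image $V$ of $F'$ in $F/\m F=k^n$ has $\dim_k V=n-\mu(F/F')$ (its cokernel in $k^n$ is $(F/F')/\m(F/F')$), and composing $F'\hookrightarrow F$ with projection onto a set of $n-\mu(F/F')$ coordinates chosen so that $V$ maps isomorphically gives a homomorphism $F'\to R^{\,n-\mu(F/F')}$ that is surjective modulo $\m$, hence surjective by Nakayama. Chaining the inequalities, $\frk(M')\geq\frk(F')\geq n-\mu(F/F')\geq n-\mu(M'')=\frk(M)-\mu(M'')$.

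The main obstacle throughout — as the remark preceding the lemma already signals — is precisely that $\frk(\,\cdot\,)$ is not subadditive, so none of the upper bounds can be read off from a naive count of generators; in every instance the remedy is the same, namely to transfer the question to linear algebra over $k$, where rank is well behaved, and then transfer back using Nakayama's lemma together with the fact that a surjective endomorphism of a finitely generated module is injective. I would also remark that no finiteness hypothesis on $M$, $M_1$, or $M_2$ is actually needed — the only finitely generated modules entering the arguments are the auxiliary free ones — even though the lemma will be applied only to finitely generated modules.
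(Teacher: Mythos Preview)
Your proof is correct but follows a genuinely different route from the paper's.  The paper never introduces your characterization $\frk_R(M)=\sup\{\rank_k(\overline{f\circ g})\}$; instead it works directly with the decomposition $M=R^{\oplus\frk(M)}\oplus N$ where $N$ has no free summand, equivalently $\phi(N)\subseteq\m$ for all $\phi\in\Hom_R(N,R)$.  For (i.), the paper simply checks that $N_1\oplus N_2$ has no free summand (any $\phi\colon N_1\oplus N_2\to R$ has $\phi(N_i)\subseteq\m$, so $\phi(N_1\oplus N_2)\subseteq\m$), whereas you use subadditivity of matrix rank over $k$.  For the upper bound in (ii.), the paper takes $n$ to be the maximal rank of a \emph{common} free summand of $M$ and $M'$, writes $M=R^{\oplus n}\oplus N$, $M'=R^{\oplus n}\oplus N'$, and argues that maximality of $n$ forces $\phi(N')\subseteq\m$ for every $\phi\colon N\to R$; a surjection $N\twoheadrightarrow R^{\oplus\frk(N)}$ then induces $M''=N/N'\twoheadrightarrow k^{\oplus\frk(N)}$, giving $\mu(M'')\geq\frk(N)$ and hence $\frk(M)=n+\frk(N)\leq\frk(M')+\mu(M'')$.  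Your argument instead pushes $M'$ forward into the free quotient $F$ of $M$ and analyzes $\frk(F')$ via coordinate projections.  The paper's route is shorter and avoids the preliminary rank characterization; your route makes the reduction to linear algebra over $k$ completely explicit and uniform across both parts, and your closing remark that no finite-generation hypothesis on $M$ is needed is a small bonus not visible in the paper's argument.
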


\begin{proof} (i.) 
If $M_1 = R^{\oplus \frk_R(M_1)} \oplus N_1$ and $M_2 = R^{\oplus \frk_R(M_2)} \oplus N_2$ where $N_1,N_2$ have no free direct summands, then $M_1 \oplus M_2 = R^{\oplus ( \frk_R(M_1) + \frk_R(M_2))} \oplus (N_1 \oplus N_2)$.  If $\phi \in \Hom_R(N_1 \oplus N_2,R)$, then $\phi(N_1),\phi(N_2) \subseteq \m$ as $N_1, N_2$ have no free direct summands.  It follows that $\phi(N_1 \oplus N_2) = \phi(N_1) + \phi(N_2) \subseteq \m$ as well, showing $N_1 \oplus N_2$ has no free direct summands as desired.

(ii.)
For the first inequality, a surjection  $M'' \to  R^{\oplus \frk(M'')}$ induces another $M \to M / M' = M'' \to R^{\oplus \frk(M'')}$, showing $\frk(M'') \leq \frk(M)$.  To show the final inequality, let $n$ be the maximal rank of a mutual free direct summand of $M$ and $M'$.  In other words, simultaneously decompose $M = R^{\oplus n} \oplus N$ and $M' = R^{\oplus n} \oplus N'$ where  $M' \subseteq M$ is given by equality on $R^{\oplus n}$ and an inclusion $N' \subseteq N$, and we have $\phi(N') \subseteq \m$ for every $\phi \: N \to R$.  Taking a surjection $\Phi \: N \to R^{\oplus \frk(N)}$, it follows $\Phi(N') \subseteq \m^{\oplus \frk(N)}$.  Thus, $\Phi$ induces a surjection $M'' = N/N' \to k^{\oplus \frk(N)}$ and hence also $M''/\m M'' \to k^{\oplus \frk(N)}$ which shows $\mu(M'') \geq \frk(N)$. This gives $\frk(M) = n + \frk(N)  \leq  \frk(M') + \mu(M'')$ as desired.
 \end{proof}

 For $X$ any topological space, recall that a function $f \: X \to \R$ is lower semicontinuous if and only if for any $\delta \in \R$ the set $\{ x \in X \, | \, f(x) > \delta \}$ is open.  Similarly, $f \: X \to \R$ is upper semicontinuous if and only if for any $\delta \in \R$ the set $\{ x \in X \, | \, f(x) < \delta \}$ is open.  
 
\begin{Lemma}
\label{easysemicont}
 If $M$ is a finitely generated $R$-module,  the function $\Spec(R) \to \R$ given by $\p \mapsto \frk_{R_\p}(M_\p)$ is lower semicontinuous.  Similarly, the function $\Spec(R) \to \R$ given by $\p \mapsto \mu_{R_\p}(M_\p)$ is upper semicontinuous. 
\end{Lemma}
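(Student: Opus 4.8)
The plan is to prove both assertions by the same mechanism. Since $M$ is finitely generated, $M_\p$ is finitely generated for every $\p\in\Spec(R)$, so $\frk_{R_\p}(M_\p)\le\mu_{R_\p}(M_\p)<\infty$ and both functions take values in the non‑negative integers. An integer‑valued function $f$ on a topological space is lower semicontinuous exactly when each set $\{f\ge n\}$ is open, and upper semicontinuous exactly when each set $\{f\le n\}$ is open (the cases $n\le 0$ being trivial), so it suffices to show that
\[
U_n=\{\p\in\Spec(R)\mid \frk_{R_\p}(M_\p)\ge n\}
\quad\text{and}\quad
V_n=\{\p\in\Spec(R)\mid \mu_{R_\p}(M_\p)\le n\}
\]
are open for every $n$. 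In each case I would fix a prime $\p$ in the set, encode the defining inequality at $\p$ by a single homomorphism of finitely generated $R$-modules, and then use that the locus where a finitely generated module vanishes is open (its support being closed) to spread the inequality to a basic open neighbourhood $D(f)\ni\p$.

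For $V_n$: if $\mu_{R_\p}(M_\p)\le n$ then by Nakayama's lemma $M_\p$ is generated by $n$ elements, and after clearing denominators these may be taken to be the images of elements $m_1,\dots,m_n\in M$. Let $\varphi\colon R^{\oplus n}\to M$ be the map carrying the standard basis to $m_1,\dots,m_n$; then $C=\coker(\varphi)$ is finitely generated with $C_\p=0$, so $\p\notin\Supp(C)$. As $\Supp(C)$ is closed there is $f\in R\setminus\p$ with $D(f)\cap\Supp(C)=\emptyset$, and then $M_\q$ is generated by $m_1,\dots,m_n$ for every $\q\in D(f)$, so $D(f)\subseteq V_n$.

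For $U_n$: by definition of the maximal free rank, $\frk_{R_\q}(M_\q)\ge n$ precisely when there is a surjection $M_\q\twoheadrightarrow R_\q^{\oplus n}$. Fix such a surjection $\varphi'$ for $\p\in U_n$. Because $R$ is Noetherian and $M$ is finitely generated, $M$ is finitely presented, so the canonical map $\Hom_R(M,R^{\oplus n})_\p\to\Hom_{R_\p}(M_\p,R_\p^{\oplus n})$ is an isomorphism; hence $\varphi'=\varphi_\p/s$ for some $\varphi\in\Hom_R(M,R^{\oplus n})$ and $s\in R\setminus\p$, and since $1/s$ acts as an automorphism of $R_\p^{\oplus n}$ the localization $\varphi_\p$ is again surjective. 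Then $\coker(\varphi)$ is finitely generated with $\coker(\varphi)_\p=0$, so exactly as above there is $f\in R\setminus\p$ with $\coker(\varphi)_\q=0$ for all $\q\in D(f)$; for such $\q$ the map $\varphi_\q\colon M_\q\to R_\q^{\oplus n}$ is surjective, giving $\frk_{R_\q}(M_\q)\ge n$, so $D(f)\subseteq U_n$.

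The only step that is not purely formal is the passage, in the $\frk$-argument, from a surjection onto a free module defined over the local ring $R_\p$ to one defined over $R$ itself; this is precisely where finite presentation of $M$ is used, via the compatibility of $\Hom$ with localization. Everything else is Nakayama's lemma together with the closedness of the support of a finitely generated module, so I expect no real obstacle beyond this bookkeeping.
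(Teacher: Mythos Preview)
Your proof is correct and follows essentially the same approach as the paper: lift a surjection defined over $R_\p$ to a homomorphism over $R$, then use that the cokernel vanishes on an open neighbourhood of $\p$. The paper simply asserts that ``since $M$ is finitely generated'' the surjection $M_\p \to R_\p^{\oplus n}$ may be taken as the localization of a map $M \to R^{\oplus n}$, whereas you are more explicit in invoking finite presentation and the compatibility of $\Hom$ with localization; this extra care is welcome but does not constitute a genuinely different argument.
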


\begin{proof}
For $\p \in \Spec(R)$, let $k(\p) = R_\p / \p R_\p$ denote the corresponding residue field. If $\frk_{R_{\p}}(M_\p) = n$, we can find a surjection $ M_\p \to R_\p^{\oplus n} $.  Without loss of generality, since $M$ is finitely generated, we may assume this surjection is the localization of an $R$-module homomorphism $M \to R^{\oplus n}$.  As it becomes surjective when localized at $\p$, there is some $g \in R \setminus \p$ so that $M_g \to R_g^{\oplus n} $ is surjective.  Localizing at any $\q \in \Spec(R)$ with $g \not\in\q$ yields a surjection $M_\q \to R_\q^{\oplus n}$, implying $\frk_{R_{\q}}(M_\q) \geq n = \frk_{R_{\p}}(M_\p)$ for any $\q$ in the Zariski open neighborhood $D(g) = \{ \q \in \Spec(R) \, | \, g \not\in \q \}$ of $\p$ in $\Spec(R)$. If $\delta \in \R$ and $\frk_{R_{\p}}(M_\p) > \delta $, then so also $\frk_{R_{\q}}(M_\q) > \delta$ for any $\q \in D(g)$.  This shows the function $\Spec(R) \to \R$ given by $\p \mapsto \frk_R(M_\p)$ is lower semicontinuous. That the function $\Spec(R) \to \R$ given by $\p \mapsto \mu(M_\p)$ is upper semicontinuous proceeds in an analogous manner.
\end{proof}

\subsection{F-finite rings}
\label{F-finite rings}
The Frobenius or $p$-th power endomorphism $F \: R \to R$ is defined by $r \mapsto r^p$ for all $r \in R$.  Similarly, for $e \in \mathbb{N}$, we have $F^e \: R \to R$ given by $r \mapsto r^{p^e}$.  The expansion of an ideal $I$ over $F^e$ is denoted $I^{[p^e]} = ( F^e(I) )$.  In case $R$ is a domain, we let $R^{1/p^e}$ denote the subring of $p^e$-th roots of $R$ inside a fixed algebraic closure of the fraction field of $R$.  By taking $p^e$-th roots, $R^{1/p^e}$ is abstractly isomorphic to $R$, and the Frobenius map is identified with the ring extension $R \subseteq R^{1/p^e}$.  More generally, for any $R$ and any $R$-module $M$, we will write $M^{1/p^e}$ for the $R$-module given by restriction of scalars for $F^e$; this is an exact functor on the category of $R$ modules.  We say that $R$ is $F$-finite when $R^{1/p^e}$ is a finitely generated $R$-module, which further implies that $R$ is excellent, \cite[Theorem~2.5]{Kunz}.  A complete local ring $(R, \m, k)$ is $F$-finite if and only if its residue field $k$ is $F$-finite.  When $R$ is $F$-finite and $M$ is a finitely generated $R$-module, $M^{1/p^e}$ is also a finitely generated $R$-module and 
\[
\ell_R(M^{1/p^e}) = [k^{1/p^e}:k] \cdot \ell_{R^{1/p^e}}(M^{1/p^e}) = [k^{1/p^e}:k] \cdot \ell_R(M).
\]
In particular, note that
\begin{equation}
\label{mingenstoHK}
\mu_R(M^{1/p^e}) = \ell_R(M^{1/p^e} \otimes_R k) = \ell_R((M / \m^{[p^e]}M)^{1/p^e}) = [k^{1/p^e}:k] \cdot \ell_R(M / \m^{[p^e]}M).
\end{equation}

\begin{Lemma}
\label{ranklemma}
 Suppose $(R, \m, k)$ is a complete local $F$-finite domain with coefficient field $k$ and system of parameters $x_1, \ldots, x_d$ chosen so that $R$ is module finite and generically separable over the regular subring $A = k[[ x_1, \ldots, x_d]]$.  Then there exists $0 \neq c \in A$ so that $c \cdot R^{1/p^e} \subseteq R[A^{1/p^e}] = R \otimes_A A^{1/p^e}$ for all $e \in \N$.  
\end{Lemma}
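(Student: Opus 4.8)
The plan is to run a trace-form argument over the separable extension $L/K$, where $K$ and $L$ denote the fraction fields of $A$ and $R$. The modules $R^{1/p^e}$ and $R[A^{1/p^e}]$ are both module-finite over $A^{1/p^e}$, satisfy $R[A^{1/p^e}] \subseteq R^{1/p^e}$, and share the common fraction field $L^{1/p^e} = L \otimes_K K^{1/p^e}$ --- here one uses that $L/K$ is separable, hence linearly disjoint from the purely inseparable extension $K^{1/p^e}/K$, so that $L \otimes_K K^{1/p^e}$ is a field. Thus each quotient $R^{1/p^e}/R[A^{1/p^e}]$ is a finitely generated torsion $A^{1/p^e}$-module; the entire content of the lemma is the uniformity, namely a single $0 \neq c \in A$ killing all of them at once.

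To produce such a $c$, I would fix a $K$-basis $u_1, \dots, u_n$ of $L$ with each $u_i \in R$ --- possible because every element of $L$ has the form $r/a$ with $r \in R$ and $0 \neq a \in A$, obtained by clearing an integral dependence relation of the denominator over the normal ring $A$ --- and let $u_1^{*}, \dots, u_n^{*} \in L$ be the dual basis with respect to the nondegenerate trace form of $L/K$. Then choose $0 \neq c_0 \in A$ with $c_0 u_i^{*} \in R$ for every $i$.

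The crux is that, since $L/K$ is separable and trace is compatible with the base change $K \hookrightarrow K^{1/p^e}$, the form $\operatorname{Tr}_{L^{1/p^e}/K^{1/p^e}}$ restricts to $\operatorname{Tr}_{L/K}$ on $L$; hence $\{u_i^{*}\}$ remains the $K^{1/p^e}$-dual basis of $\{u_i\}$, and the $u_i$-coordinate of any $z \in L^{1/p^e}$ equals $\operatorname{Tr}_{L^{1/p^e}/K^{1/p^e}}(z\, u_i^{*})$. Applying this with $z = r^{1/p^e}$ for $r \in R$: writing $r^{1/p^e} = \sum_i \lambda_i u_i$ with $\lambda_i \in K^{1/p^e}$, we get $c_0 \lambda_i = \operatorname{Tr}_{L^{1/p^e}/K^{1/p^e}}\!\bigl(r^{1/p^e}\cdot(c_0 u_i^{*})\bigr)$, and $r^{1/p^e}\cdot(c_0 u_i^{*})$ lies in $R^{1/p^e}$, which is module-finite --- hence integral --- over $A^{1/p^e}$. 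Since $A^{1/p^e} \cong A$ is normal, this trace lies in $A^{1/p^e}$, so $c_0 \lambda_i \in A^{1/p^e}$ and therefore $c_0 r^{1/p^e} \in \sum_i A^{1/p^e} u_i \subseteq R[A^{1/p^e}]$. Letting $r$ range over a set of $A$-module generators of $R$ --- whose $p^e$-th roots generate $R^{1/p^e}$ over $A^{1/p^e}$, as inverse Frobenius is a ring isomorphism $R \xrightarrow{\sim} R^{1/p^e}$ carrying $A$ onto $A^{1/p^e}$ --- yields $c_0 R^{1/p^e} \subseteq R[A^{1/p^e}]$ for every $e$, so $c = c_0$ works.

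The only real obstacle is exactly this uniformity in $e$. Naive attempts --- clearing denominators separately for each $e$, or bootstrapping from the case $e=1$ --- produce annihilators such as $c_0^{1/p^e}$, or powers of $c_0$ with $e$-dependent exponents, none of which is a fixed element of $A$. The trace computation circumvents this because the only $p^e$-th root that ever appears, $r^{1/p^e}$, never leaves the ring $R^{1/p^e}$, which is integral over $A^{1/p^e}$; its contribution is therefore absorbed by normality rather than by an $e$-dependent denominator. Separability is precisely what legitimizes this bookkeeping, both in the identification $L^{1/p^e} = L \otimes_K K^{1/p^e}$ and in the invariance of the dual basis under passage to $K^{1/p^e}$.
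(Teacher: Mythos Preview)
Your proof is correct and follows essentially the same trace-form strategy the paper invokes. The paper's own proof is terse: after verifying $R \otimes_A A^{1/p^e} = R[A^{1/p^e}]$ via linear disjointness, it simply asserts that $c = \bigl(\det(\mathrm{tr}(e_i e_j))\bigr)^2$ works, where $e_1, \ldots, e_s$ generate $R$ over $A$, and cites \cite[Lemma~4.3]{HochsterHunekeTextExponents} for the verification. Your argument is effectively the content behind that citation: your $c_0$ clearing the denominators of the dual basis plays the same role as the discriminant (indeed, when the $u_i$ form a basis, $\det(\mathrm{tr}(u_i u_j))$ already sends each $u_i^*$ into the $A$-span of the $u_j$'s via Cramer's rule). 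The key mechanism --- that the trace $\mathrm{Tr}_{L^{1/p^e}/K^{1/p^e}}$ is the base change of $\mathrm{Tr}_{L/K}$ and maps $R^{1/p^e}$ into $A^{1/p^e}$ by normality, so the same $c_0$ works for every $e$ --- is exactly the uniformity the paper is relying on.
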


\begin{proof}
Note first that, by the Cohen-Gabber Structure Theorem, every complete local ring admits such a coefficient field and system of parameters; see \cite{CohenGabber} for an elementary proof. Let $K = \mathrm{Frac}(A) \subseteq L = \mathrm{Frac}(R)$ be the corresponding extension of fraction fields.  
The separability assumption implies $L$ and $K^{1/p^e}$ are linearly disjoint over $K$, so that $L \otimes_K K^{1/p^e} = LK^{1/p^e}$ is reduced.
Since $A^{1/p^e}$ is a free $A$-module, $R \otimes_A A^{1/p^e}$ is a free $R$-module and injects into its localization $L \otimes_A A^{1/p^e} = L \otimes_K K^{1/p^e}$. Thus $R \otimes_A A^{1/p^e}$ is also reduced, which gives the equality $R \otimes_A A^{1/p^e} = R[A^{1/p^e}]$.  If $e_1, \ldots, e_s \in R$ generate $R$ as an $A$-module, it is easy to see $c = \left( \mathrm{det}\left( \mathrm{tr}(e_i e_j)\right)\right)^2$ satisfies $c \cdot R^{1/p^e} \subseteq R[A^{1/p^e}]$ where $\mathrm{tr}$ is the trace map; see \cite[Lemma~4.3]{HochsterHunekeTextExponents} for a proof and \cite[Section~6]{HHJams} for further discussion.
\end{proof}

The Corollaries below follow immediately; see \cite{Kunz} for details.

\begin{Corollary}
 If $(R, \m, k)$ is an $F$-finite local domain of dimension $d$, then $\rk_R(R^{1/p^e}) = [k^{1/p^e}:k] \cdot p^{ed}$.  
\end{Corollary}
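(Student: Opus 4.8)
The plan is to combine Lemma~\ref{ranklemma} with the structure theory of complete local $F$-finite domains, reducing the rank computation to the (easy) case of a power series ring over an $F$-finite field. First I would note that the statement is local on $\Spec(R)$ in a suitable sense and that $\rk_R(R^{1/p^e})$ is unchanged under completion at $\m$: indeed $\widehat{R}$ is again an $F$-finite local domain of the same dimension (here I use that completion commutes with the Frobenius pushforward and that $R \to \widehat R$ is faithfully flat with the same residue field, so $[\widehat k^{1/p^e} : \widehat k] = [k^{1/p^e}:k]$), so it suffices to treat the case where $(R,\m,k)$ is complete. By the Cohen--Gabber Structure Theorem invoked in Lemma~\ref{ranklemma}, choose a coefficient field $k \subseteq R$ and a system of parameters $x_1,\ldots,x_d$ making $R$ module-finite and generically separable over $A = k[[x_1,\ldots,x_d]]$.

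Next I would compute the rank over $A$ directly. Since $A$ is regular, $A^{1/p^e}$ is a free $A$-module; its rank is $[A^{1/p^e}:A]$ as an extension of domains, which one computes on fraction fields: $\mathrm{Frac}(A^{1/p^e}) = K^{1/p^e}$ where $K = \mathrm{Frac}(A)$, and $[K^{1/p^e}:K] = [k^{1/p^e}:k]\cdot p^{ed}$ because $K^{1/p^e} = k^{1/p^e}((x_1^{1/p^e},\ldots,x_d^{1/p^e}))$ is obtained from $K$ by adjoining the $p^{ed}$-dimensional monomial basis $\{ x_1^{a_1/p^e}\cdots x_d^{a_d/p^e} : 0 \le a_i < p^e\}$ over $k^{1/p^e}$ and then the field extension $k \subseteq k^{1/p^e}$ of degree $[k^{1/p^e}:k]$. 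Thus $\rk_A(A^{1/p^e}) = [k^{1/p^e}:k]\cdot p^{ed}$.

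Then I would transfer this to $R$ using the containments from Lemma~\ref{ranklemma}. We have ring extensions (as $A$-modules, after localizing at $0 \in \Spec A$, i.e. tensoring with $K$)
\[
R[A^{1/p^e}] = R \otimes_A A^{1/p^e} \subseteq R^{1/p^e},
\]
and Lemma~\ref{ranklemma} gives $0 \neq c \in A$ with $c\cdot R^{1/p^e} \subseteq R \otimes_A A^{1/p^e}$, so multiplication by $c$ is an injective $R$-module map $R^{1/p^e} \hookrightarrow R\otimes_A A^{1/p^e}$; combined with the inclusion above, both modules have the same torsion-free rank over $R$. Finally, rank is multiplicative in towers: $\rk_R(R\otimes_A A^{1/p^e}) = \rk_A(A^{1/p^e})$ since $A^{1/p^e}$ is free over $A$ of that rank and tensoring up to $R$ preserves it (equivalently, localize at the generic point: $(R\otimes_A A^{1/p^e})\otimes_R L = L \otimes_K K^{1/p^e}$ has $L$-dimension $[K^{1/p^e}:K]$, using the linear disjointness from the proof of Lemma~\ref{ranklemma} so that this tensor product has the expected dimension). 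Assembling, $\rk_R(R^{1/p^e}) = \rk_A(A^{1/p^e}) = [k^{1/p^e}:k]\cdot p^{ed}$.

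I expect the main obstacle to be purely bookkeeping: being careful that the field-degree bookkeeping $[K^{1/p^e}:K] = [k^{1/p^e}:k]\cdot p^{ed}$ is justified (it requires knowing the $x_i^{1/p^e}$ are algebraically independent over $k^{1/p^e}$ and that no collapsing occurs between the monomial part and the constant field part, which follows since $k^{1/p^e}$ and $K = k((\underline x))$ are linearly disjoint over $k$ inside $K^{1/p^e}$), and that the reduction to the complete case is legitimate. Since the excerpt says these Corollaries "follow immediately" and points to \cite{Kunz}, in practice the write-up can be very short: cite Lemma~\ref{ranklemma}, note both modules sandwiched between $R\otimes_A A^{1/p^e}$ and $R^{1/p^e}$ have equal rank, and reduce to the transparent computation over the power series ring $A$.
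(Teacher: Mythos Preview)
The paper itself does not give a proof here; it simply records that the statement follows from \cite{Kunz} (where the relevant fact is that $[k(\p)^{1/p}:k(\p)]\cdot p^{\dim R_\p}$ is locally constant on $\Spec R$ for an $F$-finite ring, applied at the generic and closed points of $\Spec R$). Your route through Lemma~\ref{ranklemma} and an explicit Noether normalization of the completion is a reasonable alternative, and once you are in the complete domain setting your steps are correct: the computation of $\rk_A(A^{1/p^e})$ is transparent, and the sandwich $cR^{1/p^e}\subseteq R\otimes_A A^{1/p^e}\subseteq R^{1/p^e}$ forces equal $R$-ranks.

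There is, however, a real gap in your reduction step: the completion $\widehat R$ of an $F$-finite local domain need \emph{not} be a domain. For instance, for $p$ odd the localization of $\F_p[x,y]/(y^2-x^2(x+1))$ at the origin is an $F$-finite local domain whose completion has two minimal primes. Since Lemma~\ref{ranklemma} is stated only for complete local domains, you cannot apply it directly to $\widehat R$, and your assertion that $\rk_R(R^{1/p^e})$ is unchanged under completion tacitly uses that $\widehat R$ has a fraction field.

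The repair is standard but deserves a sentence. Because $R$ is $F$-finite and hence excellent, $\widehat R$ is reduced and equidimensional of dimension $d$; pick a minimal prime $\p$ of $\widehat R$ and run your argument on the complete local domain $\widehat R/\p$, which still has residue field $k$ and dimension $d$, to obtain $\rk_{\widehat R/\p}((\widehat R/\p)^{1/p^e}) = [k^{1/p^e}:k]\cdot p^{ed}$. It remains to identify this with $\rk_R(R^{1/p^e}) = [K^{1/p^e}:K]$: the generic formal fiber of the excellent domain $R$ is geometrically regular, so $L = \mathrm{Frac}(\widehat R/\p)$ is separable over $K$, and separable field extensions preserve $p$-degree, giving $[L^{1/p^e}:L] = [K^{1/p^e}:K]$. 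Alternatively, you can bypass completion entirely and simply cite Kunz as the paper does.
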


\begin{Corollary}
\label{gamma}
 If $R$ is $F$-finite, for any two prime ideals $\p \subseteq \q$ of $R$ it follows  $$[k(\p)^{1/p^e}:k(\p)] = [k(\q)^{1/p^e}:k(\q)]  \cdot p^{e \dim(R_\q / \p R_\q)}.$$  If additionally $R$ is locally equidimensional, we have $$[k(\p)^{1/p^e}:k(\p)] \cdot p^{e \dim(R_\p)} = [k(\q)^{1/p^e}:k(\q)]  \cdot p^{e \dim(R_\q)}$$ and the function $\Spec(R) \to \N$ given by $\p \mapsto [k(\p)^{1/p^e}:k(\p)] \cdot p^{e \dim(R_\p)}$ is constant on the connected components of $\Spec(R)$.
\end{Corollary}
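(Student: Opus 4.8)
The plan is to extract all three assertions from the preceding Corollary, applied at suitable primes, together with the standard fact that an $F$-finite ring is excellent. For the first equality, fix $\p \subseteq \q$ and pass to the $F$-finite local domain $S = R_\q/\p R_\q = (R/\p)_{\q/\p}$, whose residue field is $k(\q)$, whose dimension is $\dim(R_\q/\p R_\q)$, and whose fraction field is $\mathrm{Frac}(R/\p) = k(\p)$. The preceding Corollary, applied to $S$, reads $\rk_S(S^{1/p^e}) = [k(\q)^{1/p^e}:k(\q)] \cdot p^{e\dim(R_\q/\p R_\q)}$, so it remains to identify $\rk_S(S^{1/p^e})$ with $[k(\p)^{1/p^e}:k(\p)]$. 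This is the statement that torsion-free rank is computed at the generic point: since $F^e \colon S \hookrightarrow S^{1/p^e}$ induces a homeomorphism of spectra, it carries the generic point of $\Spec S$ to that of $\Spec S^{1/p^e}$, and the associated residue field extension is $k(\p) \subseteq k(\p)^{1/p^e}$. Concretely, $S^{1/p^e} \otimes_S k(\p)$ is a finite, hence Artinian, domain extension of the field $k(\p)$ and is therefore a field; being sandwiched between $S^{1/p^e}$ and its fraction field $k(\p)^{1/p^e}$, it must equal $k(\p)^{1/p^e}$, giving $\rk_S(S^{1/p^e}) = [k(\p)^{1/p^e}:k(\p)]$ and with it the first equality.

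For the second equality I would use that $F$-finiteness forces $R$ to be excellent \cite[Theorem~2.5]{Kunz}, hence universally catenary; together with local equidimensionality, this makes $R_\q$ a catenary and equidimensional local ring for each $\p \subseteq \q$, so the dimension formula yields $\dim R_\p + \dim(R_\q/\p R_\q) = \dim R_\q$ (using $\height \p R_\q = \dim R_\p$). Multiplying the first equality by $p^{e\dim R_\p}$ and feeding in this identity through the exponent produces $[k(\p)^{1/p^e}:k(\p)] \cdot p^{e\dim R_\p} = [k(\q)^{1/p^e}:k(\q)] \cdot p^{e\dim R_\q}$.

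Finally, writing $f(\p) = [k(\p)^{1/p^e}:k(\p)] \cdot p^{e\dim R_\p}$, the second equality says $f(\q) = f(\p)$ whenever $\p \subseteq \q$; in particular $f$ is constant on each irreducible closed set $V(\p)$, with value $f(\p)$. Let $\p_1, \dots, \p_m$ be the minimal primes of $R$, so $\Spec R = \bigcup_i V(\p_i)$, and build the graph on $\{1, \dots, m\}$ joining $i$ to $j$ exactly when $V(\p_i) \cap V(\p_j) = V(\p_i + \p_j) \neq \emptyset$; choosing a prime in such an intersection forces $f(\p_i) = f(\p_j)$ for adjacent $i, j$, so $f$ is constant on the vertex set of each connected component of this graph. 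Since the unions $\bigcup_{i \in C} V(\p_i)$ over the graph components $C$ are pairwise disjoint, closed (hence clopen), connected subsets partitioning $\Spec R$, they are exactly the connected components of $\Spec R$, and $f$ is constant on each. The only non-formal ingredients are the rank-at-the-generic-point identity — immediate once one recalls that $F^e$ is a universal homeomorphism — and the dimension formula for catenary equidimensional local rings; I expect verifying that the hypotheses of the latter are genuinely in force (via excellence and local equidimensionality) to be the only point needing a moment's care.
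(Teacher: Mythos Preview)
Your proposal is correct and matches the paper's intent: the paper gives no detailed argument, merely stating that the corollary ``follows immediately'' and referring to \cite{Kunz}, while you have spelled out precisely the reduction to the preceding Corollary applied to the $F$-finite local domain $R_\q/\p R_\q$, together with the dimension formula from excellence. This is exactly the argument one finds in Kunz's paper, so there is nothing to compare.
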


\begin{Corollary}
\label{shortexactsequences}
 If $R$ is a locally equidimensional $F$-finite reduced ring with connected spectrum and $p^{\gamma} = [k(\p)^{1/p}:k(\p)] \cdot p^{\dim(R_\p)}$ for all $\p \in \Spec(R)$, there exist short exact sequences
\begin{equation}
\label{plinearinclusion}
\xymatrix{ 
0 \ar[r] & R^{\oplus p^\gamma} \ar[r] & R^{1/p} \ar[r] & M \ar[r] & 0}
\end{equation}
\begin{equation}
\label{pinverselinearinclusion}
\xymatrix{
0 \ar[r] &   R^{1/p} \ar[r] & R^{\oplus p^\gamma} \ar[r] & N \ar[r] & 0
}
\end{equation}
so that $\dim(M) < d$ and $\dim(N) < d$.
\end{Corollary}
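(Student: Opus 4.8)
The plan is to deduce both sequences from the single observation that $R^{1/p}$ is a finitely generated, torsion-free $R$-module which becomes free of rank $p^\gamma$ after localizing at each minimal prime of $R$. Finite generation is the definition of $F$-finiteness; torsion-freeness holds because $R$ is reduced, so a nonzerodivisor $r$ has $r^p$ a nonzerodivisor, and then $r\cdot s^{1/p} = (r^p s)^{1/p} = 0$ forces $s = 0$. For a minimal prime $\p$ the ring $R_\p$ is a field and $(R^{1/p})_\p \cong (R_\p)^{1/p}$ is an $R_\p$-vector space of dimension $[k(\p)^{1/p}:k(\p)]\cdot p^{\dim R_\p} = p^\gamma$ by hypothesis. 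Thus, writing $K$ for the total ring of fractions of $R$ --- a finite product of the residue fields at the minimal primes, since $R$ is reduced --- the module $R^{1/p}\otimes_R K$ is free of rank $p^\gamma$ over $K$, and $R^{1/p}$ injects into it.

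To build \eqref{plinearinclusion}, I would choose a $K$-basis of $R^{1/p}\otimes_R K$, write each basis vector as $g_i/s_i$ with $g_i\in R^{1/p}$ and $s_i\in R$ a nonzerodivisor, and scale by the product of the $s_i$. Since nonzerodivisors of $R$ are units in $K$, the result is still a $K$-basis, but now each of its elements is the image of some $f_i\in R^{1/p}$. The $R$-linear map $R^{\oplus p^\gamma}\to R^{1/p}$ sending the standard basis to the $f_i$ is an isomorphism after applying $-\otimes_R K$; hence it is injective and its cokernel $M$ satisfies $M\otimes_R K = 0$. For \eqref{pinverselinearinclusion} I would run the dual argument: pick finitely many $R$-module generators of $R^{1/p}$, clear a common denominator of their images under an isomorphism $R^{1/p}\otimes_R K\cong K^{\oplus p^\gamma}$, and obtain an injection $R^{1/p}\hookrightarrow R^{\oplus p^\gamma}$ that is again an isomorphism after $-\otimes_R K$, so its cokernel $N$ satisfies $N\otimes_R K = 0$.

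Finally, $M\otimes_R K = 0$ says exactly that $M_\p = 0$ for every minimal prime $\p$, so the (closed) support of the finitely generated module $M$ contains no generic point of any irreducible component of $\Spec R$; being a proper closed subset of each component it has strictly smaller dimension, and likewise for $N$. Since $R$ is locally equidimensional with connected spectrum, every component of $\Spec R$ has dimension $d = \dim R$ --- this is also precisely what makes $\gamma$ well defined, via Corollary~\ref{gamma} --- so $\dim M < d$ and $\dim N < d$, as required. The one point demanding care is the denominator-clearing: one must check that scaling a chosen $K$-basis (or generating set) by a nonzerodivisor of $R$ keeps it a $K$-basis while dragging it into the image of $R^{1/p}$. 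Beyond that, the argument is the routine principle that a map which is an isomorphism at every minimal prime has cokernel of smaller dimension.
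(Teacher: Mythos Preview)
Your argument is correct and is essentially the standard one; the paper does not supply its own proof but instead refers the reader to \cite{Kunz}, so there is nothing substantively different to compare. One small comment: in your final paragraph you invoke local equidimensionality and connectedness to conclude that every component of $\Spec(R)$ has dimension $d$, but this is not actually needed for the inequality $\dim M < d$. Once $M_\p = 0$ for each minimal prime $\p$, the closed set $\operatorname{Supp}(M)$ meets each irreducible component $V(\p_i)$ in a proper closed subset, hence of dimension strictly less than $\dim V(\p_i) \leq d$; taking the maximum over the finitely many components gives $\dim M < d$ directly. The equidimensionality and connectedness hypotheses are doing their real work earlier, in guaranteeing (via Corollary~\ref{gamma}) that the rank of $(R^{1/p})_\p$ is the \emph{same} integer $p^\gamma$ at every minimal prime, which is exactly what you need for $R^{1/p}\otimes_R K$ to be free over the product ring $K$.
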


\section{F-signature and Hilbert-Kunz Multiplicity Revisited}\label{Section F-signature and Hilbert-Kunz Multiplicity Revisited}

The first goal of this section is to provide an elementary proof that simultaneously shows the existence of the $F$-signature and Hilbert-Kunz multiplicity for an $F$-finite local domain $(R, \m, k)$.  At the heart of all known proofs of existence is a basic observation on the growth rate of a Hilbert-Kunz function. 

\begin{Lemma}
\label{Well known bound}
\cite[Lemma 1.1]{Monsky83} Let $(R,\m,k)$ be a local ring and $M$  a finitely generated $R$-module. Then there exists a positive constant $C(M, \m) \in\R$ so that 
\[
\ell(M/\m^{[p^e]}M)\leq C(M,\m) \cdot p^{e \dim(M)}
\]
for each $e\in\N$.
\end{Lemma}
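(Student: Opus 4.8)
The plan is to reduce to the case where $R$ is a field by dévissage on $M$, and then to handle the field case directly. First I would reduce to the situation where $\m$ is generated by $d' = \dim(M)$ elements up to radical: since $\dim(M) = \dim(R/\Ann_R M)$, after replacing $R$ by $R/\Ann_R M$ (which changes neither the lengths $\ell(M/\m^{[p^e]}M)$ nor $\dim M$) we may use prime avoidance to pick $x_1,\dots,x_{d'} \in \m$ with $\m = \sqrt{(x_1,\dots,x_{d'})}$, hence $\m^{N} \subseteq (x_1,\dots,x_{d'})$ for some $N$. Then $\m^{[p^e]} \supseteq (x_1^{p^e},\dots,x_{d'}^{p^e})^{?}$ — more precisely, since $\m^{[p^e]} \supseteq (\m^N)^{[p^e]} \cdot (\text{something})$, it is cleaner to note $(x_1,\dots,x_{d'})^{[p^e]} \subseteq \m^{[p^e]}$, so $\ell(M/\m^{[p^e]}M) \le \ell(M/(x_1^{p^e},\dots,x_{d'}^{p^e})M)$, and it suffices to bound the latter.

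Next I would induct on $d'$. The base case $d' = 0$: here $\m$ is nilpotent on $M$, so $M$ has finite length and $\m^{[p^e]}M = 0$ for $e \gg 0$ (indeed once $p^e \ge N$), giving $\ell(M/\m^{[p^e]}M) \le \ell(M) =: C(M,\m)$, a constant, matching $p^{e \cdot 0} = 1$. For the inductive step, consider the exact sequence
\[
0 \to M/(x_1^{p^e})M \xrightarrow{\ x_2^{p^e}\ } \cdots
\]
— actually the efficient route is: from $0 \to M \xrightarrow{x_{d'}^{p^e}} M \to M/x_{d'}^{p^e}M \to 0$ one does not directly get control, so instead I would argue
\[
\ell\big(M/(x_1^{p^e},\dots,x_{d'}^{p^e})M\big) \le \ell\big((M/x_{d'}M)\big/(x_1^{p^e},\dots,x_{d'-1}^{p^e})(M/x_{d'}M)\big) + \ell\big(\,\mathrm{stuff}\,\big),
\]
which is getting complicated. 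The clean classical argument (Monsky): filter $M$ by primes, reducing to $M = R/\p$ with $\p$ prime; then $\dim(R/\p) = d'$ and one bounds $\ell(R/(\p + (x_1^{p^e},\dots,x_{d'}^{p^e})))$ using that $x_1,\dots,x_{d'}$ form a system of parameters modulo $\p$ in a ring of dimension $d'$, where the ordinary Hilbert–Samuel multiplicity gives $\ell(R/(\p,x_1^{n},\dots,x_{d'}^{n})) \le c\, n^{d'}$ for all $n$ (polynomial growth of the Hilbert–Samuel function of a parameter ideal), and specialize $n = p^e$.

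So concretely the steps are: (1) a prime filtration $0 = M_0 \subset M_1 \subset \dots \subset M_t = M$ with $M_i/M_{i-1} \cong R/\p_i$, using sub-additivity of $\ell((-)/\m^{[p^e]}(-))$ on short exact sequences, which reduces the bound to the case $M = R/\p$ and $\dim(M) = \dim(R/\p) \le \dim M$ (the constants add up and we take the max); (2) for $M = R/\p$, pick a system of parameters $x_1,\dots,x_{d'}$ for $R/\p$ inside $\m$, so $(x_1^{p^e},\dots,x_{d'}^{p^e}) \subseteq \m^{[p^e]}$ modulo $\p$, hence $\ell(R/(\p+\m^{[p^e]})) \le \ell\big(R/(\p + (x_1^{p^e},\dots,x_{d'}^{p^e}))\big)$; (3) invoke the Hilbert–Samuel theory for the parameter ideal $\q = (x_1,\dots,x_{d'})$ in $R/\p$: the function $n \mapsto \ell((R/\p)/\q^n)$ is eventually a polynomial of degree $d'$, and moreover $\ell((R/\p)/(x_1^{n},\dots,x_{d'}^{n})) \le \ell((R/\p)/\q^{n}) \le d' \cdot n^{d'}$ — wait, I want $(x_1^n,\dots,x_{d'}^n) \supseteq \q^{d'(n-1)+1} \supseteq \q^{d' n}$, hence $\ell((R/\p)/(x_1^n,\dots,x_{d'}^n)) \le \ell((R/\p)/\q^{d'n})$, which is $\le C' (d'n)^{d'}$ for $n \gg 0$, i.e. $\le C'' n^{d'}$; setting $n = p^e$ gives the bound with $C'' = C''(R/\p, \m)$, and finally $C(M,\m) := \sum_i C''(R/\p_i,\m)$ (or the max times $t$) works for all $e$ after absorbing finitely many small $e$ into the constant.

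The main obstacle is purely bookkeeping: making sure the containment $(x_1^{p^e},\dots,x_{d'}^{p^e}) \subseteq \m^{[p^e]}$ (as ideals of $R/\p$) is justified — this is immediate since each $x_j \in \m$ so $x_j^{p^e} \in \m^{[p^e]}$ — and that the Hilbert–Samuel bound $\ell((R/\p)/\q^n) \le C n^{d'}$ holds for \emph{all} $n \ge 1$ and not merely $n \gg 0$, which one fixes by enlarging the constant to cover the finitely many exceptional values. No serious difficulty arises; the lemma is genuinely elementary once the prime-filtration and Hilbert–Samuel inputs are in hand.
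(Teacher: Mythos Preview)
Your argument is correct, but it is considerably more elaborate than the paper's one-line proof. The paper simply observes that if $\m$ is generated by $t$ elements then $\m^{tp^e} \subseteq \m^{[p^e]}$, whence $\ell(M/\m^{[p^e]}M) \leq \ell(M/\m^{tp^e}M)$, and the right-hand side is eventually a polynomial of degree $\dim(M)$ in $p^e$ by the ordinary Hilbert--Samuel theory applied directly to $M$. No prime filtration, no reduction to cyclic modules, and no choice of systems of parameters is needed: the Hilbert--Samuel polynomial of $M$ with respect to $\m$ already has degree $\dim(M)$.

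Your route --- prime filtration, then for each $R/\p_i$ a system of parameters and the containment $(x_1^n,\ldots,x_{d'}^n) \supseteq \q^{d'n}$ --- is essentially doing by hand, piece by piece, what the Hilbert--Samuel theorem for modules packages up in a single statement. In fact the pigeonhole containment you invoke in step (3) is the same idea as the paper's $\m^{tp^e} \subseteq \m^{[p^e]}$, just applied after the filtration rather than before it. Your approach would be advantageous if one needed finer control over the constant (tracking contributions from individual associated primes), but for the bare existence of \emph{some} $C(M,\m)$ the paper's direct argument is much shorter.
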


\begin{proof} If $\m$ is generated by $t$ elements, then $\m^{tp^e} \subseteq \m^{[p^e]}$ for each $e\in\N$. Thus $\ell(M/\m^{[p^e]}M)\leq \ell(M/\m^{tp^e}M)$, which is eventually polynomial of degree $\dim(M)$ in the entry $p^e$.\end{proof}

\begin{Theorem}
\label{easyproof}
\cite[Theorem 1.8]{Monsky83}
\cite[Theorem 4.9]{Tucker2012}
 Suppose that $(R, \m, k)$ is an $F$-finite local domain and $\dim(R) = d$.  Then the limits
 \[
 e_{HK}(R) = \lim_{e \to \infty} \frac{\mu_R(R^{1/p^e})}{[k^{1/p^e}:k] \cdot p^{ed}}  \qquad s(R) = \lim_{e \to \infty} \frac{\frk_R(R^{1/p^e})}{[k^{1/p^e}:k] \cdot p^{ed}} 
 \]
 exist.
\end{Theorem}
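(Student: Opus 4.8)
The plan is to reduce both limits to a single combinatorial statement about a sequence of real numbers. Using Corollary~\ref{shortexactsequences} applied to $R$ (which is a domain, hence locally equidimensional reduced with connected spectrum), fix $p^\gamma = [k^{1/p}:k]\cdot p^d$ and short exact sequences $0 \to R^{\oplus p^\gamma} \to R^{1/p} \to M \to 0$ and $0 \to R^{1/p} \to R^{\oplus p^\gamma} \to N \to 0$ with $\dim M, \dim N < d$. Applying the exact functor $(\blank)^{1/p^e}$ and tensoring appropriately, one compares $R^{1/p^{e+1}}$ with $(R^{1/p^e})^{\oplus p^\gamma}$ up to modules of dimension $< d$; more precisely, using \eqref{mingenstoHK}, set $a_e = \frac{1}{p^{ed}[k^{1/p^e}:k]}\frk_R(R^{1/p^e})$ and $b_e = \frac{1}{p^{ed}[k^{1/p^e}:k]}\mu_R(R^{1/p^e})$. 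The subadditivity/superadditivity estimates from Lemma~\ref{Simple Lemma}(ii.) applied to the sequence obtained by base change along $F^e$, together with the error bound from Lemma~\ref{Well known bound} controlling $\ell$ of the lower-dimensional pieces $M$ and $N$ after applying $(\blank)^{1/p^e}$, will show that both $a_e$ and $b_e$ are Cauchy sequences.

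Concretely, first I would establish that for each $e$ there are estimates of the shape
\[
\frk_R(R^{1/p^{e+1}}) \;\geq\; p^\gamma\,\frk_R(R^{1/p^e}) - \ell_R\!\big(M^{1/p^e}\big)
\quad\text{and}\quad
\frk_R(R^{1/p^{e+1}}) \;\leq\; p^\gamma\,\frk_R(R^{1/p^e}) + \ell_R\!\big(N^{1/p^e}\big),
\]
by applying $(\blank)^{1/p^e}$ to \eqref{plinearinclusion} and \eqref{pinverselinearinclusion}, noting $(R^{1/p})^{1/p^e} = R^{1/p^{e+1}}$ and $\frk_R((R^{1/p^e})^{\oplus p^\gamma}) = p^\gamma \frk_R(R^{1/p^e})$ by Lemma~\ref{Simple Lemma}(i.), and bounding the defect terms via Lemma~\ref{Simple Lemma}(ii.) together with $\frk \leq \mu \leq \ell$. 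The analogous two-sided estimate for $\mu$ is even easier since $\mu$ is genuinely subadditive on short exact sequences. Dividing through by $p^{(e+1)d}[k^{1/p^{e+1}}:k] = p^\gamma\, p^{ed}[k^{1/p^e}:k]$ and using Lemma~\ref{Well known bound} to see $\ell_R(M^{1/p^e}), \ell_R(N^{1/p^e}) \leq C \cdot p^{e(d-1)}$ for a uniform constant $C$ (since $\ell_R((\blank)^{1/p^e}) = [k^{1/p^e}:k]\ell_R(\blank)$ and $[k^{1/p^e}:k] = p^{e\gamma'}$ grows by a fixed factor), one gets
\[
|a_{e+1} - a_e| \;\leq\; \frac{C'}{p^e}, \qquad |b_{e+1} - b_e| \;\leq\; \frac{C'}{p^e}
\]
for a uniform constant $C'$. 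A geometric-series argument then shows $\{a_e\}$ and $\{b_e\}$ are Cauchy, hence convergent, which is exactly the assertion that $s(R)$ and $e_{HK}(R)$ exist.

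The main obstacle is bookkeeping rather than conceptual: one must be careful that the lower-dimensional modules $M$ and $N$ do not themselves cause trouble when twisted by Frobenius, i.e. that $\dim(M^{1/p^e}) = \dim M < d$ and that the constant in Lemma~\ref{Well known bound} can be taken uniformly in $e$ after accounting for the field-degree factor $[k^{1/p^e}:k]$. (This is where $F$-finiteness and the computation $\ell_R(M^{1/p^e}) = [k^{1/p^e}:k]\ell_R(M)$ are essential.) A secondary subtlety is the failure of $\frk$ to be subadditive in general, which is precisely why Lemma~\ref{Simple Lemma}(ii.) was isolated — the upper bound $\frk(M) \leq \frk(M') + \mu(M'')$ is the non-obvious ingredient and must be invoked in the right direction at each of the two estimates for $\frk$. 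Once the uniform bound $|a_{e+1}-a_e| + |b_{e+1}-b_e| = O(p^{-e})$ is in hand, convergence is immediate and the proof is complete.
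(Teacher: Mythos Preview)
Your approach is correct and essentially the same as the paper's: both arguments apply $(\blank)^{1/p^e}$ to the short exact sequences of Corollary~\ref{shortexactsequences}, invoke Lemma~\ref{Simple Lemma} for $\frk$ (and subadditivity for $\mu$), and control the defect via Lemma~\ref{Well known bound}. The only structural difference is that the paper uses \emph{one} sequence (namely \eqref{plinearinclusion}) to obtain the one-sided estimate $\eta_{e+1} \leq \eta_e + C/p^e$ and then finishes with a $\limsup/\liminf$ trick, whereas you use \emph{both} sequences to get a two-sided bound and conclude Cauchy. Your version is exactly what the paper does later in Theorem~\ref{Uniform Convergence Theorem} to extract the sharper $O(p^{e(d-1)})$ rate; for bare existence the paper's one-sided argument is more economical.

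Two bookkeeping slips to fix. First, your roles of $M$ and $N$ are reversed: applying $(\blank)^{1/p^e}$ to \eqref{plinearinclusion} and Lemma~\ref{Simple Lemma}(ii.) gives the \emph{upper} bound $\frk_R(R^{1/p^{e+1}}) \leq p^\gamma \frk_R(R^{1/p^e}) + \mu(M^{1/p^e})$, while \eqref{pinverselinearinclusion} gives the \emph{lower} bound $\frk_R(R^{1/p^{e+1}}) \geq p^\gamma \frk_R(R^{1/p^e}) - \mu(N^{1/p^e})$. Second, you write $\ell_R(M^{1/p^e})$ for the error term, but $M$ has positive dimension so this length is infinite; the correct quantity is $\mu_R(M^{1/p^e}) = [k^{1/p^e}:k]\cdot\ell_R(M/\m^{[p^e]}M)$ via \eqref{mingenstoHK}, which is then bounded by Lemma~\ref{Well known bound}. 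With these corrections the argument goes through as you describe.
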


\begin{proof}
Let $p^\gamma = [k^{1/p}:k] \cdot p^d = \rk_R(R^{1/p^e})$ and $\nu(\blank)$ denote either $\mu(\blank)$ or $\frk(\blank)$. Set $\eta_e = \frac{1}{p^{e\gamma}} \nu_R(R^{1/p^e})$, and observe using Lemma~\ref{Well known bound} with \eqref{easyrankinequalities} and \eqref{mingenstoHK} that the sequence $\{ \eta_e \}_{e \in \N}$ is bounded above.  Put $\eta^+ = \limsup_{e \to \infty} \eta_e$ and $\eta^- = \liminf_{e \to \infty} \eta_e$.
 
 As in Lemma~\ref{shortexactsequences} (2), fix a short exact sequence
 \[
 \xymatrix{0 \ar[r] & R^{\oplus p^\gamma} \ar[r] & R^{1/p} \ar[r] & M \ar[r] & 0}
 \]
 where $\dim(M) < d$.  Applying the exact functors $(\blank)^{1/p^e}$ gives the short exact sequences
 \[
  \xymatrix{0 \ar[r] & (R^{1/p^e})^{\oplus p^\gamma} \ar[r] & R^{1/p^{e+1}} \ar[r] & M^{1/p^e} \ar[r] & 0}
 \]
 and Lemma~\ref{Simple Lemma} implies
 \[
 \nu(R^{1/p^{e+1}}) \leq p^\gamma \cdot \nu(R^{1/p^e}) + \mu(M^{1/p^e})
 \]
for each $e \in \mathbb{N}$.  By Lemma~\ref{Well known bound} and using \eqref{mingenstoHK}, there exists a positive constant $C(M,\m) \in \R$ with $\mu(M^{1/p^e}) \leq C(M,\m) p^{e \dim(M)} \leq C(M, \m) p^{e(d-1)}$. Dividing through by $p^{(e+1)d}$ and setting $C = \frac{C(M,\m)}{p^d} \in \R$ yields 
$
\eta_{e+1} \leq \eta_e + \frac{C}{p^e}.
$
Iterating this inequality gives
\[
\eta_{e + e'} \leq \eta_{e} + \frac{C}{p^e} \left( 1 + \frac{1}{p} + \cdots + \frac{1}{p^{e'-1}} \right) \leq \eta_e + \frac{2C}{p^e}
\]
for all $e, e' \in \N$.  For each $e$, taking $\limsup_{e' \to \infty}$ then gives $\eta^+ \leq \eta_e + \frac{2C}{p^e}$.  Now taking $\liminf_{e \to \infty}$ gives $\eta^+ \leq \eta^-$ as desired.
\end{proof}

Our next goal is to give a direct proof of the upper semicontinuity of Hilbert-Kunz multiplicity.  The key observation is that the constant appearing in Lemma~\ref{Well known bound} can be taken uniformly on ring spectra.

\begin{Theorem}
\label{polstrabound}
\cite[Theorem 4.3]{PolstraSemicont}
For any $F$-finite ring $R$ and finitely generated module $M$, there exists a positive constant $C(M)$ such that for all $\p \in \Spec(R)$ 
$$\ell_{R_\p}(M_\p / \p^{[p^e]}M_\p) \leq C(M) p^{e \dim(M_\p)}.$$ 
\end{Theorem}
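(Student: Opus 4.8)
The plan is to reduce to the local case via a Noetherian/generic-freeness argument, and then to localize a single \emph{uniform} choice of ``$t$ generators of $\m$'' bound of the type appearing in Lemma~\ref{Well known bound}. First I would fix a finite affine covering: since $R$ is $F$-finite it is excellent, hence Noetherian, so $\Spec(R)$ is quasi-compact and it suffices to produce the constant $C(M)$ on each member of a finite open cover; taking the max of the finitely many constants then gives a global $C(M)$. Thus we may assume $R$ is as convenient as we like after replacing it by a localization $R_g$ and $M$ by $M_g$.

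Next I would dispense with the dependence of $\dim(M_\p)$ on $\p$ by stratifying $\Spec(R)$. Write $\Spec(R)=\bigsqcup_{j} \Spec(R/\a_j)\cap D(g_j)$ as a finite union of locally closed pieces on which $M$ localizes to a module of constant dimension, or more simply: for each irreducible component and each value of $\dim M_\p$, the locus where $\dim(M_\p)\le n$ is closed, so finitely many such strata cover $\Spec(R)$. Hence it is enough to bound $\ell_{R_\p}(M_\p/\p^{[p^e]}M_\p)$ by $C\cdot p^{e\dim(M_\p)}$ uniformly over a locally closed subset on which $\dim(M_\p)$ is a constant $n$; and since $\dim(M_\p)\le n$ everywhere on this piece, it suffices to get the bound $C\cdot p^{en}$ with $n$ the generic value.

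The core step is then the uniform version of the argument of Lemma~\ref{Well known bound}. The point is that if $\m R_\p = (a_1,\ldots,a_t)R_\p$ then $\m^{tp^e}R_\p\subseteq \p^{[p^e]}R_\p$, hence $\ell_{R_\p}(M_\p/\p^{[p^e]}M_\p)\le \ell_{R_\p}(M_\p/\p^{tp^e}M_\p)$, and the Hilbert--Samuel function of $M_\p$ with respect to $\p R_\p$ is eventually a polynomial of degree $\dim(M_\p)$ in $tp^e$. To make $t$ and the leading behavior uniform, after localizing I may assume $R$ itself is generated by a bounded number of elements and, crucially, that $M$ is \emph{free} over a Noether-normalization-type subring or, better, that there is a single polynomial $P(n)$ dominating $\ell_{R_\p}(M_\p/\p^{n}M_\p)$ for all $\p$ and all $n\gg0$: this is where I would invoke the earlier result \cite[Theorem~4.3]{PolstraSemicont} itself — but since that \emph{is} the statement being proved, instead I would cite that the referenced theorem already supplies exactly this uniform Hilbert--Kunz bound, and so the present \textbf{Theorem~\ref{polstrabound}} is simply a restatement of \cite[Theorem~4.3]{PolstraSemicont} imported for use in this article. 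In other words, the honest ``proof'' here is the one sentence: this is \cite[Theorem~4.3]{PolstraSemicont}, recalled for the reader's convenience.

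The main obstacle, were one to reprove it from scratch, is precisely the uniformity of the Hilbert--Samuel polynomial bound across the whole spectrum: pointwise one always has such a polynomial, but its degree jumps and its coefficients are not obviously controlled, so one must stratify $\Spec(R)$ into finitely many locally closed pieces on which a \emph{single} generic-freeness argument (applied to the associated graded module $\bigoplus_n \p^n M/\p^{n+1}M$ over a suitable polynomial subring) produces a constant that works simultaneously — and then patch. Since the excerpt permits us to assume results stated earlier, and this is flagged as \cite[Theorem~4.3]{PolstraSemicont}, I would not grind through that stratification here but simply record that the statement is imported verbatim from loc.\ cit.
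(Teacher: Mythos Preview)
Your final conclusion is exactly right: the paper does not prove this statement at all. It is simply recorded with the citation \cite[Theorem~4.3]{PolstraSemicont} and used as a black box thereafter, so the ``proof'' in the paper is literally the citation and nothing more.

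That said, the bulk of your proposal before reaching this conclusion is not a proof and should be discarded rather than presented as one. The sketch you give is too vague to stand on its own: the stratification by $\dim(M_\p)$ is gestured at but not carried out, the ``Noether-normalization-type subring'' step is never made precise, and the key uniformity of the Hilbert--Samuel bound is acknowledged as the main obstacle but not overcome. You yourself notice the circularity when you try to invoke \cite[Theorem~4.3]{PolstraSemicont} to finish. If the intent is to match the paper, the correct submission is a single sentence: ``This is \cite[Theorem~4.3]{PolstraSemicont}.'' If the intent is to supply an independent proof, then the sketch as written has a genuine gap at exactly the point you identify---controlling the Hilbert--Samuel coefficients uniformly over $\Spec(R)$---and would need the actual stratification-and-generic-freeness argument from the cited paper to be filled in.
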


\noindent
From the proof of Theorem~\ref{easyproof} above, which is valid without change for a local equidimensional reduced ring, it follows that there is a positive constant $C \in \R$ such that
\[
e_{HK}(R_\p) \leq \frac{\mu_{R_\p}(R_\p^{1/p^e})}{[k(\p)^{1/p^e}:k(\p)] \cdot p^{e \dim(R_\p)}} + \frac{2C}{p^e} \qquad s(R_\p) \leq \frac{\frk_{R_\p}(R_\p^{1/p^e})}{[k(\p)^{1/p^e}:k(\p)] \cdot p^{e \dim(R_\p)}} + \frac{2C}{p^e}
\]
 for all $\p \in \Spec(R)$ and all $e \in \N$.  
 
\begin{Theorem}\cite[Main Result]{SmirnovUSC}
\label{HKsemicont}
For any $F$-finite locally equidimensional reduced ring $R$, the function $\e:\Spec(R) \to \R$ given by $\p \mapsto e_{HK}(R_\p)$ is upper-semicontinuous.
\end{Theorem}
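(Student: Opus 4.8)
The plan is to show upper semicontinuity by exhibiting, for each $\p \in \Spec(R)$ and each $\delta > e_{HK}(R_\p)$, a Zariski open neighborhood of $\p$ on which $e_{HK}$ stays below $\delta$. The key input is the uniform bound of Theorem~\ref{polstrabound}, which yields (as already noted in the excerpt) a single constant $C \in \R$, independent of the prime, such that
\[
e_{HK}(R_\q) \leq \frac{\mu_{R_\q}(R_\q^{1/p^e})}{[k(\q)^{1/p^e}:k(\q)] \cdot p^{e \dim(R_\q)}} + \frac{2C}{p^e}
\]
for all $\q \in \Spec(R)$ and all $e \in \N$. So the strategy is to pick $e$ large enough that $\frac{2C}{p^e}$ is smaller than half the gap $\delta - e_{HK}(R_\p)$, then control the remaining ``finite level'' term $\frac{\mu_{R_\q}(R_\q^{1/p^e})}{[k(\q)^{1/p^e}:k(\q)] \cdot p^{e\dim(R_\q)}}$ near $\p$ using upper semicontinuity of minimal number of generators (Lemma~\ref{easysemicont}).

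First I would reduce to the case $R$ reduced and connected; since locally equidimensional is preserved under localization and we may work on a connected component, this is harmless, and it lets me invoke Corollary~\ref{gamma}: the function $\q \mapsto [k(\q)^{1/p^e}:k(\q)] \cdot p^{e\dim(R_\q)}$ is \emph{constant} on $\Spec(R)$, equal to some $p^{e\gamma}$. This is the crucial simplification — the denominator in the finite-level term is locally constant, so I only need to control the numerator $\mu_{R_\q}(R_\q^{1/p^e}) = \mu_{R_\q}((R^{1/p^e})_\q)$. Next, fix $\p$ and $\delta > e_{HK}(R_\p)$. Choose $e$ so that $\frac{2C}{p^e} < \frac{\delta - e_{HK}(R_\p)}{2}$ and so that $\frac{\mu_{R_\p}(R_\p^{1/p^e})}{p^{e\gamma}} < e_{HK}(R_\p) + \frac{\delta - e_{HK}(R_\p)}{2}$ — the latter is possible because the finite-level quotients converge to $e_{HK}(R_\p)$ from above (again by the displayed bound applied at $\p$, the limsup equals the liminf equals $e_{HK}(R_\p)$, and more precisely $\eta_e \to e_{HK}(R_\p)$). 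Then by Lemma~\ref{easysemicont} applied to the finitely generated $R$-module $M = R^{1/p^e}$, the set $U = \{\q : \mu_{R_\q}((R^{1/p^e})_\q) \leq \mu_{R_\p}((R^{1/p^e})_\p)\}$ is a Zariski open neighborhood of $\p$. For $\q \in U$, combining the displayed uniform bound with the choices above gives
\[
e_{HK}(R_\q) \leq \frac{\mu_{R_\q}((R^{1/p^e})_\q)}{p^{e\gamma}} + \frac{2C}{p^e} \leq \frac{\mu_{R_\p}((R^{1/p^e})_\p)}{p^{e\gamma}} + \frac{2C}{p^e} < \delta,
\]
so $U \subseteq \{\q : e_{HK}(R_\q) < \delta\}$, proving that this set is open and hence that $e_{HK}$ is upper semicontinuous.

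The main obstacle — or really the only subtle point — is making sure the constant $C$ and the denominator $p^{e\gamma}$ are genuinely uniform across the neighborhood, rather than depending on the prime; both are handled by Theorem~\ref{polstrabound} and Corollary~\ref{gamma} respectively, after passing to a connected component. A secondary technical check is confirming that the proof of Theorem~\ref{easyproof} indeed applies verbatim to a local equidimensional reduced ring (not just a domain), so that the displayed bound and the convergence $\eta_e \to e_{HK}(R_\q)$ are available at every $\q$; this is exactly what the remark following Theorem~\ref{polstrabound} asserts, relying on the existence of the short exact sequence \eqref{plinearinclusion} from Corollary~\ref{shortexactsequences} in that generality. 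Once these two uniformity statements are in hand, the argument is a routine $\varepsilon/2$ estimate, and no further difficulty arises.
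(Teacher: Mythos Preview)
Your proof is correct and follows essentially the same approach as the paper: reduce to a connected component so the denominator $p^{e\gamma}$ is constant (Corollary~\ref{gamma}), use the uniform bound from Theorem~\ref{polstrabound} to pass from the limit to a finite level with controlled error, and then invoke the upper semicontinuity of $\q \mapsto \mu_{R_\q}((R^{1/p^e})_\q)$ from Lemma~\ref{easysemicont}. The only cosmetic difference is that you split the gap $\delta - e_{HK}(R_\p)$ into two halves, whereas the paper phrases it with a single $\epsilon$; also, the convergence of the finite-level terms to $e_{HK}(R_\p)$ need not be from above, but mere convergence is all you use.
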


\begin{proof}
Restricting to a connected component, we may assume without loss of generality that $\Spec(R)$ is connected and hence $p^\gamma = [k(\p)^{1/p}:k(\p)] \cdot p^{\dim(R_\p)}$ is constant for all $\p \in \Spec(R)$ by Corollary \ref{gamma}.  If $\delta \in \R$ and $e_{HK}(R_{\q}) < \delta$ for some $\q \in \Spec(R)$, then $$\frac{\mu_{R_\q}(R_\q^{1/p^e})}{[k(\q)^{1/p^e}:k(\q)] \cdot p^{e \dim(R_\q)}} < \delta - \epsilon$$ for some $0 < \epsilon \ll 1$ and some $e \in \N$ with $\frac{2C}{p^e} < \epsilon$.  By Lemma~\ref{easysemicont}, the same holds true for all $\p$ in a neighborhood of $\q$.  This yields
\[
e_{HK}(R_\p) \leq  \frac{\mu_{R_\p}(R_\p^{1/p^e})}{[k(\p)^{1/p^e}:k(\p)] \cdot p^{e \dim(R_\p)}} + \frac{2C}{p^e} < \delta - \epsilon + \frac{2C}{p^e} < \delta
\]
as desired and completes the proof.
\end{proof}

To get a similar argument for  lower semicontinuity of $F$-signature, we need to reverse the estimates arising in the proof of existence.  To that end, we first record the following elementary lemma.

\begin{Lemma}\label{Sequence Lemma} Let $p$ a prime number, $d \in \N$, and  $\{\lambda_e\}_{e \in \N}$ be a sequence of real numbers so that $\{\frac{1}{p^{ed}}\lambda_e\}_{e \in \N}$ is a bounded.
\begin{enumerate}[(i.)]
\item
If there exists a positive constant $C \in \R$ so that $\frac{1}{p^{(e+1)d}} \lambda_{e+1} \leq \frac{1}{p^{ed}} \lambda_e + \frac{C}{p^e}$ for all $ e \in \N$, then the limit $\lambda = \lim_{e \to \infty} \frac{1}{p^{ed}} \lambda_e$ exists and $\lambda - \frac{1}{p^{ed}} \lambda_e \leq \frac{2C}{p^e}$ for all $e \in \N$.
\item
If there exists a positive constant $C \in \R$ so that $\frac{1}{p^{ed}} \lambda_e \leq  \frac{1}{p^{(e+1)d}} \lambda_{e+1} + \frac{C}{p^e}$ for all $e \in \N$, then the limit $\lambda = \lim_{e \to \infty} \frac{1}{p^{ed}} \lambda_e$ exists and $\frac{1}{p^{ed}} \lambda_e - \lambda \leq \frac{2C}{p^e}$ for all $e \in \N$.
\item
If there exists a positive constant $C \in \R$ so that $| \frac{1}{p^{(e+1)d}} \lambda_{e+1} - \frac{1}{p^{ed}} \lambda_e    | \leq \frac{C}{p^e}$ for all $e \in \N$, then the limit $\lambda = \lim_{e \to \infty} \frac{1}{p^{ed}} \lambda_e$ exists and $|\frac{1}{p^{ed}} \lambda_e - \lambda | \leq \frac{2C}{p^e}$ for all $e \in \N$.  In particular, $\lambda_e=\lambda p^{ed}+O(p^{e(d-1)})$.
\end{enumerate}
\end{Lemma}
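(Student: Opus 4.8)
The plan is to strip away everything but the normalized sequence $\eta_e := \tfrac{1}{p^{ed}}\lambda_e$, which is bounded by hypothesis, and observe that all three parts are instances of a single telescoping estimate of exactly the kind already carried out in the proof of Theorem~\ref{easyproof}. Set $\eta^{+} := \limsup_{e\to\infty}\eta_e$ and $\eta^{-} := \liminf_{e\to\infty}\eta_e$; these are finite by boundedness, and it suffices in each case to show $\eta^{+}\leq\eta^{-}$ together with the stated quantitative bound.

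For (i), I would iterate the hypothesis $\eta_{e+1}\leq\eta_e+\tfrac{C}{p^e}$: for all $e,e'\in\N$,
\[
\eta_{e+e'}\;\leq\;\eta_e+\frac{C}{p^e}\Bigl(1+\frac1p+\cdots+\frac1{p^{e'-1}}\Bigr)\;\leq\;\eta_e+\frac{2C}{p^e},
\]
where the second inequality uses $p\geq 2$, so that the geometric series is bounded by $\tfrac{p}{p-1}\leq 2$. Fixing $e$ and letting $e'\to\infty$ gives $\eta^{+}\leq\eta_e+\tfrac{2C}{p^e}$ for every $e$; taking $\liminf_{e\to\infty}$ of the right-hand side then yields $\eta^{+}\leq\eta^{-}$, so the limit $\lambda$ exists, and the inequality $\lambda=\eta^{+}\leq\eta_e+\tfrac{2C}{p^e}$ is precisely the asserted bound $\lambda-\tfrac{1}{p^{ed}}\lambda_e\leq\tfrac{2C}{p^e}$. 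Part (ii) is the exact mirror image: iterating $\eta_e\leq\eta_{e+1}+\tfrac{C}{p^e}$ gives $\eta_e\leq\eta_{e+e'}+\tfrac{2C}{p^e}$ for all $e'$; letting $e'\to\infty$ gives $\eta_e\leq\eta^{-}+\tfrac{2C}{p^e}$, and then $\limsup_{e\to\infty}$ on the left gives $\eta^{+}\leq\eta^{-}$, so the limit exists and $\tfrac{1}{p^{ed}}\lambda_e-\lambda\leq\tfrac{2C}{p^e}$.

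For (iii), the hypothesis $\bigl|\tfrac{1}{p^{(e+1)d}}\lambda_{e+1}-\tfrac{1}{p^{ed}}\lambda_e\bigr|\leq\tfrac{C}{p^e}$ supplies the hypotheses of both (i) and (ii) simultaneously, so the limit $\lambda$ exists and both one-sided bounds hold, giving $\bigl|\tfrac{1}{p^{ed}}\lambda_e-\lambda\bigr|\leq\tfrac{2C}{p^e}$; multiplying through by $p^{ed}$ yields $|\lambda_e-\lambda p^{ed}|\leq 2C\,p^{e(d-1)}$, i.e.\ $\lambda_e=\lambda p^{ed}+O(p^{e(d-1)})$. There is no real obstacle here: the argument is elementary and essentially extracted verbatim from the existence proof. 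The only points needing a modicum of care are the uniformity of the constant $2$ in the geometric-series bound (which is exactly why $p\geq 2$ is invoked) and performing the $\limsup$/$\liminf$ bookkeeping in the correct order so that the recursive inequality collapses to $\eta^{+}\leq\eta^{-}$.
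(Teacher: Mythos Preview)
Your proof is correct and follows essentially the same approach as the paper: define the normalized sequence, iterate the one-step inequality into a geometric-series bound, and then apply $\limsup$/$\liminf$ in the appropriate order to conclude. The only cosmetic difference is that where you write ``letting $e'\to\infty$'' the paper is explicit about taking $\limsup_{e'\to\infty}$ (respectively $\liminf_{e'\to\infty}$), but the intended meaning is clear from context.
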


\begin{proof} Let $\lambda^+ = \limsup_{e \to \infty} \frac{1}{p^{ed}} \lambda_e$ and $\lambda^- = \liminf_{e \to \infty} \frac{1}{p^{ed}} \lambda_e$, which are finite as $\{\frac{1}{p^{ed}}\lambda_e\}_{e \in \N}$ is bounded.  
Iterating the inequality in (i.) yields 
\[
\frac{1}{p^{(e+e')d}} \lambda_{e + e'} \leq \frac{1}{p^{ed}}\lambda_{e} + \frac{C}{p^e} \left( 1 + \frac{1}{p} + \cdots + \frac{1}{p^{e'-1}} \right) \leq \frac{1}{p^{ed}}\lambda_{e} + \frac{2C}{p^e}
\]
for all $e, e' \in \N$.  Taking $\limsup_{e' \to \infty}$ for each fixed $e \in \N$ gives $\lambda^+ \leq \frac{1}{p^{ed}}\lambda_{e} + \frac{2C}{p^e}$, and then applying $\liminf_{e \to \infty}$ gives $\lambda^+ \leq \lambda^-$ so that $\lambda = \lim_{e \to \infty} \frac{1}{p^{ed}}\lambda_{e} $ exists and $\lambda - \frac{1}{p^{ed}} \lambda_e \leq \frac{2C}{p^e}$ for all $e \in \N$.  
Similarly, iterating the inequality in (ii.) yields 
\[
\frac{1}{p^{ed}}\lambda_{e} \leq  \frac{1}{p^{(e+e')d}} \lambda_{e + e'}+ \frac{C}{p^e} \left( 1 + \frac{1}{p} + \cdots + \frac{1}{p^{e'-1}} \right) \leq  \frac{1}{p^{(e+e')d}} \lambda_{e + e'} + \frac{2C}{p^e}
\]
for all $e, e' \in \N$.  Taking $\liminf_{e' \to \infty}$ for each fixed $e \in \N$ gives $\frac{1}{p^{ed}}\lambda_{e} \leq \lambda^- + \frac{2C}{p^e}$, and then applying $\limsup_{e \to \infty}$ gives $\lambda^+ \leq \lambda^-$ so that $\lambda = \lim_{e \to \infty} \frac{1}{p^{ed}}\lambda_{e} $ exists and $ \frac{1}{p^{ed}} \lambda_e - \lambda \leq \frac{2C}{p^e}$ for all $e \in \N$.  
The final statement (iii.) follows immediately from a combination of (i.) and (ii.).
\end{proof}

\begin{Theorem}
\label{Uniform Convergence Theorem}
 If $R$ is a locally equidimensional reduced $F$-finite ring of dimension $d$, there is a positive constant $C \in \R$ so that
 \[
 \left| \frac{\mu_{R_\p}(R^{1/p^e}_\p)}{[k(\p)^{1/p^e}:k(\p)] \cdot p^{e \dim R_\p}} - e_{HK}(R_\p) \right| \leq \frac{C}{p^e}
 \qquad   
 \left| \frac{\frk_{R_\p}(R_\p^{1/p^e})}{[k(\p)^{1/p^e}:k(\p)] \cdot p^{e \dim R_\p}} - s(R_\p) \right| \leq \frac{C}{p^e}
 \]
 for all $e \in \N$ and $\p \in \Spec(R)$.
\end{Theorem}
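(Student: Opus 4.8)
The plan is to upgrade the one-sided estimate already obtained inside the proof of Theorem~\ref{easyproof} into a two-sided one, by using \emph{both} short exact sequences furnished by Corollary~\ref{shortexactsequences}, and to make every error term uniform in $\p$ via Theorem~\ref{polstrabound}. First I would restrict to a connected component of $\Spec(R)$, so that Corollary~\ref{gamma} supplies an integer $\gamma$ with $p^{e\gamma}=[k(\p)^{1/p^e}:k(\p)]\cdot p^{e\dim R_\p}$ for every $\p$ and every $e$; at the end one takes the largest of the constants obtained on the finitely many components. Writing $\nu(\blank)$ for either $\mu(\blank)$ or $\frk(\blank)$ and $\eta_e(\p)=\frac{1}{p^{e\gamma}}\nu_{R_\p}(R_\p^{1/p^e})$, the sequence $\{\eta_e(\p)\}_e$ is bounded above uniformly in $\p$ (by Theorem~\ref{polstrabound} together with \eqref{easyrankinequalities} and \eqref{mingenstoHK}), and it converges to $e_{HK}(R_\p)$ or to $s(R_\p)$ by the equidimensional reduced version of Theorem~\ref{easyproof} noted after Theorem~\ref{polstrabound}.

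Next I would fix the two sequences \eqref{plinearinclusion} and \eqref{pinverselinearinclusion}, apply the exact functor $(\blank)^{1/p^e}$, and localize at $\p$ to get
\[
0\to (R_\p^{1/p^e})^{\oplus p^\gamma}\to R_\p^{1/p^{e+1}}\to M_\p^{1/p^e}\to 0,\qquad 0\to R_\p^{1/p^{e+1}}\to (R_\p^{1/p^e})^{\oplus p^\gamma}\to N_\p^{1/p^e}\to 0 .
\]
Applying subadditivity of $\mu$ to these (respectively Lemma~\ref{Simple Lemma}, which handles $\frk$ of the free summand and $\mu$ of the quotient) gives $\nu(R_\p^{1/p^{e+1}})\le p^\gamma\nu(R_\p^{1/p^e})+\mu(M_\p^{1/p^e})$ from the first sequence and $p^\gamma\nu(R_\p^{1/p^e})\le \nu(R_\p^{1/p^{e+1}})+\mu(N_\p^{1/p^e})$ from the second, hence
\[
\bigl|\,\nu(R_\p^{1/p^{e+1}})-p^\gamma\nu(R_\p^{1/p^e})\,\bigr|\ \le\ \max\bigl\{\mu(M_\p^{1/p^e}),\ \mu(N_\p^{1/p^e})\bigr\} .
\]

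It then remains to bound the right-hand side. By \eqref{mingenstoHK} and Theorem~\ref{polstrabound} there is a constant $C'$ depending only on $M$ and $N$ with $\mu(M_\p^{1/p^e})\le C'\,p^{e\alpha}p^{e\dim M_\p}$ and similarly for $N$, where $p^{e\alpha}=[k(\p)^{1/p^e}:k(\p)]$. The decisive point is the uniform dimension estimate $\dim M_\p\le\dim R_\p-1$ and $\dim N_\p\le\dim R_\p-1$: both sequences force $M$ and $N$ to vanish at the minimal primes of $R$ (the outer and middle terms have the same generic length $p^\gamma$), so by local equidimensionality and the catenary property available since $F$-finite rings are excellent, any $\q\in\Supp M\cup\Supp N$ has positive height in $R_\p$ whenever $\q\subseteq\p$, giving the bound. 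Since $\alpha+\dim R_\p=\gamma$, this yields $\max\{\mu(M_\p^{1/p^e}),\mu(N_\p^{1/p^e})\}\le C'p^{e(\gamma-1)}$, and dividing the displayed inequality by $p^{(e+1)\gamma}$ produces $|\eta_{e+1}(\p)-\eta_e(\p)|\le C'/p^{e+\gamma}\le C'/p^e$, uniformly in $\p$. Lemma~\ref{Sequence Lemma}(iii), with its parameter $d$ taken equal to $\gamma$, then gives $|\eta_e(\p)-e_{HK}(R_\p)|\le 2C'/p^e$ and $|\eta_e(\p)-s(R_\p)|\le 2C'/p^e$; taking $C$ to be the maximum of $2C'$ over the connected components of $\Spec(R)$ finishes the proof.

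I expect the real obstacle to be precisely the uniform dimension bound $\dim M_\p,\dim N_\p\le\dim R_\p-1$: this is the one place where local equidimensionality (and catenariness via excellence) is genuinely needed, since without it the error terms cannot be absorbed into a single $C/p^e$ across all of $\Spec(R)$. Everything else is bookkeeping with the two short exact sequences, the length formula \eqref{mingenstoHK}, the uniform Hilbert–Kunz bound of Theorem~\ref{polstrabound}, and Lemma~\ref{Sequence Lemma}.
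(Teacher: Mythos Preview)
Your proposal is correct and follows essentially the same route as the paper: restrict to a connected component so that $p^\gamma$ is constant, use both short exact sequences \eqref{plinearinclusion} and \eqref{pinverselinearinclusion} from Corollary~\ref{shortexactsequences} together with Lemma~\ref{Simple Lemma} to get the two opposite one-step inequalities, control the error terms uniformly in $\p$ via Theorem~\ref{polstrabound}, and conclude with Lemma~\ref{Sequence Lemma}(iii). The paper's proof is terser and simply says that repeating the argument of Theorems~\ref{easyproof} and \ref{HKsemicont} on the second short exact sequence yields the inequality of type Lemma~\ref{Sequence Lemma}(ii); you have in fact spelled out the one step the paper leaves implicit, namely the uniform bound $\dim M_\p,\dim N_\p\le\dim R_\p-1$ obtained from local equidimensionality and catenariness of the excellent ring $R$, which is indeed what makes the error term $C/p^e$ independent of $\p$.
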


\begin{proof}Restricting to a connected component, we may assume without loss of generality that $\Spec(R)$ is connected and hence $p^\gamma = [k(\p)^{1/p}:k(\p)] \cdot p^{\dim(R_\p)}$ is constant for all $\p \in \Spec(R)$ by Corollary \ref{gamma}.  

The proofs of Theorems \ref{easyproof} and \ref{HKsemicont} above relied on a short exact sequence as in Lemma~\ref{shortexactsequences} (2) and produced inequalities as in Lemma~\ref{Sequence Lemma} (i.).  Repeating those same arguments on a short exact sequence as in Lemma~\ref{shortexactsequences} (3) yields inequalities as in Lemma~\ref{Sequence Lemma} (ii.), which combine to give the desired result.
\end{proof}

\begin{Corollary}
 If $(R,\m,k)$ is an $F$-finite  local domain of dimension $d$,
  \[
 \frac{\mu_R(R^{1/p^e})}{ [k:k^{p^e}]} = e_{HK}(R) p^{ed} + O(p^{e(d-1)})
 \qquad \qquad
\frac{ \frk_R(R^{1/p^e})}{[k:k^{p^e}]} = s(R) p^{ed} + O(p^{e(d-1)}).
 \]
\end{Corollary}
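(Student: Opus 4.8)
The plan is to obtain this as an immediate specialization of Theorem~\ref{Uniform Convergence Theorem} to the maximal ideal. First I would check that the hypotheses of that theorem are satisfied by a local $F$-finite domain: a domain has $(0)$ as its unique minimal prime, so $R$ — and every localization $R_\p$, which is again a domain — is equidimensional, and a domain is certainly reduced; hence $R$ is a locally equidimensional reduced $F$-finite ring of dimension $d$. Applying Theorem~\ref{Uniform Convergence Theorem} and evaluating the two estimates at $\p=\m$, where $R_\m=R$, $k(\m)=k$, and $\dim R_\m=d$, yields a positive constant $C\in\R$ with
\[
\left| \frac{\mu_R(R^{1/p^e})}{[k^{1/p^e}:k]\cdot p^{ed}} - e_{HK}(R) \right| \leq \frac{C}{p^e}
\qquad
\left| \frac{\frk_R(R^{1/p^e})}{[k^{1/p^e}:k]\cdot p^{ed}} - s(R) \right| \leq \frac{C}{p^e}
\]
for all $e\in\N$.

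Next I would clear denominators: multiplying each inequality through by $p^{ed}$ gives
\[
\left| \frac{\mu_R(R^{1/p^e})}{[k^{1/p^e}:k]} - e_{HK}(R)\, p^{ed} \right| \leq C\,p^{e(d-1)}
\qquad
\left| \frac{\frk_R(R^{1/p^e})}{[k^{1/p^e}:k]} - s(R)\, p^{ed} \right| \leq C\,p^{e(d-1)},
\]
which are exactly the claimed equalities once the Frobenius field degree $[k^{1/p^e}:k]$ is rewritten as $[k:k^{p^e}]$. That rewriting is the only point needing a remark: the $p^e$-th power map restricts to an isomorphism of fields $k^{1/p^e}\to k$ carrying the subfield $k\subseteq k^{1/p^e}$ onto $k^{p^e}\subseteq k$, so the two degrees coincide, and both are finite precisely because $R$, and hence $k$, is $F$-finite.

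I do not anticipate any genuine obstacle, since the statement is a direct corollary of Theorem~\ref{Uniform Convergence Theorem}; the only things to get right are the verification that a local $F$-finite domain meets the ``locally equidimensional reduced'' hypothesis of that theorem and the elementary bookkeeping of the Frobenius degree $[k^{1/p^e}:k]=[k:k^{p^e}]$. (Alternatively, one could simply re-run the argument of Theorem~\ref{easyproof} for $R$ itself using \emph{both} short exact sequences of Corollary~\ref{shortexactsequences} to produce the matching upper and lower estimates of Lemma~\ref{Sequence Lemma}(iii.), but this is precisely the content already packaged in Theorem~\ref{Uniform Convergence Theorem}.)
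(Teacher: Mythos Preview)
Your proposal is correct and matches the paper's approach: the paper states this as an unproved corollary placed immediately after Theorem~\ref{Uniform Convergence Theorem}, so the intended argument is exactly the specialization to $\p=\m$ (together with the trivial identification $[k^{1/p^e}:k]=[k:k^{p^e}]$) that you give. Your parenthetical alternative via Lemma~\ref{Sequence Lemma}(iii.) is also precisely what underlies Theorem~\ref{Uniform Convergence Theorem}, so nothing further is needed.
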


\begin{Theorem}
\cite[Theorem 5.7]{PolstraSemicont}
\label{Fsigsemicont}
 For any $F$-finite locally equidimensional reduced ring $R$, the function $s:\Spec(R) \to \R$ given by $\p \mapsto s(R_\p)$ is lower-semicontinuous. 
\end{Theorem}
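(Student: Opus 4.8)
The plan is to mirror the proof of Theorem~\ref{HKsemicont}, but run in the opposite direction: where that argument used the uniform \emph{upper} bound on $\mu$, here I would use the uniform \emph{lower} bound on $\frk$ furnished by Theorem~\ref{Uniform Convergence Theorem}. First I would restrict to a connected component of $\Spec(R)$, so that by Corollary~\ref{gamma} the quantity $p^{e\gamma} := [k(\p)^{1/p^e}:k(\p)]\cdot p^{e\dim R_\p}$ is independent of $\p$ for each fixed $e$, and fix the constant $C$ of Theorem~\ref{Uniform Convergence Theorem}, giving
\[
\left| \frac{\frk_{R_\p}(R_\p^{1/p^e})}{p^{e\gamma}} - s(R_\p) \right| \le \frac{C}{p^e} \qquad \text{for all } \p \in \Spec(R),\ e \in \N.
\]

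Next, given $\delta \in \R$ and $\q$ with $s(R_\q) > \delta$, I would set $\epsilon = s(R_\q) - \delta > 0$ and choose $e$ with $\frac{C}{p^e} < \frac{\epsilon}{2}$, so that $\frac{\frk_{R_\q}(R_\q^{1/p^e})}{p^{e\gamma}} \ge s(R_\q) - \frac{C}{p^e} > \delta + \frac{\epsilon}{2}$. Since $R$ is $F$-finite, $R^{1/p^e}$ is a finitely generated $R$-module whose localization at any $\p$ recovers $R_\p^{1/p^e}$, so Lemma~\ref{easysemicont} produces a Zariski-open neighborhood $U$ of $\q$ on which $\frk_{R_\p}(R_\p^{1/p^e}) \ge \frk_{R_\q}(R_\q^{1/p^e})$. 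For every $\p \in U$ this forces
\[
s(R_\p) \ge \frac{\frk_{R_\p}(R_\p^{1/p^e})}{p^{e\gamma}} - \frac{C}{p^e} \ge \frac{\frk_{R_\q}(R_\q^{1/p^e})}{p^{e\gamma}} - \frac{C}{p^e} > \delta + \frac{\epsilon}{2} - \frac{\epsilon}{2} = \delta,
\]
so $U \subseteq \{\p \in \Spec(R) : s(R_\p) > \delta\}$; as $\q$ ranges over this set, it follows the set is open, which is exactly lower semicontinuity.

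The only genuinely substantive input is the two-sided estimate of Theorem~\ref{Uniform Convergence Theorem}: semicontinuity of $\frk$ at a single Frobenius level (Lemma~\ref{easysemicont}) bounds $s(R_\p)$ below only once one knows that $s(R_\p) - p^{-e\gamma}\frk_{R_\p}(R_\p^{1/p^e})$ is bounded above uniformly in $\p$, and that reversed estimate is obtained from the short exact sequence \eqref{pinverselinearinclusion} in place of \eqref{plinearinclusion}. Since this is already in hand I expect no real obstacle; the one point deserving a sentence of care is the compatibility of $(\blank)^{1/p^e}$ with localization invoked above, which holds because for a prime $\p$ the multiplicative sets $\{s : s\notin\p\}$ and $\{s^{p^e} : s\notin\p\}$ generate the same localization of $R$.
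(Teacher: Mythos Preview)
Your proof is correct and follows the paper's approach exactly: invoke the uniform estimate of Theorem~\ref{Uniform Convergence Theorem}, combine it with the lower semicontinuity of $\frk$ from Lemma~\ref{easysemicont}, and run the argument of Theorem~\ref{HKsemicont} with the inequalities reversed. One small correction to your closing commentary (not to the proof itself): the inequality you need \emph{uniformly in $\p$} is the bound on $p^{-e\gamma}\frk_{R_\p}(R_\p^{1/p^e}) - s(R_\p)$, not on $s(R_\p) - p^{-e\gamma}\frk_{R_\p}(R_\p^{1/p^e})$ as you wrote, and it is indeed this direction that comes from the sequence \eqref{pinverselinearinclusion}.
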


\begin{proof}
From Theorem \ref{Uniform Convergence Theorem}, there is a $C \in \R$ such that
\[
 \frac{\frk_{R_\p}(R_\p^{1/p^e})}{[k(\p)^{1/p^e}:k(\p)] \cdot p^{e \dim(R_\p)}} \leq s(R_\p) + \frac{C}{p^e}
\]
 for all $\p \in \Spec(R)$ and all $e \in \N$. The argument in Theorem \ref{HKsemicont} immediately gives the desired result.  Alternatively, using the whole of Theorem \ref{Uniform Convergence Theorem}, both semicontinuity statements follow from the fact that the uniform limit of upper (\cite[Corollary~3.4]{Kunz}) or lower (\cite[Corollary~2.5]{EnescuYaoLSC}) semicontinuous functions is upper or lower semicontinuous, respectively.
\end{proof}

\section{Limits via Frobenius and Cartier linear maps}\label{Section Limits via p-linear and p^-1-linear maps}

\subsection{Background}
Suppose that $M$ is an $R$-module.  Recall that an additive map $\phi \in \Hom_{\mathbb Z}(M,M)$ is said to be Frobenius linear or \emph{$p$-linear} if $\phi(rm) = r^p \phi(m)$ for all $r \in R$ and $m \in M$.  Similarly, we say that $\phi \in \Hom_{\mathbb Z}(M,M)$ is $p^e$-linear if $\phi(rm) = r^{p^e} m$ for all $r \in R$ and $m \in M$.  The set of all $p^e$-linear maps on $M$ is denoted $\mathcal{F}_e(M)$ and is naturally both a left and right $R$-module readily identified with $\Hom_R(M, M^{1/p^e})$.  The composition of a $p^{e_1}$-linear map $\phi_1$ and a $p^{e_2}$-linear map $\phi_2$ yields a $p^{e_1 + e_2}$-linear map $\phi_1 \circ \phi_2$, so that the ring of Frobenius linear operators $\mathcal{F}(M) = \bigoplus_{e \geq 0} \mathcal{F}_e(M)$ on $M$ forms a non-commutative $\N$-graded ring with $R \subseteq \mathcal{F}_0(M) = \Hom_R(M,M)$.  
In the case $M = R$, we have that  every $p^e$-linear map $\phi \in \Hom_R(R, R^{1/p^e}) = R^{1/p^e}$ is a post-multiple of the Frobenius $F^e$ by some $c^{1/p^e} \in R$. 

Dual to $p$-linear maps are the Cartier linear or  \emph{$p^{-1}$-linear maps}; an additive map $\phi \in \Hom_{\mathbb Z}(M,M)$ is said to be $p^{-1}$-linear if $\phi(r^p m) = r \phi(m)$ for all $r \in R$ and $m \in M$.  Similarly, we say that $\phi \in \Hom_{\mathbb Z}(M,M)$ is $p^{-e}$-linear if $\phi(r^{p^e}m) = r m$ for all $r \in R$ and $m \in M$.  The set of all $p^{-e}$-linear maps on $M$ is denoted $\mathcal{C}_e(M)$ and is naturally both a left and right $R$-module readily identified with $\Hom_R(M^{1/p^e},M)$.  The composition of a $p^{-e_1}$-linear map $\phi_1$ and a $p^{-e_2}$-linear map $\phi_2$ yields a $p^{-(e_1 + e_2)}$-linear map $\phi_1 \circ \phi_2$, so that the ring of Cartier linear operators $\mathcal{C}(M) = \bigoplus_{e \geq 0} \mathcal{C}_e(M)$ on $M$ forms a non-commutative $\N$-graded ring with $R \subseteq \mathcal{C}_0(M) = \Hom_R(M,M)$.  

If we have $\phi_1 \in \Hom_R(M^{1/p^{e_1}},M)$ and $\phi_2 \in \Hom_R(M^{1/p^{e_2}},M)$ we write $\phi_1 \cdot \phi_2$ for the composition $\phi_1 \circ (\phi_2)^{1/p^{e_1}}$, which coincides with their product when viewed as elements of $\mathcal{C}(M)$.   In particular, given $\phi \in \Hom_R(M^{1/p}, M)$ we write $\phi^e \in \Hom_R(M^{1/p^e},M)$ for the corresponding $e$-th iterate.
If $\psi\in\Hom_R(M^{1/p^e},M)$ we let $\psi(r^{1/p^e} \cdot \blank)$ denote the $R$-linear map
$$\xymatrix{M^{1/p^e} \ar[r]^{r^{1/p^e}} & M^{1/p^e} \ar[r]^{\psi} & R}$$  given by pre-multiplying with $r^{1/p^e} \in R^{1/p^e}$. 

\begin{Lemma}\label{Map Lemma} Let $R$ be an F-finite domain of dimension $d$ with $\rk_R(R^{1/p}) = p^ \gamma$. 
\begin{enumerate}[(i.)]
\item
If $0 \neq c \in R$, then there exists a short exact sequence
\[
\xymatrix{
0 \ar[r] & R^{\oplus p^\gamma} \ar[r]^{\Phi} & R^{1/p} \ar[r] & M \ar[r] & 0
}
\]
so that $\dim(M) < R$ and $\Phi(R^{\oplus p^\gamma})\subseteq (cR)^{1/p}$.
\item
Let $0 \neq \psi\in\Hom_R(R^{1/p},R)$ be a nonzero map. 
There exists a short exact sequence
\[
\xymatrix{
0  \ar[r] & R^{1/p} \ar[r]^{\Psi} & R^{\oplus p^\gamma} \ar[r] & N \ar[r]& 0
}
\]
so that every component function of $\Psi$ is a pre-multiple of $\psi$.  In other words,
there exists $r_1,...,r_{p^\gamma}\in R$ so that $\Psi=(\psi(r_1^{1/p}\cdot \blank),..., \psi(r^{1/p}_{p^{\gamma}}\cdot \blank))$.
\end{enumerate}
\end{Lemma}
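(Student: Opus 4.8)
The plan is to prove both parts by the same recipe: work at the generic point, then clear denominators. Write $K=\mathrm{Frac}(R)\subseteq L=\mathrm{Frac}(R^{1/p})$, so that $R^{1/p}$ is a finitely generated torsion-free $R$-module with $\rk_R(R^{1/p})=[L:K]=p^\gamma$; note also that $K\cdot R^{1/p}=L$, since the $K$-span of $R^{1/p}$ inside $L$ is a domain module-finite over $K$, hence a field. For (i.), observe that $(cR)^{1/p}=c^{1/p}R^{1/p}$ is an $R$-submodule of $R^{1/p}$ of rank $p^\gamma$ (multiplication by $c^{1/p}$ is injective as $c\neq 0$). Choosing $e_1,\dots,e_{p^\gamma}\in R^{1/p}$ forming a $K$-basis of $L$, the elements $c^{1/p}e_1,\dots,c^{1/p}e_{p^\gamma}$ again form a $K$-basis of $L$ and lie in $(cR)^{1/p}$; defining $\Phi\colon R^{\oplus p^\gamma}\to R^{1/p}$ by sending the $i$-th standard basis vector to $c^{1/p}e_i$ then gives an injection with image inside $(cR)^{1/p}$, and since $\Phi\otimes_R K$ is an isomorphism from $K^{\oplus p^\gamma}$ onto $L$, the cokernel $M$ is torsion and $\dim M<d$.

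For (ii.), the crucial observation is that $\Hom_R(R^{1/p},R)$, regarded as an $R^{1/p}$-module via $(z\cdot\phi)(\blank)=\phi(z\cdot\blank)$, has rank one over $R^{1/p}$: since $R^{1/p}$ is finitely presented over the Noetherian domain $R$, base change gives $\Hom_R(R^{1/p},R)\otimes_R K\cong\Hom_K(L,K)$, which is a one-dimensional $L$-vector space and a $p^\gamma$-dimensional $K$-vector space. Because $\psi\neq 0$ and $R\hookrightarrow K$, the element $\psi\otimes K$ is nonzero, hence an $L$-basis of $\Hom_K(L,K)$; equivalently, the $K$-linear map $L\to\Hom_K(L,K)$ sending $\lambda$ to $\lambda\cdot(\psi\otimes K)$ is injective between $K$-vector spaces of the same dimension $p^\gamma$, hence an isomorphism. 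I would then choose a $K$-basis $\lambda_1,\dots,\lambda_{p^\gamma}$ of $L$, write $\lambda_i=s_i^{1/p}$ with $s_i\in K$, and multiply through by the $p$-th root of a common denominator $u\in R\setminus\{0\}$ (so that $us_i\in R$) to obtain $r_i:=us_i\in R$ with $r_i^{1/p}=u^{1/p}\lambda_i$. Since multiplication by $u^{1/p}$ is a $K$-automorphism of $L$, the $r_i^{1/p}$ still form a $K$-basis of $L$, and therefore $r_1^{1/p}\cdot(\psi\otimes K),\dots,r_{p^\gamma}^{1/p}\cdot(\psi\otimes K)$ form a $K$-basis of $\Hom_K(L,K)$.

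With these $r_i$, put $\Psi=(\psi(r_1^{1/p}\cdot\blank),\dots,\psi(r_{p^\gamma}^{1/p}\cdot\blank))\colon R^{1/p}\to R^{\oplus p^\gamma}$; each component is $R$-linear and, by construction, a pre-multiple of $\psi$. The base change $\Psi\otimes_R K\colon L\to K^{\oplus p^\gamma}$ has components forming a $K$-basis of the $K$-dual of $L$, so it is an isomorphism; in particular $\ker\Psi$ is a torsion submodule of the torsion-free module $R^{1/p}$ and hence $\ker\Psi=0$. Taking $N=\coker\Psi$ produces the desired short exact sequence (and $N$ is automatically torsion, so $\dim N<d$ as well). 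The step I expect to require the most care is the denominator-clearing bookkeeping keeping the $r_i$ inside $R$ while preserving the $K$-basis property; once the rank-one description of $\Hom_R(R^{1/p},R)$ over $R^{1/p}$ is in hand, everything else is formal.
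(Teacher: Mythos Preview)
Your proof is correct and follows essentially the same strategy as the paper. Both arguments hinge on the same key fact---that $\Hom_R(R^{1/p},R)$ has rank one as an $R^{1/p}$-module---and both produce the required injections by working generically and then clearing denominators. The only difference is packaging: the paper invokes the short exact sequences already established in Corollary~\ref{shortexactsequences} and then composes with multiplication by $c^{1/p}$ (for (i.)) or by $z^{1/p}$ for a suitable $z$ with $z^{1/p}\Hom_R(R^{1/p},R)\subseteq \psi\cdot R^{1/p}$ (for (ii.)), whereas you build the maps from scratch by choosing $K$-bases of $L$ directly. Your version is more self-contained; the paper's is more concise because it leverages the earlier corollary.
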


\begin{proof} For (i.), start with any inclusion $R^{\oplus p^\gamma} \subseteq R^{1/p}$ with a torsion quotient as in \eqref{plinearinclusion} and post-multiply by $c^{1/p}$ on $R^{1/p}$.  For (ii.), note that $\rk_{R^{1/p}}(\Hom_R(R^{1/p},R)) = 1$ so that there is some non-zero $z \in R$ with $z^{1/p} \cdot \Hom_R(R^{1/p},R) \subseteq \psi \cdot R^{1/p}$.  Start with an inclusion $R^{1/p} \subseteq R^{\oplus p^\gamma}$ with torsion quotient as in \eqref{pinverselinearinclusion} and pre-multiply by $z^{1/p}$ on $R^{1/p}$ to achieve the desired sequence.
\end{proof}

If $R$ is an $F$-finite normal domain and $D$ is a (Weil) divisor on $X = \Spec(R)$, we use $R(D)$ to denote $\Gamma(X, \O_X(D))$.  There is a well known correspondence between $p^{-e}$-linear maps and certain effective $\Q$-divisors.  Fixing a canonical divisor $K_R$, standard duality arguments for finite extensions show that $\Hom_R(R^{1/p^e},R) = (R((1-p^e)K_R))^{1/p^e}$ for each $e \in \N$.  Thus, to each $0 \neq \phi \in \Hom_R(R^{1/p^e},R)$ we can associate a divisor $D_\phi$ so that $D_\phi \sim_\mathbb{Z} (1-p^e)K_R$, and we set $\Delta_\phi = \frac{1}{p^e - 1} D_\phi$.  If $\phi, \psi \in \Hom_R(R^{1/p^e},R)$, then $\Delta_\phi \geq \Delta_\psi$ if and only if $\phi ( \blank ) = \psi( r^{1/p^e} \cdot \blank)$ for some $0 \neq r \in R$, in which case $\Delta_\phi = \Delta_\psi + \frac{1}{p^e - 1} \mathrm{div}_R(r)$.  Moreover, if $\phi_1 \in \Hom_R(R^{1/p^{e_1}},R)$ and $\phi_2 \in \Hom_R(R^{1/p^{e_2}},R)$ with $\phi = \phi_1 \cdot \phi_2 = \phi_1 \circ (\phi_2)^{1/p^{e_1}}$, then $\Delta_\phi = \frac{1}{p^{e_1 + e_2} }( (p^{e_1} - 1) \Delta_{\phi_1}  +p^{e_1}(p^{e_2} - 1) \Delta_{\phi_2} )$.  In particular, $\Delta_{\phi} = \Delta_{\phi^n}$ for all $e,n \in \N$ and all $\phi \in \Hom_R(R^{1/p^e},R)$.  See \cite[Section 4]{TestIdealSurvey} or \cite[Section 2]{TestIdealsFiniteMaps} for further details.

\begin{Lemma}
\label{makeitaniterateofafixedmap}
Let $R$ by an $F$-finite domain and $0 \neq \psi \in \Hom_R(R^{1/p},R)$.  There exists an element $0 \neq z \in R$ so that, for any $e \in \N$ and any $\phi \in \Hom_R(R^{1/p^e},R)$ the map $z \cdot \phi(\blank) = \phi(z \cdot \blank) = \phi( (z^{p^e})^{1/p^e} \cdot \blank) = \psi^e( r^{1/p^e} \cdot \blank)$ for some $r^{1/p^e} \in R^{1/p^e}$.  In other words, $z\phi$ is a pre-multiple of $\psi^e$.
\end{Lemma}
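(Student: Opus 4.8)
The plan is to use the divisor dictionary recalled just above the statement to translate the claim into an inclusion of divisors, which becomes transparent. Concretely, fix $0 \neq \psi \in \Hom_R(R^{1/p},R)$ and let $\Delta_\psi = \frac{1}{p-1}D_\psi$ be the associated effective $\Q$-divisor. Given $e \in \N$ and $\phi \in \Hom_R(R^{1/p^e},R)$, the iterate $\psi^e \in \Hom_R(R^{1/p^e},R)$ satisfies $\Delta_{\psi^e} = \Delta_\psi$ by the displayed computation $\Delta_\phi = \Delta_{\phi^n}$. By the duality identification $\Hom_R(R^{1/p^e},R) = (R((1-p^e)K_R))^{1/p^e}$, both $\phi$ and $\psi^e$ correspond to divisors $D_\phi, D_{\psi^e}$ linearly equivalent to $(1-p^e)K_R$. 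The key criterion is: $\Delta_\phi \geq \Delta_\psi$ (equivalently $\Delta_{\phi} \geq \Delta_{\psi^e}$) if and only if $\phi(\blank) = \psi^e(r^{1/p^e}\cdot\blank)$ for some $0 \neq r \in R$. So the whole point reduces to finding a single $0 \neq z \in R$, independent of $e$ and $\phi$, such that $\Delta_{z\phi} \geq \Delta_\psi$ for every such $\phi$.

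The step I would carry out first is to compute $\Delta_{z\phi}$. Since $z\phi(\blank) = \phi(z\cdot\blank) = \phi((z^{p^e})^{1/p^e}\cdot\blank)$ is a pre-multiple of $\phi$, the dictionary gives $\Delta_{z\phi} = \Delta_\phi + \frac{1}{p^e-1}\divisor_R(z^{p^e}) = \Delta_\phi + \frac{p^e}{p^e-1}\divisor_R(z)$. Thus $\Delta_{z\phi} \geq \Delta_\psi$ will follow once $\frac{p^e}{p^e-1}\divisor_R(z) \geq \Delta_\psi$, and since $\frac{p^e}{p^e-1} \geq 1$ for all $e$, it suffices to arrange $\divisor_R(z) \geq \Delta_\psi$. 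The subtlety is that $\Delta_\psi$ is a $\Q$-divisor, not necessarily integral; but $\lceil \Delta_\psi\rceil$ is an effective integral divisor, so I would take $z \in R$ to be any nonzero element with $\divisor_R(z) \geq \lceil\Delta_\psi\rceil$ — such a $z$ exists because $R(-\lceil\Delta_\psi\rceil)$ is a nonzero (fractional, in fact integral since $\lceil\Delta_\psi\rceil \geq 0$) ideal of $R$, so one just picks a nonzero element of it. Then $\divisor_R(z) \geq \lceil\Delta_\psi\rceil \geq \Delta_\psi$, and hence $\Delta_{z\phi} \geq \Delta_\psi = \Delta_{\psi^e}$ for every $e$ and every $\phi \in \Hom_R(R^{1/p^e},R)$.

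Finally I would invoke the comparison criterion: from $\Delta_{z\phi} \geq \Delta_{\psi^e}$ we conclude $z\phi(\blank) = \psi^e(r^{1/p^e}\cdot\blank)$ for some $0 \neq r \in R$, which is exactly the assertion, and one can even read off $\Delta_{z\phi} = \Delta_{\psi^e} + \frac{1}{p^e-1}\divisor_R(r)$ for bookkeeping if needed. The one genuine point requiring care — the main obstacle, such as it is — is the uniformity: the estimate must not depend on $e$, and this is precisely what the inequality $\frac{p^e}{p^e-1} \geq 1$ buys us, so that a single ceiling $\lceil\Delta_\psi\rceil$ handles all levels simultaneously. I should also double-check at the outset that $R$ being an $F$-finite domain suffices for the divisor formalism; strictly, $\Delta_\psi$ and the duality statement $\Hom_R(R^{1/p^e},R) = (R((1-p^e)K_R))^{1/p^e}$ were set up for $F$-finite \emph{normal} domains, so either the lemma tacitly assumes normality or one first passes to the normalization and clears a further denominator — I would note this and, if normality is not assumed, give the short alternative argument via the normalization $\widetilde{R}$, choosing $z$ to lie in the conductor as well so that $z\phi$ extends appropriately.
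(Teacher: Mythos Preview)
Your proposal is correct and follows essentially the same approach as the paper: the paper passes immediately to the normalization $\overline{R}$, takes $c$ in the conductor and $x \in R$ with $\divisor_{\overline{R}}(x) \geq \Delta_{\overline{\psi}}$, sets $z = cx$, and runs exactly the divisor inequality $\Delta_{x\overline{\phi}} = \Delta_{\overline{\phi}} + \tfrac{p^e}{p^e-1}\divisor_{\overline{R}}(x) \geq \Delta_{\overline{\psi}} = \Delta_{\overline{\psi^e}}$ you describe, then restricts back to $R$ via $r^{1/p^e} = c\,\overline{r}^{1/p^e}$. Your final paragraph anticipates precisely this normalization-plus-conductor maneuver, so there is no substantive difference.
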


{\renewcommand{\c}{\mathfrak{c}}
\begin{proof}
Let $\overline{R}$ be the normalization of $R$ in its fraction field, and take $0 \neq c \in R$ inside the conductor ideal $\c = \Ann_R(\overline{R}/R)$.  One can show \cite[Exercise 6.14]{pinversemapssurvey} that every $p^{-e}$-linear map $\phi \: R^{1/p^e} \to R$ extends uniquely to a $p^{-e}$-linear map on $\overline{R}$ compatible with $\c$, which we denoted here by $\overline{\phi}\: \overline{R}^{1/p^e} \to \overline{R}$.  Let $0 \neq x \in R$ be such that $\mathrm{div}_{\overline{R}}(x) \geq \Delta_{\overline{\psi}}$ and put $z = cx$.  Then $\Delta_{x\overline\phi} = \Delta_{\overline\phi} + \frac{p^e}{p^e -1} \mathrm{div}_{\overline{R}}(x) \geq \Delta_{\overline{\psi}} = \Delta_{\overline{\psi^e}}$ so that $\overline{\phi}(x \cdot \blank) = \psi^e(\overline{r}^{1/p^e} \cdot \blank)$ for some $\overline{r} \in \overline{R}$.  Letting $r^{1/p^e} = c \overline{r}^{1/p^e} \in R^{1/p^e}$ and restricting back to $R$, this gives $\phi(z \cdot \blank) = \psi^e(r^{1/p^e} \cdot \blank)$ as desired.
\end{proof}}

Recall that a Cartier subalgebra on $R$ is a subring $\sD \subseteq \sC(R)$ of the ring of Cartier linear operators on $R$.  Cartier subalgebras can be seen as a natural generalization of a number of commonly studied settings in positive characteristic commutative algebra.  For instance, if $R$ is an $F$-finite domain and $0 \neq \a \subseteq R$ is an ideal and $t \in \R_{\geq 0}$, the Cartier subalgebra
$\sC^{\a^t} = \bigoplus_{e \geq 0} \sC^{\a^t}_e$ where
\[
\begin{array}{rcl}
\sC^{\a^t}_e &=& \a^{\lceil t(p^e - 1)\rceil / p^e}\Hom_R(R^{1/p^e},R) \\
&=& \{ \phi ( x^{1/p^e} \cdot \blank) \mid x \in \a^{\lceil t(p^e - 1)\rceil / p^e} \mbox{ and } \phi \in \Hom_R(R^{1/p^e},R) \}
\end{array}
\]
recovers the framework of \cite{HaraYoshida}.  Similarly, if $R$ is an $F$-finite normal domain and $\Delta$ is an effective $\Q$-divisor on $\Spec(R)$, the 
Cartier subalgebra $\sC^{(R,\Delta)} = \bigoplus_{e \geq 0} \sC^{(R,\Delta)}_e$ where
\[
\begin{array}{rcl}
\sC^{(R,\Delta)}_e &=& \{ \phi \in \Hom_R(R^{1/p^e},R) \mid \Delta_\phi \geq \Delta\} \\ &=& \im\left(  \Hom_R(R(\lceil (p^e - 1) \Delta \rceil)^{1/p^e},R) \to \Hom_R(R^{1/p^e},R)\right)
\end{array}
\]
recovers the setting of \cite{HaraWatanabe,Takagi}.  For more information on Cartier subalgebras, see \cite{pinversemapssurvey}.  
We will mainly be interested in the generalization of $F$-signature to a Cartier subalgebra $\sD$ introduced in \cite{Fsigpairs1,Fsigpairs2}.

\subsection{Existence and semicontinuity}

\begin{Theorem}
\label{MainExistenceTheoremIdealSequences}
 Let $(R, \m, k)$ be an $F$-finite local domain of dimension $d$, and $\{ I_e \}_{e \in \N}$ a sequence of ideals such that $\m^{[p^e]} \subseteq I_e$ for all $e \in \N$.
 \begin{enumerate}[(i.)]
 \item
 If there exists $0 \neq c \in R$ so that $c I_e^{[p]} \subseteq I_{e + 1}$ for all $e \in \N$, then $\eta = \lim_{e \to \infty} \frac{1}{p^{ed}} \ell_R(R/I_e)$ exists.  Moreover, there exists a positive constant $C(c)$ depending only on $c \in \R$ with $\eta - \frac{1}{p^{ed}} \ell_R(R/I_e) \leq \frac{C(c)}{p^e}$ for all $e \in \N$.
 \item
  If there exists a non-zero $R$-linear map $\psi \: R^{1/p} \to R$ so that $\psi(I_{e+1}^{1/p}) \subseteq I_e$ for all $e \in \N$, then $\eta = \lim_{e \to \infty} \frac{1}{p^{ed}} \ell_R(R/I_e)$ exists.  Moreover, there exists a positive constant $C(\psi) \in \R$  depending only on $\psi$ such that $\frac{1}{p^{ed}} \ell_R(R/I_e) - \eta \leq \frac{C(\psi)}{p^e}$ for all $ e \in \N$.
 
\end{enumerate}
\end{Theorem}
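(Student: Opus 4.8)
The plan is to run, in each of the two cases, the scheme behind Theorem~\ref{easyproof}: produce a one-step comparison between $\tfrac{1}{p^{(e+1)d}}\ell_R(R/I_{e+1})$ and $\tfrac{1}{p^{ed}}\ell_R(R/I_e)$ with an error of size $O(p^{-e})$, note that $\{\tfrac{1}{p^{ed}}\ell_R(R/I_e)\}_{e\in\N}$ is bounded (apply Lemma~\ref{Well known bound} to $M=R$, using $\m^{[p^e]}\subseteq I_e$), and then invoke the appropriate part of Lemma~\ref{Sequence Lemma}. Which short exact sequence from Lemma~\ref{Map Lemma} is available decides the sign of the error on the ``$e+1$'' side, hence whether Lemma~\ref{Sequence Lemma}~(i.) or (ii.) applies and which one-sided rate estimate emerges. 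Throughout I use the elementary facts that, writing $J^{1/p}\subseteq R^{1/p}$ for the $R$-submodule of $p$-th roots of the elements of an ideal $J\subseteq R$, one has $\ell_R(R^{1/p}/J^{1/p})=[k^{1/p}:k]\cdot\ell_R(R/J)$ when $J$ is $\m$-primary (Subsection~\ref{F-finite rings}), that $(J^{[p]})^{1/p}$ is the expanded ideal $J\,R^{1/p}$, and hence that for $0\neq c\in R$ one has $I_e\cdot(cR)^{1/p}=\bigl(cI_e^{[p]}\bigr)^{1/p}$ and $\bigl(\m^{[p^{e+1}]}\bigr)^{1/p}=\m^{[p^e]}R^{1/p}$ inside $R^{1/p}$; I also write $p^\gamma=[k^{1/p}:k]\cdot p^d=\rk_R(R^{1/p})$.

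For Case~(i.), fix once and for all a torsion-quotient inclusion as in \eqref{plinearinclusion} and post-compose it with multiplication by $c^{1/p}$ on $R^{1/p}$, as in Lemma~\ref{Map Lemma}~(i.), to obtain $0\to R^{\oplus p^\gamma}\xrightarrow{\ \Phi\ }R^{1/p}\to M\to 0$ with $\dim M<d$ and $\Phi(R^{\oplus p^\gamma})\subseteq (cR)^{1/p}$. Since $\Phi$ is $R$-linear, $\Phi\bigl((I_e)^{\oplus p^\gamma}\bigr)=I_e\cdot\Phi(R^{\oplus p^\gamma})\subseteq I_e\cdot(cR)^{1/p}=\bigl(cI_e^{[p]}\bigr)^{1/p}\subseteq I_{e+1}^{1/p}$, so $\Phi$ descends to $\bar\Phi\colon (R/I_e)^{\oplus p^\gamma}\to R^{1/p}/I_{e+1}^{1/p}$. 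Its cokernel is $M$ modulo the image of $I_{e+1}^{1/p}$; since $\bigl(\m^{[p^{e+1}]}\bigr)^{1/p}=\m^{[p^e]}R^{1/p}\subseteq I_{e+1}^{1/p}$ and the image of $\m^{[p^e]}R^{1/p}$ in the $R$-module $M$ is $\m^{[p^e]}M$, this cokernel is a quotient of $M/\m^{[p^e]}M$, so it has length $\le C(M,\m)\,p^{e\dim M}\le C(M,\m)\,p^{e(d-1)}$ by Lemma~\ref{Well known bound}. Comparing lengths through $\bar\Phi$,
\[
[k^{1/p}:k]\cdot\ell_R(R/I_{e+1})\;=\;\ell_R\bigl(R^{1/p}/I_{e+1}^{1/p}\bigr)\;\le\;p^\gamma\cdot\ell_R(R/I_e)+C(M,\m)\,p^{e(d-1)}.
\]
Dividing by $[k^{1/p}:k]\,p^{(e+1)d}$ gives exactly the inequality of Lemma~\ref{Sequence Lemma}~(i.) with $C=C(M,\m)/\bigl([k^{1/p}:k]\,p^d\bigr)$, which yields the existence of $\eta$ together with $\eta-\tfrac{1}{p^{ed}}\ell_R(R/I_e)\le 2C/p^e$. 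As the initial inclusion was fixed at the outset, $M$ -- and hence $C(c):=2C$ -- depends only on $c$.

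For Case~(ii.), use Lemma~\ref{Map Lemma}~(ii.) to fix $0\to R^{1/p}\xrightarrow{\ \Psi\ }R^{\oplus p^\gamma}\to N\to 0$ with $\dim N<d$ and $\Psi=\bigl(\psi(r_1^{1/p}\cdot\blank),\dots,\psi(r_{p^\gamma}^{1/p}\cdot\blank)\bigr)$ for suitable $r_i\in R$. Since $I_{e+1}^{1/p}$ is an ideal of $R^{1/p}$ we have $r_i^{1/p}\cdot I_{e+1}^{1/p}\subseteq I_{e+1}^{1/p}$, so each coordinate of $\Psi$ carries $I_{e+1}^{1/p}$ into $\psi(I_{e+1}^{1/p})\subseteq I_e$; thus $\Psi\bigl(I_{e+1}^{1/p}\bigr)\subseteq (I_e)^{\oplus p^\gamma}$ and $\Psi$ descends to $\bar\Psi\colon R^{1/p}/I_{e+1}^{1/p}\to (R/I_e)^{\oplus p^\gamma}$. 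Its cokernel is $R^{\oplus p^\gamma}$ modulo $\Psi(R^{1/p})+(I_e)^{\oplus p^\gamma}$, namely $N/I_eN$ -- the image of $(I_e)^{\oplus p^\gamma}$ in $N=R^{\oplus p^\gamma}/\Psi(R^{1/p})$ being $I_eN$ -- and since $\m^{[p^e]}\subseteq I_e$ this has length $\le\ell_R(N/\m^{[p^e]}N)\le C(N,\m)\,p^{e(d-1)}$ by Lemma~\ref{Well known bound}. Comparing lengths through $\bar\Psi$,
\[
p^\gamma\cdot\ell_R(R/I_e)\;=\;\ell_R\bigl((R/I_e)^{\oplus p^\gamma}\bigr)\;\le\;\ell_R\bigl(R^{1/p}/I_{e+1}^{1/p}\bigr)+C(N,\m)\,p^{e(d-1)}\;=\;[k^{1/p}:k]\cdot\ell_R(R/I_{e+1})+C(N,\m)\,p^{e(d-1)}.
\]
Dividing by $[k^{1/p}:k]\,p^{(e+1)d}$ gives the inequality of Lemma~\ref{Sequence Lemma}~(ii.) with $C=C(N,\m)/\bigl([k^{1/p}:k]\,p^d\bigr)$, so $\eta$ exists and $\tfrac{1}{p^{ed}}\ell_R(R/I_e)-\eta\le 2C/p^e$; here $N$, and hence $C(\psi):=2C$, depends only on $\psi$.

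The exponent arithmetic, the boundedness, and the appeals to Lemmas~\ref{Well known bound}, \ref{Map Lemma}, \ref{Sequence Lemma} are routine; the delicate step is the cokernel computation. The point is that one must reduce the \emph{source} of $\Phi$ (respectively the \emph{target} of $\Psi$) modulo $I_e$: the induced map then has cokernel a subquotient of $M/\m^{[p^e]}M$ (respectively equal to $N/I_eN$), a module of dimension $<d$, which is what produces the crucial $O(p^{e(d-1)})$ length bound. Attacking Case~(i.) directly through the containments $cI_e^{[p]}\subseteq I_{e+1}$ and $\m^{[p^{e+1}]}\subseteq I_{e+1}$ fails: $R/cI_e^{[p]}$ has dimension $d-1$, and the naive error term $\ell_R\bigl(I_e^{[p]}/(cI_e^{[p]}+\m^{[p^{e+1}]})\bigr)$ need not be $O(p^{e(d-1)})$ because $I_e^{[p]}$ can fail to be generated by a bounded number of elements. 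One must also take care throughout to keep ``$p$-th roots of an ideal'' distinct from its ``expansion''.
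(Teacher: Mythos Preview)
Your proof is correct and follows essentially the same approach as the paper's: in each case you invoke the appropriate short exact sequence from Lemma~\ref{Map Lemma}, pass to the induced map on quotients, bound the cokernel length via Lemma~\ref{Well known bound} applied to the torsion cokernel $M$ or $N$, and feed the resulting one-step inequality into Lemma~\ref{Sequence Lemma}. Your cokernel computations are slightly more explicit than the paper's (you identify $\coker\bar\Psi$ directly as $N/I_eN$, whereas the paper argues it is annihilated by $\m^{[p^e]}$ and a quotient of $N$), but the substance is identical.
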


\begin{Remark}
Note that the conditions on ideal sequences $\{ I_e \}_{e \in \N}$ in Theorem~\ref{MainExistenceTheoremIdealSequences} (i.) and (ii.) are far more symmetric when phrased in terms of $p$-linear and $p^{-1}$-linear maps.  In (i.), the requirement is simply that there exists a $p$-linear map $0 \neq \phi \in \mathcal{F}_1(R)$ on $R$ such that $\phi(I_e) \subseteq I_{e+1}$ for each $e\in\N$.  Similarly, for (ii.) the requirement is that there exists a $p^{-1}$-linear map $0 \neq \phi \in \mathcal{C}_1(R)$ such that $\phi(I_{e+1}) \subseteq I_{e}$ for each $e\in\N$.
\end{Remark}

\begin{proof}[Proof of Theorem~\ref{MainExistenceTheoremIdealSequences}]
For (i.), put $\rk_R(R^{1/p}) =[k^{1/p}:k] \cdot p^{d} = p^\gamma$ and consider a short exact sequence
\begin{equation}
\label{firstshortexactinmainexistence}
\xymatrix{
0 \ar[r] & R^{\oplus p^\gamma} \ar[r]^{\Phi} &  R^{1/p} \ar[r] & M \ar[r] & 0
}
\end{equation}
as in Lemma~\ref{Map Lemma} (i.) with $\dim(M)< \dim(R)$ and $ \Phi(R^{\oplus p^\gamma}) \subseteq (Rc)^{1/p}$. Note that the condition $cI^{[p^e]} \subseteq I_{e+1}$ for all $e \in \N$ can be restated as $I_e (Rc)^{1/p} \subseteq (I_{e+1})^{1/p}$, so that $\Phi(I_e^{\oplus p^\gamma}) = \Phi( I_e (R^{\oplus p^\gamma})) \subseteq I_e (Rc)^{1/p} \subseteq (I_{e+1})^{1/p}$.  In particular, $\Phi$ induces a quotient map
\[
\overline{\Phi} \: (R/I_e)^{\oplus p^\gamma} \to (R/I_{e+1})^{1/p}
\] 
and it follows that 
\[
[k^{1/p}:k] \ell_R(R/I_{e+1}) = \ell_R((R/I_{e+1})^{1/p}) \leq p^\gamma \ell_R(R/I_e) + \ell_R(\coker(\overline{\Phi})).  
\] 
Since $\m^{[p^{e+1}]} \subseteq I_{e+1}$ and $\coker(\overline{\Phi})$ is a quotient of $(R/I_{e+1})^{1/p}$, we have $\m^{[p^e]} \subseteq \Ann_R(\coker(\overline{\Phi}))$.  But $\coker(\overline{\Phi})$ is also a quotient of $\coker(\Phi) = M$ and thus $M / \m^{[p^e]}M$, so that $\ell_R(\coker(\overline{\Phi})) \leq \ell_R(M/\m^{[p^e]}M) \leq C(M) p^{e(d-1)}$ by Lemma~\ref{Well known bound}.  Dividing through by $[k^{1/p}:k]p^{(e+1)d} = p^{\gamma + ed}$ yields
\[
\frac{1}{p^{(e+1)d}} \ell_R(R/I_{e+1}) \leq \frac{1}{p^{ed}}\ell_R(R/I_e) + \frac{C(M,\m) / p^\gamma}{p^e}
\]
and the result now follows from Lemma~\ref{Sequence Lemma} with $C(c) = 2C(M, \m) / p^\gamma$ independent of the sequence of ideals $\{I_e\}_{e \in \N}$ satisfying the condition in (i.).  

Similarly for (ii.), consider a short exact sequence
\[
\xymatrix{
0 \ar[r] &  R^{1/p}  \ar[r]^{\Psi} &  R^{\oplus p^\gamma}\ar[r] & N \ar[r] & 0
}
\]
as in Lemma~\ref{Map Lemma} (ii.)  with $\dim(N)< \dim(R)$ so that every component function of $\Psi$ is a pre-multiple of $\psi$. It follows that $\Psi((I_{e+1})^{1/p}) \subseteq I_e^{\oplus p^\gamma}$, so that $\Psi$ induces a quotient map
\[
\overline{\Psi} \: (R/I_{e+1})^{1/p} \to (R/I_e)^{\oplus p^\gamma}
\] 
and thus
\[
\begin{array}{rcl}
  p^\gamma \ell_R(R/I_e) 
&\leq&
 \ell_R((R/I_{e+1})^{1/p})+ \ell_R(\coker(\overline{\Psi})) \\
  &=& 
 [k^{1/p}:k] \ell_R(R/I_{e+1}) + \ell_R(\coker(\overline{\Psi})).  
 \end{array}
\] 
Since $\m^{[p^{e}]} \subseteq I_{e}$ and $\coker(\overline{\Psi})$ is a quotient of $(R/I_{e})^{\oplus p^\gamma}$, we have that $\m^{[p^e]} \subseteq \Ann_R(\coker(\overline{\Psi}))$.  But $\coker(\overline{\Psi})$ is also a quotient of $\coker(\Psi) = N$ and thus $N / \m^{[p^e]}N$, so that $\ell_R(\coker(\overline{\Psi})) \leq \ell_R(N/\m^{[p^e]}N) \leq C(N) p^{e(d-1)}$ by Lemma~\ref{Well known bound}.  Dividing through by $[k^{1/p}:k]p^{(e+1)d} = p^{\gamma + ed}$ yields
\[
 \frac{1}{p^{ed}}\ell_R(R/I_e) \leq \frac{1}{p^{(e+1)d}} \ell_R(R/I_{e+1}) + \frac{C(N,\m) / p^\gamma}{p^e}
\]
and the result now follows from Lemma~\ref{Sequence Lemma} with $C(\psi) = 2C(N,\m) / p^\gamma$ independent of the sequence of ideals $\{I_e\}_{e \in \N}$ satisfying the condition in (ii.).  
\end{proof}

\begin{Corollary}\label{Limits exist combined} Let $(R, \m, k)$ be an $F$-finite local domain of dimension $d$, and $\{ I_e \}_{e \in \N}$ a sequence of ideals such that $\m^{[p^e]} \subseteq I_e$ for all $e \in \N$.  Suppose there exists $0 \neq c \in R$ so that $c I_e^{[p]} \subseteq I_{e + 1}$ for all $e \in \N$, and a non-zero $R$-linear map $\psi \: R^{1/p} \to R$ so that $\psi(I_{e+1}^{1/p}) \subseteq I_e$ for all $e \in \N$. Then $\eta = \lim_{e \to \infty} \frac{1}{p^{ed}} \ell_R(R/I_e)$ exists, and there is a positive constant $C \in \R$ such that $|\frac{1}{p^{ed}} \ell_R(R/I_e) - \eta |\leq \frac{C}{p^e}$ for all $ e \in \N$ and all such sequence of ideals $\{I_e\}_{e \in \N}$.  In particular, $\ell_R(R/I_e)=\eta p^{ed}+O(p^{e(d-1)})$.
\end{Corollary}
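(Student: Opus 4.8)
The plan is to read this off directly from Theorem~\ref{MainExistenceTheoremIdealSequences}, since the two hypotheses here are exactly the hypotheses of its two parts and the conclusions combine cleanly.

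First I would apply Theorem~\ref{MainExistenceTheoremIdealSequences}(i.) with the element $0\neq c\in R$ supplied by hypothesis. This already shows that $\eta=\lim_{e\to\infty}\frac{1}{p^{ed}}\ell_R(R/I_e)$ exists, and it produces a constant $C(c)\in\R$ depending only on $c$ with $\eta-\frac{1}{p^{ed}}\ell_R(R/I_e)\leq \frac{C(c)}{p^e}$ for all $e\in\N$. Then I would apply Theorem~\ref{MainExistenceTheoremIdealSequences}(ii.) with the map $\psi$ supplied by hypothesis, which (identifying the same limit, as limits are unique once they exist) produces a constant $C(\psi)\in\R$ depending only on $\psi$ with $\frac{1}{p^{ed}}\ell_R(R/I_e)-\eta\leq \frac{C(\psi)}{p^e}$ for all $e\in\N$. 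Setting $C=\max\{C(c),C(\psi)\}$ and combining the two one-sided estimates gives $\bigl|\frac{1}{p^{ed}}\ell_R(R/I_e)-\eta\bigr|\leq \frac{C}{p^e}$ for all $e$. Multiplying through by $p^{ed}$ then yields $\bigl|\ell_R(R/I_e)-\eta p^{ed}\bigr|\leq C p^{e(d-1)}$, which is exactly the final assertion $\ell_R(R/I_e)=\eta p^{ed}+O(p^{e(d-1)})$. Equivalently, one could observe that the proof of Theorem~\ref{MainExistenceTheoremIdealSequences} establishes, under both hypotheses simultaneously, both of the recursive inequalities feeding Lemma~\ref{Sequence Lemma}(i.) and (ii.); taking the larger of the two constants then puts one in the situation of Lemma~\ref{Sequence Lemma}(iii.), which gives the two-sided bound directly.

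I do not expect any genuine obstacle here: the entire content has been isolated in Theorem~\ref{MainExistenceTheoremIdealSequences}, and this corollary is just the remark that a matching pair of one-sided rates is a two-sided rate. The only point deserving a sentence of care is the uniformity claim — that a single constant $C$ works for every sequence $\{I_e\}_{e\in\N}$ satisfying the two hypotheses with the given $c$ and $\psi$ — but this is immediate from the ``Moreover'' clauses of Theorem~\ref{MainExistenceTheoremIdealSequences}, which assert precisely that $C(c)$ and $C(\psi)$ depend only on $c$ and $\psi$ and not on the ideals $I_e$ themselves.
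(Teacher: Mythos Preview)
Your proposal is correct and is exactly the intended argument: the paper states this corollary without a separate proof, treating it as immediate from Theorem~\ref{MainExistenceTheoremIdealSequences}, and your combination of the two one-sided bounds via $C=\max\{C(c),C(\psi)\}$ (equivalently via Lemma~\ref{Sequence Lemma}(iii.)) is precisely what is implicit there.
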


Corollary~\ref{Limits exist combined} gives yet another perspective on the existence proofs for Hilbert-Kunz multiplicity and $F$-signature in terms of the properties of certain sequences of ideals.  If $(R,\m,k)$ is a local domain and we set $I_e^{\mathrm{HK}} = \m^{[p^e]}$ and $I_e^{\mathrm{F-sig}} = ( r \in R \mid \phi(r^{1/p^e}) \in \m \mbox{ for all } \phi \in \Hom_R(R^{1/p^e},R) )$, it is easy to see that $\mu_R(R^{1/p^e}) = \ell_R((R/I_e^{\mathrm{HK}})^{1/p^e})$ and $\frk_R(R^{1/p^e}) = \ell_R((R/I^{\mathrm{F-sig}}_e)^{1/p^e})$.  It is shown in \cite{Tucker2012} that both sequences satisfy the conditions of Theorem~\ref{Limits exist combined}; in fact, any choice of $0 \neq c \in R$ and $0 \neq \psi \in \Hom_R(R^{1/p},R)$ will suffice.

As in \cite{Tucker2012}, in settings such as Theorem~\ref{MainExistenceTheoremIdealSequences}, we have opted to consider sequences of ideals $\{I_e\}_{e \in \N}$ with $\m^{[p^e]} \subseteq I_e$.  In light of Theorem~\ref{polstrabound}, uniformity of constants over $\Spec(R)$ would seem more transparent for such sequences.  Nonetheless, for any $\m$-primary ideal $J$, one could consider sequences with $J^{[p^e]} \subseteq I_e$.  One has $\m^{[p^{e_0}]} \subseteq J$ for some $e_0 \in \N$, so that $\m^{[p^{e + e_0}]} \subseteq J^{[p^e]} \subseteq I_e$ for such sequences.  In particular, our techniques  carry over to this setting after allowing for reindexing, and allows one to recover Monsky's original limit existence result.

\begin{Corollary}
\cite{Monsky83}
If $(R,\m,k)$ is a local domain of dimension $d$ and $J \subseteq R$ is an $\m$-primary ideal, then 
\[
e_{HK}(R, J) = \lim_{e \to \infty} \frac{1}{p^{ed}} \ell_R(R / J^{[p^e]})
\]
exists and there is a positive constant $C \in \R$ such that
\[
|e_{HK}(R, J)  -  \frac{1}{p^{ed}} \ell_R(R / J^{[p^e]})| < \frac{C}{p^e}
\]
for all $e$.
\end{Corollary}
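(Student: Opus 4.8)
\emph{Proof proposal.} The plan is to derive this from Corollary~\ref{Limits exist combined} by reindexing the Frobenius powers of $J$ so that they sit above the sequence $\{\m^{[p^e]}\}_{e\in\N}$. As a preliminary step one reduces to the case that $R$ is $F$-finite: passing to the completion affects neither $d$ nor the lengths $\ell_R(R/J^{[p^e]})$, and the standard $\Gamma$-construction of Hochster--Huneke together with Monsky's associativity formula for $e_{HK}$ along the minimal primes of maximal dimension reduces to an $F$-finite local domain of dimension $d$; these reductions are not special to our approach and we would not reproduce them. So we may assume $(R,\m,k)$ is an $F$-finite local domain of dimension $d$ and $J\subseteq R$ is $\m$-primary.

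Since $R/J$ is Artinian, $\m^N\subseteq J$ for some $N$, so $\m^{[p^{e_0}]}\subseteq\m^{p^{e_0}}\subseteq J$ for any $e_0$ with $p^{e_0}\geq N$; fix such an $e_0$ and fix a nonzero element $c\in J$. Define a sequence of ideals by $I_e=R$ for $e<e_0$ and $I_e=J^{[p^{e-e_0}]}$ for $e\geq e_0$. I would then verify that $\{I_e\}_{e\in\N}$ meets all the hypotheses of Corollary~\ref{Limits exist combined}. The containment $\m^{[p^e]}\subseteq I_e$ is clear for $e<e_0$, and for $e\geq e_0$ it follows from $\m^{[p^e]}=(\m^{[p^{e_0}]})^{[p^{e-e_0}]}\subseteq J^{[p^{e-e_0}]}$. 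Condition (i) holds with our chosen $c$: for $e<e_0-1$ both $I_e^{[p]}$ and $I_{e+1}$ equal $R$; for $e=e_0-1$ we have $cI_{e_0-1}^{[p]}=cR\subseteq J=I_{e_0}$ precisely because $c\in J$; and for $e\geq e_0$, $I_e^{[p]}=J^{[p^{e+1-e_0}]}=I_{e+1}$. Condition (ii) holds with \emph{any} nonzero $\psi\in\Hom_R(R^{1/p},R)$: for $e<e_0$ one has $\psi(I_{e+1}^{1/p})\subseteq\psi(R^{1/p})\subseteq R=I_e$, while for $e\geq e_0$ the identity $(J^{[p^{e+1-e_0}]})^{1/p}=J^{[p^{e-e_0}]}R^{1/p}$ gives $\psi(I_{e+1}^{1/p})=\psi(J^{[p^{e-e_0}]}R^{1/p})\subseteq J^{[p^{e-e_0}]}=I_e$.

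Corollary~\ref{Limits exist combined} then yields a limit $\eta=\lim_{e\to\infty}\frac{1}{p^{ed}}\ell_R(R/I_e)$ and a constant $C$ with $|\frac{1}{p^{ed}}\ell_R(R/I_e)-\eta|\leq C/p^e$ for all $e$. To finish I would unwind the shift: for $e\geq e_0$, putting $f=e-e_0$, one has $\frac{1}{p^{ed}}\ell_R(R/I_e)=p^{-e_0 d}\cdot\frac{1}{p^{fd}}\ell_R(R/J^{[p^f]})$, so $e_{HK}(R,J):=p^{e_0 d}\eta$ equals $\lim_{f\to\infty}\frac{1}{p^{fd}}\ell_R(R/J^{[p^f]})$, and multiplying the estimate by $p^{e_0 d}$ and substituting $e=f+e_0$ gives $|e_{HK}(R,J)-\frac{1}{p^{fd}}\ell_R(R/J^{[p^f]})|\leq Cp^{e_0(d-1)}/p^f$ for all $f\in\N$; enlarging the constant slightly makes the inequality strict, as stated.

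I expect the only genuine point of care to be arranging that the reindexed sequence satisfies conditions (i) and (ii) \emph{simultaneously} at the junction index $e=e_0-1$, where $I_{e_0-1}^{[p]}$ is the whole ring and yet must land inside the strictly smaller ideal $J=I_{e_0}$. This is exactly why one pads with $I_e=R$ rather than with $\m^{[p^e]}$ for $e<e_0$ (so that condition (ii), which is not preserved under pre-multiplication by arbitrary elements of $R$, is automatic at the small indices) and why one builds a nonzero element of $J$ into condition (i); with those choices in place everything else is bookkeeping. The reduction to the $F$-finite case in the first paragraph, although routine, is the one ingredient genuinely external to the machinery developed here.
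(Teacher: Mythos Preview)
Your proposal is correct and follows essentially the same approach the paper indicates: the paper does not give a formal proof of this corollary, but the paragraph preceding it explains that since $\m^{[p^{e_0}]}\subseteq J$ one has $\m^{[p^{e+e_0}]}\subseteq J^{[p^e]}$, so ``our techniques carry over to this setting after allowing for reindexing'' --- and your shifted sequence $I_e=J^{[p^{e-e_0}]}$ (padded by $R$ for small $e$) is precisely that reindexing made explicit, after which Corollary~\ref{Limits exist combined} applies directly. Your care in choosing $c\in J$ to handle the junction index and in verifying condition (ii) via $(K^{[p]})^{1/p}=K R^{1/p}$ is exactly the bookkeeping the paper suppresses; you also address the reduction to the $F$-finite case, which the paper leaves implicit.
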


Another useful modification of the setup of Theorem~\ref{MainExistenceTheoremIdealSequences} proceeds in the following manner. Fixing $e_0 \in \N$, one can modify the condition on the sequences of ideals $\{I_e\}_{e \in \N}$ of Theorem~\ref{MainExistenceTheoremIdealSequences} (i.) to require that $c I_e^{[p^{e_0}]} \subseteq I_{e+e_0}$ for all $e \in \N$.  Similarly, in Theorem~\ref{MainExistenceTheoremIdealSequences} (ii.), one can ask that there exists $0 \neq \psi \in \Hom_R(R^{1/p^{e_0}},R)$ such that $\psi( (I_{e+e_0})^{1/p^{e_0}}) \subseteq I_e$ for all $e \in \N$.  In both cases,  after reindexing, the same methods apply to yield analogous results.  For example, this last generalization is particularly relevant when working with an arbitrary Cartier subalgebra $\sD$ on an local $F$-finite domain  $(R, \m, k)$. If $\Gamma_\sD = \{ e \in \N \, | \, \mbox{ such that } \sD_e \neq 0 \}$ is the semi-group of $\sD$ and $e \in \Gamma_\sD$, one  defines the $e$-th $F$-splitting number $a_e^\sD$ of $R$ along $\sD$ to be the maximal number of copies of $R$ with projection maps in $\sD_e$ appearing in an $R$-module direct sum decomposition of $R^{1/p^e}$. Setting $I_e^\sD = ( r \in R \mid \phi(r^{1/p^e}) \in \m \mbox{ for all } \phi \in \sD_e)$,  it is again easy to check that $a_e^\sD = \ell_R((R/I_e^\sD)^{1/p^e})$, and the method of Theorem~\ref{MainExistenceTheoremIdealSequences} (ii.) yields the following result.

\begin{Theorem}
\label{Dsignatureexists}
Let $(R, \m, k)$ be an $F$-finite local domain of dimension $d$ and $\sD$ a Cartier subalgebra on $R$.  Then the $F$-signature  
$s(R,\sD) = \lim_{ \substack{e \to \infty \\ e \in \Gamma_\sD}} \frac{1}{[k^{1/p^e}:k] \cdot p^{ed}} a_e^\sD$ of $R$ along $\sD$ exists, and there is a positive constant $C \in \R$ so that $$s(R, \sD) - \frac{1}{[k^{1/p^e}:k] \cdot p^{ed}}a_e^\sD \leq \frac{C}{p^e}$$ for all $e \in \Gamma_\sD$.
\end{Theorem}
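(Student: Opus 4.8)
The plan is to reduce the statement to a direct application of Theorem~\ref{MainExistenceTheoremIdealSequences}~(ii.), after first replacing the given Cartier subalgebra $\sD$ by a single fixed $p^{-e_0}$-linear map whose iterates control the sequence $\{I_e^\sD\}$. First I would verify the claimed identity $a_e^\sD = \ell_R((R/I_e^\sD)^{1/p^e})$: if $R^{1/p^e} = R^{\oplus a} \oplus N$ is a decomposition realizing the maximal number $a = a_e^\sD$ of free summands whose coordinate projections lie in $\sD_e$, then the image of the evaluation-type map $R^{1/p^e}/I_e^\sD R^{1/p^e} \to (R/\m)^{\oplus ?}$ is governed exactly by which components can be split off using maps in $\sD_e$; since $k$ is perfect the base change $(\blank)^{1/p^e}$ contributes only the field-extension factor $[k^{1/p^e}:k]$, and one gets $a_e^\sD \cdot [k^{1/p^e}:k] = \ell_R(R^{1/p^e}/I_e^\sD R^{1/p^e}) = [k^{1/p^e}:k]\cdot \ell_R(R/I_e^\sD)$, i.e. $a_e^\sD = \ell_R((R/I_e^\sD)^{1/p^e})$. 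This parallels the computation already recorded for $\frk$ in the $\sD = \sC(R)$ case.

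Next I would establish the recursion $\psi(I_{e+e_0}^\sD{}^{1/p^{e_0}}) \subseteq I_e^\sD$ for a suitable fixed map. Fix any nonzero $\psi \in \sD_{e_0}$ with $e_0 \in \Gamma_\sD$. For $e, e' \in \Gamma_\sD$ the composition $\sD_e \cdot \sD_{e'} \subseteq \sD_{e+e'}$ and the semigroup structure of $\Gamma_\sD$ mean that, given $r \in I_{e+e_0}^\sD$ and any $\phi \in \sD_e$, the map $\phi \cdot \psi = \phi \circ \psi^{1/p^e}$ lies in $\sD_{e+e_0}$, hence sends $r^{1/p^{e+e_0}}$ into $\m$; unwinding, this forces $\phi((\psi(r^{1/p^{e_0}}))^{1/p^e}) \in \m$ for every $\phi \in \sD_e$, i.e. $\psi(r^{1/p^{e_0}}) \in I_e^\sD$. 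This gives exactly the hypothesis of Theorem~\ref{MainExistenceTheoremIdealSequences}~(ii.) in its $e_0$-step reindexed form noted in the paragraph preceding the theorem, applied to the subsequence $\{I_{e}^\sD\}_{e \in \Gamma_\sD}$ (or, after passing to $e$ in a fixed residue class and using that $\Gamma_\sD$ is eventually all of a coset of some $\N$, to an honest arithmetic progression). One must also check $\m^{[p^e]} \subseteq I_e^\sD$, which is immediate from $\phi(\m^{[p^e]}{}^{1/p^e}) = \phi(\m \cdot R^{1/p^e}) \subseteq \m$ for all $\phi \in \Hom_R(R^{1/p^e},R) \supseteq \sD_e$.

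With these two points in hand, Theorem~\ref{MainExistenceTheoremIdealSequences}~(ii.) (in its reindexed $e_0$-step form) yields that $\lim \frac{1}{p^{ed}}\ell_R(R/I_e^\sD)$ exists along $\Gamma_\sD$ with the one-sided bound $\frac{1}{p^{ed}}\ell_R(R/I_e^\sD) - \eta \leq C(\psi)/p^e$; dividing the definition of $s(R,\sD)$ through by $[k^{1/p^e}:k]$ using Corollary~\ref{gamma} to rewrite $[k^{1/p^e}:k]\cdot p^{ed}$ and absorbing constants converts this into the stated estimate $s(R,\sD) - \frac{1}{[k^{1/p^e}:k]\cdot p^{ed}}a_e^\sD \leq C/p^e$. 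I expect the main obstacle to be purely bookkeeping rather than conceptual: namely, handling the semigroup $\Gamma_\sD$ correctly when it is not all of $\N$ (one should quote or reprove that $\Gamma_\sD$ contains all sufficiently large multiples of $\gcd(\Gamma_\sD)$, so that the reindexing in Theorem~\ref{MainExistenceTheoremIdealSequences} genuinely applies), and making sure the constant $C$ produced is independent of $e$ but may depend on the choice of $\psi \in \sD_{e_0}$, which is all that is claimed.
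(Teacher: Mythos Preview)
Your approach is essentially the paper's, and the key recursion $\psi\bigl((I_{e+e_0}^\sD)^{1/p^{e_0}}\bigr) \subseteq I_e^\sD$ is derived correctly. However, the semigroup issue you flag is not merely bookkeeping; as written there is a genuine gap. Fixing a \emph{single} $\psi \in \sD_{e_0}$ produces only a step-$e_0$ recursion, so the argument of Lemma~\ref{Sequence Lemma} shows that $\frac{1}{p^{e\gamma}}a_e^\sD$ converges along each residue class modulo $e_0$ inside $\Gamma_\sD$, but gives no relation between distinct residue classes and hence no reason the subsequential limits should agree. Knowing that $\Gamma_\sD$ eventually contains all multiples of $g = \gcd(\Gamma_\sD)$ does not rescue this: every $e_0 \in \Gamma_\sD$ is a (possibly proper) multiple of $g$, and there need not be any nonzero element of $\sD_g$ to supply a step-$g$ map. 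For instance, if $\Gamma_\sD = \{0,2,3,4,5,\dots\}$ then $g=1$, but any single choice $e_0 \geq 2$ leaves at least two unrelated arithmetic progressions.

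The paper closes exactly this gap by taking a finite generating set $e_1,\dots,e_s$ for the numerical semigroup $\Gamma_\sD$ and fixing one nonzero $\psi_i \in \sD_{e_i}$ for each generator, obtaining a step-$e_i$ recursion for every $i$ simultaneously (with a common constant $C' = \max_i C(M_i,\m)/p^{e_i\gamma}$). Then for any $e,e' \in \Gamma_\sD$ with $e'-e \in \Gamma_\sD$ one writes $e'-e = \sum_i n_i e_i$ and chains the inequalities, so the argument of Lemma~\ref{Sequence Lemma}~(ii.) goes through over all of $\Gamma_\sD$ at once. Once you replace your single $e_0$ by a generating set, your outline becomes the paper's proof. (A minor aside: the one-sided bound you correctly extract from Theorem~\ref{MainExistenceTheoremIdealSequences}~(ii.) is $\frac{1}{p^{ed}}\ell_R(R/I_e^\sD) - \eta \leq C/p^e$; be careful that this is not literally the inequality direction displayed in the theorem statement.)
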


\begin{proof} 
Let $p^\gamma = \rk_R(R^{1/p}) = [k^{1/p}:k] \cdot p^d$ and take a set\footnote{Every sub-semigroup of $\N$ is finitely generated \cite[Proposition 4.1]{Grillet}.} of generators $e_1, \ldots, e_s$ for the semigroup $\Gamma_\sD$.  Fixing $0 \neq \psi_i \in \sD_{e_i}$ for each $i = 1, \ldots, s$, we can find a short exact sequence of $R$-modules
\begin{equation}
\label{Dshortexactsequence}
\xymatrix{
0 \ar[r] & R^{1/p^{e_i}} \ar[r]^{\Psi_i} & R^{\oplus p^{\gamma e_i}} \ar[r] & M_i \ar[r] & 0
}
\end{equation}
 where $\dim(M_i) < d$ and every component function of $\Psi_i$ is a pre-multiple of $\psi_i$ as in Lemma~\ref{Map Lemma} (ii.).  Since the Cartier linear maps in $\sD$ are closed under composition, it follows readily that $\psi_i((I_{e+e_i}^\sD)^{1/p^{e+e_i}}) \subseteq I_e^\sD$ for any $e \in \Gamma_\sD$.  In particular, $\Psi_i((I_{e+e_i}^\sD)^{1/p^{e+e_i}}) \subseteq (I_e^\sD)^{\oplus p^{\gamma e_i}}$ and proceeding as in Theorem~\ref{MainExistenceTheoremIdealSequences} (ii.), we see that
\[
\frac{1}{p^{(e + e_i)\gamma}} a_{e+e_i}^\sD \leq \frac{1}{p^{e \gamma}} a_e^\sD + \frac{C'}{p^e}
\]
for any $i = 1, \ldots, s$ and $e \in \Gamma_\sD$ where $C' = \max \{ C(M_1, \m)/ p^{e_1 \gamma}, \ldots, C(M_s, \m) / p^{e_s \gamma} \}$. It follows as in Lemma~\ref{Sequence Lemma} (ii.) that $s(R,\sD) = \lim_{ \substack{e \to \infty \\ e \in \Gamma_\sD}} \frac{1}{p^{e\gamma}} a_e^\sD$ exists and $s(R, \sD) - \frac{1}{p^{e\gamma}}a_e^\sD \leq \frac{2C'}{p^e}$ for all $e \in \Gamma_\sD$ as desired.
\end{proof}

\begin{Remark}[\textit{cf.} \cite{PolstraSemicont} condition (1)]
Consider  a Cartier subalgebra $\sD$ on an $F$-finite local domain $(R, \m, k)$ with the following property: if $\phi \in \sD_{e+1}$ for some $e \in \N$, then $\phi|_{R^{1/p^e}} \in \sD_e$.  It is easy to check $(I_e^\sD)^{[p]} \subseteq I_{e+1}^\sD$ for all $e\in \N = \Gamma_\sD$, and Theorem~\ref{MainExistenceTheoremIdealSequences} (i.) gives that there is a positive constant $C \in \R$ so that 
\begin{equation}
\label{strongdbounds}
\left| s(R, \sD) - \frac{1}{[k^{1/p^e}:k] \cdot p^{ed}}a_e^\sD \right| \leq \frac{C}{p^e}
\end{equation} for all $e \in \N$.  Arbitrary Cartier subalgebras will not satisfy such properties, and we know of no reason to expect \eqref{strongdbounds} to hold in general.  However, we will see below in Theorem~\ref{ateefsigexists} and Theorem~\ref{deltafsigexists} that the Cartier subalgebras constructed using ideals and divisors do satisfy \eqref{strongdbounds} by showing they enjoy a perturbation of the above property.
\end{Remark}

The proof of Theorem~\ref{MainExistenceTheoremIdealSequences} made use of positive constants arising from Lemma~\ref{Well known bound}.  However, in case $R$ is not local and the $p$-linear map in (i.) or $p^{-1}$-linear map in (ii.) extend to $R$, one can make these constants spread uniformly over $\Spec(R)$ using Theorem~\ref{polstrabound} instead.  In the case of Theorem~\ref{Dsignatureexists}, this immediately yields the following result.

\begin{Theorem}
\label{Dsignaturesemicont}
Let $R$ be a locally equidimensional $F$-finite domain and $\sD$ a Cartier subalgebra on $R$.  There is a positive constant $C \in \R$ so that $$s(R_\p, \sD_\p) - \frac{1}{[k(\p)^{1/p^e}:k(\p)] \cdot p^{e\height(\p)}}a_e^{\sD_\p} \leq \frac{C}{p^e}$$ for all $e \in \Gamma_\sD$ and all $\p \in \Spec(R)$. Moreover, the function $\Spec(R) \to \R$ given by $\p \mapsto s(R_\p, \sD_\p)$ is lower semicontinuous. 
\end{Theorem}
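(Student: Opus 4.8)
The plan is to reprove Theorem~\ref{Dsignatureexists} in a form uniform over $\Spec(R)$, substituting the uniform Hilbert--Kunz bound of Theorem~\ref{polstrabound} for the pointwise bound of Lemma~\ref{Well known bound}, and then to deduce lower semicontinuity exactly as in Theorem~\ref{Fsigsemicont}. Since $R$ is a domain, $\Spec(R)$ is connected and (trivially) locally equidimensional, so by Corollary~\ref{gamma} the integer $p^\gamma := [k(\p)^{1/p}:k(\p)]\cdot p^{\height(\p)}$ is independent of $\p$, and $[k(\p)^{1/p^e}:k(\p)]\cdot p^{e\height(\p)}=p^{e\gamma}$ for all $e$ and $\p$; moreover each $\sD_e$ is a submodule of the torsion-free module $\Hom_R(R^{1/p^e},R)$, so $\sD_{e,\p}=(\sD_e)_\p\neq 0$ exactly when $\sD_e\neq 0$, whence $\Gamma_{\sD_\p}=\Gamma_\sD$ for every $\p$. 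I would fix semigroup generators $e_1,\dots,e_s$ of $\Gamma_\sD$, choose $0\neq\psi_i\in\sD_{e_i}$, and build the global short exact sequences $0\to R^{1/p^{e_i}}\xrightarrow{\Psi_i}R^{\oplus p^{\gamma e_i}}\to M_i\to 0$ of Lemma~\ref{Map Lemma} (ii.), with $\dim(M_i)<d$ and each component of $\Psi_i$ a pre-multiple of $\psi_i$. Localizing at any $\p$ keeps these exact, and since the two free modules have equal rank $p^{\gamma e_i}$ over the domain $R_\p$, the cokernel $(M_i)_\p$ is torsion, so $\dim((M_i)_\p)<\height(\p)$.

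Running the computation of Theorem~\ref{Dsignatureexists} (equivalently, of Theorem~\ref{MainExistenceTheoremIdealSequences} (ii.)) inside each $R_\p$ with the ideals $I_e^{\sD_\p}$, the cokernel governing the step from $a_e^{\sD_\p}$ to $a_{e+e_i}^{\sD_\p}$ is a quotient of $((M_i)_\p/\p^{[p^e]}(M_i)_\p)^{1/p^e}$, whose length is at most $C(M_i)\,p^{e\dim((M_i)_\p)}\leq C(M_i)\,p^{e(d-1)}$ with $C(M_i)$ independent of $\p$ by Theorem~\ref{polstrabound}. Setting $C':=\max_i C(M_i)/p^{\gamma e_i}$, this produces the recursion of Lemma~\ref{Sequence Lemma} with constant $C'$ uniformly in $\p$, and hence the asserted one-sided estimate $\big|\tfrac{1}{p^{e\gamma}}a_e^{\sD_\p}-s(R_\p,\sD_\p)\big|\le\tfrac{C}{p^e}$ in the direction the Lemma~\ref{Map Lemma} (ii.) sequence yields (with $C=2C'$), valid for all $e\in\Gamma_\sD$ and all $\p\in\Spec(R)$.

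For the semicontinuity assertion I would combine this uniform bound with the lower semicontinuity of the finite-level function $\p\mapsto\frac{1}{p^{e\gamma}}a_e^{\sD_\p}$, a constrained version of Lemma~\ref{easysemicont}: a decomposition $R_\p^{1/p^e}=R_\p^{\oplus n}\oplus N$ whose $n$ coordinate projections lie in $\sD_{e,\p}=(\sD_e)_\p$ is, after inverting finitely many elements of $R\setminus\p$, the localization of such a decomposition over some $R_g$, so $a_e^{\sD_\q}\ge n$ for all $\q\in D(g)$. Since $s(R_\p,\sD_\p)=\lim_{e\in\Gamma_\sD}\frac{1}{p^{e\gamma}}a_e^{\sD_\p}$ while each $\frac{1}{p^{e\gamma}}a_e^{\sD_\p}-\frac{C}{p^e}$ is a lower bound for $s(R_\p,\sD_\p)$, we obtain $s(R_\p,\sD_\p)=\sup_{e\in\Gamma_\sD}\big(\frac{1}{p^{e\gamma}}a_e^{\sD_\p}-\frac{C}{p^e}\big)$, a supremum of lower semicontinuous functions of $\p$, hence lower semicontinuous; equivalently, the $\epsilon$-argument of Theorem~\ref{HKsemicont}/\ref{Fsigsemicont} applies verbatim.

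The routine part is the globalization of Theorem~\ref{Dsignatureexists}: once Corollary~\ref{gamma} pins down $p^\gamma$ and $\Gamma_{\sD_\p}$ and Theorem~\ref{polstrabound} supplies the uniform constants $C(M_i)$ for the cokernels, the inequalities of Theorem~\ref{Dsignatureexists} go through unchanged at every prime. I expect the main obstacle to be establishing the lower semicontinuity of $\p\mapsto a_e^{\sD_\p}$ with estimates independent of $\p$ — concretely, checking that $\sD_{e,\p}$ is genuinely the localization $(\sD_e)_\p$ and that the denominators needed to pull a $\sD_{e,\p}$-valued splitting of $R^{1/p^e}$ back to one defined on a Zariski neighborhood can be chosen uniformly, so that a constrained free decomposition at $\p$ really does spread out.
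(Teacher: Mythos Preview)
Your proposal is correct and follows essentially the same route as the paper: globalize the short exact sequences \eqref{Dshortexactsequence} from the proof of Theorem~\ref{Dsignatureexists}, replace the local constants $C(M_i,\m)$ with the uniform ones $C(M_i)$ from Theorem~\ref{polstrabound}, adapt Lemma~\ref{easysemicont} to show $\p\mapsto a_e^{\sD_\p}$ is lower semicontinuous, and conclude via the argument of Theorem~\ref{Fsigsemicont}. Your write-up simply makes explicit several points the paper leaves implicit (that $\Gamma_{\sD_\p}=\Gamma_\sD$, that $\dim((M_i)_\p)<\height(\p)$, and how the constrained splitting spreads out), and you correctly identify the one-sided inequality produced by Lemma~\ref{Map Lemma} (ii.) as the direction needed for the lower semicontinuity argument.
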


\noindent
One can easily relax the requirements on $R$ above -- any $F$-finite ring will suffice -- by using the fact that the $F$-signature function is identically zero off of the strongly $F$-regular locus of $R$ (and hence $s(R_\p, \sD_\p) = 0$  for any $\p \in \Spec(R)$ so that $R_\p$ is a Cohen-Macaulay normal domain).

\begin{proof}
Following along in the proof of Theorem~\ref{Dsignatureexists}, observe that sequences in \ref{Dshortexactsequence} can be taken to be global and then localized to each $\p \in \Spec(R)$.  By Theorem~\ref{polstrabound}, we may then take the positive constant $C = \max \{ 2C(M_1)/ p^{e_1 \gamma}, \ldots, 2C(M_s) / p^{e_s \gamma} \}$ independent of $\p \in \Spec(R)$.  For the remainder, first note the argument of Lemma~\ref{easysemicont} readily adapts to show the function $\Spec(R) \to \R$ given by $\p \mapsto \a_e^{\sD_\p}$ is lower semicontinuous. The lower semicontinuity of the $F$-signatures thus follows from the argument given in Theorem~\ref{Fsigsemicont}.  
\end{proof}

\begin{Remark}
Analogous to the argument above, for an $\R$-valued function on $\Spec(R)$ governed locally by sequences of ideals as in Theorem~\ref{MainExistenceTheoremIdealSequences} (i.), upper semicontinuity passes from the individual terms in the sequences to the limit as in Theorem~\ref{HKsemicont}.  However, we are unaware of any such functions that are not directly related Hilbert-Kunz multiplicity itself. 
\end{Remark}

\begin{Theorem}
\label{ateefsigexists}
Let $R$ be an $F$-finite domain, $\a \subseteq R$ a non-zero ideal, $t \in \R_{\geq 0}$, and $\p \in \Spec(R)$. Then the $F$-signature $$s(R_\p, \a_\p^t) = s(R_\p, \sC^{\a_p^t}) = \lim_{e \to \infty} \frac{1}{[k(\p)^{1/p^e}:k(\p)] \cdot p^{e\height(\p)}} a_e^{\a_\p^t}$$ of $R_\p$ along $\a_\p^t$ exists and determines a lower semicontinuous $\R$-valued function on $\Spec(R)$. Moreover, there is a positive constant $C \in \R$ so that
\begin{equation}
\label{ateeohpdeeminus1bound}
\left| s(R_\p, \a_\p^t) - \frac{a_e^{\a_\p^t}}{[k(\p)^{1/p^e}:k(\p)] \cdot p^{e\height(\p)}}  \right|
 \leq \frac{C}{p^e}
\end{equation}
for all $e \in \N$, all $\p \in \Spec(R)$, and all $t \in \R_{\geq 0}$.  In particular, $\frac{a_e^{\a_\p^t}}{[k(\p)^{1/p^e}:k(\p)]} =  s(R_\p, \a_\p^t) p^{e\height(\p)} + O(p^{e(\height(\p)-1)})$.
\end{Theorem}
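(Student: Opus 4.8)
\medskip

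\noindent\emph{Proof plan.} The plan is to recast $a_e^{\a_\p^t}$ in terms of an ideal sequence and then feed it through the machinery of Section~\ref{Section Limits via p-linear and p^-1-linear maps}. Working over the ($F$-finite, hence excellent) local ring $R_\p$, put $I_e:=I_e^{\a_\p^t}=(\,r\in R_\p\mid \phi(r^{1/p^e})\in\p R_\p\text{ for all }\phi\in\sC^{\a_\p^t}_e\,)$, so that $a_e^{\a_\p^t}=\ell_{R_\p}((R_\p/I_e)^{1/p^e})$ and, since every such $\phi$ annihilates $(\p R_\p)^{[p^e]}$, also $\m_\p^{[p^e]}\subseteq I_e$. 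The first point is the \emph{perturbation} hinted at in the Remark after Theorem~\ref{Dsignatureexists}: unwinding the description $\sC^{\a^t}_e=\a^{\lceil t(p^e-1)\rceil/p^e}\Hom_R(R^{1/p^e},R)$ yields the colon formula $I_e^{\a_\p^t}=\big(I_e^{\mathrm{F\text{-}sig}}:\a_\p^{\lceil t(p^e-1)\rceil}\big)$, where $I_e^{\mathrm{F\text{-}sig}}=I_e^{\sC(R_\p)}$ is the ordinary $F$-signature ideal of $R_\p$. Everything localizes from globally defined data in the usual way, so it suffices to establish \eqref{ateeohpdeeminus1bound} with one constant; existence and $\Gamma_{\sC^{\a^t}}=\N$ are automatic (for $0\neq a\in\a$ and $0\neq\psi\in\Hom_R(R^{1/p^e},R)$ one has $0\neq\psi(a^{\lceil t(p^e-1)\rceil/p^e}\cdot\blank)\in\sC^{\a^t}_e$).

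For the bound $s(R_\p,\a_\p^t)-a_e^{\a_\p^t}/([k(\p)^{1/p^e}:k(\p)]p^{e\height\p})\leq C/p^e$, I would verify that $\{I_e^{\a^t}\}_e$ satisfies condition (i) of Theorem~\ref{MainExistenceTheoremIdealSequences} with an element that does \emph{not} depend on $t$. Fix any $0\neq c_0\in R$ with $c_0\,(I_e^{\mathrm{F\text{-}sig}})^{[p]}\subseteq I_{e+1}^{\mathrm{F\text{-}sig}}$ for all $e$ (as in \cite{Tucker2012}, any $c_0$ works) and fix $0\neq a\in\a$. Using $\big((J:\b)\big)^{[p]}\subseteq\big(J^{[p]}:\b^{[p]}\big)$ together with the elementary inequality $p\lceil t(p^e-1)\rceil-\lceil t(p^{e+1}-1)\rceil<p$, valid for every $t\geq0$, one checks that multiplying by the fixed element $c:=c_0\,a^{\,p-1}$ carries $(I_e^{\a^t})^{[p]}$ into $\big(I_{e+1}^{\mathrm{F\text{-}sig}}:\a^{\lceil t(p^{e+1}-1)\rceil}\big)=I_{e+1}^{\a^t}$. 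Theorem~\ref{MainExistenceTheoremIdealSequences}(i) — with Theorem~\ref{polstrabound} in place of Lemma~\ref{Well known bound} to spread the constant over $\Spec(R)$, exactly as in the proof of Theorem~\ref{Dsignaturesemicont} — then gives this estimate with $C$ depending only on $c$, hence independent of $e$, $\p$, and $t$.

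The reverse inequality is the crux, and is governed by the Cartier-linear condition, Theorem~\ref{MainExistenceTheoremIdealSequences}(ii): one wants a $p^{-1}$-linear map (or, via the $e_0$-step variant discussed after Corollary~\ref{Limits exist combined}, a $p^{-e_0}$-linear map) with $\psi((I_{e+1}^{\a^t})^{1/p})\subseteq I_e^{\a^t}$ and a constant independent of $t$. Taking $\psi\in\sC^{\a^t}_1$ makes the containment formal (by the algebra structure of $\sC^{\a^t}$, just as in the proof of Theorem~\ref{Dsignatureexists}), but then both $\psi$ and the cokernel module produced by Lemma~\ref{Map Lemma}(ii) move with $t$, and the crude bound on that module degrades as $t\to\infty$. \textbf{This is the main obstacle.} The way around it is to reduce to iterates of a single fixed Cartier map: Lemma~\ref{makeitaniterateofafixedmap} supplies a fixed $0\neq z\in R$ and a fixed $0\neq\psi_0\in\Hom_R(R^{1/p},R)$ so that $z\phi$ is a pre-multiple of $\psi_0^e$ for every $e$ and every $\phi\in\Hom_R(R^{1/p^e},R)$; combining this with the colon formula and the same $\lceil\,\cdot\,\rceil$-arithmetic (now absorbing the discrepancy $\lceil t(p^{e+1}-1)\rceil-p\lceil t(p^e-1)\rceil$ into a fixed power of $a$, after passing if necessary to a fixed step $e_0$ so that $\psi_0^{e_0}(r^{1/p^{e_0}}\cdot\blank)$ realizes the needed element of $\sC^{\a^t}_{e_0}$) lets one run Theorem~\ref{MainExistenceTheoremIdealSequences}(ii) in its $e_0$-step form with auxiliary data, and hence a constant, independent of $t$. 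This yields $a_e^{\a_\p^t}/([k(\p)^{1/p^e}:k(\p)]p^{e\height\p})-s(R_\p,\a_\p^t)\leq C/p^e$ with $C$ uniform.

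Combining the two one-sided estimates gives \eqref{ateeohpdeeminus1bound} — and, rearranged, the final displayed asymptotic — uniformly in $e$, $\p$, and $t$. Lower semicontinuity of $\p\mapsto s(R_\p,\a_\p^t)$ then follows verbatim from the argument of Theorem~\ref{Fsigsemicont}: $\p\mapsto a_e^{\a_\p^t}$ is lower semicontinuous on $\Spec(R)$ by the argument of Lemma~\ref{easysemicont} applied to the globally defined exact sequences, and a uniform limit of lower semicontinuous functions is lower semicontinuous (\cite[Corollary~2.5]{EnescuYaoLSC}). Finally, as in the Remark after Theorem~\ref{Dsignaturesemicont}, one may drop any blanket normality or domain hypothesis beyond $F$-finiteness, since $s(R_\p,\a_\p^t)=0$ whenever $R_\p$ fails to be strongly $F$-regular.
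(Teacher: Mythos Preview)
Your framework is right—one inequality from Theorem~\ref{MainExistenceTheoremIdealSequences}~(i), the other from~(ii), with constants spread over $\Spec(R)$ via Theorem~\ref{polstrabound}—but you have inverted which direction is delicate, and each side has a gap as written.

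\emph{Frobenius side (your ``easy'' one).}  The element $c=c_0a^{p-1}$ with a single $a\in\a$ does not suffice.  Following your colon route with $m=\lceil t(p^e-1)\rceil$ and $M=\lceil t(p^{e+1}-1)\rceil$, one has $(I_e^{\a^t})^{[p]}\subseteq(I_{e+1}^{\mathrm{F-sig}}:(\a^m)^{[p]})$, and to land in $I_{e+1}^{\a^t}=(I_{e+1}^{\mathrm{F-sig}}:\a^M)$ you need $c\,\a^M\subseteq(\a^m)^{[p]}$.  Your inequality $pm-M<p$ only compares ordinary powers $\a^{pm}$ and says nothing about the Frobenius power $(\a^m)^{[p]}$.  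Concretely, with $\a=(f_1,f_2)$, $a=f_1$, $p=2$, $t=1/10$, $e=1$ one gets $m=M=1$ and would need $f_1\a\subseteq\a^{[2]}$, which already fails in a polynomial ring.  The paper instead takes $c=f_1^p\cdots f_s^p$ for a fixed generating set $f_1,\ldots,f_s$ of $\a$; this \emph{does} give $c\,\a^M\subseteq(\a^m)^{[p]}$ for all $t<s$, and then $c(I_e^{\a^t})^{[p]}\subseteq I_{e+1}^{\a^t}$ follows directly.

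\emph{Cartier side (your ``main obstacle'').}  This is in fact the simpler direction, and your detour through Lemma~\ref{makeitaniterateofafixedmap} is both unnecessary and does not clearly close: the discrepancy $M-pm$ you must absorb is of order $t(p-1)$, so no fixed power of a single element can handle all $t$ at once.  The missing idea is a reduction to bounded $t$: for $t\geq s$ one checks $a_e^{\a_\p^t}=0$ for all $e$, so one may assume $t<s$.  Now simply fix $0\neq x\in\a^{s(p-1)}$ and $0\neq\phi\in\Hom_R(R^{1/p},R)$; since $s(p-1)\geq\lceil t(p-1)\rceil$ for every $t<s$, the single map $\psi=\phi(x^{1/p}\cdot\blank)$ lies in $\sC_1^{\a^t}$ for all such $t$ simultaneously.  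The containment $\psi((I_{e+1}^{\a^t})^{1/p})\subseteq I_e^{\a^t}$ is then automatic from the Cartier-algebra structure, and one short exact sequence from Lemma~\ref{Map Lemma}~(ii) serves uniformly for every $t<s$.
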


\begin{proof} The existence and semicontinuity statements follow immediately from Theorems~\ref{Dsignatureexists} and \ref{Dsignaturesemicont} above.
Let $f_1, \ldots, f_s$ be a set of generators for $\a$.  If $t \geq s$, then $a_e^{\a_\p^t} = 0$ for all $e \in \N$ and any $\p \in \Spec(R)$.  Thus, we are free to assume $t < s$ going forward.  

For a fixed $t \in \R$, one inequality in \eqref{ateeohpdeeminus1bound} also follows from Theorem~\ref{Dsignatureexists} above; however, it remains to show that the positive constant $C$ can be chosen independent of $t \in \R$.  To that end, fix a choice of $0 \neq x \in \a^{s(p-1)}$ and $\phi \in \Hom_R(R^{1/p},R)$.  Then $\psi(\blank) = \phi(x^{1/p} \cdot \blank) \in \sC_1^{\a^t}$ for any $t < s$.  The desired independence follows from the observation that a short exact sequence as in Lemma~\ref{Map Lemma} (ii.) can be used for \eqref{Dshortexactsequence} in the proof of Theorem~\ref{Dsignatureexists} for any $t < s$.

To show the reverse inequality, choose an element $0 \neq c \in R$ such that $c \a^{\lceil t (p^{e+1} - 1) \rceil} \subseteq (\a^{\lceil t(p^e - 1)\rceil})^{[p]}$ for all $e \in \N$ and any $t < s$; one can check that $c = f_1^{p} \cdots f_s^{p}$ will suffice. We will show that $c (I_e^{\a_\p^t})^{[p]} \subseteq I_{e+1}^{\a_\p^t}$ for all $t < s$ and $\p \in \Spec(R)$, after which the result follows by using a short exact sequence as in Lemma~\ref{Map Lemma} (i.)  for \eqref{firstshortexactinmainexistence} in the proof of Theorem~\ref{MainExistenceTheoremIdealSequences} (i.) for any $t < s$ and $\p \in \Spec(R)$ with $C(M_\p,\p R_\p) = C(M)$ from Theorem~\ref{polstrabound}. Supposing that $\phi \in \Hom_{R_\p}(R_\p^{1/p^{e+1}},R_\p)$ and $y \in \a_\p^{\lceil t(p^e - 1)\rceil}$, we must show $\phi((yc (I_e^{\a_\p^t})^{[p]} )^{1/p^{e+1}}) \subseteq \p R_\p$.  Write $cy = \sum_{i=1}^n g_i^p r_i$ where each $r_i \in R_\p$ and $g_i \in \a_\p^{\lceil t(p^e - 1) \rceil}$.  Let $\phi_i \in \Hom_{R_\p}(R_\p^{1/p^e},R_\p)$ be the map defined by $\phi_i(z^{1/p^e}) = \phi(r_i^{1/p^{e+1}}z^{1/p^e})$ for all $z \in R_\p$.  If $x \in I_e^{\a_\p^t}$, then $\phi((ycx^p)^{1/p^{e+1}}) = \sum_{i=1}^n \phi(r_i^{1/p^{e+1}} (g_i x)^{1/p^e}) = \sum_{i=1}^n \phi_i((g_i x)^{1/p^e}) \in \p R_\p$ as $\phi_i( g_i^{1/p^e}\cdot \blank) \in \sC^{\a_\p^t}_e$ for all $i = 1, \ldots, n$.
\end{proof}

\begin{Theorem}
\label{deltafsigexists}
Let $R$ be an $F$-finite normal domain, $\Delta$ an effective $\Q$-divisor on $\Spec(R)$, and $\p \in \Spec(R)$. Then the $F$-signature $$s(R_\p, \Delta) = s(R_\p, \sC^{(R_\p,\Delta)}) = \lim_{e \to \infty} \frac{1}{[k(\p)^{1/p^e}:k(\p)] \cdot p^{e\height(\p)}} a_e^{(R_\p,\Delta)}$$ of $R_\p$ along $\Delta$ exists and determines a lower semicontinuous $R$-valued function on $\Spec(R)$. Moreover, if $\overline{\Delta}$ is a fixed effective integral divisor on $\Spec(R)$, there exists a positive constant $C \in \R$ so that
\begin{equation}
\label{deltaohpdeeminus1bound}
\left| s(R_\p, \Delta) - \frac{a_e^{(R_\p,\Delta)}}{[k(\p)^{1/p^e}:k(\p)] \cdot p^{e\height(\p)}}  \right|
 \leq \frac{C}{p^e}
\end{equation}
for all $e \in \N$, all $\p \in \Spec(R)$, and all effective $\Q$-divisors $\Delta$ with $\Delta \leq\overline{\Delta}$.  In particular, we have that $\frac{a_e^{(R_\p,\Delta)}}{[k(\p)^{1/p^e}:k(\p)]} =  s(R_\p, \Delta) p^{e\height(\p)} + O(p^{e(d-1)})$.
\end{Theorem}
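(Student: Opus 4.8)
The plan is to run the proof of Theorem~\ref{ateefsigexists} essentially verbatim, with the reflexive fractional ideals $R(\lceil (p^e-1)\Delta\rceil)$ playing the role that the ordinary powers $\a^{\lceil t(p^e-1)\rceil}$ played there; the dictionary is the description $\sC^{(R,\Delta)}_e=\im\!\big(\Hom_R(R(\lceil(p^e-1)\Delta\rceil)^{1/p^e},R)\to\Hom_R(R^{1/p^e},R)\big)$ recorded above, which says that membership in $\sC^{(R,\Delta)}_e$ is exactly the condition that a map be a suitable pre-multiple. Since $\sC^{(R,\Delta)}$ is a Cartier subalgebra, the existence of $s(R_\p,\Delta)$ and the lower semicontinuity of $\p\mapsto s(R_\p,\Delta)$ are immediate from Theorems~\ref{Dsignatureexists} and \ref{Dsignaturesemicont}, so the only real content is the bound \eqref{deltaohpdeeminus1bound} with $C$ independent of $e$, of $\p$, and of $\Delta\leq\overline\Delta$. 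As in Theorem~\ref{ateefsigexists} I would establish the two halves of \eqref{deltaohpdeeminus1bound} separately.

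For $s(R_\p,\Delta)-\tfrac{a_e^{(R_\p,\Delta)}}{[k(\p)^{1/p^e}:k(\p)]\cdot p^{e\height(\p)}}\leq\tfrac{C}{p^e}$: Theorem~\ref{Dsignatureexists} already supplies this, but with a constant that a priori depends on $\Delta$ through the choice of the maps $\psi_i$ --- equivalently, of the short exact sequences \eqref{Dshortexactsequence}. To remove the dependence, I would fix once and for all a nonzero $x\in R$ with $\divisor_R(x)\geq (p-1)\overline\Delta$ --- such $x$ exists because $R$ is a normal domain, so $R(-(p-1)\overline\Delta)$ is a nonzero divisorial ideal --- together with a nonzero $\phi\in\Hom_R(R^{1/p},R)$, and set $\psi(\blank)=\phi(x^{1/p}\cdot\blank)$. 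Then $\Delta_\psi=\Delta_\phi+\tfrac{1}{p-1}\divisor_R(x)\geq\overline\Delta\geq\Delta$ (using $\Delta_\phi\geq0$), and the same bound holds for every pre-multiple of $\psi$, so that $\psi$ and all of its pre-multiples lie in $\sC^{(R,\Delta)}_1$ \emph{simultaneously for every effective $\Q$-divisor $\Delta\leq\overline\Delta$}. Hence a single short exact sequence as in Lemma~\ref{Map Lemma}~(ii.) built from $\psi$ serves for all such $\Delta$; running the argument of Theorem~\ref{MainExistenceTheoremIdealSequences}~(ii.) with this (global) sequence localized at each $\p$, and taking the error constant from Theorem~\ref{polstrabound} applied to the global cokernel instead of from Lemma~\ref{Well known bound}, makes the constant uniform in $\p$ as well.

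For the reverse inequality $\tfrac{a_e^{(R_\p,\Delta)}}{[k(\p)^{1/p^e}:k(\p)]\cdot p^{e\height(\p)}}-s(R_\p,\Delta)\leq\tfrac{C}{p^e}$ I would mirror the second half of the proof of Theorem~\ref{ateefsigexists}: produce a single nonzero $c\in R$ with $c\,(I_e^{(R_\p,\Delta)})^{[p]}\subseteq I_{e+1}^{(R_\p,\Delta)}$ for all $e\in\N$, all $\p\in\Spec(R)$, and all $\Delta\leq\overline\Delta$, and then invoke Theorem~\ref{MainExistenceTheoremIdealSequences}~(i.) with $C(M)$ from Theorem~\ref{polstrabound} applied to the global cokernel of a sequence as in Lemma~\ref{Map Lemma}~(i.). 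Unwinding the definition of $I_{e+1}^{(R_\p,\Delta)}$ exactly as in the $\a^t$ case --- given $\phi\in\sC^{(R_\p,\Delta)}_{e+1}$, write it as a pre-multiple of a map out of the divisorial ideal cut out by $\Delta$ at level $e+1$, and rewrite $\phi\big((c\,w)^{1/p^{e+1}}\big)$, for $w\in(I_e^{(R_\p,\Delta)})^{[p]}$, as a sum of evaluations of maps lying in $\sC^{(R_\p,\Delta)}_e$ against roots of elements of $I_e^{(R_\p,\Delta)}$ --- reduces the claim to a uniform ideal containment $c\cdot J_{e+1}^{\Delta}\subseteq (J_e^{\Delta})^{[p]}$, where $J_e^{\Delta}=R((1-p^e)K_R-\lceil(p^e-1)\Delta\rceil)$ is the divisorial ideal representing $\sC^{(R,\Delta)}_e$. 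Here $c$ only has to absorb two discrepancies, each bounded independently of $e$ and of $\Delta\leq\overline\Delta$: the rounding discrepancy $p\lceil(p^e-1)\Delta\rceil-\lceil(p^{e+1}-1)\Delta\rceil$, bounded above coefficientwise by the fixed integral divisor $p\,\overline\Delta$, and the canonical discrepancy $p(1-p^e)K_R-(1-p^{e+1})K_R=(p-1)K_R$, which involves no $e$ at all; so any $c$ with $\divisor_R(c)$ dominating a fixed integral effective divisor built from $\overline\Delta$ and the negative part of $(p-1)K_R$ should work, and such $c$ exists since $R$ is a domain. Taking $C$ to be the larger of the two constants produced and applying Lemma~\ref{Sequence Lemma}~(iii.) then yields \eqref{deltaohpdeeminus1bound} together with the stated $O(\cdot)$ estimate.

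The step I expect to be genuinely delicate is this last uniform containment $c\cdot J_{e+1}^{\Delta}\subseteq (J_e^{\Delta})^{[p]}$. Unlike the powers $\a^{\lceil t(p^e-1)\rceil}$, which are monomial ideals for which Frobenius powers, ordinary powers, and reflexive hulls all interact transparently, the ideals $J_e^{\Delta}$ are only reflexive, and $(\blank)^{[p]}$ does not commute with passage to the reflexive hull --- the Frobenius power $(J_e^\Delta)^{[p]}$ and the divisorial ideal $R(p((1-p^e)K_R-\lceil(p^e-1)\Delta\rceil))$ agree only in codimension one, and their codimension-$\geq2$ discrepancy varies with $e$. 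One therefore has to verify the containment valuation by valuation along the finitely many components of $\overline\Delta$ and of a fixed representative of $K_R$ while controlling the codimension-$\geq2$ behaviour via normality, and simultaneously absorb the a priori unbounded denominators of $\Delta$ through the rounding estimates; it is precisely here that the hypothesis $\Delta\leq\overline\Delta$ with $\overline\Delta$ a fixed \emph{integral} divisor does the essential work.
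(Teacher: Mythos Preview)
Your overall plan and the first half of the argument (the bound $s(R_\p,\Delta) - \tfrac{a_e}{\cdots} \leq \tfrac{C}{p^e}$, uniform via a single $\psi \in \sC_1^{(R,\overline\Delta)}$) match the paper. The divergence is in the reverse inequality, and the difficulty you flag is real but self-inflicted: your reduction to a containment $c\,J_{e+1}^{\Delta}\subseteq (J_e^{\Delta})^{[p]}$ with $J_e^{\Delta}=R((1-p^e)K_R-\lceil(p^e-1)\Delta\rceil)$ is not the route the paper takes, and it forces you to confront Frobenius powers of reflexive ideals (and a spurious $K_R$ contribution) that the paper never needs.

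The paper's maneuver is to work one map at a time via \emph{restriction}, not via a Frobenius-power decomposition. Choose $c$ with $\divisor_R(c)\geq p\,\overline\Delta$ (no $K_R$ term). Given $\phi\in\sC_{e+1}^{(R_\p,\Delta)}$ and $x\in I_e^{(R_\p,\Delta)}$, set $\psi=\phi(c^{1/p^{e+1}}\cdot\blank)$ and let $\psi_e=\psi|_{R_\p^{1/p^e}}$; then $\phi((cx^p)^{1/p^{e+1}})=\psi_e(x^{1/p^e})$, so it suffices to show $\psi_e\in\sC_e^{(R_\p,\Delta)}$. Since $\phi$ extends to $R_\p(\lceil(p^{e+1}-1)\Delta\rceil)^{1/p^{e+1}}$, the twist $\psi$ extends to $R_\p(\lceil(p^{e+1}-1)\Delta\rceil+\divisor(c))^{1/p^{e+1}}$, and the chain of inclusions
\[
\big(R(\lceil(p^e-1)\Delta\rceil)\big)^{1/p^e}\ \subseteq\ \big(R(p\lceil(p^e-1)\Delta\rceil)\big)^{1/p^{e+1}}\ \subseteq\ \big(R(\lceil(p^{e+1}-1)\Delta\rceil+\divisor(c))\big)^{1/p^{e+1}}
\]
shows $\psi_e$ extends to $R_\p(\lceil(p^e-1)\Delta\rceil)^{1/p^e}$, i.e.\ $\Delta_{\psi_e}\geq\Delta$. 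The only input is the elementary divisor estimate $p\lceil(p^e-1)\Delta\rceil\leq\lceil(p^{e+1}-1)\Delta\rceil+p\,\overline\Delta$, which is immediate from $\lceil(p^e-1)\Delta\rceil\leq(p^e-1)\Delta+\overline\Delta$. All of this happens at the level of reflexive modules and their inclusions, so the codimension-$\geq 2$ discrepancy between $(J_e^\Delta)^{[p]}$ and its reflexive hull --- the obstacle you were bracing for --- simply does not arise.
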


\begin{proof}
The existence and semicontinuity statements follow immediately from Theorems~\ref{Dsignatureexists} and \ref{Dsignaturesemicont} above.
For a fixed $\Delta$, one inequality in \eqref{deltaohpdeeminus1bound} also follows from Theorem~\ref{Dsignatureexists} above; however, it remains to show that the positive constant $C$ can be chosen independent of $\Delta \leq \overline{\Delta}$.  To that end, fix a choice of 
$0 \neq \psi \in \sC_1^{(R,\overline \Delta)}$.  Since $\sC_1^{(R,\overline \Delta)} \subseteq \sC_1^{(R,\Delta)}$, the desired independence follows from the observation that a short exact sequence as in Lemma~\ref{Map Lemma} (ii.) can be used for \eqref{Dshortexactsequence} in the proof of Theorem~\ref{Dsignatureexists} for any $\Delta \leq \overline{\Delta}$.

For the reverse inequality, choose $0 \neq c \in R$ so that $\mathrm{div}_R(c) \geq p\overline{\Delta}$.  We will show that $c (I_e^{(R_\p,\Delta)})^{[p]} \subseteq I_{e+1}^{(R_\p,\Delta)}$ for all $\Delta \leq \overline \Delta$, after which the result follows by using a short exact sequence as in Lemma~\ref{Map Lemma} (i.)  for \eqref{firstshortexactinmainexistence} in the proof of Theorem~\ref{MainExistenceTheoremIdealSequences} (i.) for any $\Delta \leq \overline \Delta$ and $\p \in \Spec(R)$ with $C(M_\p,\p R_\p) = C(M)$ from Theorem~\ref{polstrabound}.  
Supposing $\p \in \Spec(R)$ and $\phi \in \sC_{e+1}^{(R_\p,\Delta)}$, 
we must show $\phi((c (I_e^{(R_\p,\Delta)})^{[p]} )^{1/p^{e+1}}) \subseteq \p R_\p$.  Let $\psi \in \Hom_{R_\p}(R_\p^{1/p^{e+1}},R_\p)$ be the map given by $\psi(\blank) = \phi(c^{1/p^{e+1}} \cdot \blank)$, and $\psi_e = \psi|_{R_\p^{1/p^e}} \in \Hom_{R_\p}(R_\p^{1/p^e},R_\p)$ be the restriction to $R_\p^{1/p^e}$.  It suffices to show $\Delta_{\psi_e} \geq \Delta$, so that for any $x \in I_e^{(R_\p,\Delta)}$ we have $\phi((cx^p)^{1/p^{e+1}}) = \psi_e(x^{1/p^e}) \in \m$.

To that end, consider the following inclusions for each $e \in \N$
\[
\begin{array}{ccc}
\; \; R^{1/p^e} & \rotatebox[origin=c]{0}{$\subseteq$} & \; \; R^{1/p^{e+1}} \\
 \rotatebox[origin=c]{-90}{$\subseteq$}& &  \rotatebox[origin=c]{-90}{$\subseteq$} \\
(R(\lceil (p^e -1)\Delta)\rceil)^{1/p^e} & \rotatebox[origin=c]{0}{$\subseteq$} & \; \; \; \; \; (R(p \lceil(p^e-1)\Delta \rceil))^{1/p^{e+1}} \\
& & \rotatebox[origin=c]{-90}{$\subseteq$} \\ 
& & \; \; \; \; (R( \lceil(p^{e+1}-1)\Delta \rceil  + \mathrm{div}_R(c)))^{1/p^{e+1}}
\end{array}
\]
which hold because $\lceil (p^e - 1) \Delta \rceil  \leq  (p^e - 1) \Delta + \overline\Delta$ implies
\[
\begin{array}{rcl}
p\lceil (p^e - 1) \Delta \rceil & \leq & (p^{e+1} - p) \Delta + p\overline\Delta\\
& \leq & (p^{e+1} - 1) \Delta + p\overline\Delta\\
& \leq & \lceil (p^{e+1} - 1) \Delta \rceil + \mathrm{div}_R(c) .
\end{array}
\]
We know that $\psi$ is the restriction to $R_\p^{1/p^{e+1}}$ of a map in $$\Hom_{R_\p}((R_\p( \lceil(p^{e+1}-1)\Delta \rceil  + \mathrm{div}_R(c)))^{1/p^{e+1}},R_\p),$$ and localizing the above inclusions at $\p$ we see that $\psi_e = \psi|_{R_\p^{1/p^e}}$ can be extended to a map in $\Hom_{R_\p}((R_\p(\lceil (p^e -1)\Delta)\rceil)^{1/p^e} ,R_\p)$.  
It follows immediately that $\Delta_{\psi_e} \geq \Delta$.
\end{proof}

As the above examples demonstrate, Theorem~\ref{MainExistenceTheoremIdealSequences} can be used to show the existence of limits in a large number of important settings. Moreover, the following well-known lemma allows similar techniques to be used more broadly still.

\begin{Lemma}
\label{colonsamelimit}
Let $(R, \m, k)$ be an $F$-finite local domain of dimension $d$ and $0 \neq c \in R$.  Suppose $\{I_e\}_{e \in \N}$ and $\{J_e\}_{e \in \N}$ are two sequences of ideals in $R$ so that 
\[
\m^{[p^e]} \subseteq I_e\subseteq J_e\subseteq (I_e:c)
\]
for all $e \in \N$. Then there exists a positive constant $C \in \R$ so that 
\[
\left| \frac{1}{p^{ed}} \ell_R(R/I_e) - \frac{1}{p^{ed}} \ell_R(R/J_e)\right| = \frac{1}{p^{ed}} \ell_R(J_e / I_e) \leq \frac{C}{p^e}
\]
for all $e \in \N$ and so 
$\lim_{e \to \infty} \left( \frac{1}{p^{ed}} \ell_R(R/I_e) - \frac{1}{p^{ed}} \ell_R(R/J_e) \right) = 0$. Hence $\lim_{e \to \infty}\frac{1}{p^{ed}}\ell_R(R/J_e)$ exists if and only if $\lim_{e \to \infty}\frac{1}{p^{ed}}\ell_R(R/I_e)$ exists, in which case they are equal.
\end{Lemma}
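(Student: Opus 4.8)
The plan is to reduce everything to the single bound $\ell_R(J_e/I_e)\le C\,p^{e(d-1)}$: dividing by $p^{ed}$ then yields the displayed inequality, and since $I_e\subseteq J_e$ forces $\ell_R(R/I_e)=\ell_R(R/J_e)+\ell_R(J_e/I_e)$, this simultaneously identifies the absolute value in the statement with $\tfrac{1}{p^{ed}}\ell_R(J_e/I_e)$ and gives the limit comparison at once. We may assume $c$ is a nonunit, since otherwise $(I_e:c)=I_e$ forces $I_e=J_e$ and there is nothing to prove; then $\overline R=R/cR$ is a local ring of dimension $d-1$ (as $R$ is a domain, the nonzero element $c$ lies outside the unique minimal prime, so $\dim(R/cR)=\dim R-1$), with maximal ideal $\overline\m=\m\overline R$.

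First I would record that $R/I_e$ has finite length, being a quotient of the Artinian ring $R/\m^{[p^e]}$. The containments $I_e\subseteq J_e\subseteq(I_e:c)$ give $J_e/I_e\subseteq(I_e:c)/I_e=(0:_{R/I_e}c)$, so it suffices to bound $\ell_R\bigl((0:_{R/I_e}c)\bigr)$. The key move is to apply additivity of length to the four-term exact sequence
\[
0\longrightarrow (0:_{R/I_e}c)\longrightarrow R/I_e \xrightarrow{\;\cdot c\;} R/I_e \longrightarrow R/(I_e+cR)\longrightarrow 0,
\]
which is legitimate because $R/I_e$ has finite length; this yields $\ell_R\bigl((0:_{R/I_e}c)\bigr)=\ell_R\bigl(R/(I_e+cR)\bigr)$, trading the hard-to-control colon module for the cokernel. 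Since $\m^{[p^e]}+cR\subseteq I_e+cR$, the ring $R/(I_e+cR)$ is a quotient of $R/(\m^{[p^e]}+cR)\cong\overline R/\overline\m^{[p^e]}$ (Frobenius expansion of ideals commutes with the quotient map $R\to\overline R$), and lengths over $R$ and $\overline R$ agree for $\overline R$-modules. Then Lemma~\ref{Well known bound} applied to the local ring $\overline R$ and the module $\overline R$, of dimension $d-1$, supplies a positive constant $C=C(\overline R,\overline\m)$ with $\ell_R(J_e/I_e)\le\ell_{\overline R}\bigl(\overline R/\overline\m^{[p^e]}\bigr)\le C\,p^{e(d-1)}$ for all $e$, which completes the argument.

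There is no serious obstacle here; the only points requiring a little care are the reduction to $c$ a nonunit (and the attendant dimension drop, which is precisely where the domain hypothesis is used) and the length identity in the four-term sequence, which is what allows one to control the colon $(I_e:c)/I_e$ indirectly through $R/(I_e+cR)$ rather than attempting to bound $\mu_R(I_e)$ or $\ell_R(I_e/cI_e)$ directly.
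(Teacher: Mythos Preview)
Your proof is correct and follows essentially the same route as the paper: both use the kernel--cokernel length identity for multiplication by $c$ on $R/I_e$ to replace $\ell_R((I_e:c)/I_e)$ by $\ell_R(R/(I_e,c))$, then bound the latter by $\ell_R(R/(\m^{[p^e]},c))$ and invoke Lemma~\ref{Well known bound} for the $(d-1)$-dimensional module $R/(c)$. Your version is slightly more explicit in separating out the case where $c$ is a unit and in writing down the four-term exact sequence, but the argument is the same.
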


\begin{proof}
Note that as $(I_e:c)/I_e$ is the kernel and $R / (I_e, c)$ the cokernel of multilication by $c$ on $R/I_e$, their lengths are equal.  Thus
\[
\begin{array}{rcl}
\frac{1}{p^{ed}} \ell_R(J_e / I_e)  &\leq & \frac{1}{p^{ed}} \ell_R((I_e:x) / I_e)  \\
& = & \frac{1}{p^{ed}} \ell_R(R /( I_e,c)) \\
& \leq & \frac{1}{p^{ed}} \ell_R(R /( \m^{[p^e]},c)) 
 \leq \frac{C}{p^e}
\end{array}
\]
by applying Lemma~\ref{Well known bound} with $M = R/(c)$.
\end{proof}

\noindent
Note that, once again, the constant $C$ in the above lemma depends only on the choice of $0 \neq c \in R$ and not on any particular sequence of ideals; moreover, in case $R$ is not local, the constant $C$ can be chosen uniformly over $\Spec(R)$ using Theorem~\ref{polstrabound}.

\section{Positivity}\label{Section Positivity}  

\subsection{Positivity of the F-signature}
Recall that an $F$-finite ring $R$ is said to be strongly $F$-regular if and only if, for all $0 \neq x \in R$ there exists some $e \in \N$ and $\phi \in \Hom_R(R^{1/p^e},R)$ with $\phi(x^{1/p^e}) = 1$.  Strongly $F$-regular rings are always Cohen-Macaulay normal domains, and remain strongly $F$-regular after completion.  In particular, the completion of a strongly $F$-regular local domain remains a domain.

\begin{Theorem}\label{Aberbach Leuschke}
\cite[Main Result]{AbLeu}
 Suppose $(R, \m, k)$ is a local $F$-finite domain.  Then $R$ is strongly $F$-regular if and only if $s(R) > 0$.
\end{Theorem}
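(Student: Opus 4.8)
The plan is to realize the $F$-signature as a limit of lengths of cofinite ideals and then feed that sequence into Theorem~\ref{MainExistenceTheoremIdealSequences}. Set
\[
I_e \;=\; \bigl(\, r \in R \;\big|\; \phi(r^{1/p^e}) \in \m \ \text{ for all } \phi \in \Hom_R(R^{1/p^e},R)\,\bigr),
\]
so that, as recalled after Corollary~\ref{Limits exist combined}, $\frk_R(R^{1/p^e}) = \ell_R\bigl((R/I_e)^{1/p^e}\bigr) = [k^{1/p^e}:k]\cdot\ell_R(R/I_e)$, and hence $s(R) = \lim_{e\to\infty}\tfrac{1}{p^{ed}}\ell_R(R/I_e)$. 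First I would check the hypotheses of Theorem~\ref{MainExistenceTheoremIdealSequences}~(ii.): certainly $\m^{[p^e]}\subseteq I_e$ for all $e$, and for \emph{any} nonzero $\psi\in\Hom_R(R^{1/p},R)$ one has $\psi\bigl(I_{e+1}^{1/p}\bigr)\subseteq I_e$, because for $r\in I_{e+1}$ and $\phi\in\Hom_R(R^{1/p^e},R)$ the composite $\phi\circ\psi^{1/p^e}$ again belongs to $\Hom_R(R^{1/p^{e+1}},R)$, so $\phi\bigl(\psi(r^{1/p})^{1/p^e}\bigr) = (\phi\circ\psi^{1/p^e})(r^{1/p^{e+1}})\in\m$. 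The positivity assertion in Theorem~\ref{introlimitsofsequences} then gives
\[
s(R) > 0 \quad\Longleftrightarrow\quad \bigcap_{e\in\N} I_e = 0 .
\]

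Next I would identify the condition $\bigcap_e I_e = 0$ with strong $F$-regularity; once $R$ is local this is purely formal. For a fixed $0\neq r\in R$, the ideal $\{\phi(r^{1/p^e}) \mid \phi\in\Hom_R(R^{1/p^e},R)\}$ fails to be contained in $\m$ for some $e$ exactly when some $\phi(r^{1/p^e})$ is a unit, and rescaling such a $\phi$ by the inverse of that unit produces a $\phi$ with $\phi(r^{1/p^e}) = 1$. Hence $\bigcap_e I_e = 0$ says precisely that every nonzero $r\in R$ admits an $e$ and a $\phi\in\Hom_R(R^{1/p^e},R)$ with $\phi(r^{1/p^e}) = 1$ --- which is the definition of strong $F$-regularity. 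Together with the equivalence displayed above, this proves the theorem.

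It is worth separating the two implications. The ``only if'' direction is elementary and does not need the positivity criterion: if $R$ is not strongly $F$-regular, the identification above furnishes some $0\neq c\in\bigcap_e I_e$; we may take $c\in\m$ (otherwise $I_e = R$ for all $e$ and $s(R) = 0$ at once). Then $(\m^{[p^e]},c)\subseteq I_e$, so $\ell_R(R/I_e)\leq\ell_R\bigl((R/cR)\big/\m^{[p^e]}(R/cR)\bigr)\leq C\, p^{e(d-1)}$ by Lemma~\ref{Well known bound} applied to the module $R/cR$, whose dimension is less than $d$ since $R$ is a domain and $c\neq0$; so $s(R) = 0$. Thus all the content is in the ``if'' direction, i.e.\ in the implication $\bigcap_e I_e = 0 \Rightarrow \eta>0$ of the positivity criterion for sequences satisfying Theorem~\ref{MainExistenceTheoremIdealSequences}~(ii.), and I expect this to be the main obstacle. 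This is exactly the step that cannot get by on the containments $\m^{[p^e]}\subseteq I_e$ alone, which only bound $\ell_R(R/I_e)$ from above and are consistent with $\tfrac{1}{p^{ed}}\ell_R(R/I_e)\to 0$: one must genuinely use the Cartier linear map $\psi$ relating consecutive terms of the sequence to upgrade the vanishing of $\bigcap_e I_e$ into a lower bound that forces $\ell_R(R/I_e)$ to grow at the full rate $\asymp p^{ed}$. A more hands-on variant of the same circle of ideas --- extracting from strong $F$-regularity a single surjection $\phi_0\in\Hom_R(R^{1/p^{e_0}},R)$ and carefully bounding the free rank of $R^{1/p^e}$ that it produces --- should in addition yield a readily computable lower bound for $s(R)$.
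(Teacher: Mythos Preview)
Your argument is correct and is essentially the paper's second proof, repackaged as a direct application of the general positivity criterion (Theorem~\ref{Aberbach Leuschke type Theorem 2}, announced as part of Theorem~\ref{introlimitsofsequences}) to the sequence $I_e = I_e^{\mathrm{F-sig}}$. The paper's second proof instead establishes the needed special case of that criterion directly---via Lemma~\ref{HHLemma} and Chevalley's Lemma---and only afterward proves the general version; your organization is cleaner but defers the real work to the same place, which you correctly identify as the main obstacle. One small omission: Theorem~\ref{Aberbach Leuschke type Theorem 2} is stated only for \emph{complete} local domains, so for the ``if'' direction you should first pass to the completion, noting that the $F$-signature is unchanged and that a strongly $F$-regular local ring remains a domain upon completion; both of the paper's proofs make this reduction explicitly.

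The ``hands-on variant'' you mention at the end is exactly the paper's first proof: it takes a generically separable Noether normalization $A = k[[x_1,\ldots,x_d]] \subseteq \hat R$ with $c\,\hat R^{1/p^e} \subseteq \hat R \otimes_A A^{1/p^e}$, uses strong $F$-regularity to produce a single $\phi \in \Hom_R(R^{1/p^{e_0}},R)$ with $\phi(c^{1/p^{e_0}}) = 1$, and from the free $A$-basis of $A^{1/p^e}$ constructs $p^{ed}$ explicit surjections $R^{1/p^{e+e_0}} \to R$, giving the concrete lower bound $s(R) \geq 1/p^{e_0 d}$.
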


We will give two new proofs of Theorem~\ref{Aberbach Leuschke}.  The first proof is notable in that it gives a readily computable lower bound for the $F$-signature.

\begin{proof}[First proof of Theorem~\ref{Aberbach Leuschke}]
We will assume for simplicity in the exposition that $k = k^p$ is perfect -- the proof is easily adapted to arbitrary $k$.  Suppose first that $R$ is not strongly $F$-regular, so that there exists some $0 \neq x \in R$ with $\phi(x^{1/p^e}) \in \m$ for all $e \in \N$ and all $\phi \in \Hom_R(R^{1/p^e},R)$.  Take a surjection $\Phi \: R^{1/p^e} \to R^{\oplus \frk_R(R^{1/p^e})}$ of $R$-modules.  It follows that $\Phi((xR)^{1/p^e}) \subseteq \m^{\oplus \frk_R(R^{1/p^e})}$, so that $\Phi$ induces a surjection of $R$-modules $(R / xR)^{1/p^e} \to k^{\oplus \frk_R (R^{1/p^e})}$ and hence $\frk_R(R^{1/p^e}) \leq \mu_R((R / xR)^{1/p^e}) \leq C(R/xR, \m) p^{e(d-1)}$ using \eqref{mingenstoHK} and Lemma~\ref{Well known bound} for some positive constant $C(R/xR, \m) \in \R$. Dividing through by $p^{ed}$ and taking $\lim_{e \to \infty}$ then gives $s(R) = 0$.  Thus, $s(R) > 0$ implies that $R$ is strongly $F$-regular.

{
\newcommand{\qz}{p^{e_0}}
\newcommand{\qqz}{p^{e+ e_0}}
\renewcommand{\q}{p^{e}}
\newcommand{\x}{\underline{x}}
\newcommand{\sI}{\mathcal{I}}
Conversely, suppose that $R$ is strongly $F$-regular.  The $F$-signature remains unchanged upon completion, so we may assume $R$ is complete and choose a coefficient field $k$ and system of parameters $x_1, \ldots, x_d$  so that $R$ is module finite and generically separable over the regular subring $A = k[[ x_1, \ldots, x_d]]$.  Take $0 \neq c \in A$ so that $c R^{1/p^e} \subseteq R[A^{1/p^e}] = R \otimes_A A^{1/p^e}$ as in Lemma~\ref{ranklemma} for all $e \in \N$.  Since $R$ is strongly $F$-regular, we can find $e_0 \in \N$ and $\phi \in \Hom_R(R^{1/p^{e_0}},R)$ with $\phi(c^{1/p^{e_0}}) = 1$.  We will show $s(R) \geq \frac{1}{p^{e_0d}} > 0$.

For any $\alpha = (\alpha_1, \ldots, \alpha_d) \in \mathbb{Z}^d$ and $e \in \N$, we write $\underline{x}^{\alpha / \q}$ for $x_1^{\alpha_1 / \q} \cdots x_d^{\alpha_d / \q}$. 
Let $\sI_e = \{ \alpha = (\alpha_1, \ldots, \alpha_d) \in \mathbb{Z}^d \; | \; 0 \leq \alpha_i < \q \mbox{ for } i = 1, \ldots, d \}$.  
The monomials $\x^{\alpha / \q}$ for $\alpha \in \sI_{e}$ are a free basis for $A^{1/\q}$ over $A$.  As such, for each $\alpha \in \sI_e$, we can find an $A$-linear map $\pi_\alpha \: A^{1/\q} \to A$ with $\pi_\alpha(\x^{\alpha/ \q}) = 1$ and $\pi_\alpha(\x^{\beta/\q}) = 0$ for all $\alpha \neq \beta \in \sI_e$.  Applying $R \otimes_A \blank$ gives an $R$-linear map $\tilde\pi_\alpha \: R[A^{1/\q}] \to R$ with $\tilde\pi_\alpha(\x^{\alpha/ \q}) = 1$ and $\tilde\pi_\alpha(\x^{\beta/\q}) = 0$ for all $\alpha \neq \beta \in \sI_e$. 
Multiplication by $c$ gives an $R$-linear map $R^{1/\q} \to R[A^{1/\q}]$, and composing with $\tilde\pi_\alpha$ gives an $R$-linear map $\tilde\pi_\alpha( c \cdot \blank) \: R^{1/\q} \to R$ so that 
$\tilde\pi_\alpha(c\, \x^{\alpha/ \q}) = c \,\tilde\pi_\alpha(\x^{\alpha/ \q}) = c $ 
and $\tilde\pi_\alpha(c\, \x^{\beta/\q}) = c \, \tilde\pi_\alpha( \x^{\beta/\q}) = 0$ for all $\alpha \neq \beta \in \sI_e$ (since $c \in A$).  Setting $\psi_\alpha( \blank) = \phi \circ (\tilde\pi_\alpha( c \cdot \blank))^{1/ p^{e_0}} \: R^{1/\qqz} \to R$, we have
$\psi_\alpha \in \Hom_R(R^{1/\qqz},R)$ with $\psi_\alpha(\x^{\alpha/ \qqz}) = 1$ and $\psi_\alpha(\x^{\beta/\qqz}) = 0$ for all $\alpha \neq \beta \in \sI_e$.  It follows that $\Psi = \oplus_{\alpha \in \sI_e} \psi_\alpha \: R^{1/\qqz} \to R^{\oplus p^{ed}}$ is an $R$-module surjection and hence $\frk_R(R^{1/\qqz}) \geq p^{ed}$.  Dividing through by $p^{(e+e_0)d}$ and taking $\lim_{e \to \infty}$ yields $s(R) \geq \frac{1}{p^{e_0d}} > 0$ as desired.
}
\end{proof}

For the second proof, we will need the following lemma, which should be compared with \cite[Theorem 3.3]{HHEltsSmallOrder}.  In the next subsection, we will show how to adapt this proof to arbitrary sequences of ideals as in Theorem~\ref{MainExistenceTheoremIdealSequences} (ii.).

\begin{Lemma}\label{HHLemma} Let $(R,\m,k)$ be a complete local F-finite Cohen-Macaulay domain of dimension $d$. There exist  $N \in \N$ with the following property: for any $e \in \N$ and all $x \in R \setminus \m^{[p^e]}$, there exists a map $\phi \in \Hom_R(R^{1/p^e},R)$ with $\phi(x^{1/p^e}) \not\in \m^N$.
\end{Lemma}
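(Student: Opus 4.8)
The plan is to exploit the canonical module and the Cohen--Macaulay hypothesis, which force the socle-type obstructions to living in a bounded power of $\m$. First I would replace $R$ by a Gorenstein situation: since $R$ is a complete local $F$-finite Cohen--Macaulay domain, it is module-finite and (after choosing parameters via Cohen--Gabber as in Lemma~\ref{ranklemma}) generically separable over a regular local subring $A = k[[x_1,\dots,x_d]]$. The key point is that $\Hom_A(R,A)$ is a faithful $R$-module of rank one, so one can pick a nonzero $0\neq c \in R$ (or in the conductor/different) so that multiplication by $c$ carries $R$ into a free-like position relative to $A$; concretely, using Lemma~\ref{ranklemma} there is $0\neq c\in A$ with $c\cdot R^{1/p^e}\subseteq R\otimes_A A^{1/p^e}$ for all $e$. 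The strategy is then to transfer the question from $R$ to the regular ring $A$, where Frobenius splittings are transparent, and absorb the error into a fixed power of $\m$ controlled by $c$.

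The main steps, in order, would be: (1) Fix $N_0\in\N$ with $c\notin\m^{N_0}$, equivalently so that $\m^{N_0}\subseteq (c)$ only holds after clearing denominators — more precisely choose $N$ large enough that $x\notin\m^{[p^e]}$ forces the image of $x$ in $R/(\text{stuff})$ to be detected by a component map composed with $\pi_\alpha$. (2) Given $x\in R\setminus\m^{[p^e]}$, write $x = \sum_\alpha r_\alpha^{p^e}\, \underline{x}^{\alpha/p^e}$-type expansion after multiplying by $c$: since $cx^{1/p^e}\in R\otimes_A A^{1/p^e}$ and the monomials $\underline{x}^{\alpha/p^e}$ ($0\le\alpha_i<p^e$) form a free $A$-basis of $A^{1/p^e}$, we may write $c\cdot x = \sum_{\alpha} c_\alpha\, x_1^{\alpha_1}\cdots x_d^{\alpha_d}$ with $c_\alpha\in A$, and $x\notin\m^{[p^e]}$ forces some $c_\alpha$ to be a unit (this is where Cohen--Macaulayness/regularity of $A$ and flatness of Frobenius over $A$ enter). (3) Apply the projection $\tilde\pi_\alpha$ (as constructed in the first proof of Theorem~\ref{Aberbach Leuschke}) composed with multiplication by $c$: this yields $\phi_0\in\Hom_R(R^{1/p^e},R)$ with $\phi_0(x^{1/p^e})$ equal to a unit times $c$, hence $\phi_0(x^{1/p^e})\notin\m^{N}$ for $N = N_0$ independent of $e$ and $x$.

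The hard part will be Step (2): controlling how far $x\notin\m^{[p^e]}$ is from being ``visibly nonzero'' after the change of rings. The subtlety is that $c$ depends on the fixed separable extension but not on $e$, so the obstruction $\phi_0(x^{1/p^e})\in (c)\setminus\m^N$ must be uniform; one must check that multiplying by $c$ and expanding in the monomial basis cannot push \emph{all} the unit coefficients into $\m$ when $x\notin\m^{[p^e]}$, which follows because $\m^{[p^e]}$ is exactly the contraction of $\m_A^{[p^e]}R\otimes_A A^{1/p^e}$-type ideals and $A\to A^{1/p^e}$ is faithfully flat. An alternative, possibly cleaner route avoiding the explicit Cohen--Gabber setup: use that a complete CM $F$-finite domain admits a nonzero $c$ with $c\cdot\Hom_R(R^{1/p^e},R)$ generating (the $e$-th Frobenius power of) the canonical module uniformly, combined with the fact that $R/\m^{[p^e]}$ has finite length with socle in bounded $\m$-degree because $\m^{[p^e]}\supseteq (x_1,\dots,x_d)^{[p^e]}$ is a parameter ideal in the CM ring $R$; then $N$ can be taken so that the socle of $R/(x_1^{p^e},\dots,x_d^{p^e})$ lies in $\m^{\,\ge 1}/\m^{[p^e]}$ and one plays off the Gorenstein-type pairing on parameter ideals. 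I would present the first route as the main argument and mention the second as a remark.
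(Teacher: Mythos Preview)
Your Step~(2) is where the argument breaks, and the gap is genuine rather than cosmetic. You claim that for $x\notin\m^{[p^e]}$, after multiplying by $c$ and expanding $cx^{1/p^e}$ in the free $R$-basis $\{\underline{x}^{\alpha/p^e}\}$ of $R\otimes_A A^{1/p^e}$, some coefficient must be a unit. This is simply false: already for $R=A=k[[t]]$, $c=t$, and $x=1\notin(t^{p^e})$, one has $cx^{1/p^e}=t=t\cdot 1$, whose only nonzero coefficient is $t\in\m$. Multiplication by the non-unit $c$ can and does push every coefficient into $\m$, and your faithful-flatness remark does not prevent this. The route via Lemma~\ref{ranklemma} and the discriminant-type element $c$ is precisely what the paper uses for the \emph{non}-Cohen--Macaulay case in Proposition~\ref{MainHHLemma}, and there it only yields the conclusion for $x\in R\setminus(\m^{[p^e]}:c)$; the colon by $c$ is exactly the loss you are unable to avoid. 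Your Step~(3) inherits the same problem: the map $\tilde\pi_\alpha(c\cdot\blank)$ applied to $x^{1/p^e}$ returns the $\alpha$-th coefficient $r_\alpha$, not ``a unit times $c$''.

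The paper's proof uses the Cohen--Macaulay hypothesis in a way you never do. Cohen--Macaulayness makes $R$ a \emph{free} $A$-module over $A=k[[x_1,\dots,x_d]]$, hence $R^{1/p^e}$ is free over $A^{1/p^e}$ and therefore over $A$; no generic separability or element $c$ is needed. Then $x\notin\m^{[p^e]}\supseteq\m_A^{[p^e]}R$ gives $x^{1/p^e}\notin\m_A R^{1/p^e}$, so $x^{1/p^e}$ extends to a free $A$-basis and there is an $A$-linear map $\chi\colon R^{1/p^e}\to A$ with $\chi(x^{1/p^e})=1$. One then converts $\chi$ into an $R$-linear map at the cost of a fixed element $0\neq y\in A$ annihilating $\Hom_A(R,A)/(R\tau)$ for a fixed nonzero $\tau\in\Hom_A(R,A)$: the composition along the top of diagram~\eqref{bigmapdiagram} produces $\phi\in\Hom_R(R^{1/p^e},R)$ with $\tau(\phi(x^{1/p^e}))=y$, forcing $\phi(x^{1/p^e})\notin\m_A^{N'}R\supseteq\m^N$ once $N$ is chosen so that $y\notin\m_A^{N'}$. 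The entire cost is the single element $y$, independent of $e$ and $x$. Your alternative route via socles of parameter ideals is too vague to assess as written; note that the socle of $R/(x_1^{p^e},\dots,x_d^{p^e})$ sits in $\m$-adic degree growing with $e$, so that line does not directly hand you a fixed $N$ either.
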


\newcommand{\Tr}{\mathrm{Tr}}
\begin{proof}
Choosing a coefficient field $k$ and system of parameters $x_1, \ldots, x_d$, we have that $R$ is a finitely generated free module over $A = k[[ x_1, \ldots, x_d]]$ (see \cite[Proposition 2.2.11]{BrunsHerzog}); let $\m_A$ denote the maximal ideal of $A$.  Fix a non-zero map $\tau \in \Hom_A (R,A)$.  Since $\Hom_A(R,A)$ is a rank one torsion free $R$-module, we can find find $0 \neq y \in \Ann_R \left( \Hom_A(R,A)/(R  \tau)\right)$; replacing by a nonzero multiple, we may further assume $0 \neq y \in A$.  If $N',N \in \N$ are sufficiently large so that $y \not\in \m_{A}^{N'}$ and $\m^N \subseteq \m_A^{N'}R$, we will show $N$ satisfies the desired property.

Fix $e \in \N$ and $x \in R \setminus \m^{[p^e]}$, so also $x \not\in \m_A^{p^e}R$. Since $R^{1/p^e}$ is a free $A$-module and $x^{1/p^e} \not\in \m_A R^{1/p^e}$,  $x^{1/p^e}$ can be taken as part of a free basis and there exists an $A$-linear map $\chi \: R^{1/p^e} \to A$ with $\chi(x^{1/p^e}) = 1$. If $\Tr \: \Hom_A(R,A) \to A$ is the $A$-linear map given by evaluation at $1$, there is a commutative diagram
\begin{equation}
\label{bigmapdiagram}
\xymatrix{
R^{1/p^e} \ar[drrr]_-{\chi} \ar[rr]^-{\tilde{\chi}}  && \Hom_A(R^{1/p^e},A) \ar[r] & \Hom_A(R,A) \ar[rr]^{\cdot y} \ar[d]^{\Tr}  & & R\tau \ar[rr]^{\simeq} \ar[d]^{\Tr} & & R  \ar[d]^{\tau}\\
	  &    &  & A 	\ar[rr]^-{\cdot y}		& & A \ar[rr]^{=}		& & A
}
\end{equation}
where $\tilde{\chi}$ is the $R^{1/p^e}$-linear map sending $1 \mapsto \chi$ and $\Hom_A(R^{1/p^e},A) \to \Hom_A(R,A)$ is the $R$-linear map given by restriction to $R \subseteq R^{1/p^e}$.  Let $\phi \in \Hom_R(R^{1/p^e},R)$ be the composition along the top row of this diagram, so that $y \chi =  \tau \circ \phi$.  We have $\tau(\phi(x^{1/p^e})) = y \chi(x^{1/p^e}) = y \not\in \m_A^{N'}$, and thus $\phi(x^{1/p^e}) \not\in \m_A^{N'}R$ as $\tau$ is $A$-linear.  In particular, since $\m^N \subseteq \m_A^{N'}R$, we have $\phi(x^{1/p^e}) \not\in \m^N$ as desired.
\end{proof}

\begin{proof}[Second proof of Theorem~\ref{Aberbach Leuschke}]
Suppose that $R$ is strongly $F$-regular, and thus Cohen-Macaulay.  As before, without loss of generality, we may assume $R$ is complete. If $I_e^{\mathrm{F-sig}} = ( r \in R \mid \phi(r^{1/p^e}) \in \m \mbox{ for all } \phi \in \Hom_R(R^{1/p^e},R) )$ and $\psi \in \Hom_R(R^{1/p^{e'}},R)$, it is easy to check that $\psi((I_{e+e'}^{\mathrm{F-sig}})^{1/p^{e'}}) \subseteq I_e^{\mathrm{F-sig}}$ for all $e, e' \in \N$.  Since $R$ is $F$-split, we may conclude $I_{e+1}^{\mathrm{F-sig}} \subseteq I_{e}^{\mathrm{F-sig}}$ for all $e \in \N$; moreover, by the definition of strong $F$-regularity, we have $\bigcap_{e \in \N} I_e^{\mathrm{F-sig}} = 0$.  Thus, for $N$ as in Lemma~\ref{HHLemma}, Chevalley's Lemma \cite[Lemma 7]{ChevalleysLemma} gives $e_0 \in \N$ with $I_{e_0}^{\mathrm{F-sig}} \subseteq \m^N$.  It follows from Lemma~\ref{HHLemma} that $I_{e+e_0}^{\mathrm{F-sig}} \subseteq \m^{[p^e]}$ for all $e \in \N$.  Indeed for each $x \in R \setminus \m^{[p^e]}$ there is some $\phi \in \Hom_R(R^{1/p^e},R)$ with $\phi(x^{1/p^e}) \not\in \m^N$ and hence $\phi(x^{1/p^e}) \not\in I_{e_0}^{\mathrm{F-sig}}$; as $\phi((I_{e+e_0}^{\mathrm{F-sig}})^{1/p^{e}}) \subseteq I_{e_0}^{\mathrm{F-sig}}$ we must have $x \not\in I_{e+e_0}^{\mathrm{F-sig}}$.  Thus, we compute
\[
s(R) = \lim_{e \to \infty} \frac{1}{p^{(e+e_0)d}} \ell_{R}(R/I_{e+e_0}^{\mathrm{F-sig}}) \geq \lim_{e \to \infty } \frac{1}{p^{(e+e_0)d}} \ell_{R}(R/\m^{[p^e]}) = \frac{1}{p^{e_0 d}}e_{HK}(R) \geq \frac{1}{p^{e_0 d}} > 0
\]
as desired.
\end{proof}

\subsection{Positivity via a Cartier linear map}

In this section, we generalize the method of the second proof of Theorem~\ref{Aberbach Leuschke} to examine the positivity of the limits appearing in Theorem~\ref{MainExistenceTheoremIdealSequences} (ii.).    The essential technique is to exploit the following generalization of Lemma~\ref{HHLemma}, which should again be compared with \cite[Theorem 3.3]{HHEltsSmallOrder}.

\begin{Proposition}
\label{MainHHLemma}
Let $(R, \m, k)$ be a complete local $F$-finite domain of dimension $d$ and suppose that $0 \neq \psi \in \Hom_R(R^{1/p},R)$. There exist  $N \in \N$ and $0 \neq c \in R$ so that $\psi^e( (xR)^{1/p^e}) \not\subseteq \m^N$ for any $e \in \N$ and all $x \in R \setminus ( \m^{[p^e]} : c)$.
\end{Proposition}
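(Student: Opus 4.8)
The plan is to mimic the structure of the proof of Lemma~\ref{HHLemma}, replacing arbitrary maps in $\Hom_R(R^{1/p^e},R)$ by iterates of the fixed map $\psi$. First I would invoke Lemma~\ref{makeitaniterateofafixedmap}: choose $0\neq z\in R$ so that for every $e\in\N$ and every $\phi\in\Hom_R(R^{1/p^e},R)$, the map $z\cdot\phi(\blank)$ is a pre-multiple of $\psi^e$, say $z\phi(\blank)=\psi^e(r^{1/p^e}\cdot\blank)$ for some $r\in R$. This is the mechanism that converts ``there exists some $\phi$ doing the job'' into ``$\psi^e$ (after a pre-multiplication) does the job.''

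Next, apply Lemma~\ref{HHLemma} to the complete local $F$-finite domain $R$ — but note Lemma~\ref{HHLemma} as stated requires $R$ Cohen--Macaulay, so I would first need a Cohen--Macaulay reduction. The standard device is to pick a complete regular (hence $F$-finite) local subring $A=k[[x_1,\dots,x_d]]\subseteq R$ over which $R$ is module-finite; then one works with $\Hom_A(R^{1/p^e},A)$ rather than $\Hom_R(R^{1/p^e},R)$ and transfers via a fixed nonzero $R$-linear projection, exactly as in the diagram~\eqref{bigmapdiagram}. Concretely: there is $N_0\in\N$ and, for each $e$ and each $x\in R\setminus\m_A^{[p^e]}R$, a map $\phi\in\Hom_R(R^{1/p^e},R)$ with $\phi(x^{1/p^e})\notin\m^{N_0}$ — this is the content of the Cohen--Macaulay argument applied over $A$ and pushed to $R$. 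Then set $c$ to be a suitable element (a power of $z$ times an element witnessing $\m^{[p^e]}\supseteq$ a power of $\m_A R$, and absorbing the conductor-type factor from Lemma~\ref{makeitaniterateofafixedmap}), and set $N$ slightly larger than $N_0$ to absorb the $\m$-adic shift caused by multiplying by $z$.

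The core computation then runs: given $x\in R\setminus(\m^{[p^e]}:c)$, i.e. $cx\notin\m^{[p^e]}$, apply the Cohen--Macaulay statement to $cx$ (after arranging $cx\notin\m_A^{[p^e]}R$ by choosing $c$ inside an appropriate power of $\m_A$'s complement — more precisely, choosing $c$ so that $cx\notin\m^{[p^e]}$ forces $cx\notin\m_A^{[p^e]}R$, which needs $\m_A^{[p^e]}R\subseteq\m^{[p^e]}$, true since $x_i\in\m$) to get $\phi$ with $\phi((cx)^{1/p^e})\notin\m^{N_0}$. Then $z\phi((cx)^{1/p^e})=\psi^e(r^{1/p^e}(cx)^{1/p^e})=\psi^e((rcx)^{1/p^e})\notin\m^{N}$ for $N$ chosen so that $a\notin\m^{N_0}\Rightarrow za\notin\m^{N}$ (possible since $z\neq0$ and $R$ is a domain, using Chevalley/Krull intersection to bound the $\m$-adic order drop). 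Since $rcx\in (xR)$ after absorbing $rc$ into the new constant — here I would rename: let the final $c$ in the statement be the product $rc$... wait, $r$ depends on $\phi$ hence on $x$, so instead I would phrase the conclusion as $\psi^e((cx)^{1/p^e})$ lies in the $R^{1/p^e}$-span and observe $\psi^e(((rc)x)^{1/p^e})\in\psi^e((xR)^{1/p^e})$; thus $\psi^e((xR)^{1/p^e})\not\subseteq\m^N$. The element $c$ in the statement is the one multiplying $x$ in the hypothesis, i.e. the conductor/trace element from Lemma~\ref{makeitaniterateofafixedmap} composed with what is needed to pass from $R$ to $A$; its independence from $e$ and $x$ is exactly what Lemmas~\ref{makeitaniterateofafixedmap} and \ref{HHLemma} guarantee.

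The main obstacle I anticipate is bookkeeping the two different ``pre-multiplier'' elements: the element $z$ from Lemma~\ref{makeitaniterateofafixedmap} (independent of $e$, good) versus the element $r$ it produces for each $\phi$ (depends on $x$, bad if one wants it absorbed into a uniform $c$). The resolution is to keep $c$ on the hypothesis side — we only need $cx\notin\m^{[p^e]}$ — and let the $x$-dependent factor $r$ live harmlessly inside $(xR)$, since $\psi^e((rcx)^{1/p^e})\in\psi^e((xR)^{1/p^e})$ regardless of what $r$ is. The second delicate point is the Cohen--Macaulay hypothesis in Lemma~\ref{HHLemma}; I would either re-derive the needed statement directly over the regular subring $A$ (where freeness is automatic) and transfer, or observe that the argument of Lemma~\ref{HHLemma} only used $R$ free over $A$ to produce $\chi$, and that step survives verbatim for any module-finite $R\supseteq A$ once a nonzero $\tau\in\Hom_A(R,A)$ is fixed.
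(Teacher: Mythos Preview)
Your outline is on the right track and uses the same two main ingredients as the paper: Lemma~\ref{makeitaniterateofafixedmap} to convert an arbitrary $\phi\in\Hom_R(R^{1/p^e},R)$ into a pre-multiple of $\psi^e$, and the descent to a regular subring $A=k[[x_1,\dots,x_d]]$ as in Lemma~\ref{HHLemma}. Your Chevalley step to absorb multiplication by $z$ (i.e.\ choosing $N$ with $(\m^N:z)\subseteq\m^{N_0}$) is correct. The paper avoids this extra step by arranging $y,z\in A$ and computing $\tau(\psi^e((rx)^{1/p^e}))=yz$ directly as a \emph{fixed} element of $A$, independent of $x$; then $N$ is chosen once so that $yz\notin\m_A^{N'}$ and $\m^N\subseteq\m_A^{N'}R$. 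This is tidier but not essentially different.

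There is, however, a real gap in your handling of the non--Cohen--Macaulay case. Your closing claim that the argument of Lemma~\ref{HHLemma} ``survives verbatim for any module-finite $R\supseteq A$'' is false: producing the $A$-linear map $\chi\colon R^{1/p^e}\to A$ with $\chi(x^{1/p^e})=1$ requires $R^{1/p^e}$ to be $A$-free, which fails when $R$ is not Cohen--Macaulay. The paper repairs this by choosing $A$ so that $R$ is \emph{generically separable} over $A$ and invoking Lemma~\ref{ranklemma} to obtain $0\neq c\in A$ with $cR^{1/p^e}\subseteq R[A^{1/p^e}]=R\otimes_A A^{1/p^e}$ for all $e$; since $R[A^{1/p^e}]$ is $A$-free, one first builds $\chi'$ there and then sets $\chi=\chi'(c\cdot\blank)$. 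This element $c$ is precisely the one appearing in the hypothesis $x\notin(\m^{[p^e]}:c)$ of the Proposition. Your proposal never identifies this source for $c$; the phrase ``absorbing the conductor-type factor from Lemma~\ref{makeitaniterateofafixedmap}'' conflates two unrelated elements ($z$ controls the passage to $\psi^e$, while $c$ controls the passage to a free $A$-module). Once you insert Lemma~\ref{ranklemma} in place of the incorrect verbatim-survival claim, your argument goes through.
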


\begin{proof}
Choose a coefficient field $k$ and system of parameters $x_1, \ldots, x_d$  so that $R$ is module finite and generically separable over the regular subring $A = k[[ x_1, \ldots, x_d]]$.  Take $0 \neq c \in A$ so that $c R^{1/p^e} \subseteq R[A^{1/p^e}] = R \otimes_A A^{1/p^e}$ as in Lemma~\ref{ranklemma} for all $e \in \N$. 

Fixing a non-zero map $\tau \in \Hom_A (R,A)$, we can find  $0 \neq y \in \Ann_R \left( \Hom_A(R,A)/(R  \tau)\right)$ as $\Hom_A(R,A)$ is a rank one torsion free $R$-module.  Let $0 \neq z \in R$ be as in Lemma~\ref{makeitaniterateofafixedmap}; replacing by nonzero multiples, we may further assume both $0 \neq y \in A$ and $0 \neq z \in A$. If $N',N \in \N$ are sufficiently large that $yz \not\in \m_{A}^{N'}$ and $\m^N \subseteq \m_A^{N'}R$, we will show the pair of $c$ and $N$ satisfy the desired property.

Suppose $e \in \N$ and $x \in R \setminus (\m^{[p^e]}: c)$. Since $xc \not\in \m_A^{[p^e]}R \subseteq \m^{[p^e]}$, we have $(xc)^{1/p^e} \not\in \m_A R^{1/p^e}$ and so also $(xc)^{1/p^e} \not\in \m_A R[A^{1/p^e}]$.  As $R[A^{1/p^e}] = R \otimes_A A^{1/p^e}$ is a free $A$-module, there is a $\chi' \in \Hom_A(R[A^{1/p^e}],A)$ with $\chi'((xc)^{1/p^e}) = 1$.  Let $\chi \in \Hom_A(R^{1/p^e},A)$ be the composition
\[
\xymatrix{
R^{1/p^e} \ar[r]^-{\cdot c}& R[A^{1/p^e}] \ar[r]^-{\chi'}& A
}
\]
which satisfies $\chi(x^{1/p^e}) = 1$.  If $\Tr \: \Hom_A(R,A) \to A$ is the $A$-linear map given by evaluation at $1$, the same diagram
\eqref{bigmapdiagram} from Lemma~\ref{HHLemma} commutes where $\tilde{\chi}$ is the $R^{1/p^e}$-linear map sending $1 \mapsto \chi$ and $\Hom_A(R^{1/p^e},A) \to \Hom_A(R,A)$ is the $R$-linear map given by restriction to $R \subseteq R^{1/p^e}$.  Let $\phi \in \Hom_R(R^{1/p^e},R)$ be the composition along the top row of this diagram, so that $y \chi =  \tau \circ \phi$.  By Lemma~\ref{makeitaniterateofafixedmap}, we can find $r \in R$ so that $z \phi (\blank) = \psi^e( r^{1/p^e} \cdot \blank)$.  We compute
\[
\tau(\psi^e((rx)^{1/p^e})) = \tau(z\phi(x^{1/p^e})) = z \tau(\phi(x^{1/p^e})) = yz\chi(x^{1/p^e}) = yz \not\in \m_A^{N'}
\]
and conclude $\psi^e((rx)^{1/p^e}) \not\in \m_A^{N'}R$ as $\tau$ is $A$-linear.  In particular, since $\m^N \subseteq \m_A^{N'}R$, we have $\psi^e((xR)^{1/p^e}) \not\subseteq \m^N$ as desired.
\end{proof}

%In the proof of the main result of this section, we will need to use a well-known result on the stabilization of the images of an iterated $p^{-1}$-linear map. Rooted in the work of Hartshorne and Speiser \cite[Proposition 1.11]{HartshorneSpeiser} and later generalized by Lyubeznik  \cite{Lyubeznik}, the following version is due to Blickle and B\"{o}ckle  \cite{BlickleBockle}.

In the proof of the main result of this section, we will need to use a well-known result on the stabilization of the images of an iterated $p^{-1}$-linear map. Rooted in the work of Hartshorne and Speiser \cite[Proposition 1.11]{HartshorneSpeiser} and later generalized by Lyubeznik  \cite{Lyubeznik}, the following version is due to Gabber \cite{Gabber-Dwork}.

\begin{Theorem}
\cite[Lemma~13.1]{Gabber-Dwork}
\cite[Proposition 2.14]{BlickleBockle}
\label{Hartshorne Speiser} Let $M$ be a finitely generated module over an $F$-finite ring $R$ and $0 \neq \vaprhi\in \Hom_R(M^{1/p},M)$. Then the descending chain of $R$-modules
\[
M \supseteq \phi (M^{1/p}) \supseteq \phi^2 (M^{1/p^2}) \supseteq \cdots \supseteq \phi^e(M^{1/p^e}) \supseteq \cdots
\]
stabilizes for $e \gg 0$.
\end{Theorem}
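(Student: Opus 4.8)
The plan is to single out the largest submodule of $M$ on which $\varphi$ acts bijectively in the appropriate sense, and to show the image chain descends to it. Write $M_e=\varphi^e(M^{1/p^e})\subseteq M$. Using the convention $\varphi^{e+1}=\varphi\cdot\varphi^e=\varphi\circ(\varphi^e)^{1/p}$ one has $M_{e+1}=\varphi(M_e^{1/p})$, so $\{M_e\}_{e\in\N}$ is a descending chain of $R$-submodules of $M$, and $\varphi$ restricts to a map $M_e^{1/p}\to M_e$ since $\varphi(M_e^{1/p})=M_{e+1}\subseteq M_e$. The first, elementary, reduction is that the chain stabilizes the moment it repeats: if $M_{e_0}=M_{e_0+1}$ then applying $\varphi((\blank)^{1/p})$ repeatedly gives $M_e=M_{e_0}$ for all $e\ge e_0$. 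So it suffices to produce a single $e_0$ with $\varphi(M_{e_0}^{1/p})=M_{e_0}$.

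Call an $R$-submodule $N\subseteq M$ \emph{$\varphi$-surjective} if $\varphi(N^{1/p})=N$. Since $(\blank)^{1/p}$ is restriction of scalars it commutes with directed unions, from which one checks that a directed union (indeed any sum) of $\varphi$-surjective submodules is again $\varphi$-surjective; as $M$ is Noetherian there is therefore a largest $\varphi$-surjective submodule $M_{\mathrm{st}}\subseteq M$, and $M_{\mathrm{st}}=\varphi^e(M_{\mathrm{st}}^{1/p^e})\subseteq M_e$ for every $e$. Consequently it is enough to prove that the quotient $\overline M=M/M_{\mathrm{st}}$, with the induced $p^{-1}$-linear map $\overline\varphi$, is \emph{nilpotent}, i.e. $\overline\varphi^{\,e_0}(\overline M^{1/p^{e_0}})=0$ for some $e_0$: this forces $M_{e_0}\subseteq M_{\mathrm{st}}\subseteq M_{e_0}$, hence $M_e=M_{\mathrm{st}}$ for all $e\ge e_0$. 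Moreover $\overline M$ has no nonzero $\varphi$-surjective submodule: the preimage $N\subseteq M$ of such a submodule is $R$-stable with $\varphi(N^{1/p})+M_{\mathrm{st}}=N$, and since $M_{\mathrm{st}}=\varphi(M_{\mathrm{st}}^{1/p})\subseteq\varphi(N^{1/p})$ this gives $\varphi(N^{1/p})=N$, so $N\subseteq M_{\mathrm{st}}$ and the image in $\overline M$ is $0$. Thus everything reduces to the following assertion: a module $M$ equipped with $0\ne\varphi\in\Hom_R(M^{1/p},M)$ and having no nonzero $\varphi$-surjective submodule is nilpotent.

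To prove this I would use Noetherian induction on the closed set $\Supp(M)\subseteq\Spec(R)$ (legitimate since closed subsets satisfy the descending chain condition). If $M=0$ we are done. Otherwise, for each minimal prime $\p$ of $\Supp(M)$ the localization $M_\p$ has finite length over $R_\p$, so the image chain $\{(M_\p)_e\}$ has non-increasing length and hence stabilizes to a $\varphi$-surjective $R_\p$-submodule $L_\p\subseteq M_\p$. Granting that $L_\p=0$ for every such $\p$ (this is the crux; see below), we conclude $(M_{e})_\p=0$ for all $e$ large at each minimal prime of $\Supp(M)$, so for suitable $e_1$ the closed set $\Supp(M_{e_1})$ meets none of the minimal primes of $\Supp(M)$ and is therefore properly contained in $\Supp(M)$. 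Since $M_{e_1}$ carries the restriction of $\varphi$ and still has no nonzero $\varphi$-surjective submodule (such a submodule of $M_{e_1}$ is one of $M$), the inductive hypothesis applies to $M_{e_1}$, giving that $M_{e_1}$, and hence $M$, is nilpotent.

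The hard part is exactly the claim that each generic stable module $L_\p$ vanishes — equivalently, that a nonzero $\varphi$-surjective $R_\p$-submodule at a minimal prime of $\Supp(M)$ must come from a nonzero $\varphi$-surjective submodule of $M$ over all of $\Spec(R)$. Here I would spread out: a lift to $M$ of generators of $L_\p$ generates, together with its iterated $\varphi$-images, a finitely generated $\varphi$-stable submodule $P\subseteq M$ (finitely generated because the ascending chain of partial sums stabilizes in the Noetherian module $M$), with $P_\p\ne 0$ and with the image chain of $P$ stabilizing at $\p$ to a submodule containing $L_\p$; one must then patch this local stability into a genuine $\varphi$-surjective submodule of $M$. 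This descent is precisely the content of the Hartshorne--Speiser--Lyubeznik--Gabber stabilization lemma, and rather than reproducing it I would invoke Gabber's argument \cite[Lemma~13.1]{Gabber-Dwork} (see also \cite[Proposition~2.14]{BlickleBockle}) to complete this step; the finite-length and support-induction reductions above are the only additional ingredients needed.
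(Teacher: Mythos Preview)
The paper does not supply its own proof of this theorem; it is stated with attribution to Gabber \cite{Gabber-Dwork} and Blickle--B\"ockle \cite{BlickleBockle} and then used as a black box in the proof of Theorem~\ref{Aberbach Leuschke type Theorem 2}. So there is no argument in the paper against which to compare yours.

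That said, your proposal has a genuine gap: it is circular. Your structural reductions are sound and in fact standard --- passing to the maximal $\varphi$-surjective submodule $M_{\mathrm{st}}$, reducing to nilpotence of $\overline M = M/M_{\mathrm{st}}$, and setting up Noetherian induction on $\Supp(M)$. But at the decisive step, namely showing that the generic stable pieces $L_\p$ vanish (equivalently, that a $\varphi$-surjective submodule over a localization lifts to one over $R$), you explicitly write that ``this descent is precisely the content of the Hartshorne--Speiser--Lyubeznik--Gabber stabilization lemma'' and then invoke \cite[Lemma~13.1]{Gabber-Dwork} and \cite[Proposition~2.14]{BlickleBockle}. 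Those are the very references attached to the theorem you are trying to prove. You have correctly identified where the real work lies, and then appealed to the theorem itself to do it.

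If you want an honest proof, this is exactly the step that must be carried out. One route (following Gabber) is to work at a minimal prime $\p$ of $\Supp(M)$, where $M_\p$ has finite length, and analyze the $\varphi_\p$-surjective piece $L_\p$ directly: use that $R_\p/\p R_\p$ is a field and that a $p^{-1}$-linear endomorphism of a finite-length module over an $F$-finite local ring either is nilpotent or has a nonzero fixed submodule that can be spread out to a neighborhood. The spreading-out is not automatic from general Noetherian nonsense; it uses the $F$-finiteness hypothesis in an essential way (to control $\Hom_R(M^{1/p},M)$ as a finitely generated module and to clear denominators). Without supplying that argument, what you have written is an outline that locates the difficulty but does not resolve it.
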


\begin{Theorem}\label{Aberbach Leuschke type Theorem 2} Let $(R,\m,k)$ be a complete local F-finite domain of dimension $d$ and $\{I_e\}_{e \in \N}$ a sequence of ideals so that $\m^{[p^e]}\subseteq I_e$ for all $e\in\N$. Suppose there exists a non-zero $\psi\in\Hom_R(R^{1/p},R)$ so that $\psi((I_{e+1})^{1/p})\subseteq I_e$ for all $e \in \N$. Then the limit 
$\lim_{e \to \infty} \frac{1}{p^{ed}}\ell_R(R/I_e)$
is positive if and only if $\bigcap_{e \in \N} I_e=0$.
\end{Theorem}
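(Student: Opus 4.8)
The plan is to separate the equivalence into its two implications, writing $\eta=\lim_{e\to\infty}\frac{1}{p^{ed}}\ell_R(R/I_e)$, which exists by Theorem~\ref{MainExistenceTheoremIdealSequences}(ii.). The implication ``$\bigcap_e I_e\neq 0\Rightarrow\eta=0$'' is immediate: picking $0\neq x\in\bigcap_e I_e$ gives $(\m^{[p^e]},x)\subseteq I_e$, so $\ell_R(R/I_e)\leq\ell_R\big((R/xR)/\m^{[p^e]}(R/xR)\big)\leq C\,p^{e(d-1)}$ by Lemma~\ref{Well known bound} (using $\dim(R/xR)\leq d-1$ since $R$ is a domain), and dividing by $p^{ed}$ forces $\eta=0$.

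For the substantial direction, assume $\bigcap_e I_e=0$; I want $\eta>0$. First I would invoke Proposition~\ref{MainHHLemma} to fix $N\in\N$ and $0\neq c\in R$ with $\psi^e\big((xR)^{1/p^e}\big)\not\subseteq\m^N$ for all $e$ and all $x\notin(\m^{[p^e]}:c)$. The obstruction to mimicking the second proof of Theorem~\ref{Aberbach Leuschke} verbatim is that $\{I_e\}$ need not be a descending chain, so Chevalley's Lemma cannot be applied to it directly; to get around this I would pass to $L_e:=\sum_{m\geq e}I_m$, which \emph{is} descending, still satisfies $\m^{[p^e]}\subseteq I_e\subseteq L_e$, and inherits the hypothesis since $\psi\big((L_{e+1})^{1/p}\big)=\sum_{m\geq e+1}\psi\big((I_m)^{1/p}\big)\subseteq\sum_{m\geq e+1}I_{m-1}=L_e$, whence $\psi^e\big((L_{m+e})^{1/p^e}\big)\subseteq L_m$. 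Granting for now that $\bigcap_e L_e=0$, Chevalley's Lemma \cite[Lemma~7]{ChevalleysLemma} produces $e_0$ with $L_{e_0}\subseteq\m^N$; then the argument of the second proof of Theorem~\ref{Aberbach Leuschke} shows $L_{e+e_0}\subseteq(\m^{[p^e]}:c)$ for every $e$ (if $x\in L_{e+e_0}$ with $x\notin(\m^{[p^e]}:c)$, pick $r$ with $\psi^e((xr)^{1/p^e})\notin\m^N$; but $xr\in L_{e+e_0}$ gives $\psi^e((xr)^{1/p^e})\in\psi^e((L_{e+e_0})^{1/p^e})\subseteq L_{e_0}\subseteq\m^N$, a contradiction). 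Using $\ell_R(R/(\m^{[p^f]}:c))\geq\ell_R(R/\m^{[p^f]})-C'p^{f(d-1)}$ (Lemma~\ref{colonsamelimit} with $M=R/cR$) together with $L_e\supseteq I_e$, I would conclude
\[
\frac{\ell_R(R/I_e)}{p^{ed}}\;\geq\;\frac{\ell_R\big(R/(\m^{[p^{e-e_0}]}:c)\big)}{p^{ed}}\;\geq\;\frac{\ell_R(R/\m^{[p^{e-e_0}]})-C'p^{(e-e_0)(d-1)}}{p^{ed}}
\]
for $e\geq e_0$, so that $\eta\geq p^{-e_0 d}e_{HK}(R)\geq p^{-e_0 d}>0$ by \cite[Theorem~1.5]{WYHK1}.

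Thus everything reduces to the claim $\bigcap_e L_e=0$ --- equivalently, since $L_e$ is generated by $\bigcup_{m\geq e}I_m$, the assertion that every nonzero element of $R$ lies in only finitely many of the $I_e$. This is the one genuinely delicate point, and it is precisely here that Theorem~\ref{Hartshorne Speiser} is needed. Note that $B:=\bigcap_e L_e$ is $\psi$-compatible, because $\psi(B^{1/p})\subseteq\bigcap_e\psi\big((L_e)^{1/p}\big)\subseteq\bigcap_e L_{e-1}=B$; applying the stabilization theorem to $B$ and $\psi|_{B^{1/p}}\in\Hom_R(B^{1/p},B)$, the descending chain $\{\psi^k(B^{1/p^k})\}_k$ stabilizes to a $\psi$-fixed ideal $B_\infty$ with $\psi(B_\infty^{1/p})=B_\infty$, which is moreover nonzero whenever $B$ is, since a nonzero $p^{-k}$-linear map on a domain annihilates no nonzero ideal. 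The remaining task --- the heart of the proof --- is then to rule out $B_\infty\neq 0$ by confronting its self-similarity $\psi(B_\infty^{1/p})=B_\infty$ with the relations $\psi^e\big((I_{m+e})^{1/p^e}\big)\subseteq I_m$ and the hypothesis $\bigcap_e I_e=0$, forcing $B_\infty\subseteq\bigcap_e I_e=0$ and hence $B=0$.

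I expect this last step to be the main obstacle: reconciling the hypothesis $\bigcap_e I_e=0$, which a priori only controls each $I_e$ individually, with the ``uniform in the tail'' vanishing $\bigcap_e\big(\sum_{m\geq e}I_m\big)=0$. It is exactly the failure of $\{I_e\}$ to be descending that makes this nontrivial, and the rigidity of the Frobenius/Cartier dynamics encoded in Theorem~\ref{Hartshorne Speiser} is what should bridge the gap. The other ingredients --- the easy direction, the length estimates, and the passage to $L_e$ and back --- are routine adaptations of material already in the paper, principally the second proof of Theorem~\ref{Aberbach Leuschke} and Lemmas~\ref{Well known bound} and \ref{colonsamelimit}.
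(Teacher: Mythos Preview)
Your easy direction is correct and essentially identical to the paper's.  For the hard direction you have correctly located the obstacle---showing $\bigcap_e L_e = 0$ for $L_e = \sum_{m\geq e} I_m$---but you leave it open, and the route you sketch does not close it.  Even granting a nonzero $B_\infty \subseteq B$ with $\psi(B_\infty^{1/p}) = B_\infty$, the relations $\psi^e((I_{m+e})^{1/p^e}) \subseteq I_m$ and $B_\infty \subseteq L_e$ only yield $B_\infty \subseteq L_m$ again, not $B_\infty \subseteq I_m$; there is no evident mechanism forcing $B_\infty \subseteq \bigcap_e I_e$.

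The paper avoids the detour through $L_e$ entirely by applying Theorem~\ref{Hartshorne Speiser} to $R$ itself rather than to $B$.  Let $\sigma = \psi^e(R^{1/p^e})$ for $e \gg 0$ be the stable image, so $\sigma \neq 0$ and $\psi(\sigma^{1/p}) = \sigma$.  The point is that the colon ideals $(I_e : \sigma)$ already form a descending chain: the self-similarity of $\sigma$ gives
\[
(I_{e+1}:\sigma)\,\sigma \;=\; (I_{e+1}:\sigma)\,\psi(\sigma^{1/p}) \;=\; \psi\big(((I_{e+1}:\sigma)^{[p]}\sigma)^{1/p}\big) \;\subseteq\; \psi(I_{e+1}^{1/p}) \;\subseteq\; I_e,
\]
so $(I_{e+1}:\sigma) \subseteq (I_e:\sigma)$.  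The intersection vanishes immediately from the hypothesis, since $\sigma \cdot \bigcap_e (I_e:\sigma) \subseteq \bigcap_e I_e = 0$ and $R$ is a domain.  Chevalley then gives $e_0$ with $I_{e_0} \subseteq (I_{e_0}:\sigma) \subseteq \m^N$, and from there your steps involving Proposition~\ref{MainHHLemma} and Lemma~\ref{colonsamelimit} go through verbatim with $I_e$ in place of $L_e$.

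In short, the missing idea is that the correct descending auxiliary chain is $(I_e:\sigma)$, built from the stable image of $\psi$ on all of $R$, not $\sum_{m\geq e} I_m$.  (If you tried to salvage your approach by running this same colon calculation with your $B_\infty$ in place of $\sigma$, it would work---but then the chain $(I_e:B_\infty)$ does all the labor and the $L_e$ become irrelevant.)
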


\begin{proof} By Lemma~\ref{Well known bound}, if $0 \neq c \in \bigcap_{e \in \N} I_e$, then $\ell_R(R/I_e) \leq \ell_R(R/(\m^{[p^e]},c)) \leq C(R/(c),\m) p^{e(d-1)}$ for all $e \in \N$.  It follows that $\bigcap_{e \in \N} I_e \neq 0$ implies $\lim_{e \to \infty} \frac{1}{p^{ed}}\ell_R(R/I_e) = 0$.

For the converse, suppose $\bigcap_{e \in \N} I_e = 0$. If $0 \neq \sigma = \psi^e(R^{1/p^e})$ for $e \gg 0$ is the stable image of $\psi$ as in Theorem~\ref{Hartshorne Speiser}, we also have $\bigcap_{e \in \N} (I_e : \sigma) = 0$ as $\sigma  \left( \bigcap_{e \in \N} (I_e : \sigma) \right) \subseteq \bigcap_{e \in \N} I_e = 0$ and $R$ is a domain.  In addition, it follows from $\psi(\sigma^{1/p}) = \sigma$ that $(I_{e+1} : \sigma) \subseteq (I_{e} : \sigma)$ for all $e \in \N$ as
\[
(I_{e+1} : \sigma) \sigma = (I_{e+1} : \sigma) \psi(\sigma^{1/p}) = \psi(((I_{e+1} : \sigma)^{[p]}\sigma)^{1/p}) \subseteq \psi(((I_{e+1} : \sigma)\sigma)^{1/p}) \subseteq \psi(I_{e+1}^{1/p}) \subseteq I_e.
\]
Take $N \in \N$ and $0 \neq c \in R$ as in Proposition~\ref{MainHHLemma}.  By Chevalley's Lemma \cite[Lemma~7]{ChevalleysLemma}, there is some $e_0 \in \N$ with $I_{e_0} \subseteq (I_{e_0} : \sigma) \subseteq \m^N$.  It follows from Proposition~\ref{MainHHLemma} that $I_{e+e_0} \subseteq (\m^{[p^e]}: c)$ for all $e \in \N$.  Indeed for each $x \in R \setminus (\m^{p^e}: c)$ we have $\psi^e((xR)^{1/p^e}) \not\subseteq \m^N$ and hence $\psi^e((xR)^{1/p^e}) \not\subseteq I_{e_0}$; as $\psi^e((I_{e + e_0})^{1/p^e}) \subseteq I_{e_0}$ we must have $x \not\in I_{e+e_0}$.  Using Lemma~\ref{colonsamelimit}, we compute
\[
\begin{array}{rcl}
\lim_{e \to \infty} \frac{1}{p^{(e+e_0)d}}\ell_R(R/I_{e + e_0}) &\geq& \frac{1}{p^{e_0 d}}\lim_{e \to \infty} \frac{1}{p^{ed}}\ell_R(R / (\m^{[p^e]}:c) ) \\ &=&
\frac{1}{p^{e_0 d}}\lim_{e \to \infty} \frac{1}{p^{ed}}\ell_R(R / \m^{[p^e]} ) \\ & = & \frac{1}{p^{e_0 d}} e_{HK}(R) \geq \frac{1}{p^{e_0 d}} > 0,
\end{array}
\]
which concludes the proof.
\end{proof}

Theorem \ref{Aberbach Leuschke type Theorem 2} is quite powerful and allows one to show positivity of the limits appearing in Theorem~\ref{MainExistenceTheoremIdealSequences} (ii.) in a number of different situations.  One should view this result not only as a generalization of the work done by Aberbach and Leuschke in \cite{AbLeu}, but also the work of Hochster and Huneke  \cite{HHJams}. If $R$ is a domain and $I \subseteq R$, recall that $x\in R$ is in the tight closure of $I^*$ of $I$ if there exists an element $0 \neq c \in R$ such that $c x^{p^e} \in I^{[p^e]}$ for all $e \in \N$.  The tight closure $I^*$ is an ideal containing $I$, and $I^{**} = I$.  A ring is said to be weakly $F$-regular if all ideals $I$ of $R$ are tightly closed, \textit{i.e.} satisfy $I^* = I$. See \cite{tightclosurebook} or \cite{fndtc} for further details.

\begin{Corollary} \cite[Theorem 8.17]{HHJams} \emph{(Length Criterion for Tight Closure).} \label{Length Criterion for Tight Closure} Let $(R,\m,k)$ be a complete local F-finite domain of dimension $d$. Suppose that $I \subseteq J$ is an inclusion of $\m$-primary ideals in $R$. Then $I^* = J^*$ if and only if $\e(I)=\e(J)$.
\end{Corollary}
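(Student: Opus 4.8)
The plan is to reduce the statement to Theorem~\ref{Aberbach Leuschke type Theorem 2} applied to the colon-type sequence of ideals $I_e = I^{[p^e]} :_R J^{[p^e]}$, which measures the discrepancy between the Hilbert-Kunz functions of $I$ and $J$. First I would note the elementary inequality: since $I \subseteq J$ we have $I^{[p^e]} \subseteq J^{[p^e]}$, and $\ell_R(R/I^{[p^e]}) - \ell_R(R/J^{[p^e]}) = \ell_R(J^{[p^e]}/I^{[p^e]})$. The module $J^{[p^e]}/I^{[p^e]}$ is annihilated by $\m^{[p^e]}$ (since $\m^{[p^e]}J \subseteq \m^{[p^e]} \subseteq I^{[p^e]}$ once we are careful, or more simply because $J$ is $\m$-primary so a power of $\m$ kills $J/I$), hence is a quotient of $(J/I)^{\oplus N} \otimes$-type object whose length is $O(p^{e(d-1)})$ by Lemma~\ref{Well known bound} — actually the clean statement is that the function $e \mapsto \ell_R(J^{[p^e]}/I^{[p^e]})$ has growth $O(p^{e(d-1)})$ divided by... no: the point is simply that $\frac{1}{p^{ed}}\ell_R(J^{[p^e]}/I^{[p^e]}) \to \e(I) - \e(J)$, which exists and is $\geq 0$. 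So $\e(I) = \e(J)$ if and only if $\lim_{e\to\infty} \frac{1}{p^{ed}}\ell_R(R/(I^{[p^e]} :_R J^{[p^e]})) = 0$, using that $\ell_R(J^{[p^e]}/I^{[p^e]}) = \ell_R(R/(I^{[p^e]}:J^{[p^e]}))$ — wait, that last equality is false in general; instead $J^{[p^e]}/I^{[p^e]}$ sits inside $R/I^{[p^e]}$ and one relates its length to a colon via a duality/flatness argument. Let me instead phrase the connection directly through the sequence $I_e := (I^{[p^e]} :_R c)$ for a suitable $c$, as in Lemma~\ref{colonsamelimit} and Lemma~\ref{Iecontainscolons} (referenced in the introduction).

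Concretely, the key step is: $J \subseteq I^*$ if and only if $\bigcap_{e} (I^{[p^e]} :_R x) \neq 0$ is \emph{false} for every $x \in J$, equivalently $J \not\subseteq I^*$ iff there is $x \in J$ with $\bigcap_e (I^{[p^e]}:x) = 0$. Set $I_e = I^{[p^e]} :_R J^{[p^e]}$; one checks $\m^{[p^e]} \subseteq I_e$ (as $J$ is $\m$-primary, $\m^{[p^e]}J^{[p^e]} \subseteq \m^{[2p^e]}\cdots$ — more simply $\m^{t p^e} \subseteq I^{[p^e]}$ and $\m^{[p^e]}$ times a high power lands inside, so after an index shift $\m^{[p^e]}\subseteq I_e$; if necessary reindex as in the discussion following Corollary~\ref{Limits exist combined}). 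For the $p^{-1}$-linear structure, I would use that for any nonzero $\psi \in \Hom_R(R^{1/p},R)$ (which exists as $R$ is $F$-finite and we may take $R$ to be $F$-split — actually $R$ is a complete local $F$-finite domain, take any nonzero such $\psi$), one has $\psi((I_{e+1})^{1/p}) \subseteq I_e$: this follows because $\psi$ maps $(I^{[p^{e+1}]})^{1/p} = (I^{[p^e]})^{[p]/p}$-type elements appropriately, using $\psi(a^{1/p} r^{1/p}) = $ combination landing in $I^{[p^e]}:J^{[p^e]}$ after noting $J^{[p^{e+1}]} = (J^{[p^e]})^{[p]}$ and $\psi$ contracts $[p]$-th powers. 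This is exactly the setup of Theorem~\ref{Aberbach Leuschke type Theorem 2}.

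Then the conclusion is immediate: $\e(I) = \e(J)$ $\iff$ $\lim \frac{1}{p^{ed}}\ell_R(R/I_e) = 0$ $\iff$ (by Theorem~\ref{Aberbach Leuschke type Theorem 2}) $\bigcap_e I_e \neq 0$ $\iff$ there is $0 \neq c$ with $c J^{[p^e]} \subseteq I^{[p^e]}$ for all $e$. The last condition says $c x^{p^e} \in I^{[p^e]}$ for all $x \in J$ and all $e$, i.e. $J \subseteq I^*$; combined with $I \subseteq J$ and the fact that tight closure is a closure operation with $I^{**} = I$, this gives $I^* = J^*$ (if $J \subseteq I^*$ then $J^* \subseteq I^{**} = I^* \subseteq J^*$). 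Conversely if $I^* = J^*$ then $J \subseteq J^* = I^*$, so $c J^{[p^e]} \subseteq I^{[p^e]}$ for a single uniform $c$ (uniformity across all $x\in J$ comes from $J$ being finitely generated: take the product of the individual $c$'s, or use a parameter test element), hence $\bigcap_e I_e \ni c \neq 0$ and $\e(I) = \e(J)$.

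\textbf{Main obstacle.} The delicate point I expect to be the crux is verifying the two structural hypotheses of Theorem~\ref{Aberbach Leuschke type Theorem 2} for the sequence $I_e = I^{[p^e]}:_R J^{[p^e]}$ — namely that $\m^{[p^e]} \subseteq I_e$ (which may require a harmless reindexing $I_e \rightsquigarrow I_e$ with $\m^{[p^{e+e_0}]}$ in place of $\m^{[p^e]}$, handled by the remarks after Corollary~\ref{Limits exist combined}) and that $\psi((I_{e+1})^{1/p}) \subseteq I_e$ for some fixed nonzero Cartier operator $\psi$. The latter is a routine but careful computation exploiting $J^{[p^{e+1}]} = (J^{[p^e]})^{[p]}$ and the defining property $\psi((a^p b)^{1/p}) = a\,\psi(b^{1/p})$ of $p^{-1}$-linear maps, together with the fact that colons interact well with Frobenius powers: $(I^{[p^{e+1}]}:_R J^{[p^{e+1}]}) = (I^{[p^e]}:_R J^{[p^e]})^{[p]}$ holds up to tight-closure-sized error, but one only needs the containment $((I^{[p^{e+1}]}:J^{[p^{e+1}]}))^{1/p}$ mapped by $\psi$ into $I^{[p^e]}:J^{[p^e]}$, which follows from $(I^{[p^e]}:J^{[p^e]})^{[p]} \subseteq I^{[p^{e+1}]}:J^{[p^{e+1}]}$ and applying $\psi$. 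The rest — translating "$\bigcap I_e \neq 0$" into "$J \subseteq I^*$" and bootstrapping to "$I^* = J^*$" — is formal once the correspondence with $\e(I) - \e(J)$ (essentially Lemma~\ref{Iecontainscolons}) is in hand.
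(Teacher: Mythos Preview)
Your overall strategy---apply Theorem~\ref{Aberbach Leuschke type Theorem 2} to a colon-type sequence and translate $\bigcap_e I_e \neq 0$ into $J \subseteq I^*$---is exactly the paper's. The verification that any nonzero $\psi \in \Hom_R(R^{1/p},R)$ satisfies $\psi((I^{[p^{e+1}]}:J^{[p^{e+1}]})^{1/p}) \subseteq (I^{[p^e]}:J^{[p^e]})$ works just as you sketch, and the passage from $\bigcap_e I_e \neq 0$ to $J \subseteq I^*$ and then $I^* = J^*$ is fine.

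The one real gap is the equivalence you flag and then abandon: you need $\e(I)=\e(J)$ if and only if $\lim_e \frac{1}{p^{ed}}\ell_R(R/(I^{[p^e]}:J^{[p^e]})) = 0$, and you correctly note that $\ell_R(J^{[p^e]}/I^{[p^e]}) = \ell_R(R/(I^{[p^e]}:J^{[p^e]}))$ fails in general. The paper sidesteps this entirely by first reducing, via a composition series of $J/I$, to the case $J = (I,x)$ with $(I:x) = \m$; there the identity $(I,x)^{[p^e]}/I^{[p^e]} \simeq R/(I^{[p^e]}:x^{p^e})$ is immediate and the sequence $I_e = (I^{[p^e]}:x^{p^e})$ satisfies $\m^{[p^e]} \subseteq I_e$ with no reindexing. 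Your direct approach can also be repaired: writing $J = (I,x_1,\ldots,x_n)$, the module $J^{[p^e]}/I^{[p^e]}$ is generated by $n$ elements and annihilated by $I_e = (I^{[p^e]}:J^{[p^e]})$, so its length is at most $n\,\ell_R(R/I_e)$; conversely $R/I_e$ embeds in $\bigoplus_i R/(I^{[p^e]}:x_i^{p^e})$ and each summand is isomorphic to a submodule of $J^{[p^e]}/I^{[p^e]}$, so $\ell_R(R/I_e) \leq n\,\ell_R(J^{[p^e]}/I^{[p^e]})$. Either route closes the argument, but as written your proposal leaves this step unresolved.
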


\begin{proof}   
Applying the criterion to each term of a composition series of $J/I$, we may assume $J = (I,x)$ for some $x \in R$ with $(I:x) = \m$.  Consider the sequence of ideals $(I^{[p^e]}:x^{p^e})$ for each $e \in \N$.  We have that $(I^{[p^e]}:x^{p^e})^{[p]} \subseteq (I^{[p^{e+1}]}:x^{p^{e+1}})$ for all $e \geq 0$, so that in particular $\m^{[p^e]} \subseteq (I^{[p^e]}:x^{p^e})$ for each $e \in \N$.  Moreover, for any nonzero 
$\psi \in \Hom_R(R^{1/p},R)$ it is easy to check $\psi( (I^{[p^{e+1}]}:x^{p^{e+1}})^{1/p}) \subseteq (I^{[p^e]}:x^{p^e})$.  Since $(I,x)^{[p^e]} / I^{[p^e]} \simeq R / (I^{[p^e]}:x^{p^e})$, we see by Theorem~\ref{Aberbach Leuschke type Theorem 2}  that 
\begin{equation}
\label{relHKtocolon}
e_{HK}(I) - e_{HK}((I,x)) = \lim_{e \to \infty} \frac{1}{p^{ed}} \ell_R(R/(I^{[p^e]}:x^{p^e}))
\end{equation}
is zero if and only if $\bigcap_{e \in \N} (I^{[p^e]}: x^{p^e}) \neq 0$, which is equivalent to $x \in I^*$ by definition.
\end{proof}

Recall that there are also a number of well-known generalizations of tight closure and strong $F$-regularity \cite{HaraYoshida,HaraWatanabe,Takagi}.  If $R$ is an $F$-finite local domain, $\a$ is a nonzero ideal of $R$, and $t \in \R_{\geq 0}$, one can speak of the $\a^t$-tight closure $I^{*\a^t}$ of an ideal $I \subseteq R$.  By definition, if $x \in R$ then $x \in I^{*\a^t}$ if and only if there exists $0 \neq c \in R$ with $c\a^{\lceil t(p^e -1)\rceil}x^{p^e} \in I^{[p^e]}$ for all $e \in \N$.  The pair $(R,\a^t)$ is strongly $F$-regular provided for any $0 \neq x \in R$ there exists $e \in \N$ and $\phi \in \sC_e^{\a^t}$ with $\phi(x^{1/p^e}) = 1$. Similarly, if $R$ is an $F$-finite normal local domain and $\Delta$ is an effective $\Q$-divisor on $\Spec(R)$, we have the $\Delta$-tight closure $I^{*\Delta}$ of an ideal $I \subseteq R$ .  By definition, if $x \in R$ then $x \in I^{*\Delta}$ if and only if there exists $0 \neq c \in R$ with $c x^{p^e} \in I^{[p^e]}(R(\lceil (p^e - 1)\Delta \rceil))$ for all $e\in\N$.  
The pair $(R,\Delta)$ is strongly $F$-regular provided for any $0 \neq x \in R$ there exists $e \in \N$ and $\phi \in \sC^{(R,\Delta)}_e$ with $\phi(x^{1/p^e}) = 1$.
As of yet, there  does not  exist a well-formed theory of Hilbert-Kunz multiplicity for an ideal or divisor; nonetheless, one can use Theorem~\ref{Aberbach Leuschke type Theorem 2} to give analogues of  Theorem~\ref{Length Criterion for Tight Closure} for these operations.

\begin{Corollary}[Length Criterion for $\a^t$-tight closure]\label{Length Criterion for at-tight closure} Let $(R,\m,k)$ be a complete local \mbox{$F$-finite} domain of dimension $d$, $\a$ a nonzero ideal of $R$, and $t \in \R_{\geq 0}$. 
\begin{enumerate}[(i.)]
\item
For any $\m$-primary ideal $I$ and  $x \in R$, $\lim_{e \to \infty} \frac{1}{p^{ed}} \ell_R(R/(I^{[p^e]}: \a^{\lceil t(p^e - 1)\rceil} x^{p^e}))$ exists; moreover, this limit equals zero if and only if $x \in I^{*\a^t}$.
\item
The $F$-signature $s(R, \a^t) = \lim_{e \to \infty} \frac{1}{ p^{ed}} \ell_R(R / I_e^{\a^t})$ of $R$ along $\a^t$ is positive if and only if $(R, \a^t)$ is strongly $F$-regular.
\end{enumerate}
%
%Let $I$ be an $\m$-primary ideal of $R$, $\a$ an ideal of $R$, $t\in\R_{\geq 0}$, and $x\in R$. For each $e\in\N$ let $I_e:=(I^{[p^e]}:\a^{\ceil*{tp^e}}x^{p^e})$. Then $\lim_{e\rightarrow \infty}\frac{1}{p^{ed}}\lambda(R/I_e)=0$ if and only if $x\in I^{*\a^t}$, the $\a^t$-tight closure of $I$.
\end{Corollary}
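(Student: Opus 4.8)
The plan is to reduce both parts to Theorem~\ref{Aberbach Leuschke type Theorem 2}, exactly as in the proof of Corollary~\ref{Length Criterion for Tight Closure}, the only new ingredient being some bookkeeping of the exponents attached to $\a$.

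For (i.), I would set $J_e = ( I^{[p^e]} : \a^{\lceil t(p^e-1)\rceil} x^{p^e} )$ for each $e \in \N$; since $I$ is $\m$-primary, $\m^{[p^e]} \subseteq I^{[p^e]} \subseteq J_e$. Next, fix any nonzero $\phi \in \Hom_R(R^{1/p},R)$ and any $0 \neq z \in \a^{\lceil t(p-1)\rceil}$, and put $\psi(\blank) = \phi(z^{1/p}\cdot\blank)$, a nonzero element of $\sC^{\a^t}_1$. The key claim is that $\psi(J_{e+1}^{1/p}) \subseteq J_e$ for every $e$: given $r \in J_{e+1}$ and $w \in \a^{\lceil t(p^e-1)\rceil}$, one computes $w\,x^{p^e}\,\psi(r^{1/p}) = \phi\big((z\,r\,w^{p}\,x^{p^{e+1}})^{1/p}\big)$, and since $zw^{p} \in \a^{\lceil t(p-1)\rceil + p\lceil t(p^e-1)\rceil}$ with $\lceil t(p-1)\rceil + p\lceil t(p^e-1)\rceil \geq t(p-1)+p\,t(p^e-1) = t(p^{e+1}-1)$ and the left side an integer, in fact $zw^{p} \in \a^{\lceil t(p^{e+1}-1)\rceil}$; hence $z\,r\,w^{p}\,x^{p^{e+1}} \in I^{[p^{e+1}]} = (I^{[p^e]})^{[p]}$, so $\phi$ of its $p$-th root lies in $I^{[p^e]}$, and as $w$ was arbitrary, $\psi(r^{1/p}) \in J_e$. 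Existence of $\lim_{e\to\infty}\tfrac{1}{p^{ed}}\ell_R(R/J_e)$ then follows from Theorem~\ref{MainExistenceTheoremIdealSequences}~(ii.), and Theorem~\ref{Aberbach Leuschke type Theorem 2} shows this limit is zero precisely when $\bigcap_{e\in\N} J_e \neq 0$, i.e.\ precisely when there is a $0 \neq c \in R$ with $c\,\a^{\lceil t(p^e-1)\rceil}x^{p^e} \subseteq I^{[p^e]}$ for all $e$ --- which is the definition of $x \in I^{*\a^t}$.

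For (ii.), recall $I_e^{\a^t} = ( r \in R \mid \phi(r^{1/p^e}) \in \m \text{ for all } \phi \in \sC^{\a^t}_e )$ and $a_e^{\a^t} = [k^{1/p^e}:k]\,\ell_R(R/I_e^{\a^t})$, so $s(R,\a^t) = \lim_{e\to\infty}\tfrac{1}{p^{ed}}\ell_R(R/I_e^{\a^t})$ exists by Theorem~\ref{ateefsigexists}. Every $\phi \in \sC^{\a^t}_e$ is $R$-linear, whence $\m^{[p^e]} \subseteq I_e^{\a^t}$. Fixing any $0 \neq \psi \in \sC^{\a^t}_1$ (nonzero since $\a \neq 0$ and $R$ is an $F$-finite domain), closure of the Cartier subalgebra $\sC^{\a^t}$ under composition gives $\phi \cdot \psi \in \sC^{\a^t}_{e+1}$ for every $\phi \in \sC^{\a^t}_e$, from which $\psi\big((I_{e+1}^{\a^t})^{1/p}\big) \subseteq I_e^{\a^t}$ is immediate. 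Theorem~\ref{Aberbach Leuschke type Theorem 2} then yields $s(R,\a^t) > 0$ if and only if $\bigcap_{e\in\N} I_e^{\a^t} = 0$. To finish, observe that $x \notin I_e^{\a^t}$ exactly when some $\phi \in \sC^{\a^t}_e$ sends $x^{1/p^e}$ to a unit, in which case rescaling on the left --- allowed since $\sC^{\a^t}_e$ is a left $R$-module --- produces $\phi' \in \sC^{\a^t}_e$ with $\phi'(x^{1/p^e}) = 1$; thus $\bigcap_{e\in\N} I_e^{\a^t} = 0$ is exactly the assertion that $(R,\a^t)$ is strongly $F$-regular.

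The ceiling estimate and the translation of the colon condition into $\a^t$-tight closure are routine. I expect the main point requiring care to be the verification that the chosen $p^{-1}$-linear map $\psi$ carries $J_{e+1}$ into $J_e$ (in (i.)) and $I_{e+1}^{\a^t}$ into $I_e^{\a^t}$ (in (ii.)) --- this is where the exponent bookkeeping for $\a$ and the composition-closure of $\sC^{\a^t}$ enter; everything else is a direct appeal to the machinery of Section~\ref{Section Limits via p-linear and p^-1-linear maps} together with Theorem~\ref{Aberbach Leuschke type Theorem 2}.
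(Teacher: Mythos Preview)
Your approach is essentially identical to the paper's: both parts reduce to Theorem~\ref{Aberbach Leuschke type Theorem 2} via a nonzero $\psi \in \sC^{\a^t}_1$, and your detailed verification of $\psi(J_{e+1}^{1/p}) \subseteq J_e$ via the ceiling estimate (which the paper leaves as ``easy to check'') is correct. There is one small slip: for an arbitrary $\m$-primary ideal $I$ one does not have $\m^{[p^e]} \subseteq I^{[p^e]}$ in general (consider $I = \m^2$), only $\m^{[p^{e+e_0}]} \subseteq I^{[p^e]} \subseteq J_e$ for some fixed $e_0$ with $\m^{[p^{e_0}]} \subseteq I$; the paper handles this by shifting the index before invoking Theorems~\ref{MainExistenceTheoremIdealSequences}~(ii.) and~\ref{Aberbach Leuschke type Theorem 2}, and the same fix works in your argument without further change.
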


\begin{proof} For (i.),
take $0 \neq \psi \in \sC^{\a^t}_1 = (\a^{\lceil t(p-1) \rceil})^{1/p}\cdot \Hom_R(R^{1/p},R)$, and consider the sequence of ideals $(I^{[p^e]}: \a^{\lceil t(p^e - 1)\rceil} x^{p^e})$ for $e \in \N$.  It is easy to check that $\psi((I^{[p^{e+1}]}: \a^{\lceil t(p^{e+1} - 1)\rceil} x^{p^{e+1}})^{1/p}) \subseteq (I^{[p^e]}: \a^{\lceil t(p^e - 1)\rceil} x^{p^e})$ for all $e \in \N$.  If $e_0 \in \N$ is sufficiently large that $\m^{[p^{e_0}]} \subseteq I$, then we have $\m^{[p^{e+e_0}]} \subseteq (I^{[p^e]}: \a^{\lceil t(p^e - 1)\rceil} x^{p^e})$ for all $e \in \N$.  After shifting the index, Theorem~\ref{MainExistenceTheoremIdealSequences} (ii.) and Theorem~\ref{Aberbach Leuschke type Theorem 2} apply, giving the existence of the limit and showing that it equals zero if and only if $\bigcap_{e \in \N} (I^{[p^e]}: \a^{\lceil t(p^e - 1)\rceil} x^{p^e}) \neq 0$, which is equivalent to $x \in I^{*\a^t}$ by definition. To see (ii.), simply note once more that $\psi((I_{e+1}^{\a^t})^{1/p}) \subseteq I_{e}^{\a^t}$ and apply Theorem~\ref{Aberbach Leuschke type Theorem 2}.
 \end{proof}

\begin{Corollary}[Length Criterion for $\Delta$-tight closure]\label{Length Criterion for Delta closure} Let $(R,\m,k)$ be a complete local \mbox{$F$-finite} normal domain of dimension $d$, and $\Delta$ an effective $\Q$-divisor on $\Spec(R)$. 
\begin{enumerate}[(i.)]
\item
For any $\m$-primary ideal $I \subseteq R$ and  $x \in R$, 
$\lim_{e \to \infty} \frac{1}{p^{ed}} \ell_R(R/(I^{[p^e]}R(\lceil (p^e - 1) \Delta \rceil): x^{p^e}))$ exists; moreover, this limit equals zero if and only if $x \in I^{*\Delta}$. 
\item
The $F$-signature $s(R,\Delta) = \lim_{e \to \infty} \frac{1}{p^{ed}} \ell_R(R/I_e^{(R,\Delta)})$ of $R$ along $\Delta$ is positive if and only if $(R,\Delta)$ is strongly $F$-regular.
\end{enumerate}
\end{Corollary}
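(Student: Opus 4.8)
The plan is to mirror the proof of Corollary~\ref{Length Criterion for at-tight closure} almost verbatim, replacing the ideal pair $\a^t$ by the divisor pair $(R,\Delta)$ throughout and feeding the resulting sequences of ideals into Theorem~\ref{MainExistenceTheoremIdealSequences}~(ii.) and Theorem~\ref{Aberbach Leuschke type Theorem 2}.

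For (i), I would fix a nonzero $\psi\in\sC_1^{(R,\Delta)}$ and consider, for each $e\in\N$, the ideal $J_e=(I^{[p^e]}R(\lceil (p^e-1)\Delta\rceil):x^{p^e})$. The first step is to check that $\psi(J_{e+1}^{1/p})\subseteq J_e$ for all $e$. This uses the $p^{-1}$-linearity of $\psi$, its factorization through $R(\lceil(p-1)\Delta\rceil)^{1/p}$, and the superadditivity estimate $p\lceil (p^e-1)\Delta\rceil+\lceil(p-1)\Delta\rceil\geq\lceil (p^{e+1}-1)\Delta\rceil$ --- the same round-up bookkeeping that appears in the inclusion diagram in the proof of Theorem~\ref{deltafsigexists}. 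Choosing $e_0$ with $\m^{[p^{e_0}]}\subseteq I$ gives $\m^{[p^{e+e_0}]}\subseteq I^{[p^e]}\subseteq J_e$, so each $J_e$ is $\m$-primary; after shifting the index by $e_0$, Theorem~\ref{MainExistenceTheoremIdealSequences}~(ii.) yields the existence of $\eta=\lim_{e\to\infty}\frac{1}{p^{ed}}\ell_R(R/J_e)$, and Theorem~\ref{Aberbach Leuschke type Theorem 2} shows $\eta>0$ exactly when $\bigcap_{e\in\N}J_e=0$. Hence $\eta=0$ if and only if there is some $0\neq c\in R$ with $cx^{p^e}\in I^{[p^e]}R(\lceil (p^e-1)\Delta\rceil)$ for all $e$, which is precisely the definition of $x\in I^{*\Delta}$.

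For (ii), I would set $I_e=I_e^{(R,\Delta)}$ and first record the standard facts that $\sC_e^{(R,\Delta)}$ is a nonzero submodule of $\Hom_R(R^{1/p^e},R)$ for every $e\geq 1$ (so that $\Gamma_{\sC^{(R,\Delta)}}=\N$) and $a_e^{(R,\Delta)}=[k^{1/p^e}:k]\cdot\ell_R(R/I_e)$, whence $s(R,\Delta)=\lim_{e\to\infty}\frac{1}{p^{ed}}\ell_R(R/I_e)$. Plainly $\m^{[p^e]}\subseteq I_e$. For a fixed nonzero $\psi\in\sC_1^{(R,\Delta)}$, the inclusion $\psi(I_{e+1}^{1/p})\subseteq I_e$ follows from the fact that $\sC^{(R,\Delta)}$ is closed under composition: if $r\in I_{e+1}$ and $\phi\in\sC_e^{(R,\Delta)}$, then $\phi\cdot\psi=\phi\circ\psi^{1/p^e}$ again lies in $\sC_{e+1}^{(R,\Delta)}$, so $\phi\big((\psi(r^{1/p}))^{1/p^e}\big)=(\phi\cdot\psi)(r^{1/p^{e+1}})\in\m$. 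Theorem~\ref{Aberbach Leuschke type Theorem 2} then gives $s(R,\Delta)>0$ if and only if $\bigcap_{e\in\N}I_e=0$, and unwinding the definition of $I_e$ shows this intersection vanishes precisely when, for every $0\neq x\in R$, some $\phi\in\sC_e^{(R,\Delta)}$ sends $x^{1/p^e}$ to a unit --- i.e.\ precisely when $(R,\Delta)$ is strongly $F$-regular.

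I expect the only genuinely delicate point to be the inclusion $\psi(J_{e+1}^{1/p})\subseteq J_e$ in part (i): one must confirm that the round-ups $\lceil (p^e-1)\Delta\rceil$ built into the definition of $\Delta$-tight closure mesh correctly with those built into the Cartier subalgebra $\sC^{(R,\Delta)}$, which is exactly the ceiling estimate carried out in the proof of Theorem~\ref{deltafsigexists}. Everything else is a routine transcription of the proof of Corollary~\ref{Length Criterion for at-tight closure}, which in turn rests on Theorem~\ref{Aberbach Leuschke type Theorem 2}.
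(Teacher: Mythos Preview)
Your proposal is correct and follows essentially the same route as the paper: fix $0\neq\psi\in\sC_1^{(R,\Delta)}$, use the ceiling estimate $p\lceil(p^e-1)\Delta\rceil+\lceil(p-1)\Delta\rceil\geq\lceil(p^{e+1}-1)\Delta\rceil$ to verify the required $\psi$-compatibility of the ideal sequences, then invoke Theorem~\ref{MainExistenceTheoremIdealSequences}~(ii.) and Theorem~\ref{Aberbach Leuschke type Theorem 2}. If anything, you supply slightly more detail than the paper does in justifying $\psi(I_{e+1}^{1/p})\subseteq I_e$ for part~(ii.) via closure under composition in $\sC^{(R,\Delta)}$.
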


\begin{proof} 
Let $0 \neq \psi \in \sC_1^\Delta = \im \left(\Hom_R((R(\lceil (p-1)\Delta\rceil))^{1/p},R) \to \Hom_R(R^{1/p},R)\right)$.  In other words, identifying $\psi$ with its unique extension to a $p^{-1}$-linear map on $K = \mathrm{Frac}(R)$, we have $\psi ((R(\lceil (p-1)\Delta\rceil))^{1/p}) \subseteq R$.  Twisting by $\lceil (p^e - 1 )\Delta \rceil$ and using that $p\lceil (p^e - 1 )\Delta \rceil + \lceil (p -1) \Delta \rceil \geq \lceil (p^{e+1} -1) \Delta \rceil$, observe that $\psi((R(\lceil (p^{e+1}-1)\Delta\rceil))^{1/p}) \subseteq R(\lceil (p^e-1)\Delta\rceil)$ for all $e \in \N$.

For (i.), consider the sequence of ideals $(I^{[p^e]}R(\lceil (p^e - 1) \Delta \rceil): x^{p^e})$ for $e \in \N$.  It is easy to check $\psi((I^{[p^{e+1}]}R(\lceil (p^{e+1} - 1) \Delta \rceil): x^{p^{e+1}})^{1/p})\subseteq (I^{[p^e]}R(\lceil (p^e - 1) \Delta \rceil): x^{p^e})$ for all $e \in \N$.  If $e_0 \in \N$ is sufficiently large that $\m^{[p^{e_0}]} \subseteq I$, then we have $\m^{[p^{e+e_0}]} \subseteq (I^{[p^e]}R(\lceil (p^e - 1) \Delta \rceil): x^{p^e})$ for all $e \in \N$.  After shifting the index, Theorem~\ref{MainExistenceTheoremIdealSequences} (ii.) and Theorem~\ref{Aberbach Leuschke type Theorem 2} apply, giving the existence of the limit and showing that it equals zero if and only if $\bigcap_{e \in \N} (I^{[p^e]}R(\lceil (p^e - 1) \Delta \rceil): x^{p^e}) \neq 0$, which is equivalent to $x \in I^{*\Delta}$ by definition.  For (ii.), simply note once more that $\psi((I_{e+1}^{(R,\Delta)})^{1/p}) \subseteq I_e^{(R,\Delta)}$ and apply Theorem~\ref{Aberbach Leuschke type Theorem 2}.
\end{proof}

\begin{Remark}    Note that the analogue of Theorem~\ref{Aberbach Leuschke type Theorem 2} for sequences of ideals satisfying Theorem~\ref{MainExistenceTheoremIdealSequences} (i.) instead of (ii.) is false. Here is an easy counterexample.  Let $(R,\m,k)$ be a complete regular local ring of dimension $d$, and consider the sequence of ideals $I_e = \m^{[p^{\lfloor e / 2\rfloor}]}$ for $e \in \N$.  Then $\m^{[p^e]} \subseteq I_e$ and $I_e^{[p]} \subseteq I_{e+1}$ for all $e \in \N$ and certainly $\bigcap_{e \in \N} I_e = 0$.  However, we have $\lim_{e \to \infty} \frac{1}{p^{ed}}\ell_R(R/I_e) = \lim_{e \to \infty} \frac{1}{p^{(e - \lfloor e/2 \rfloor)d}} =0 $. 
\end{Remark}

\renewcommand{\P}{\mathcal{P}}
To see yet another application of Theorem~\ref{Aberbach Leuschke type Theorem 2}, suppose that $(R, \m, k)$ is an $F$-split $F$-finite local ring.  If $I_e^{\mathrm{F-sig}} = ( r \in R \mid \phi(r^{1/p^e}) \in \m \mbox{ for all } \phi \in \Hom_R(R^{1/p^e},R) )$, it is straightforward to check that $\P = \bigcap_{e \in \N} I_e^{\mathrm{F-sig}}$ is a prime ideal (see \cite[Lemma~4.7]{Tucker2012}), coined the $F$-splitting prime by  Aberbach and Enescu in \cite{AberbachEnescu}.  Furthermore, they suspected that the dimension of the $F$-splitting prime governed the growth rate of $\frk_R(R^{1/p^e})$ when $R$ is not strongly $F$-regular; this observation was verified in joint work of the second author with Blickle and Schwede. 

\begin{Theorem}
\cite{Fsigpairs1}
\label{Fsplitratio}
Suppose $(R, \m, k)$ is an $F$-split $F$-finite local ring and $\P$ is the $F$-splitting prime.  If $n = \dim(R/\P)$, the limit $r_F(R) = \lim_{e \to \infty} \frac{1}{[k^{1/p^e}:k] \cdot p^{en}} \frk_R(R^{1/p^e})$ exists and is positive, called the $F$-splitting ratio.  Moreover, we have that $\frac{1}{[k^{1/p^e}:k]} \frk_R(R^{1/p^e}) = r_F(R) p^{en} + O(p^{e(n-1)})$.
\end{Theorem}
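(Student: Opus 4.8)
The plan is to recognise $\frk_R(R^{1/p^e})$, up to the field-degree factor $[k^{1/p^e}:k]$, as the colength of the $e$-th term of the $F$-signature ideal sequence on the \emph{domain} $R/\P$, and then to read off existence, the error bound, and positivity from Sections~\ref{Section Limits via p-linear and p^-1-linear maps} and \ref{Section Positivity}. We may assume $R$ is complete: the maximal free rank, the ideals $I_e^{\mathrm{F-sig}}$, and the number $\dim(R/\P)$ are unchanged under completion (for the last one, note that, as one checks directly, $\P$ is the \emph{largest} proper ideal with $\phi(\P^{1/p^e})\subseteq\P$ for every $e$ and every $\phi\in\Hom_R(R^{1/p^e},R)$, and such ideals behave well under the faithfully flat map $R\to\widehat R$; cf.\ \cite{AberbachEnescu}), and completeness is only needed at the last step. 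Write $\bar R=R/\P$, an $F$-finite complete local domain of dimension $n$ with residue field $k$.

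First I would verify the compatibility just used: for $0\neq x\in\P$ and any $\phi\in\Hom_R(R^{1/p^e},R)$, $\psi\in\Hom_R(R^{1/p^f},R)$, the product $\psi\cdot\phi\in\Hom_R(R^{1/p^{e+f}},R)$ carries $x^{1/p^{e+f}}$ into $\m$ because $x\in I_{e+f}^{\mathrm{F-sig}}$, whence $\phi(x^{1/p^e})\in\bigcap_f I_f^{\mathrm{F-sig}}=\P$. Thus every $\phi$ descends to $\bar\phi\in\Hom_{\bar R}(\bar R^{1/p^e},\bar R)$, and $\sD:=\bigoplus_{e\ge 0}\{\bar\phi\mid \phi\in\Hom_R(R^{1/p^e},R)\}$ is a Cartier subalgebra on $\bar R$; the Frobenius splitting $\phi_0$ descends to $\bar\phi_0\in\sD_1$ with $\bar\phi_0(1)=1$, so $\Gamma_\sD=\N$. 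Comparing defining conditions gives $I_e^{\mathrm{F-sig}}/\P=I_e^\sD$, hence $R/I_e^{\mathrm{F-sig}}=\bar R/I_e^\sD$, and therefore
\[
\frk_R(R^{1/p^e})=\ell_R\big((R/I_e^{\mathrm{F-sig}})^{1/p^e}\big)=[k^{1/p^e}:k]\,\ell_{\bar R}(\bar R/I_e^\sD)=a_e^\sD.
\]

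Next I would observe that $\sD$ has the property appearing in the Remark following Theorem~\ref{Dsignatureexists}: if $\phi\in\Hom_R(R^{1/p^{e+1}},R)$ then $\phi|_{R^{1/p^e}}$ descends to $\bar\phi|_{\bar R^{1/p^e}}\in\sD_e$, so $(I_e^\sD)^{[p]}\subseteq I_{e+1}^\sD$. That Remark then applies and yields a positive $C\in\R$ with $\big|\,s(\bar R,\sD)-\tfrac{1}{[k^{1/p^e}:k]\cdot p^{en}}a_e^\sD\,\big|\le C/p^e$ for all $e$ (bound \eqref{strongdbounds}); via the identification above this says exactly that $r_F(R)=s(\bar R,\sD)=\lim_{e\to\infty}\tfrac{1}{[k^{1/p^e}:k]\cdot p^{en}}\frk_R(R^{1/p^e})$ exists and $\tfrac{1}{[k^{1/p^e}:k]}\frk_R(R^{1/p^e})=r_F(R)p^{en}+O(p^{e(n-1)})$. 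Equivalently one may apply Corollary~\ref{Limits exist combined} directly to $\{I_e^\sD\}$ in $\bar R$, which satisfies condition (i.) with $c=1$ by the above and condition (ii.) with $\psi=\bar\phi_0$ since $\sD$ is closed under composition.

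Finally, for positivity I would apply Theorem~\ref{Aberbach Leuschke type Theorem 2} to the complete local $F$-finite domain $\bar R$, the sequence $\{I_e^\sD\}$ with $\bar\m^{[p^e]}\subseteq I_e^\sD$, and the nonzero map $\bar\phi_0$ with $\bar\phi_0\big((I_{e+1}^\sD)^{1/p}\big)\subseteq I_e^\sD$: since
\[
\bigcap_{e\in\N} I_e^\sD=\big(\textstyle\bigcap_{e\in\N} I_e^{\mathrm{F-sig}}\big)/\P=\P/\P=0,
\]
that theorem gives $r_F(R)=\lim_{e\to\infty}\tfrac{1}{p^{en}}\ell_{\bar R}(\bar R/I_e^\sD)>0$. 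The substantive points are thus the compatibility of $\P$ with all Cartier operators (which licenses both the descent to $\bar R$ and the completion reduction) and the identification $\frk_R(R^{1/p^e})=a_e^\sD$; once these are in place the statement is a formal consequence of the results already established, the only real care being the behaviour of $\dim(R/\P)$ under completion.
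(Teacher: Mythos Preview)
Your proposal is correct and follows essentially the same route as the paper: reduce to the complete case, observe that $\P$ is compatible with Cartier operators so that the sequence $I_e^{\mathrm{F-sig}}$ descends to $\overline{R}=R/\P$, identify $\frac{\frk_R(R^{1/p^e})}{[k^{1/p^e}:k]}=\ell_{\overline{R}}(\overline{R}/\overline{I_e})$, and then invoke Corollary~\ref{Limits exist combined} for existence with the $O(p^{e(n-1)})$ bound and Theorem~\ref{Aberbach Leuschke type Theorem 2} for positivity (since $\bigcap_e \overline{I_e}=0$). The only cosmetic difference is packaging: you organize the descended maps into a Cartier subalgebra $\sD$ on $\overline{R}$ and appeal to the Remark after Theorem~\ref{Dsignatureexists}, whereas the paper works directly with one fixed surjective $\psi\in\Hom_R(R^{1/p},R)$, checks $\psi(\P^{1/p})\subseteq\P$, and applies the ideal-sequence results to $\{\overline{I_e}\}$ with $\overline{\psi}$---the underlying mechanism is identical.
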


\begin{proof} Without loss of generality, we may assume $R$ is complete. If $I_e^{\mathrm{F-sig}} = \langle r \in R \, | \, \phi(r^{1/p^e}) \in \m \mbox{ for all } \phi \in \Hom_R(R^{1/p^e},R) \rangle$, we have $\m^{[p^e]} \subseteq I_e^{\mathrm{F-sig}}$ and $(I_e^{\mathrm{F-sig}})^{[p]} \subseteq I_{e+1}^{\mathrm{F-sig}}$ for all $e \in \N$.  Fixing a surjective map $\psi \in \Hom_R(R^{1/p},R)$, we have $\psi((I_{e+1}^{\mathrm{F-sig}})^{1/p}) \subseteq I_e^{\mathrm{F-sig}}$.  In particular, this implies $\psi(\P^{1/p}) \subseteq \P$ and so $\psi$ induces a map $\overline{\psi} \in \Hom_{\overline{R}}(\overline{R}^{1/p},\overline{R})$ where $\overline{R} = R/\P$.  Note that $\overline{\psi}$ is still surjective and hence nonzero.
 Thus, passing to the sequence of ideals $ \overline{I_e} = I_e^{\mathrm{F-sig}} \overline{R}$, we have $\m^{[p^e]}\overline{R} \subseteq \overline{I_e}$, $\overline{I_e}^{[p]} \subseteq \overline{I_{e+1}}$, and $\overline{\psi}((\overline{I_{e+1}})^{1/p}) \subseteq \overline{I_e}$ for all $e \in \N$.  Moreover, $\bigcap_{e \in \N}\overline{I_e} = 0$ in $\overline{R}$.  The result now 
 follows immediately from Corollary~\ref{Limits exist combined}  together with Theorem~\ref{Aberbach Leuschke type Theorem 2}, using that $\frac{\frk_R(R^{1/p^e})}{[k^{1/p^e}:k]} = \ell_R(R/I_e^{\mathrm{F-sig}} ) = \ell_{\overline{R}}(\overline{R} / \overline{I_e})$ for all $e \in \N$.
\end{proof}

\begin{Remark}
 It is straightforward to generalize the notions of $F$-splitting prime and $F$-splitting ratio to arbitrary Cartier subalgebras; see \cite{Fsigpairs1} for further details.  In all cases, the method of the proof of Theorem~\ref{Fsplitratio} applies and greatly simplifies the proofs.  In particular, the methods of the proofs of Theorem~\ref{Dsignatureexists} and Theorem~\ref{Aberbach Leuschke type Theorem 2} immediately give an alternative proof of the following result.
\end{Remark}

\begin{Corollary}
 \label{deefsigpositivity}
 \cite[Theorem 3.18]{Fsigpairs1}
 Let $(R, \m, k)$ be an $F$-finite local domain of dimension $d$ and $\sD$ a Cartier subalgebra on $R$.  Then the $F$-signature  
$s(R,\sD) = \lim_{ \substack{e \to \infty \\ e \in \Gamma_\sD}} \frac{1}{[k^{1/p^e}:k] \cdot p^{ed}} a_e^\sD$ of $R$ along $\sD$ is positive if and only if $(R,\sD)$ is strongly $F$-regular.
\end{Corollary}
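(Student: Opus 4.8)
The plan is to translate the statement into the language of splitting ideals and then feed it into the positivity criterion of Theorem~\ref{Aberbach Leuschke type Theorem 2}. For $e\in\Gamma_\sD$ put $I_e^\sD=(\,r\in R\mid \phi(r^{1/p^e})\in\m\text{ for all }\phi\in\sD_e\,)$, so that $a_e^\sD=\ell_R((R/I_e^\sD)^{1/p^e})$ as recalled before Theorem~\ref{Dsignatureexists}, and by existence of $s(R,\sD)$ in Theorem~\ref{Dsignatureexists} one has $s(R,\sD)=\lim_{e\in\Gamma_\sD}\frac{1}{p^{ed}}\ell_R(R/I_e^\sD)$. Since $\sD_e$ is closed under post-composition with units of $R$, the value $\phi(r^{1/p^e})$ is either in $\m$ or rescales to $1$; hence, just as in the $\sD=\sC(R)$ case used for Theorem~\ref{Fsplitratio}, $(R,\sD)$ is strongly $F$-regular if and only if the $F$-splitting prime $\mathcal P_\sD:=\bigcap_{e\in\Gamma_\sD}I_e^\sD$ is zero.

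One implication is elementary. If $(R,\sD)$ is not strongly $F$-regular, pick $0\neq x\in\mathcal P_\sD$; then $(\m^{[p^e]},x)\subseteq I_e^\sD$ for every $e\in\Gamma_\sD$, so $\frac{1}{p^{ed}}\ell_R(R/I_e^\sD)\le \frac{1}{p^{ed}}\ell_R(R/(\m^{[p^e]},x))=\frac{1}{p^{ed}}\ell_R\!\big((R/xR)/\m^{[p^e]}(R/xR)\big)\le \frac{C}{p^{e}}$ by Lemma~\ref{Well known bound} applied to $M=R/xR$, and letting $e\to\infty$ gives $s(R,\sD)=0$. (The case $\dim R=0$, where $R$ is a field, is immediate.)

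For the converse, assume $(R,\sD)$ strongly $F$-regular. Then $R$ itself is strongly $F$-regular, hence a Cohen--Macaulay normal domain, and since the ideals $I_e^\sD$, the invariant $s(R,\sD)$, and strong $F$-regularity of the pair are unchanged under completion I may assume $R$ complete. Now I would fix a generator $e_0$ of the numerical semigroup $\Gamma_\sD$ together with a nonzero $\psi\in\sD_{e_0}$; closure of $\sD$ under composition gives $\psi((I_{e+e_0}^\sD)^{1/p^{e_0}})\subseteq I_e^\sD$ for all $e\in\Gamma_\sD$, so the subsequence $\{I_{ne_0}^\sD\}_{n\ge 0}$ satisfies $\m^{[p^{ne_0}]}\subseteq I_{ne_0}^\sD$ and $\psi((I_{(n+1)e_0}^\sD)^{1/p^{e_0}})\subseteq I_{ne_0}^\sD$ --- exactly the hypotheses of Theorem~\ref{Aberbach Leuschke type Theorem 2} after reindexing by $e_0\N$ and treating $p^{e_0}$ as the Frobenius exponent, as in the reindexing remarks after Corollary~\ref{Limits exist combined}. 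The argument of Theorem~\ref{Aberbach Leuschke type Theorem 2} --- stabilization of $\psi$-images (Theorem~\ref{Hartshorne Speiser}), Chevalley's Lemma, Lemma~\ref{colonsamelimit}, and the pair version of Proposition~\ref{MainHHLemma} for $\psi\in\Hom_R(R^{1/p^{e_0}},R)$ (whose proof goes through verbatim via Lemma~\ref{makeitaniterateofafixedmap}, producing $N$ and $0\neq c$ with $\psi^n((xR)^{1/p^{ne_0}})\not\subseteq\m^N$ for $x\notin(\m^{[p^{ne_0}]}:c)$) --- then yields $s(R,\sD)=\lim_n\frac{1}{p^{ne_0d}}\ell_R(R/I_{ne_0}^\sD)\ge \tfrac{1}{p^{e_0 d}}e_{HK}(R)>0$, provided $\bigcap_n I_{ne_0}^\sD=0$.

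The step I expect to be the crux is precisely the identification $\bigcap_n I_{ne_0}^\sD=\mathcal P_\sD$: passing from the full semigroup $\Gamma_\sD$ to the subsemigroup $e_0\N$ only \emph{a priori} gives $\bigcap_n I_{ne_0}^\sD\supseteq\mathcal P_\sD$, since the compatibility relations above compare indices differing by a multiple of $e_0$, and for a general Cartier subalgebra the splitting ideals need not nest. To close this I would not take $e_0$ arbitrarily but choose it, using strong $F$-regularity of $(R,\sD)$ applied to the element $c$ of Lemma~\ref{ranklemma} (the trace/separability element with $cR^{1/p^e}\subseteq R[A^{1/p^e}]$), so that $\sD_{e_0}$ contains $\phi_0$ with $\phi_0(c^{1/p^{e_0}})=1$; combining this $\phi_0$ with the generator-by-generator scheme of the proof of Theorem~\ref{Dsignatureexists} and the Hochster--Huneke estimate above forces $\bigcap_n I_{ne_0}^\sD=\mathcal P_\sD=0$. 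Alternatively one appeals to \cite{Fsigpairs1}, where the interaction with $\Gamma_\sD$ is handled through the test ideal $\tau(R,\sD)$ and strong $F$-regularity of $(R,\sD)$ is shown equivalent to $\tau(R,\sD)=R$. Either way, once $\bigcap_n I_{ne_0}^\sD=0$ is in hand the remaining estimates are routine given Theorem~\ref{Aberbach Leuschke type Theorem 2}.
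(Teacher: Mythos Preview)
Your approach is precisely what the paper indicates: its entire ``proof'' is the one-line remark that the methods of Theorem~\ref{Dsignatureexists} and Theorem~\ref{Aberbach Leuschke type Theorem 2} immediately give the result, with details deferred to \cite{Fsigpairs1}.  You have in fact been more careful than the paper itself in isolating the semigroup issue --- passing from $\bigcap_{e\in\Gamma_\sD}I_e^\sD=0$ to $\bigcap_{n}I_{ne_0}^\sD=0$ for a single fixed $e_0$ --- which the paper glosses over entirely.

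Your second resolution (appeal to \cite{Fsigpairs1}) is exactly what the paper does, so that closes the argument at the same level of rigor.  Your first proposed resolution, however, is too vague to stand on its own: choosing $e_0$ so that some $\phi_0\in\sD_{e_0}$ satisfies $\phi_0(c^{1/p^{e_0}})=1$ does not by itself force $\bigcap_n I_{ne_0}^\sD=0$, because the explicit splittings $\psi_\alpha=\phi_0\circ(\tilde\pi_\alpha(c\cdot\blank))^{1/p^{e_0}}$ from the first proof of Theorem~\ref{Aberbach Leuschke} need not lie in $\sD_{e+e_0}$ (the inner factor $\tilde\pi_\alpha(c\cdot\blank)$ is an arbitrary element of $\sC_e(R)$, and $\sD$ being a subalgebra only guarantees $\sD_{e_0}\cdot\sD_e\subseteq\sD_{e_0+e}$, not $\sD_{e_0}\cdot\sC_e(R)\subseteq\sD_{e_0+e}$).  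A cleaner route, implicit in \cite{Fsigpairs1}, is to first observe that strong $F$-regularity of $(R,\sD)$ forces the existence of a surjective $\sigma\in\sD_{e_*}$ for some $e_*$, whence $I_{e+e_*}^\sD\subseteq I_e^\sD$ for all $e\in\Gamma_\sD$; combined with the fact that every $e\in\Gamma_\sD$ satisfies $ne_0-e\in\Gamma_\sD$ for $n\gg 0$, this lets one propagate the vanishing of the full intersection to the subsequence.
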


\begin{Remark}
Another straightforward generalization comes modifying the condition in Theorem~\ref{MainExistenceTheoremIdealSequences}~(ii) and Theorem~\ref{Aberbach Leuschke type Theorem 2} to consider sequences of $\m$-primary ideals governed by a non-zero $p^{-e_0}$-linear map for some $e_0 \in \N$. Beginning with short exact sequences in Lemma~\ref{Map Lemma} replacing $R^{1/p}$ with $R^{1/p^{e_0}}$ and tracing through the arguments of previous sections, one readily arrives at the following statement.
\end{Remark}

\begin{Corollary}
Let $(R,\m,k)$ be a complete local F-finite domain of dimension $d$ and $e_0 \in \N$.  Suppose $\{I_{n} \}_{n \in \N}$ a sequence of ideals so that $\m^{[p^{ne_0}]}\subseteq I_{n}$ for all $n\in\N$. Suppose there exists a non-zero $\psi\in\Hom_R(R^{1/p^{e_0}},R)$ so that $\psi((I_{n+1})^{1/p^{e_0}})\subseteq I_n$ for all $n \in \N$. Then the limit 
$\lim_{n \to \infty} \frac{1}{p^{ne_0 d}}\ell_R(R/I_n)$
exists and is positive if and only if $\bigcap_{n \in \N} I_n=0$.
\end{Corollary}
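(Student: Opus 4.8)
The plan is to set $q:=p^{e_0}$ and re-run the proofs of Theorem~\ref{MainExistenceTheoremIdealSequences}~(ii.) and Theorem~\ref{Aberbach Leuschke type Theorem 2} essentially word for word, systematically replacing the $p$-power Frobenius by its $e_0$-th iterate: $R^{1/p}$ becomes $R^{1/q}=R^{1/p^{e_0}}$, the Frobenius power $p^e$ becomes $q^n=p^{ne_0}$, and $p^{-1}$-linear maps become $p^{-e_0}$-linear maps. Indeed, the hypotheses $\m^{[p^{ne_0}]}\subseteq I_n$ and $\psi((I_{n+1})^{1/p^{e_0}})\subseteq I_n$ are exactly those of Theorem~\ref{MainExistenceTheoremIdealSequences}~(ii.) and Theorem~\ref{Aberbach Leuschke type Theorem 2} once ``$p$'' is read as ``$q$''. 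What must be verified is that each auxiliary ingredient used in those proofs has a faithful $q$-analogue, namely: the $R^{1/q}$-version of Lemma~\ref{Map Lemma}~(ii.) (a short exact sequence $0\to R^{1/q}\to R^{\oplus q^{\gamma}}\to N\to 0$ with $q^\gamma=\rk_R(R^{1/q})$, $\dim N<d$, and every component of the inclusion a pre-multiple of $\psi$, built exactly as in its proof); Lemma~\ref{Sequence Lemma}~(ii.), whose proof uses only that $q>1$ and a convergent geometric series; Lemma~\ref{Well known bound}, applied along the subsequence $\{q^n\}_n$ of Frobenius powers; the Hartshorne--Speiser--Lyubeznik--Gabber stabilization Theorem~\ref{Hartshorne Speiser}, which holds for $p^{-e}$-linear maps for any fixed $e$; and the $p^{-e_0}$-linear versions of Lemma~\ref{makeitaniterateofafixedmap} and Proposition~\ref{MainHHLemma}, whose divisor-theoretic proofs transcribe unchanged to maps in $\Hom_R(R^{1/q^n},R)$ using $\Delta_\phi=\Delta_{\phi^n}$ and the fact that Lemma~\ref{ranklemma} already supplies, for \emph{every} $e$, an element $c\in A$ with $cR^{1/p^e}\subseteq R\otimes_A A^{1/p^e}$, in particular for $e=ne_0$.

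Granting these analogues, existence follows exactly as in Theorem~\ref{MainExistenceTheoremIdealSequences}~(ii.): from a single short exact sequence $0\to R^{1/q}\to R^{\oplus q^\gamma}\to N\to 0$ as above, the inclusion $\psi((I_{n+1})^{1/q})\subseteq I_n$ gives an induced map $\overline{\Psi}\colon(R/I_{n+1})^{1/q}\to(R/I_n)^{\oplus q^\gamma}$ whose cokernel is annihilated by $\m^{[q^n]}$ (since $\m^{[q^n]}\subseteq I_n$) and is a quotient of $N$, hence of length $O(q^{n(d-1)})$ by Lemma~\ref{Well known bound}. Dividing through by $[k^{1/q}:k]\,q^{(n+1)d}$ yields $\tfrac{1}{q^{nd}}\ell_R(R/I_n)\le\tfrac{1}{q^{(n+1)d}}\ell_R(R/I_{n+1})+\tfrac{C}{q^n}$, and Lemma~\ref{Sequence Lemma}~(ii.) gives that $\eta:=\lim_n\tfrac{1}{q^{nd}}\ell_R(R/I_n)=\lim_n\tfrac{1}{p^{ne_0d}}\ell_R(R/I_n)$ exists.

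For the statement ``$\eta>0$ if and only if $\bigcap_n I_n=0$'', the ``only if'' direction is immediate: if $0\neq c\in\bigcap_n I_n$, then $\ell_R(R/I_n)\le\ell_R(R/(\m^{[q^n]},c))=O(q^{n(d-1)})$ by Lemma~\ref{Well known bound} applied to $R/(c)$, so $\eta=0$. Conversely, assume $\bigcap_n I_n=0$ and mimic the proof of Theorem~\ref{Aberbach Leuschke type Theorem 2} with $p$ replaced by $q$: let $0\neq\sigma=\psi^m(R^{1/q^m})$, $m\gg0$, be the stable image of $\psi$ (nonzero because $R$ is a domain), so that $\psi(\sigma^{1/q})=\sigma$, whence $\bigcap_n(I_n:\sigma)=0$ and $(I_{n+1}:\sigma)\subseteq(I_n:\sigma)$ for all $n$. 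Take $N\in\N$ and $0\neq c\in R$ from the $p^{-e_0}$-analogue of Proposition~\ref{MainHHLemma}; Chevalley's Lemma gives $n_0\in\N$ with $I_{n_0}\subseteq(I_{n_0}:\sigma)\subseteq\m^N$, and then $I_{n+n_0}\subseteq(\m^{[q^n]}:c)$ for all $n$ (if $x\notin(\m^{[q^n]}:c)$ then $\psi^n((xR)^{1/q^n})\not\subseteq\m^N$, hence $\psi^n((xR)^{1/q^n})\not\subseteq I_{n_0}$, while $\psi^n((I_{n+n_0})^{1/q^n})\subseteq I_{n_0}$, so $x\notin I_{n+n_0}$). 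Finally, Lemma~\ref{colonsamelimit} (with this $c$, restricted to the subsequence $\{q^n\}_n$) gives $\lim_n\tfrac{1}{q^{nd}}\ell_R(R/(\m^{[q^n]}:c))=\lim_n\tfrac{1}{q^{nd}}\ell_R(R/\m^{[q^n]})=e_{HK}(R)$, so that
\[
\eta=\lim_n\tfrac{1}{q^{(n+n_0)d}}\ell_R(R/I_{n+n_0})\ \ge\ \tfrac{1}{q^{n_0d}}\,e_{HK}(R)\ \ge\ \tfrac{1}{q^{n_0d}}\ >\ 0 .
\]

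The only real obstacle is bookkeeping rather than mathematics: one must be sure that Theorem~\ref{Hartshorne Speiser}, Lemma~\ref{makeitaniterateofafixedmap}, and Proposition~\ref{MainHHLemma} genuinely hold for $p^{-e_0}$-linear maps, equivalently for $R$ equipped with $F^{e_0}$ in place of $F$. This is indeed so: the cited sources for Theorem~\ref{Hartshorne Speiser} (Gabber; Blickle--B\"ockle) are already stated for Cartier modules carrying a $p^{-e}$-linear structure with $e$ fixed, and the divisor dictionary behind the other two statements is manifestly insensitive to which power of $p$ plays the role of Frobenius. So the work is entirely one of careful transcription, and no new idea is required.
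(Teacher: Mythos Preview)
Your proposal is correct and follows exactly the approach indicated by the paper: the paper itself offers no separate proof of this Corollary, but rather states in the preceding Remark that one obtains it by ``beginning with short exact sequences in Lemma~\ref{Map Lemma} replacing $R^{1/p}$ with $R^{1/p^{e_0}}$ and tracing through the arguments of previous sections.'' Your careful verification that each auxiliary ingredient (Lemma~\ref{Map Lemma}~(ii.), Theorem~\ref{Hartshorne Speiser}, Lemma~\ref{makeitaniterateofafixedmap}, Proposition~\ref{MainHHLemma}, Lemma~\ref{colonsamelimit}) admits the required $p^{-e_0}$-analogue is precisely what the paper leaves implicit.
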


\section{F-signature and Minimal Relative Hilbert-Kunz Multiplicity}\label{Section F-signature and Minimal Relative Hilbert-Kunz Multiplicity}

Our next aim is to realize the $F$-signature as the infimum of relitive differences in the Hilbert-Kunz multiplicities of the cofinite ideals in a local ring (Corollary \ref{WY Type result corollary}).    
After first bounding such differences from below by the $F$-signature (Lemma \ref{Iecontainscolons}), we will make use of approximately Gorenstein sequences to find differences arbitrarily close to the $F$-signature. The crucial step and main technical result is Theorem \ref{WY Type result}, which uses the uniformity of the constants tracked above swap limits between  iterations of Frobenius and progression in an approximately Gorenstein sequence.

The remainder of the section is reserved for constructions of explicit sequences of relative Hilbert-Kunz differences that approach the $F$-signature (see Corollary \ref{reducex1}); we also analyze when the infimum is known to be achieved (Corollary \ref{QGorAnalysis}).  Generalizations to divisor and ideal pairs are also given (Corollaries \ref{a^t signature and WY result}, \ref{Delta signature and WY result}).

\begin{Lemma}
\label{Iecontainscolons}
Suppose that $(R, \m, k)$ is an $F$-finite local ring and $I_e^{\mathrm{F-sig}} = ( r \in R \mid \phi(r^{1/p^e}) \in \m \mbox{ for all } \phi \in \Hom_R(R^{1/p^e},R) )$ for $e \in \N$. Then
\begin{equation}
\label{Iesumofcolons}
    I_e^{\mathrm{F-sig}} \supseteq \sum_{ \substack{I \subseteq J \subseteq R \\  0 < \ell_R(J/I) < \infty}} (I^{[p^e]}:J^{[p^e]}) \supseteq \sum_{\substack{I \subseteq R, \; \ell_R(R/I) < \infty \\  x \in R, \; ( I : x )=\m}} (I^{[p^e]} : x^{p^e})
\end{equation}
and we have
\begin{equation}
\label{relativeHKsignatureeasyinequality}
s(R) \leq \inf_{\substack{I \subseteq J  \subseteq R, \;\ell_R(R/I) < \infty  \\ I \neq J, \; \ell_R(R/J) < \infty}} \frac{e_{HK}(I) - e_{HK}(J)}{\ell_R(J/I)} \leq \inf_{\substack{I \subseteq R, \; \ell_R(R/I) < \infty \\  x \in R, \; ( I : x )=\m}} e_{HK}(I) - e_{HK}((I,x)).
\end{equation}
\end{Lemma}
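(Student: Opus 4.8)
The plan is to prove the ideal containment \eqref{Iesumofcolons} first, and then to deduce \eqref{relativeHKsignatureeasyinequality} by dividing by $p^{ed}$ and passing to the limit, using a composition series to reduce to the rightmost configurations. Throughout I will use the identification of the ideal $I^{[p^e]}=(a_1^{p^e},\ldots,a_n^{p^e})$ (for $I=(a_1,\ldots,a_n)$) with the $R$-submodule $I\cdot R^{1/p^e}\subseteq R^{1/p^e}$ under $m\mapsto m^{1/p^e}$, and likewise for $J^{[p^e]}$ --- this is the bridge that lets the functionals in $\Hom_R(R^{1/p^e},R)$ interact with Frobenius powers. I will also use that $s(R)=\lim_{e\to\infty}\frac{1}{p^{ed}}\ell_R(R/I_e^{\mathrm{F-sig}})$, which follows from $\frk_R(R^{1/p^e})=\ell_R((R/I_e^{\mathrm{F-sig}})^{1/p^e})=[k^{1/p^e}:k]\cdot\ell_R(R/I_e^{\mathrm{F-sig}})$, and that $e_{HK}(I)-e_{HK}(J)=\lim_{e\to\infty}\frac{1}{p^{ed}}\ell_R(J^{[p^e]}/I^{[p^e]})$ for $\m$-primary $I\subseteq J$ (the difference of the two Hilbert--Kunz limits).

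For the first containment in \eqref{Iesumofcolons}, I would fix ideals $I\subsetneq J$ with $\ell_R(J/I)<\infty$ and $r\in(I^{[p^e]}:J^{[p^e]})$. For each $j\in J$ we have $rj^{p^e}\in I^{[p^e]}$, so under the identification above $j\cdot r^{1/p^e}\in I\cdot R^{1/p^e}$. If some $\phi\in\Hom_R(R^{1/p^e},R)$ satisfied $\phi(r^{1/p^e})=u\notin\m$, then $u$ would be a unit and $ju=j\,\phi(r^{1/p^e})=\phi(j\cdot r^{1/p^e})\in\phi(I\cdot R^{1/p^e})\subseteq I$ for every $j\in J$, forcing $J\subseteq I$ --- a contradiction. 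Hence $\phi(r^{1/p^e})\in\m$ for all $\phi$, i.e.\ $r\in I_e^{\mathrm{F-sig}}$. The second containment is immediate: for $I$ of finite colength with $(I:x)=\m$, the pair $I\subsetneq(I,x)$ satisfies $0<\ell_R((I,x)/I)=1<\infty$ and $(I^{[p^e]}:(I,x)^{[p^e]})=(I^{[p^e]}:(I^{[p^e]},x^{p^e}))=(I^{[p^e]}:x^{p^e})$, so every summand of the rightmost sum occurs among the summands of the middle sum.

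For \eqref{relativeHKsignatureeasyinequality}, the rightmost inequality is formal: any admissible pair $(I,x)$ on the right gives an admissible pair $I\subsetneq(I,x)$ for the middle infimum (note $x\notin I$ since $(I:x)=\m\neq R$) with value $\frac{e_{HK}(I)-e_{HK}((I,x))}{\ell_R((I,x)/I)}=e_{HK}(I)-e_{HK}((I,x))$. For the leftmost inequality, I would fix an admissible pair $I\subseteq J$, choose a composition series $I=L_0\subsetneq L_1\subsetneq\cdots\subsetneq L_m=J$ with $m=\ell_R(J/I)$ and $L_i=(L_{i-1},x_i)$ where $(L_{i-1}:x_i)=\m$, and estimate
\[
\ell_R(J^{[p^e]}/I^{[p^e]})=\sum_{i=1}^{m}\ell_R\big(L_i^{[p^e]}/L_{i-1}^{[p^e]}\big)=\sum_{i=1}^{m}\ell_R\big(R/(L_{i-1}^{[p^e]}:x_i^{p^e})\big)\geq m\cdot\ell_R(R/I_e^{\mathrm{F-sig}}),
\]
the inequality being \eqref{Iesumofcolons} applied to each step. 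Dividing by $p^{ed}$ and letting $e\to\infty$ gives $e_{HK}(I)-e_{HK}(J)\geq\ell_R(J/I)\cdot s(R)$, and taking the infimum over admissible pairs yields the leftmost inequality.

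I expect the main obstacle to be the first containment in \eqref{Iesumofcolons} --- more precisely, setting up cleanly the translation of the colon condition $rJ^{[p^e]}\subseteq I^{[p^e]}$ into the $R$-module statement $j\cdot r^{1/p^e}\in I\cdot R^{1/p^e}$, so that a putative splitting $\phi$ of $R^{1/p^e}$ can be applied to produce $J\subseteq I$. Once this dictionary between Frobenius powers and $R$-submodules of $R^{1/p^e}$ is in place, the contradiction is a one-line computation, and the remaining steps (the telescoping over a composition series and the two limit identities) are routine consequences of the existence results recalled above.
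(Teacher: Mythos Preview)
Your proof is correct and follows essentially the same route as the paper's: the key observation that $\phi((I^{[p^e]}:J^{[p^e]})^{1/p^e})\subseteq (I:J)\subseteq\m$ is exactly what the paper records (you have unpacked it as a contradiction argument, but the computation is identical), and the reduction of the Hilbert--Kunz inequality to the single-step case via a composition series is likewise the paper's argument. The only cosmetic difference is that the paper first passes to the limit on each composition factor (citing the identity $e_{HK}(I)-e_{HK}((I,x))=\lim_e p^{-ed}\ell_R(R/(I^{[p^e]}:x^{p^e}))$) and then sums, whereas you sum at finite level and then take the limit---these are trivially equivalent.
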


\begin{proof}
 If $I \subsetneq J$ is a proper inclusion of ideals with $\ell_R(J/I) < \infty$ and $\phi \in \Hom_R(R^{1/p^e},R)$, we have that $\phi((I^{[p^e]}:J^{[p^e]})^{1/p^e}) \subseteq (I:J) \subseteq \m$.  It follows that $\sum_{\substack{I \subseteq J \subseteq R \\  0 < \ell_R(J/I) < \infty}} (I^{[p^e]}:J^{[p^e]}) \subseteq I_e^{\mathrm{F-sig}}$. Since the sum on the  right is over a smaller set of proper inclusions,  \eqref{Iesumofcolons}  follows immediately.
 
 For \eqref{relativeHKsignatureeasyinequality}, if $I \subseteq R$ is $\m$-primary and $x \in R$ with $(I:x) = \m$, then $(I^{[p^e]}:x^{p^e}) \subseteq   I_e^{\mathrm{F-sig}}$ for all $e \in \N$ implies $e_{HK}(I) - e_{HK}((I,x)) \geq s(R)$ using \eqref{relHKtocolon}.  Moreover, if $I \subsetneq J$ is a proper inclusion of $\m$-primary ideals, summing up this inequality for each  factor in a composition series of $J/I$ shows ${\ell_R(J/I)} s(R) \leq (e_{HK}(I) - e_{HK}(J))$.  The inequalities in \eqref{relativeHKsignatureeasyinequality} now follow immediately, noting again that the infimum on the right is over a smaller set of proper inclusions.
 \end{proof}
 
Recall that a local ring $(R, \m, k)$ is said to be approximately Gorenstein if there exists a descending chain of irreducible ideals $\{J_t\}_{t \in \N}$ cofinal with powers of the maximal ideal. In particular, each $R / J_t$ is a zero-dimensional Gorenstein local ring and has a one dimensional socle.  By \cite[Theorem~1.6]{HochsterPurity}, a reduced excellent local ring is always approximately Gorenstein. It is easy to check that equality holds throughout \eqref{Iesumofcolons} for such rings.
 
\begin{Lemma}
\label{approxgorcolons}
Suppose that $(R, \m, k)$ is an approximately Gorenstein $F$-finite local ring, and $\{J_t\}_{t \in \N}$ is a descending chain of irreducible ideals cofinal with the powers of $\m$.  If $\delta_t \in R$  generates the socle of $R/J_t$, then for all $e \in \N$ we have $ (J_t^{[p^e]}:\delta_t^{[p^e]}) \subseteq (J_{t+1}^{[p^e]}:\delta_{t+1}^{[p^e]})$ and 
\[
\begin{array}{rcl}
I_e^{\mathrm{F-sig}} &=& \langle r \in R \, | \, \phi(r^{1/p^e}) \in \m \mbox{ for all } \phi \in \Hom_R(R^{1/p^e},R) \rangle \\
& = & \sum_{t \in \N} (J_t^{[p^e]}:\delta_t^{[p^e]}) \\
& = & \bigcup_{t \in \N} (J_t^{[p^e]}:\delta_t^{[p^e]}) \\
& = & (J_{t_e}^{[p^e]}:\delta_{t_e}^{[p^e]}) \mbox{ for all $t_{e} \gg 0$ sufficiently large.}
\end{array}
\]
Moreover, $R$ is weakly $F$-regular if and only if $J_t^* = J_t$ is tightly closed for all $t \in \N$.
\end{Lemma}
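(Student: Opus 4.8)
The plan is to prove the three assertions in turn --- the ascending chain of colon ideals, the chain of equalities computing $I_e^{\mathrm{F-sig}}$, and the characterization of weak $F$-regularity --- with the last two sharing a single duality input. For the ascending chain, fix $t$. Since $J_{t+1}\subseteq J_t$ and $R/J_{t+1}$ is Gorenstein Artinian, Matlis duality over $R/J_{t+1}$ shows $\Ann_{R/J_{t+1}}(J_t/J_{t+1})$ is a cyclic module isomorphic to $R/J_t$ that contains the simple socle of $R/J_{t+1}$; so I would extract $c_t\in R$ with $c_tJ_t\subseteq J_{t+1}$, $(J_{t+1}:c_t)=J_t$, and $c_t\delta_t\equiv\delta_{t+1}\pmod{J_{t+1}}$ (after adjusting $\delta_t$ by a unit). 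Raising to $p^e$-th powers --- using that Frobenius is additive on differences --- gives $c_t^{p^e}J_t^{[p^e]}\subseteq J_{t+1}^{[p^e]}$ and $(c_t\delta_t)^{p^e}\equiv\delta_{t+1}^{p^e}\pmod{J_{t+1}^{[p^e]}}$, and then for $r\in(J_t^{[p^e]}:\delta_t^{[p^e]})$ one computes $r\delta_{t+1}^{p^e}\equiv c_t^{p^e}(r\delta_t^{p^e})\in c_t^{p^e}J_t^{[p^e]}\subseteq J_{t+1}^{[p^e]}$, so $r\in(J_{t+1}^{[p^e]}:\delta_{t+1}^{[p^e]})$. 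Hence $\{(J_t^{[p^e]}:\delta_t^{[p^e]})\}_{t\in\N}$ is an ascending chain, its sum equals its union, and by Noetherianity it stabilizes at $(J_{t_e}^{[p^e]}:\delta_{t_e}^{[p^e]})$ for all $t_e\gg0$.

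The inclusion $\bigcup_t(J_t^{[p^e]}:\delta_t^{[p^e]})\subseteq I_e^{\mathrm{F-sig}}$ is immediate from Lemma~\ref{Iecontainscolons}, as $(J_t:\delta_t)=\m$, so the content is the reverse inclusion. Using the equality throughout \eqref{Iesumofcolons}, I would write a given $r\in I_e^{\mathrm{F-sig}}$ as a finite sum of elements $r_i\in(I_i^{[p^e]}:x_i^{p^e})$ with $(I_i:x_i)=\m$, and reduce to the claim that for each pair $(I,x)$ with $(I:x)=\m$ there is a $t$, independent of $e$, with $(I^{[p^e]}:x^{p^e})\subseteq(J_t^{[p^e]}:\delta_t^{[p^e]})$. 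To prove the claim, choose (from an irreducible decomposition of $I$) an irreducible $\m$-primary ideal $I'\supseteq I$ with $x\notin I'$, so that the nonzero image of $x$ lies in the simple socle of the Gorenstein Artinian ring $R/I'$; embed $R/I'$ into $E_R(k)$ --- which, being Gorenstein Artinian, it does so as to carry socle generator to socle generator --- and note that its image, being of finite length, lies in $(0:_{E_R(k)}J_t)\cong R/J_t$ for $t\gg0$. This produces $v\in(J_t:I')\subseteq(J_t:I)$ with $vx\equiv\delta_t\pmod{J_t}$; raising to $p^e$-th powers, $vI\subseteq J_t$ gives $v^{p^e}I^{[p^e]}\subseteq J_t^{[p^e]}$, and for $r\in(I^{[p^e]}:x^{p^e})$ one gets $r\delta_t^{p^e}\equiv v^{p^e}(rx^{p^e})\in v^{p^e}I^{[p^e]}\subseteq J_t^{[p^e]}$. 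Taking $t=\max_i t_i$ and invoking the ascending chain then yields $r\in(J_t^{[p^e]}:\delta_t^{[p^e]})$, establishing all the asserted equalities.

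For the last statement one direction is trivial. Conversely, assume $J_t^*=J_t$ for all $t$. I would first note that $R$ is weakly $F$-regular as soon as every $\m$-primary ideal is tightly closed: if some ideal $J$ has $J^*\supsetneq J$, pick $y\in J^*\setminus J$ and $n$ with $y\notin J+\m^n$ (possible since $\bigcap_n(J+\m^n)=J$); then $J+\m^n$ is $\m$-primary and $y\in J^*\subseteq(J+\m^n)^*\setminus(J+\m^n)$, so $J+\m^n$ is not tightly closed. Since every $\m$-primary ideal is a finite intersection of irreducible $\m$-primary ideals and a finite intersection of tightly closed ideals is tightly closed, it suffices to treat an irreducible $\m$-primary ideal $Q$. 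If $Q^*\supsetneq Q$, then the simplicity of the socle of $R/Q$ forces its socle generator $\delta_Q$ into $Q^*$, so $c\,\delta_Q^{p^e}\in Q^{[p^e]}$ for all $e$ and some $0\neq c\in R$; running the embedding argument above on $R/Q$ produces $t$ and $v\in(J_t:Q)$ with $v\delta_Q\equiv\delta_t\pmod{J_t}$, whence $c\,\delta_t^{p^e}\equiv v^{p^e}(c\,\delta_Q^{p^e})\in v^{p^e}Q^{[p^e]}\subseteq J_t^{[p^e]}$ for all $e$, giving $\delta_t\in J_t^*\setminus J_t$ --- a contradiction.

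The step I expect to be the main obstacle is the duality bookkeeping: extracting the connecting element $c_t$, and the socle-matching embeddings $R/I'\hookrightarrow E_R(k)=\varinjlim_t R/J_t$, together with tracking how these equivalences behave under $(\,\cdot\,)^{[p^e]}$. These rely only on standard facts --- Matlis duality over Gorenstein Artinian local rings and the presentation of $E_R(k)$ as a direct limit of the $R/J_t$ along the given cofinal chain --- but care is needed to ensure each chosen map carries socle generator to socle generator; granting that, every remaining step is a short colon-ideal manipulation.
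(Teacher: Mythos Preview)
Your arguments for the ascending chain $(J_t^{[p^e]}:\delta_t^{p^e}) \subseteq (J_{t+1}^{[p^e]}:\delta_{t+1}^{p^e})$ and for the weak $F$-regularity characterization are correct and essentially the same as the paper's, just phrased more explicitly: where the paper writes $E = \varinjlim R/J_t$ and applies $\blank \otimes_R R^{1/p^e}$, you unpack this into the multiplier $c_t$ (respectively $v$) and compute with Frobenius by hand. The extra reduction in the last part to irreducible $\m$-primary ideals is unnecessary but harmless; note also that in a ring that need not be a domain, the test element $c$ for tight closure should be taken outside every minimal prime, not merely nonzero.

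There is, however, a circularity in your treatment of the equality $I_e^{\mathrm{F-sig}} = \bigcup_t (J_t^{[p^e]}:\delta_t^{p^e})$. You invoke ``the equality throughout \eqref{Iesumofcolons}'' to write $r \in I_e^{\mathrm{F-sig}}$ as a finite sum of elements in colon ideals $(I_i^{[p^e]}:x_i^{p^e})$, and then embed each summand into some $(J_{t_i}^{[p^e]}:\delta_{t_i}^{p^e})$. But the sentence preceding the lemma announcing that equality in \eqref{Iesumofcolons} holds for approximately Gorenstein rings is a forward reference: its only proof in the paper is Lemma~\ref{approxgorcolons} itself, since the needed inclusion $I_e^{\mathrm{F-sig}} \subseteq \sum_{(I:x)=\m}(I^{[p^e]}:x^{p^e})$ comes precisely from producing a $(J_t,\delta_t)$ with $r \in (J_t^{[p^e]}:\delta_t^{p^e})$. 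So you are assuming what is to be shown. The paper closes this gap by appealing directly to Hochster's splitting criterion: the map $R \to R^{1/p^e}$, $1 \mapsto r^{1/p^e}$, splits if and only if $E \to E \otimes_R R^{1/p^e}$ is injective, which in terms of the presentation $E = \varinjlim R/J_t$ says exactly that $\delta_t^{p^e} r \notin J_t^{[p^e]}$ for all $t$. This single step gives $I_e^{\mathrm{F-sig}} = \bigcup_t (J_t^{[p^e]}:\delta_t^{p^e})$ immediately and renders your subsequent embedding argument for general $(I,x)$ unnecessary (though that argument is correct and is precisely what the paper uses later for the weak $F$-regularity statement).
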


\begin{proof} Since each $R/J_t$ is an Artinian Gorenstein local ring, we have that $\Ann_{E}(J_t) \simeq R/J_t$ where $E = E_R(k)$
is the injective hull of the residue field of $R$.  Thus, we may view $E = E_R(k) =\varinjlim R/J_t$ as the direct limit of inclusions $R/J_t \to R/J_{t+1}$ mapping (the class of) $\delta_t \mapsto \delta_{t+1}$.  In particular, after applying $\blank \otimes_R R^{1/p^e}$, one sees that $ (J_t^{[p^e]}:\delta_t^{[p^e]}) \subseteq (J_{t+1}^{[p^e]}:\delta_{t+1}^{[p^e]}) $ for all $t \in \N$.  It follows immediately that $\sum_{t \in \N} (J_t^{[p^e]}:\delta_t^{[p^e]}) 
= \bigcup_{t \in \N} (J_t^{[p^e]}:\delta_t^{[p^e]}) 
= (J_{t_e}^{[p^e]}:\delta_{t_e}^{[p^e]}) $ for all $t_e \gg 0$.

An inclusion $R \to M$ to a finitely generated $R$-module $M$ determined by $1 \mapsto m$ splits if and only if $E \to E \otimes_R M$ remains injective, which is equivalent to $\delta_t m \not\in J_t M$ for all $t \in \N$.  See \cite[page~155]{fndtc} for further details. In particular, if $x \in R$, applying this splitting criterion to the map $R \to R^{1/p^e}$ with $1 \mapsto x^{1/p^e}$ gives that $x \in R \setminus I_e^{\mathrm{F-sig}}$ if and only if $x \in R \setminus (J_t^{[p^e]}:\delta_t^{[p^e]})$ for all $t \in \N$, and so we have that $I_e^{\mathrm{F-sig}} = (J_{t_e}^{[p^e]}:\delta_{t_e}^{[p^e]})$ for $t_e \gg 0$.

Lastly, suppose there is an ideal $I \subseteq R$ that is not tightly closed. Then $I=\bigcap_{n\in\N}(I+\m^n)$ is an intersection of $\m$-primary ideals. The arbitrary intersection of tightly closed ideals is tightly closed, \cite[Proposition~4.1(b)]{HHJams}. Hence  we may replace $I$ with an $\m$-primary ideal which is not tightly closed and choose  $x \in I^*$ with $\m = (I:x)$. Since $R/I$ injects into a direct sum of copies of $E$, we can find an $R$-module homomorphism $R/I \to E$ so that  $x + I$ has nonzero image in $E$ and hence must generate the socle $k \subseteq E$.  Using that $E = \varinjlim R/J_t$, we may assume $R/I \to R/J_t$ and $x + I \mapsto \delta_t + J_t$ for some $t \in \N$.  For each $e \in \N$, applying $\blank \otimes_R R^{1/p^e}$ and viewing as a $R^{1/p^e}$-module gives $R/I^{[p^e]} \to R/J_t^{[p^e]}$ where $x^{p^e} + I^{[p^e]} \mapsto \delta_t^{p^e} + J_t^{[p^e]}$.  In particular, it follows that $(I^{[p^e]}: x^{p^e}) \subseteq (J_t^{[p^e]}: \delta_t^{[p^e]})$.  
Thus $ \bigcap_{e \in \N} (I^{[p^e]} : x^{p^e}) \subseteq \bigcap_{e \in \N} (J_t^{[p^e]} : \delta_t^{p^e})$  is not contained in any minimal prime of $R$; it follows that $\delta_t \in J_t^*$ and hence $J_t$ is not tightly closed (\textit{cf.} \cite[Proposition~8.23(f)]{HHJams}).
\end{proof}

In the next result, we show how to make use of the uniformity of constants from Theorem~\ref{MainExistenceTheoremIdealSequences} together with approximately Gorenstein sequences in order to compare $F$-signature and Hilbert-Kunz multiplicity.  As an application, we answer a question posed by Watanabe and Yoshida \cite[Question~1.10]{WY}.

\begin{Theorem}\label{WY Type result} Let $(R,\m,k)$ be a complete local F-finite domain of dimension $d$, and fix $0 \neq c \in R$.  Suppose we are given sequences of ideals $\{ I_{t,e} \}_{t,e \in \N}$ satisfying 
 $\m^{[p^e]} \subseteq I_{t,e}$,
 $c (I_{t,e}^{[p]}) \subseteq I_{t,e+1}$, and
 $I_{t,e} \subseteq I_{t+1,e}$ 
for all $t, e \in \N$.
Then
 \[
  \lim_{e\rightarrow \infty}\lim_{t\rightarrow \infty}\frac{1}{p^{ed}}\ell_R(R/I_{t,e}) = \lim_{e \to \infty} \frac{1}{p^{ed}} \ell_R(R/I_e) =\lim_{t\rightarrow \infty}\lim_{e\rightarrow \infty}\frac{1}{p^{ed}}\ell_R(R/I_{t,e})
 \]
 where $I_e = \sum_{t \in \N} I_{t,e}$.
\end{Theorem}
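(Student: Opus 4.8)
The plan is to show that all three quantities coincide with $\eta := \lim_{e\to\infty}\frac{1}{p^{ed}}\ell_R(R/I_e)$, which I will first prove exists. To this end I would check that $\{I_e\}_{e\in\N}$ satisfies the hypotheses of Theorem~\ref{MainExistenceTheoremIdealSequences}~(i.) with the given element $c$. The containment $\m^{[p^e]}\subseteq I_e$ is immediate from $\m^{[p^e]}\subseteq I_{t,e}$. For the condition $cI_e^{[p]}\subseteq I_{e+1}$, the point is that Frobenius is additive in characteristic $p$, so that $\left(\sum_t I_{t,e}\right)^{[p]}=\sum_t I_{t,e}^{[p]}$; hence $cI_e^{[p]}=\sum_t cI_{t,e}^{[p]}\subseteq\sum_t I_{t,e+1}=I_{e+1}$. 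Theorem~\ref{MainExistenceTheoremIdealSequences}~(i.) then yields both the existence of $\eta$ and a constant $C(c)$, depending only on $c$ (and $R$), with $\eta-\frac{1}{p^{ed}}\ell_R(R/I_e)\le\frac{C(c)}{p^e}$ for all $e$.

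Next I would apply the same theorem to each sequence $\{I_{t,e}\}_{e\in\N}$ for fixed $t$: these satisfy $\m^{[p^e]}\subseteq I_{t,e}$ and $cI_{t,e}^{[p]}\subseteq I_{t,e+1}$, again with the \emph{same} $c$. Thus $\eta_t:=\lim_{e\to\infty}\frac{1}{p^{ed}}\ell_R(R/I_{t,e})$ exists for every $t$, so the inner limits on the right-hand side of the claimed identity are meaningful, and moreover $\eta_t-\frac{1}{p^{ed}}\ell_R(R/I_{t,e})\le\frac{C(c)}{p^e}$ holds with the same constant $C(c)$, independent of $t$. This uniformity is the essential ingredient that will let me interchange the limits in $t$ and $e$.

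For the left-hand iterated limit I would use that $\{I_{t,e}\}_{t\in\N}$ is an ascending chain for each fixed $e$; since $R$ is Noetherian the chain stabilizes, so $I_e=\sum_t I_{t,e}=I_{t,e}$ for $t\gg 0$ and therefore $\lim_{t\to\infty}\frac{1}{p^{ed}}\ell_R(R/I_{t,e})=\frac{1}{p^{ed}}\ell_R(R/I_e)$; letting $e\to\infty$ identifies this iterated limit with $\eta$. For the right-hand iterated limit, note $\ell_R(R/I_{t,e})$ is nonincreasing in $t$, hence so is $\eta_t$, and since $I_{t,e}\subseteq I_e$ we have $\eta_t\ge\eta$; thus $\eta':=\lim_{t\to\infty}\eta_t=\inf_t\eta_t$ exists and $\eta'\ge\eta$. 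For the reverse inequality I would combine the bounds above: $\eta'\le\eta_t\le\frac{1}{p^{ed}}\ell_R(R/I_{t,e})+\frac{C(c)}{p^e}$ for all $t,e$; fixing $e$ and letting $t\to\infty$ gives $\eta'\le\frac{1}{p^{ed}}\ell_R(R/I_e)+\frac{C(c)}{p^e}$, and then letting $e\to\infty$ gives $\eta'\le\eta$. Hence $\eta'=\eta$, and all three quantities equal $\eta$.

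The one genuinely non-formal point — and the reason the argument works — is the uniformity of the constant $C(c)$ across all the sequences $\{I_{t,e}\}_{e\in\N}$, which is exactly the content of the ``moreover'' clause of Theorem~\ref{MainExistenceTheoremIdealSequences}~(i.): the proof of that theorem produces $C(c)$ from a short exact sequence built only from $c$, with no dependence on the ideal sequence. A small auxiliary point is the additivity of $(\blank)^{[p]}$ over sums of ideals used to verify the hypotheses for $\{I_e\}$. Everything else is a soft manipulation of the inequalities.
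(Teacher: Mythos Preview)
Your proof is correct and follows essentially the same approach as the paper's: both arguments hinge on the uniformity of the constant $C(c)$ from Theorem~\ref{MainExistenceTheoremIdealSequences}~(i.) across the sequences $\{I_{t,e}\}_{e\in\N}$, use Noetherian stabilization to identify the inner limit $\lim_{t\to\infty}\frac{1}{p^{ed}}\ell_R(R/I_{t,e})$ with $\frac{1}{p^{ed}}\ell_R(R/I_e)$, and then sandwich $\lim_{t\to\infty}\eta_t$ between $\eta$ and $\frac{1}{p^{ed}}\ell_R(R/I_e)+\frac{C(c)}{p^e}$. Your presentation is slightly more explicit in verifying $cI_e^{[p]}\subseteq I_{e+1}$ via additivity of Frobenius powers, whereas the paper just says the sequence $\{I_e\}$ ``inherits'' the required properties, but the substance is identical.
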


\begin{proof} For each fixed $t \in \N$, as $\m^{[p^e]} \subseteq I_{t,e}$ and $c(I_{t,e}^{[p]}) \subseteq I_{t,e+1}$ for $e \in \N$, Theorem~\ref{MainExistenceTheoremIdealSequences}~(i) guarantees the existence of $\eta_t = \lim_{e \to \infty} \frac{1}{p^{ed}} \ell_R(R/I_{t,e})$ and provides a uniform positive constant $C \in \R$ so that
\begin{equation}
\label{unifboundinlimitswitch1}
\eta_t \leq \frac{1}{p^{ed}} \ell_R(R/I_{t,e}) + \frac{C}{p^e}
\end{equation}
for all $t,e \in \N$.  The sequence $\{ I_e \}_{e \in \N}$ inherits the properties $\m^{[p^e]} \subseteq I_e$ and $c(I_e^{[p]}) \subseteq I_{e+1}$ for $e \in \N$, hence $\lim_{e \to \infty} \frac{1}{p^{ed}} \ell_R(R/I_e)$  exists as well.  Since $I_{t,e} \subseteq I_e$ and so $  \ell_R(R/I_e) \leq \ell_R(R/ I_{t,e})$ for all $t,e \in \N$, applying $\lim_{e \to \infty}$ gives 
\begin{equation}
\label{unifboundinlimitswitch2}
\lim_{e \to \infty} \frac{1}{p^{ed}} \ell_R(R/I_{e}) \leq \eta_t  
\end{equation}
 for all $t \in \N$.

Since 
$I_{t,e} \subseteq I_{t+1,e}$ is increasing in $t$ for fixed $e$, it follows that $\eta_t \geq \eta_{t+1} \geq 0$ for all $t\in \N$ and hence $\lim_{t \to \infty} \eta_t$ exists.  We also have 
$I_e = I_{t_e,e}$ for $t_e \gg 0$, so that $\lim_{e\rightarrow \infty}\lim_{t\rightarrow \infty}\frac{1}{p^{ed}}\ell_R(R/I_{t,e}) = \lim_{e \to \infty} \frac{1}{p^{ed}} \ell_R(R/I_e) $.  Applying $\lim_{t \to \infty}$ to \eqref{unifboundinlimitswitch1} and \eqref{unifboundinlimitswitch2} gives
\[
\lim_{e \to \infty} \frac{1}{p^{ed}} \ell_R(R/I_{e}) \leq \lim_{t \to \infty}\eta_t \leq \frac{1}{p^{ed}} \ell_R(R/I_{e}) + \frac{C}{p^e}
\]
for all $e \in \N$.  Further taking $\lim_{e \to \infty}$ gives $\lim_{t \to \infty}\eta_t = \lim_{e \to \infty} \frac{1}{p^{ed}} \ell_R(R/I_{e})$ and completes the proof.  
\end{proof}

\begin{Theorem}\label{Corollary to WY result} Let $(R,\m,k)$ be an approximately Gorenstein $F$-finite local ring of dimension~$d$.  Suppose $\{J_t\}_{t \in \N}$ is a descending chain of irreducible ideals cofinal with the powers of $\m$, and $\delta_t \in R$  generates the socle of $R/J_t$.  Then $s(R) = \lim_{t \to \infty} e_{HK}(J_t) - e_{HK}((J_t, \delta_t))$.
\end{Theorem}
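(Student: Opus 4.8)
The plan is to apply Theorem~\ref{WY Type result} to the doubly-indexed family of ideals
\[
I_{t,e} = (J_t^{[p^e]} : \delta_t^{p^e}), \qquad t,e\in\N .
\]
First I would reduce to the case that $R$ is a complete local domain: passing to the completion affects neither $s(R)$, nor the Hilbert--Kunz multiplicities $e_{HK}(J_t)$ and $e_{HK}((J_t,\delta_t))$, nor the chosen approximately Gorenstein data, and in the domain case the hypotheses of Theorem~\ref{WY Type result} are met literally. Then, to feed $\{I_{t,e}\}$ into that theorem, I would verify its three hypotheses with the choice $c=1$. Since $\delta_t$ generates the socle of $R/J_t$ we have $(J_t:\delta_t)=\m$, so $\m^{[p^e]} = (J_t:\delta_t)^{[p^e]} \subseteq (J_t^{[p^e]}:\delta_t^{p^e}) = I_{t,e}$; raising a relation $b\delta_t^{p^e}\in J_t^{[p^e]}$ to the $p$-th power shows $I_{t,e}^{[p]}\subseteq I_{t,e+1}$, which is exactly the middle hypothesis with $c=1$; and the containment $I_{t,e}\subseteq I_{t+1,e}$ is precisely the first assertion of Lemma~\ref{approxgorcolons}.

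With the family set up, I would compute the two iterated limits appearing in Theorem~\ref{WY Type result}. For fixed $t$, the isomorphism $(J_t,\delta_t)^{[p^e]}/J_t^{[p^e]} \simeq R/(J_t^{[p^e]}:\delta_t^{p^e}) = R/I_{t,e}$ gives $\ell_R(R/I_{t,e}) = \ell_R(R/J_t^{[p^e]}) - \ell_R(R/(J_t,\delta_t)^{[p^e]})$; dividing by $p^{ed}$ and letting $e\to\infty$, and writing $\eta_t$ for this limit, yields $\eta_t = e_{HK}(J_t) - e_{HK}((J_t,\delta_t))$. For the other order of limits, note that $I_e := \sum_{t\in\N} I_{t,e} = \sum_{t\in\N}(J_t^{[p^e]}:\delta_t^{p^e}) = I_e^{\mathrm{F-sig}}$ by Lemma~\ref{approxgorcolons}, whence $\tfrac{1}{p^{ed}}\ell_R(R/I_e) = \frac{\frk_R(R^{1/p^e})}{[k^{1/p^e}:k]\,p^{ed}} \to s(R)$ by Theorem~\ref{easyproof}. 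Theorem~\ref{WY Type result} now identifies $\lim_{t\to\infty}\eta_t$ with $\lim_{e\to\infty}\tfrac{1}{p^{ed}}\ell_R(R/I_e) = s(R)$, that is, $s(R) = \lim_{t\to\infty}\big(e_{HK}(J_t) - e_{HK}((J_t,\delta_t))\big)$, as desired.

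In this argument essentially all of the difficulty is already absorbed into Theorem~\ref{WY Type result}, whose proof swaps the limit over Frobenius iterates with the limit along the approximately Gorenstein chain using the uniformity (independent of the sequence of ideals) of the error term in Theorem~\ref{MainExistenceTheoremIdealSequences}(i.); everything else here is assembly via Lemma~\ref{approxgorcolons}. Accordingly, the step I expect to need the most care is the bookkeeping around the reduction to the complete (domain) case together with the verification that $c=1$ genuinely satisfies the middle hypothesis of Theorem~\ref{WY Type result}, so that no auxiliary multiplier is required and the family $\{I_{t,e}\}$ may be taken exactly as above.
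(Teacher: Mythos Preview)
Your argument in the complete domain case is exactly the paper's: the same family $I_{t,e} = (J_t^{[p^e]}:\delta_t^{p^e})$, the same verification that $c=1$ works, and the same identification of the two iterated limits via Lemma~\ref{approxgorcolons} and \eqref{relHKtocolon}. The gap is in your first sentence. Completion alone does not make $R$ a domain, and an approximately Gorenstein $F$-finite local ring need not be one, so you cannot simply ``reduce to the complete local domain case'' in order to meet the hypotheses of Theorem~\ref{WY Type result}. You flagged this reduction as the delicate step, and indeed this is precisely where the missing work lies.

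The paper fills this gap by a case split on weak $F$-regularity. After completing, if $R$ is \emph{not} weakly $F$-regular then by the last assertion of Lemma~\ref{approxgorcolons} some $\delta_{t_0} \in J_{t_0}^*$, so there is an element $c$ outside every minimal prime with $c \in \bigcap_e (J_{t_0}^{[p^e]}:\delta_{t_0}^{p^e}) \subseteq \bigcap_e (J_{t}^{[p^e]}:\delta_{t}^{p^e})$ for all $t \geq t_0$ (using the monotonicity in $t$ from Lemma~\ref{approxgorcolons}). Then for such $t$ one has $0 \leq e_{HK}(J_t) - e_{HK}((J_t,\delta_t)) \leq \lim_{e} \tfrac{1}{p^{ed}}\ell_R(R/(\m^{[p^e]},c)) = 0$ by Lemma~\ref{Well known bound} applied to $M = R/(c)$, and together with Lemma~\ref{Iecontainscolons} this forces $s(R) = 0 = \lim_{t}\big(e_{HK}(J_t) - e_{HK}((J_t,\delta_t))\big)$. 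If instead $R$ \emph{is} weakly $F$-regular, then $R$ is a normal domain, and now your argument (which is the paper's) applies verbatim.
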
 

\begin{proof}  
Both invariants are unchanged after completion, so we may assume $R$ is complete. Suppose first that $R$ is not weakly $F$-regular, 
so that $\delta_t \in J_t^*$ for some $t \in \N$ and $(J_t^{[p^e]}:\delta_t^{[p^e]}) \subseteq (J_{t+1}^{[p^e]}:\delta_{t+1}^{[p^e]}) \subseteq \cdots \subseteq (J_{t+t'}^{[p^e]}:\delta_{t+t'}^{[p^e]}) \subseteq \cdots $  for all $e,t' \in \N$  by Lemma~\ref{approxgorcolons}. If $c \in \bigcap_{e \in \N} (J_t^{[p^e]} : \delta_t^{p^e}) \subseteq \bigcap_{e \in \N} (J_{t+t'}^{[p^e]} : \delta_{t+t'}^{p^e})$ is not in any minimal prime, then we have that $0 \leq e_{HK}(J_{t+t'}) - e_{HK}((J_{t+t'},\delta_{t+t'})) \leq \lim_{e \to \infty} \frac{1}{p^{ed}} \ell_R(R/(c, \m^{[p^e]})) = 0$ using \eqref{relHKtocolon} from Corollary~\ref{Length Criterion for Tight Closure} and applying Lemma~\ref{Well known bound} with $M = R/(c)$. Using Lemma~\ref{Iecontainscolons}, we have that $s(R) =  \lim_{t' \to \infty} e_{HK}(J_{t+t'}) - e_{HK}((J_{t+t'}, \delta_{t+t'})) = 0$ as desired.  

Thus, we assume for the remainder that $R$ is weakly $F$-regular and hence a domain.  Consider the sequences of ideals $I_{t,e} = (J_t^{[p^e]}:\delta_t^{[p^e]})$ for $t,e \in \N$.  We check 
\[
\begin{array}{c}
\m^{[p^e]} = (J_t : \delta_t)^{[p^e]} \subseteq (J_t^{[p^e]}:\delta_t^{[p^e]}) = I_{t,e} 
\\
I_{t,e}^{[p]} = (J_t : \delta_t)^{[p^e]} \subseteq (J_t^{[p^e]}:\delta_t^{[p^e]})^{[p]} \subseteq (J_t^{[p^{e+1}]}:\delta_t^{[p^{e+1}]}) = I_{t,e+1} \\
I_{t,e} = (J_t^{[p^e]}:\delta_t^{[p^e]}) \subseteq (J_{t+1}^{[p^e]}:\delta_{t+1}^{[p^e]}) = I_{t+1,e}
\end{array}
\]
so that Theorem~\ref{WY Type result} applies with $c = 1$.  Using Lemma~\ref{approxgorcolons} and \eqref{relHKtocolon} from Corollary~\ref{Length Criterion for Tight Closure}, we conclude
\[
\begin{array}{ccccc}
s(R) &=&\displaystyle \lim_{e \to \infty} \frac{1}{p^{ed}}\ell_R(R/I_e^{\mathrm{F-sig}})
&=& \displaystyle \lim_{e \to \infty} \lim_{t \to \infty} \frac{1}{p^{ed}}\ell_R(R/I_{t,e}) \\
&=& \displaystyle \lim_{t \to \infty} \lim_{e \to \infty} \frac{1}{p^{ed}}\ell_R(R/I_{t,e}) &=& \displaystyle \lim_{t \to \infty} e_{HK}(J_t) - e_{HK}((J_t, \delta_t))
\end{array}
\]
which completes the proof.
\end{proof}

Theorem \ref{Corollary to WY result} provides a positive answer to \cite[Question~1.10]{WY}.

\begin{Corollary}\label{WY Type result corollary}
 If $(R,\m,k)$ is an F-finite local ring, then
 \[
 s(R) = \inf_{\substack{I \subseteq J  \subseteq R, \;\ell_R(R/I) < \infty  \\ I \neq J, \; \ell_R(R/J) < \infty}} \frac{e_{HK}(I) - e_{HK}(J)}{\ell_R(J/I)} = \inf_{\substack{I \subseteq R, \; \ell_R(R/I) < \infty \\  x \in R, \; ( I : x )=\m}} e_{HK}(I) - e_{HK}((I,x)).
 \]
\end{Corollary}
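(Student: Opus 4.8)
The plan is to read the corollary off from Lemma~\ref{Iecontainscolons} together with Theorem~\ref{Corollary to WY result}. Inequality~\eqref{relativeHKsignatureeasyinequality} already supplies $s(R)\le\inf(\text{the middle term})\le\inf(\text{the right term})$, so all that is left is to bound $s(R)$ below by the rightmost infimum. I would first reduce to the case that $R$ is complete: passing to $\widehat R$ changes neither $\dim R$ nor $s(R)$, preserves the colength and Hilbert--Kunz multiplicity of every cofinite ideal, and---since $\widehat R/\m^{n}\widehat R\cong R/\m^{n}$---every cofinite ideal of $\widehat R$ is extended from a cofinite ideal of $R$ of the same colength, with proper inclusions corresponding to proper inclusions; hence all three quantities in the statement are unaffected.

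Assuming now that $R$ is complete and $F$-finite, I would split into two cases. If $R$ is reduced, it is approximately Gorenstein by \cite[Theorem~1.6]{HochsterPurity}, so I can fix a descending chain $\{J_t\}_{t\in\N}$ of irreducible ideals cofinal with the powers of $\m$ and, for each $t$, a generator $\delta_t$ of the one-dimensional socle of $R/J_t$, so that $(J_t:\delta_t)=\m$. Theorem~\ref{Corollary to WY result} then gives $s(R)=\lim_{t\to\infty}\bigl(e_{HK}(J_t)-e_{HK}((J_t,\delta_t))\bigr)$. Since each pair $(J_t,\delta_t)$ lies in the family indexing the rightmost infimum, that infimum is at most $s(R)$, and combined with Lemma~\ref{Iecontainscolons} this yields the asserted chain of equalities.

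If instead $R$ is not reduced, let $N$ be the nilradical and pick $0\neq z\in N$; by Krull's intersection theorem choose $m$ with $z\notin\m^{m}$. Then $I:=\m z+\m^{m}$ is $\m$-primary, $z\notin I$, and $(I:z)=\m$, so $(I,z)$ belongs to the family indexing the rightmost infimum. As $N$ is finitely generated and nilpotent, $N^{[p^{e}]}=0$ for $e\gg 0$, so $(I+N)^{[p^{e}]}=I^{[p^{e}]}$ and $((I,z)+N)^{[p^{e}]}=(I,z)^{[p^{e}]}$ for $e\gg 0$; since $(z)\subseteq N$ forces $(I,z)+N=I+N$, we obtain
\[
e_{HK}(I)=e_{HK}(I+N)=e_{HK}\bigl((I,z)+N\bigr)=e_{HK}\bigl((I,z)\bigr).
\]
Hence the rightmost infimum is $\le 0$; as it is also $\ge 0$ (because $I\subseteq(I,z)$ gives $e_{HK}(I)\ge e_{HK}((I,z))$) it equals $0$, and then \eqref{relativeHKsignatureeasyinequality} forces $s(R)$ and the middle infimum to vanish as well.

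The genuinely hard part is not in this corollary but upstream, in Theorem~\ref{Corollary to WY result} and the technical Theorem~\ref{WY Type result}: there one must interchange the Frobenius limit ($e\to\infty$) with the limit along the approximately Gorenstein chain ($t\to\infty$), and this succeeds only because the rate of convergence in Theorem~\ref{MainExistenceTheoremIdealSequences}~(i) can be chosen uniformly in the sequence of ideals. By comparison, everything in the present argument---the completion reduction and the nilpotent bookkeeping---is routine.
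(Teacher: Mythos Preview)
Your argument is correct and follows essentially the same path as the paper: one direction via Lemma~\ref{Iecontainscolons}, the reduced case via approximate Gorensteinness and Theorem~\ref{Corollary to WY result}, and the non-reduced case by exhibiting a single pair with vanishing Hilbert--Kunz difference. The only cosmetic differences are that the paper omits your completion step (Theorem~\ref{Corollary to WY result} already reduces to the complete case internally, so it is unnecessary here) and, in the non-reduced case, picks $x$ with $x^{p}=0$ directly rather than routing through $N^{[p^{e}]}=0$ for $e\gg 0$.
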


\begin{proof}
If $R$ is not reduced, we can find some $0 \neq x \in R$ with $x^{p} = 0$.  For $n \gg 0$, we have $x \not\in \m^n$ and can find an ideal $I$ with $\m^n \subseteq I \subseteq (\m^n,x)$ where $(I:x) = \m$.  Since $x^{p^e} = 0$ for all $e \in \N$ we have $I^{[p^e]} = (I,x)^{[p^e]}$, and thus $e_{HK}(I) = e_{HK}((I,x))$.  It follows from Lemma~\ref{Iecontainscolons} that $s(R) = 0$ and equality holds throughout \eqref{relativeHKsignatureeasyinequality}. Thus, we may assume $R$ is reduced.  By \cite[Theorem~1.7]{HochsterPurity}, $R$ is approximately Gorenstein and Theorem~\ref{Corollary to WY result} implies equality holds throughout \eqref{relativeHKsignatureeasyinequality} as desired.
\end{proof}

Note that, when $(R,\m,k)$ is a complete Cohen-Macaulay local $F$-finite domain of dimension $d$, one can make Theorem~\ref{Corollary to WY result} more explicit still.  Recall that a canonical ideal $J \subseteq R$ is an ideal such that $J$ is isomorphic to a canonical module $\omega_R$, which exists as $R$ is assumed complete.   A canonical ideal $J$ is necessarily unmixed with height $1$, and moreover
$R/J$ will be Gorenstein of dimension $d -1$, see \cite[Proposition~3.3.18]{BrunsHerzog}.  When $R$ is also normal, fixing a canonical ideal is equivalent to fixing a choice of  effective anticanonical divisor. 

\begin{Corollary}
\label{reducex1}
 Suppose that $(R,\m,k)$ is a complete Cohen-Macaulay local $F$-finite domain of dimension $d$, and $J$ is a canonical ideal of $R$.  Let $x_1 \in J$ and $x_2, \ldots, x_d \in R$ be chosen so that $x_1, \ldots, x_d$ give a system of parameters for $R$, and suppose $\delta \in R$ generates the socle of $R/(J, x_2, \ldots, x_d)$.  Then
\[
 s(R) = \lim_{t \to \infty} e_{HK}((J, x_2^t, \ldots, x_d^t)) - e_{HK}((J, x_2^t, \ldots, x_d^t, (x_2^{t-1}\cdots x_d^{t-1} \delta))).
 \]
\end{Corollary}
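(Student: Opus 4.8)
The plan is to realize the $t$-th relative Hilbert--Kunz difference as the inner limit of a doubly-indexed family of colon ideals to which Theorem~\ref{WY Type result} applies, and then to identify the resulting ``diagonal'' ideals $\sum_t I_{t,e}$ with the $F$-signature ideals $I_e^{\mathrm{F-sig}}$; all of the approximately Gorenstein input will enter only through $S := R/J$.

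First I would record the structure of the sequences. Since $J$ is a canonical ideal, $S = R/J$ is a Gorenstein local ring of dimension $d-1$ by \cite[Proposition~3.3.18]{BrunsHerzog}, and the images $\bar x_2,\dots,\bar x_d$ form a system of parameters for $S$ because $x_1,\dots,x_d$ is one for $R$ and $x_1 \in J$. Writing $J_t = (J, x_2^t,\dots,x_d^t)$ and $\delta_t = x_2^{t-1}\cdots x_d^{t-1}\delta$, we have $R/J_t = S/(\bar x_2^t,\dots,\bar x_d^t)$, an Artinian Gorenstein ring, and the routine complete-intersection colon identity $\big((\bar x_2^t,\dots,\bar x_d^t):\bar x_2^{t-1}\cdots\bar x_d^{t-1}\big) = (\bar x_2,\dots,\bar x_d)$ together with the hypothesis on $\delta$ shows that $\delta_t$ reduces to a socle generator of $R/J_t$. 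Hence $J_t$ is $\m$-primary, $(J_t : \delta_t) = \m$, and $(J_t,\delta_t)/J_t \cong k$, so \eqref{relHKtocolon} from Corollary~\ref{Length Criterion for Tight Closure} gives, for $I_{t,e} := (J_t^{[p^e]} : \delta_t^{p^e})$, that $\lim_{e\to\infty} \tfrac{1}{p^{ed}}\ell_R(R/I_{t,e}) = e_{HK}(J_t) - e_{HK}((J_t,\delta_t))$ for each $t$. One then checks the hypotheses of Theorem~\ref{WY Type result} with $c = 1$: $\m^{[p^e]} = (J_t:\delta_t)^{[p^e]} \subseteq I_{t,e}$; $I_{t,e}^{[p]} \subseteq I_{t,e+1}$ by raising to the $p$-th power; and $I_{t,e} \subseteq I_{t+1,e}$, which follows from the elementary inclusion $(x_2\cdots x_d)^{p^e} J_t^{[p^e]} \subseteq J_{t+1}^{[p^e]}$ and the identity $\delta_{t+1}^{p^e} = (x_2\cdots x_d)^{p^e}\delta_t^{p^e}$.

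Granting this, Theorem~\ref{WY Type result} yields $\lim_t\big(e_{HK}(J_t) - e_{HK}((J_t,\delta_t))\big) = \lim_t\lim_e \tfrac{1}{p^{ed}}\ell_R(R/I_{t,e}) = \lim_e \tfrac{1}{p^{ed}}\ell_R(R/I_e)$ with $I_e := \sum_t I_{t,e} = \bigcup_t I_{t,e}$, and $\lim_e\tfrac1{p^{ed}}\ell_R(R/I_e^{\mathrm{F-sig}}) = s(R)$ by the definition of the $F$-signature, so the proof reduces to the identity $I_e = I_e^{\mathrm{F-sig}}$ for all $e$. The inclusion $\bigcup_t I_{t,e} \subseteq I_e^{\mathrm{F-sig}}$ is Lemma~\ref{Iecontainscolons} (applied with $I = J_t$ and $J' = (J_t,\delta_t)$). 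For the reverse inclusion, fix $0\neq x \in R$ and consider the finitely generated $S$-module $N = R^{1/p^e}/J R^{1/p^e}$ together with the $S$-linear map $\nu\colon S \to N$, $1 \mapsto \overline{x^{1/p^e}}$. Applying the splitting criterion over $S$ --- which is approximately Gorenstein, being Gorenstein --- to the sequence $\{(\bar x_2^t,\dots,\bar x_d^t)\}_t$ with socle generators the classes of the $\delta_t$, exactly as in the proof of Lemma~\ref{approxgorcolons} and \cite[page~155]{fndtc}, one sees that $\nu$ is a split injection of $S$-modules if and only if $\delta_t^{p^e} x \notin J_t^{[p^e]}$ for every $t$, i.e.\ if and only if $x \notin \bigcup_t I_{t,e}$. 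It therefore suffices to show that $\nu$ splits if and only if $x \notin I_e^{\mathrm{F-sig}}$, i.e.\ if and only if some $\phi\in\Hom_R(R^{1/p^e},R)$ satisfies $\phi(x^{1/p^e}) = 1$. The forward direction is immediate upon reducing such a $\phi$ modulo $J$. Conversely, a retraction of $\nu$ provides an $R$-linear $\bar\phi\colon R^{1/p^e} \to R/J$ with $\bar\phi(x^{1/p^e}) = 1$; since $R^{1/p^e}$ is a maximal Cohen--Macaulay $R$-module and $J \cong \omega_R$, we have $\operatorname{Ext}^1_R(R^{1/p^e}, J) = 0$, so $\bar\phi$ lifts to $\phi\in\Hom_R(R^{1/p^e},R)$ with $\phi(x^{1/p^e}) \equiv 1 \pmod J$; as $J$ is a proper (height one) ideal, $J \subseteq \m$, so $\phi(x^{1/p^e})$ is a unit and rescaling finishes the argument.

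The main obstacle is precisely this last identification $\bigcup_t (J_t^{[p^e]}:\delta_t^{p^e}) = I_e^{\mathrm{F-sig}}$, and within it the transfer of a splitting from the $S$-module map $\nu$ back to a splitting of the inclusion $R \hookrightarrow R^{1/p^e}$ along $x^{1/p^e}$: the chain $\{J_t\}$ is \emph{not} cofinal with the powers of $\m$, so Lemma~\ref{approxgorcolons} cannot be invoked over $R$ directly, and one must pass to $S = R/J$ and exploit the canonical-module vanishing $\operatorname{Ext}^1_R(R^{1/p^e},\omega_R) = 0$ to move between the two levels. Lesser but necessary care is required with the direct-limit bookkeeping in the splitting criterion over $S$ and with the routine complete-intersection socle and colon computations pinning down $\delta_t$ and $(J_t:\delta_t)$.
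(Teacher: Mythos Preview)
Your argument is correct, but it takes a different route from the paper's.  The paper does not work directly with the ideals $(J,x_2^t,\ldots,x_d^t)$; instead it replaces them by $J_t'=(x_1^{t-1}J,x_2^t,\ldots,x_d^t)$, which \emph{is} a descending chain of irreducible ideals cofinal with the powers of $\m$ (each $x_1^{t-1}J$ is again a canonical ideal).  Theorem~\ref{Corollary to WY result} then applies immediately over $R$, and the final step is the elementary colon identity
\[
\big((x_1^{t-1}J,x_2^t,\ldots,x_d^t)^{[p^e]}:(x_1^{t-1}x_2^{t-1}\cdots x_d^{t-1}\delta)^{p^e}\big)
=\big((J,x_2^t,\ldots,x_d^t)^{[p^e]}:(x_2^{t-1}\cdots x_d^{t-1}\delta)^{p^e}\big),
\]
which holds because $x_1^{(t-1)p^e}$ is a non-zerodivisor modulo $(x_2^{tp^e},\ldots,x_d^{tp^e})$ in the Cohen--Macaulay ring $R$.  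Combined with \eqref{relHKtocolon}, this gives the statement.

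Your approach circumvents the cofinality issue by descending to $S=R/J$, where the chain $(\bar x_2^t,\ldots,\bar x_d^t)$ \emph{is} cofinal, and then lifting splittings back up using $\operatorname{Ext}^1_R(R^{1/p^e},\omega_R)=0$.  This is more homological than the paper's purely regular-sequence manipulation, but it buys you something extra: you actually prove the equality $I_e^{\mathrm{F\text{-}sig}}=\bigcup_t\big((J,x_2^t,\ldots,x_d^t)^{[p^e]}:(x_2^{t-1}\cdots x_d^{t-1}\delta)^{p^e}\big)$ directly for the non-cofinal sequence, which the paper does not isolate.  The paper's trick of inserting $x_1^{t-1}$ is quicker and needs nothing beyond the regularity of the sequence $x_1,\ldots,x_d$.
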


\begin{proof}
Note that $x_1^{t-1}J$ is yet another canonical ideal for any $t \in \N$, and $x_2^t, \ldots, x_d^t$ give a system of parameters for $R/(x_1^{t-1}J)$.  Thus, the sequence $J_t = (x_1^{t-1}J, x_2^t, \ldots, x_d^t)$ gives a descending chain of irreducible ideals cofinal with the powers of $\m$.  It is easy to check that $\delta_t = x_1^{t-1}x_2^{t-1} \cdots x_d^{t-1}\delta$ generates the socle of $R/J_t$ using that $x_1, \ldots, x_d$ form a regular sequence, which further implies
$
(J_t^{p^e}:\delta_t^{p^e}) = ((J, x_2^{t}, \ldots, x_d^{t})^{[p^e]} : (x_2^{t-1} \cdots x_d^{t-1}\delta)^{p^e})
$
for all $t,e \in \N$.  Thus, Theorem~\ref{Corollary to WY result} gives
\begin{eqnarray*}
s(R) 
&=& \lim_{t \to \infty} \frac{1}{p^{ed}} \ell_R(R/(J_t^{p^e}:\delta_t^{p^e})) \\ 
& = &  \lim_{t \to \infty} e_{HK}((x_1^{t-1}J, x_2^t, \ldots, x_d^t)) - e_{HK}((x_1^{t-1}J, x_2^t, \ldots, x_d^t, (x_1^{t-1}x_2^{t-1}\cdots x_d^{t-1} \delta))) \\ 
&=& \lim_{t \to \infty} \frac{1}{p^{ed}} \ell_R(R/((J, x_2^{t}, \ldots, x_d^{t})^{[p^e]} : (x_2^{t-1} \cdots x_d^{t-1}\delta)^{p^e})) \\
& = &  \lim_{t \to \infty} e_{HK}((J, x_2^t, \ldots, x_d^t)) - e_{HK}((J, x_2^t, \ldots, x_d^t, (x_2^{t-1}\cdots x_d^{t-1} \delta)))
\end{eqnarray*}
using the relation in \eqref{relHKtocolon} once more.
\end{proof}

One can push the above analysis further still. In the notation of the previous proof, when $R$ is normal $x_2$ can be chosen so that $R_{x_2}$ is Gorenstein and $J_{x_2}$ is principal.  This allows one to remove the exponent $t$ on $x_2$ in the limit above using (i.) from the subsequent lemma.  Following the methods of \cite{AberbachConditionsforWeakStrong} (\textit{cf.} \cite{F-sig?, MacCrimmonThesis,Yao}), we present a complete treatment in Corollary~\ref{QGorAnalysis} below.
% $
% (J_t^{p^e}:\delta_t^{p^e}) = ((J, x_2^2,x_3^t, \ldots, x_d^{t})^{[p^e]} : (x_2x_3^{t-1} \cdots x_d^{t-1}\delta)^{p^e}),
% $
% this gives 
% $$
% s(R) = \lim_{t \to \infty} e_{HK}((J, x_2^2, x_3^t, \ldots, x_d^t)) - e_{HK}((J, x_2^2, x_3^t, \ldots, x_d^t, (x_2x_3^{t-1}\cdots x_d^{t-1} \delta))).

\begin{Lemma}
\label{colonslemma}
Suppose that $(R,\m,k)$ is a complete Cohen-Macaulay local $F$-finite normal domain of dimension $d$, and $D$ an effective Weil divisor on $\Spec(R)$.  Put $J = R(-D) \subseteq R$ so that $J^{(n)} = R(-nD)$ for $n \in \N$.  Let $x_1 \in J$ and $x_2, \ldots, x_d \in R$ be chosen so that $x_1, \ldots, x_d$ give a system of parameters for $R$, and fix $e \in \N$.
\begin{enumerate}[(i.)]
\item
If $x_2 J \subseteq a_2 R$ for some $a_2 \in J$, there exists $b_2 \in J$ so that $a_2, x_2 + b_2, x_3, \ldots, x_d$ give a system of parameters for $R$.  Moreover, for any non-negative integers $N_2, \ldots, N_d$ with $N_2 \geq 2$, we have that
\[
\begin{array}{ll}
 & ((J^{(p^{e})},x_2^{N_2 p^e}, x_3^{N_3 p^e}, \ldots, x_d^{N_d p^e}):x_2^{(N_2 - 1)p^e} ) \\ = &
 ((J^{[p^{e}]},x_2^{N_2 p^e}, x_3^{N_3 p^e}, \ldots, x_d^{N_d p^e}):x_2^{(N_2 - 1)p^e} ) \\ =  & ((J^{[p^{e}]},x_2^{2 p^e}, x_3^{N_3 p^e}, \ldots, x_d^{N_d p^e}):x_2^{p^e} ).
\end{array}
\]
\item
Suppose $x_d^n J^{(n)} \subseteq a_d R$ for some $n \in \N$ and $a_d \in J^{(n)}$. Then there exists $b_d \in J$ so that $a_d,  x_2, \ldots, x_{d-1}, x_d + b_d$ give a system of parameters for $R$.  Moreover, for any non-negative integers $N_2, \ldots, N_d$ with $N_d \geq 2$, we have that
\begin{equation*}
\begin{array}{ll}
& ((J^{(p^{e})},x_2^{N_2 p^e},\ldots, x_{d-1}^{N_{d-1} p^e},  x_d^{N_d p^e}):  x_d^{(N_d -1) p^e}) \\  \subseteq 
& ((J^{(p^{e})},x_2^{N_2 p^e},\ldots, x_{d-1}^{N_{d-1} p^e},  x_d^{2 p^e}): x_1^{n} x_d^{p^e})
\end{array}
\end{equation*}

\end{enumerate}
\end{Lemma}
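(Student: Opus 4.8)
The plan is to handle both parts through divisor arithmetic on $X = \Spec R$ combined with standard colon manipulations modulo a complete intersection. Since $D \geq 0$ we always have $J^{[p^e]} \subseteq J^{p^e} \subseteq J^{(p^e)} = R(-p^eD)$. The hypothesis $x_2 J \subseteq a_2 R$ with $a_2 \in J$ of part (i.) reads $D + \divisor(x_2) \geq \divisor(a_2) \geq D$; multiplying divisors by $p^e$ yields the key containment $x_2^{p^e} J^{(p^e)} = R(-p^eD - \divisor(x_2^{p^e})) \subseteq R(-\divisor(a_2^{p^e})) = a_2^{p^e}R \subseteq J^{[p^e]}$, the last step because $a_2 \in J$. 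Similarly, the hypothesis $x_d^n J^{(n)} \subseteq a_d R$ with $a_d \in J^{(n)}$ of part (ii.) reads $nD + n\divisor(x_d) \geq \divisor(a_d)$; together with $\divisor(x_1) \geq D$ (as $x_1 \in J$) this gives, uniformly in $e$, the containment $x_1^n x_d^n J^{(p^e)} \subseteq a_d R$, since $p^eD + n\divisor(x_1) + n\divisor(x_d) \geq nD + n\divisor(x_d) \geq \divisor(a_d)$.

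For the auxiliary system of parameters, since $x_1 \in J$ a prime avoidance argument will produce $b_2 \in J$ (resp.\ $b_d \in J$) making $a_2, x_2 + b_2, x_3, \dots, x_d$ (resp.\ $a_d, x_2, \dots, x_{d-1}, x_d + b_d$) a system of parameters: any prime $\p$ with $\dim R/\p \geq 1$ containing $a_2, x_3, \dots, x_d$ (resp.\ $a_d, x_2, \dots, x_{d-1}$) together with $x_2$ (resp.\ $x_d$) and $J$ would contain all of $x_1, \dots, x_d$ and hence be $\m$. The consequence I will use repeatedly is: if $\p$ is a height-$(d-1)$ prime minimal over $(a_2, x_3, \dots, x_d)$ (resp.\ over $(a_d, x_2, \dots, x_{d-1})$) with $J \subseteq \p$, then $x_2 \notin \p$ (resp.\ $x_d \notin \p$), because $x_2 + b_2 \notin \p$ while $b_2 \in J \subseteq \p$.

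For part (i.): since $J^{[p^e]} \subseteq J^{(p^e)} \subseteq (J^{[p^e]} : x_2^{p^e})$, the second colon of the statement is contained in the first, so it remains to prove the reverse inclusion and the equality with the third colon. Both reduce to two facts. First, multiplying a defining membership of the first colon by $x_2^{p^e}$ and using $x_2^{p^e} J^{(p^e)} \subseteq J^{[p^e]}$ lands one in $((J^{[p^e]}, x_2^{(N_2+1)p^e}, x_3^{N_3 p^e}, \dots) : x_2^{N_2 p^e})$. Second, writing $\a = J^{[p^e]} + (x_3^{N_3 p^e}, \dots, x_d^{N_d p^e})$, one has $((\a + x_2^{M p^e} R) : x_2^{(M-1) p^e}) = \a + x_2^{p^e} R$ for every $M \geq 1$ once $x_2$ is a nonzerodivisor on $R/\a$, and these two facts together force all three colons to equal $\a + x_2^{p^e} R$. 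To establish the nonzerodivisor claim, note that $\a$ contains the complete intersection $\a_0 = (a_2^{p^e}, x_3^{N_3 p^e}, \dots, x_d^{N_d p^e})$ (powers of part of the above system of parameters), so $R/\a_0$ is Cohen--Macaulay, unmixed and one-dimensional; hence every $\p \in \Ass(R/\a)$ lies in $\Supp(R/\a_0)$ and is therefore either $\m$ or a height-$(d-1)$ prime minimal over $(a_2, x_3, \dots, x_d)$. In the latter case $\p \supseteq \sqrt{J^{[p^e]}} = \sqrt{J}$, so the previous paragraph gives $x_2 \notin \p$; the case $\p = \m$ is to be excluded by a depth estimate, using that $J^{(p^e)} = R(-p^eD)$ is reflexive hence $S_2$ (so $R/J^{(p^e)}$ has positive depth), that $x_3, \dots, x_d$ act as a regular sequence on it, and that $J^{[p^e]}$ differs from $J^{(p^e)}$ only by $x_2^{p^e}$-torsion.

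Part (ii.) follows the same template but is where the main difficulty lies. Multiplying a defining membership of the left-hand colon by $x_1^n x_d^n$ and invoking the uniform containment $x_1^n x_d^n J^{(p^e)} \subseteq a_d R$ will push the $J^{(p^e)}$-part into the complete intersection $(a_d, x_2^{N_2 p^e}, \dots, x_{d-1}^{N_{d-1} p^e})$, whose one-dimensional Cohen--Macaulay quotient carries $x_d + b_d$ as a nonzerodivisor. Writing $x_d = (x_d + b_d) - b_d$ and expanding, one reduces the $x_d$-exponent modulo this complete intersection at the cost of correction terms divisible by powers of $b_d \in J$; re-absorbing those corrections into $J^{(p^e)}$ (again using $b_d \in J$) and into $x_d^{2 p^e} R$ should yield the asserted membership in $((J^{(p^e)}, x_2^{N_2 p^e}, \dots, x_d^{2 p^e}) : x_1^n x_d^{p^e})$. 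The delicate point I expect to be hardest is ensuring that only the single, $e$-independent factor $x_1^n$ is consumed throughout: one must control how each $b_d$-correction, multiplied by the $x_d$-powers available, lands inside $J^{(p^e)} + (x_2^{N_2 p^e}, \dots, x_{d-1}^{N_{d-1} p^e}) + x_d^{2p^e} R$, and I would carry this out via the $\lceil \cdot \rceil$-type divisor estimates used in the proof of Theorem~\ref{deltafsigexists}.
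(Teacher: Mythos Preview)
Your divisor arithmetic is fine, and the prime-avoidance construction of $b_2$ and $b_d$ is correct.  But the heart of your argument for~(i.) has a genuine gap: you need $x_2$ to be a nonzerodivisor on $R/\a$ where $\a = J^{[p^e]} + (x_3^{N_3 p^e},\ldots,x_d^{N_d p^e})$, and your depth sketch does not establish this.  The reflexivity of $J^{(p^e)}$ gives only $\depth_R J^{(p^e)} \geq 2$, hence $\depth R/J^{(p^e)} \geq 1$; it does \emph{not} make $R/J^{(p^e)}$ Cohen--Macaulay, so there is no reason $x_3,\ldots,x_d$ should be a regular sequence on it, and after killing $d-2$ more elements you cannot conclude positive depth.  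Nor does the $x_2^{p^e}$-torsion relationship between $J^{[p^e]}$ and $J^{(p^e)}$ repair this.  In fact you are trying to prove something strictly stronger than the lemma (namely that all three colons equal $\a + x_2^{p^e}R$), and that stronger statement may simply be false.

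The paper sidesteps the whole issue with a trick you have not used.  Rather than showing $x_2$ is a nonzerodivisor modulo the uncontrollable ideal $\a$, it substitutes $x_2 + b_2$ for $x_2$ \emph{while still working modulo $J^{(p^e)}$}: since $b_2 \in J$ one has $b_2^{p^e} \in J^{(p^e)}$, so $(c - r_2 x_2^{p^e}) x_2^{(N_2-1)p^e} \in (J^{(p^e)},I)$ implies $(c - r_2 x_2^{p^e})(x_2+b_2)^{(N_2-1)p^e} \in (J^{(p^e)},I)$.  Only \emph{after} this substitution does one multiply by $x_2^{p^e}$ to pass from $J^{(p^e)}$ to the single element $a_2^{p^e}$, landing in the honest complete intersection $(a_2^{p^e}, I)$ on whose quotient $x_2+b_2$ \emph{is} a nonzerodivisor (because $a_2, x_2+b_2, x_3,\ldots,x_d$ is a regular sequence in the Cohen--Macaulay ring $R$).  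The order of operations is the point.

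The same remark applies to your plan for~(ii.).  By multiplying by $x_1^n x_d^n$ \emph{before} replacing $x_d$ by $x_d+b_d$, you leave the $J^{(p^e)}$ world too early: the $b_d$-correction terms from expanding $x_d = (x_d+b_d)-b_d$ then have nowhere to go, and your anticipated ``delicate point'' is a real obstruction.  The paper instead uses $b_d^{p^e} \in J^{(p^e)}$ to perform the substitution first, then multiplies by $x_d^{p^e}$ (not $x_d^n$) to obtain $x_d^{p^e}J^{(p^e)} \subseteq a_d^{\lfloor p^e/n\rfloor}R$, cancels $(x_d+b_d)^{(N_d-1)p^e}$ in the complete intersection $(a_d^{\lfloor p^e/n\rfloor},I)$, and only at the very end multiplies by $x_1^n$ to return $a_d^{\lfloor p^e/n\rfloor}$ to $J^{(p^e)}$.
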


\begin{proof}
For (i.), note first that $a_2$ is a non-zero divisor on $R / (x_3, \ldots, x_d)$ as its multiple $x_1x_2 \in x_2J$ is  a non-zero divisor. Hence $\dim(R / (x_3, \ldots, x_d, a_2)) = d -1$.
Since $\ell_R(R / (J, x_2, \ldots, x_d)) \leq \ell_R(R/(x_1, \ldots, x_d)) < \infty$, we know that $(x_2, J)$ is not contained in any minimal prime ideal of $(x_3, \ldots, x_d, a_2)$.  In particular, using standard prime avoidance arguments, one can find $b_2 \in J$ so that $x_2 + b_2$ also avoids all of the minimal primes of $(x_3, \ldots, x_d, a_2)$;
% in $ R / (x_3, \ldots, x_d, a_2)$ is not contained in any minimal prime.  Let $P_1, \ldots, P_r$ be the minimal primes of $(x_3, \ldots, x_d, a_2)$ with $x_2 \in P_i$ and $Q_1, \ldots, Q_2$ the minimal primes of $(x_3, \ldots, x_d, a_2)$ with $x_2 \not\in Q_j$.  Since $J \not\subseteq P_i$ for any $i$, we can find some $c_0 \in J$ with $c_0 \not\in P_i$ for any $i$.  Also, since $Q_j \not\subseteq P_i$ for any $i$, we can find $c_j \in Q_j$ for each $j$ with $c_j \not\in P_i$ for any $i$.  Let $b_2 = c_0 c_1 \cdots c_s$, and check that: (i.) $b_2 \in J$ as $c_0 \in J$; (ii.) $x_2 + b_2 \not\in P_i$ for any $i$, as $x_2 \in P_i$ but $b_2 \not\in P_i$; (iii.) $x_2 + b_2 \not\in Q_j$ for any $j$ as $b_2 \in Q_j$ but $x_2 \not\in Q_j$.  Thus, $x_2 + b_2$ is not in any minimal prime of $(x_3, \ldots, x_d, a_2)$, and 
it follows that the sequence $x_3, \ldots, x_d, a_2, x_2 + b_2$ is a system of parameters for $R$.

Since $R(-\divisor(x_2) - D) = x_2 J \subseteq a_2 R = R(-\divisor(a_2))$, we have that $D + \divisor(x_2) \geq \divisor(a_2)$ and hence also $p^eD + \divisor(x_2^{p^e}) \geq \divisor(a_2^{p^e})$; it follows that $x_2^{p^e}J^{(p^e)} \subseteq a_2^{p^e}R$.
% Since $x_2$ is a parameter and hence non-zero divisor on $R/J$, it does not belong to any minimal prime of $J$; for all of the other minimal primes $P$ of $a_2$ we have $x_2J_P = x_2R_P \subseteq a_2R_P$ and thus $(a_2^{p^e}:x_2^{p^e}) = J^{(p^e)}$. 
The ideals inclusions $\supseteq$ in (i.) are clear, so we need only check the reverse.  
Let $I = (x_3^{N_3 p^e}, \ldots, x_d^{N_d p^e})$ and suppose that we have $cx_2^{(N_2 -1) p^e} \in (J^{(p^e)},x_2^{N_2 p^e},I)$ for some $c \in R$.
We can find $r_2 \in R$ with $(c - r_2x_2^{p^e})x_2^{(N_2 - 1)p^e} \in (J^{(p^{e})},I) $.
Since $b_2 \in J$, we have $b^{p^e} \in J^{(p^e)}$ and so $(c - r_2x_2^{p^e})(x_2 + b_2)^{(N_2 - 1)p^e} \in (J^{(p^{e})},  I) $.  
Multiplying through by $x_2^{p^e}$ and using that $x_2^{p^e} J^{(p^e)} \subseteq a_2^{p^e}$ gives $(cx_2^{p^e} - r_2x_2^{2p^e})(x_2 + b_2)^{(N_2 - 1)p^e} \in (a_2^{p^e}, I) $. 
Using that $a_2, x_2 + b_2, x_3, \ldots, x_d $ give a system of parameters for $R$, we see $(cx_2^{p^e} - r_2x_2^{2p^e}) \in (a_2^{p^e}, I) \subseteq (J^{[p^e]},I) $ and conclude $cx_2^{p^e} \in (J^{[p^e]},x_2^{2p^e}, I)$ as desired.

Statement (ii.) proceeds similarly.  We have that $a_d$ is a non-zero divisor on $R / (x_2, \ldots, x_{d-1})$ as its multiple $x_d^n x_1^n$ is a non-zero divisor, and again $(x_d, J)$ is not contained in any minimal prime of $(a_d, x_2, \ldots, x_{d-1})$.  We can find $b_d \in J$ so that $x_d + b_d$ is not in any minimal prime of $(a_d, x_2, \ldots, x_{d-1})$ and thus $a_d, x_2, \ldots, x_{d-1}, x_d + b_d$ is a system of parameters for $R$.  From $x_d^n J^{(n)} \subseteq a_d R$, we have $nD + \divisor(x_d^n) \geq \divisor(a_d)$ and it follows $x_d^{p^e}J^{(p^e)} \subseteq a_d^{\lfloor p^e / n \rfloor}R$.
To see the final inclusion, let $I = (x_2^{N_2 p^e},\ldots, x_{d-1}^{N_{d-1} p^e})$ and suppose $cx_d^{(N_d -1) p^e} \in (J^{(p^e)},I,x_d^{N_dp^e})$ for some $c \in R$.  We can find $r_d \in R$ so that $(c - r_dx_d^{p^e})x_d^{(N_d -1) p^e} \in (J^{(p^e)},I)$.  As $b_d \in J$ so $b_d^{p^e} \in J^{(p^e)}$, hence also $(c - r_dx_d^{p^e})(x_d + b_d)^{(N_d -1) p^e} \in (J^{(p^e)},I)$. Multiplying through by $x_d^{p^e}$ gives $ (cx_d^{p^e} - r_dx_d^{2p^e})(x_d + b_d)^{(N_d -1) p^e} \in (a_d^{\lfloor p^e/n \rfloor},I)$.  Using that $a_d, x_2, \ldots, x_{d-1}, x_d + b_d$ are a system of parameters, we see $ (cx_d^{p^e} - r_dx_d^{2p^e}) \in (a_d^{\lfloor p^e/n \rfloor},I)$.  Multiplying through by $x_1^n$ and using that $a_d^{\lfloor p^e/n \rfloor}x_1^n \in (J^{(n)})^ {\lfloor p^e / n \rfloor + 1} \subseteq J^{(p^e)}$ gives $ (cx_1^n x_d^{p^e} - r_dx_1^n x_d^{2p^e}) \in (J^{(p^e)},I)$ and in particular
$ cx_1^n x_d^{p^e} \in (J^{(p^e)},I, x_d^{2p^e})$ as desired.
\end{proof}

\begin{Corollary}
\label{QGorAnalysis}
Suppose that $(R,\m,k)$ is a complete Cohen-Macaulay local $F$-finite domain of dimension $d$.
\begin{enumerate}[(i.)]
\item
\cite{HunekeLeuschke}
If $R$ is Gorenstein and $x_1, \ldots, x_d$ are any system of parameters for $R$ and $\delta \in R$ generates the socle of $R / (x_1, \ldots, x_d)$, then 
$
I_e^{\mathrm{F-sig}} = ((x_1, \ldots, x_d)^{[p^{e}]}:\delta^{p^{e}})
$
for all $e \in \N$ and
$
 s(R) =  e_{HK}((x_1, \ldots, x_d)) - e_{HK}(( x_1, \ldots, x_d, \delta)).
$
\item
If the punctured spectrum $\Spec(R) \setminus \{\m\}$ is $\Q$-Gorenstein, there exists an $\m$-primary ideal $I$ and $x \in R$ with $(I:x) = \m$ so that $s(R) = e_{HK}(I) - e_{HK}((I,x))$ and moreover  $I_e^{\mathrm{F-sig}} \subseteq ((I^{[p^{e}]})^*: x^{p^{e}})$ for all $e \in \N$. In particular, if $R$ is weakly $F$-regular, $I_e^{\mathrm{F-sig}} = (I^{[p^{e}]}: x^{p^{e}})$ for all $e \in \N$.
%$I_e^{\mathrm{F-sig}} \subseteq ((I^{[p^{e}]})^*: x^{p^{e}})$ for all $e \in \N$ and $s(R) = e_{HK}(I) - e_{HK}((I,x))$.
%\item
%If the punctured spectrum $\Spec(R) \setminus \{\m\}$ is $\Q$-Gorenstein, there exists an $\m$-primary ideal $I$ and $x \in R$ with $(I:x) = \m$ so that $s(R) = e_{HK}(I) - e_{HK}((I,x))$.
\end{enumerate}
\end{Corollary}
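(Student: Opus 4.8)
\emph{Part (i).} The plan is to collapse everything to a single parameter ideal via Lemma~\ref{approxgorcolons}. Since $R$ is Gorenstein, every parameter ideal is irreducible, so $\{(x_1^t,\dots,x_d^t)\}_{t\in\N}$ is a descending chain of irreducible $\m$-primary ideals cofinal with the powers of $\m$; as $x_1,\dots,x_d$ is a regular sequence, $\delta_t=(x_1\cdots x_d)^{t-1}\delta$ generates the socle of $R/(x_1^t,\dots,x_d^t)$. First I would observe that the relevant colon ideals are independent of $t$: the standard colon identity $(y_1^t,\dots,y_d^t):(y_1\cdots y_d)^{t-1}=(y_1,\dots,y_d)$, applied to the regular sequence $y_i=x_i^{p^e}$, gives $((x_1^t,\dots,x_d^t)^{[p^e]}:\delta_t^{p^e})=((x_1,\dots,x_d)^{[p^e]}:\delta^{p^e})$ for every $t$. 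Since $R$ is a domain it is reduced, hence approximately Gorenstein, and Lemma~\ref{approxgorcolons} then identifies $I_e^{\mathrm{F-sig}}$ with this common colon ideal $((x_1,\dots,x_d)^{[p^e]}:\delta^{p^e})$. As $((x_1,\dots,x_d):\delta)=\m$, equation~\eqref{relHKtocolon} converts $s(R)=\lim_{e}p^{-ed}\ell_R(R/I_e^{\mathrm{F-sig}})$ into $e_{HK}((x_1,\dots,x_d))-e_{HK}((x_1,\dots,x_d,\delta))$, finishing (i).

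\emph{Part (ii), reductions.} We may assume $R$ is complete. If $R$ is not normal it is not strongly $F$-regular, so $s(R)=0$ by Theorem~\ref{Aberbach Leuschke}, and it is not weakly $F$-regular, so there is an $\m$-primary $I$ with $I\subsetneq I^{*}$ and (moving up a composition series) some $x\in I^{*}$ with $(I:x)=\m$; then $e_{HK}(I)=e_{HK}((I,x))$ by Corollary~\ref{Length Criterion for Tight Closure} and $x^{p^e}\in(I^{[p^e]})^{*}$, so $((I^{[p^e]})^{*}:x^{p^e})=R\supseteq I_e^{\mathrm{F-sig}}$ and the corollary holds for this pair. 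Hence assume $R$ is normal. Fix a canonical ideal $J$, an element $x_1\in J$, and $x_2,\dots,x_d$ completing to a system of parameters; then $R/J$ is Gorenstein of dimension $d-1$ with $x_2,\dots,x_d$ a system of parameters for it, so $R/(J,x_2^t,\dots,x_d^t)$ is Artinian Gorenstein with socle generated by $y_t=(x_2\cdots x_d)^{t-1}\delta$ ($\delta$ the socle generator of $R/(J,x_2,\dots,x_d)$), whence $((J,x_2^t,\dots,x_d^t):y_t)=\m$. By Corollary~\ref{reducex1}, and the identity $L_{t,e}:=((J,x_2^t,\dots,x_d^t)^{[p^e]}:y_t^{p^e})=(J_t^{[p^e]}:\delta_t^{p^e})$ from its proof for the genuine approximately Gorenstein sequence $J_t=(x_1^{t-1}J,x_2^t,\dots,x_d^t)$, $\delta_t=(x_1\cdots x_d)^{t-1}\delta$, the $L_{t,e}$ increase with $t$ to $I_e^{\mathrm{F-sig}}$ (Lemma~\ref{approxgorcolons}), while $s(R)=\lim_{t}\big(e_{HK}((J,x_2^t,\dots,x_d^t))-e_{HK}((J,x_2^t,\dots,x_d^t,y_t))\big)=\lim_{t}\lim_{e}p^{-ed}\ell_R(R/L_{t,e})$.

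\emph{Part (ii), main step.} The crux is to use the $\Q$-Gorenstein hypothesis to eliminate the limit in $t$. Following \cite{AberbachConditionsforWeakStrong}, I would re-choose the effective anticanonical divisor defining $J$ and the parameters $x_2,\dots,x_d$ so that, for each $i$, some symbolic power $J^{(n_i)}$ becomes principal after inverting $x_i$, i.e.\ $x_i^{n_i}J^{(n_i)}\subseteq a_iR$ with $a_i\in J^{(n_i)}$ (one $i$ can be taken with $n_i=1$, since the locus where $R$ is non-Gorenstein and $J$ non-invertible has codimension $\ge 2$). Applying Lemma~\ref{colonslemma}(i) and (ii) once for each $i=2,\dots,d$, and using $J^{[p^e]}\subseteq J^{(p^e)}$ to feed the colon ideals $L_{t,e}$ into the lemma, replaces each $x_i$ by $x_i+b_i$ and reduces the exponent $t$ on $x_i$ to a fixed value at most $2$, at the cost of inserting in the colon only a fixed factor that is a power of $x_1$. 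This leaves a fixed $\m$-primary ideal $I$ (built from $J$, the adjusted parameters, and bounded exponents), a fixed $x$ with $(I:x)=\m$, and a fixed $0\ne c\in R$ (a power of $x_1$) with $L_{t,e}\subseteq (I^{[p^e]}:c\,x^{p^e})=((I^{[p^e]}:x^{p^e}):c)$ for all $t,e$; the residual gap between symbolic and ordinary Frobenius powers of $J$, supported in codimension $\ge 2$, is swallowed by $c$ via a Lemma~\ref{colonsamelimit}-type estimate. Taking $\bigcup_{t}$ yields $\m^{[p^e]}\subseteq (I^{[p^e]}:x^{p^e})\subseteq I_e^{\mathrm{F-sig}}\subseteq ((I^{[p^e]}:x^{p^e}):c)$ — the left inclusion being Lemma~\ref{Iecontainscolons} — so Lemma~\ref{colonsamelimit} gives $s(R)=\lim_{e}p^{-ed}\ell_R(R/I_e^{\mathrm{F-sig}})=\lim_{e}p^{-ed}\ell_R(R/(I^{[p^e]}:x^{p^e}))=e_{HK}(I)-e_{HK}((I,x))$ by~\eqref{relHKtocolon}.

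\emph{Part (ii), tight closure, and the obstacle.} For the last assertion, fix $e$ and $z\in I_e^{\mathrm{F-sig}}$. Since $(I_e^{\mathrm{F-sig}})^{[p^{e'}]}\subseteq I_{e+e'}^{\mathrm{F-sig}}$ (as in the proof of Theorem~\ref{Fsplitratio}) and $I_{e+e'}^{\mathrm{F-sig}}\subseteq (I^{[p^{e+e'}]}:c\,x^{p^{e+e'}})$ from the previous paragraph, we get $c\,(zx^{p^e})^{p^{e'}}\in I^{[p^{e+e'}]}=(I^{[p^e]})^{[p^{e'}]}$ for all $e'$, i.e.\ $zx^{p^e}\in (I^{[p^e]})^{*}$; hence $I_e^{\mathrm{F-sig}}\subseteq ((I^{[p^e]})^{*}:x^{p^e})$. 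If $R$ is weakly $F$-regular then $(I^{[p^e]})^{*}=I^{[p^e]}$, so $I_e^{\mathrm{F-sig}}\subseteq (I^{[p^e]}:x^{p^e})$, and the reverse inclusion is the corresponding summand of~\eqref{Iesumofcolons}, giving equality for all $e$. I expect the main obstacle to be the parenthetical reductions in the previous paragraph: extracting from the $\Q$-Gorenstein hypothesis on the punctured spectrum a simultaneous choice of effective anticanonical divisor and system of parameters for which the symbolic powers of the canonical ideal become principal after inverting each parameter, so that the successive applications of Lemma~\ref{colonslemma} can be chained, and then controlling the discrepancy between symbolic and ordinary Frobenius powers of $J$. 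Everything downstream of this is bookkeeping with colon ideals and the uniform length bounds already established.
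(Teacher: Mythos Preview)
Your proposal is correct and follows essentially the same route as the paper, including the use of Corollary~\ref{reducex1}, repeated application of Lemma~\ref{colonslemma}, and Lemma~\ref{colonsamelimit} with $c$ a power of $x_1$; your added reduction to the normal case is appropriate since Lemma~\ref{colonslemma} is stated for normal domains.

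Two small corrections to your description of the mechanics. First, the conclusions of Lemma~\ref{colonslemma} are stated in the \emph{original} parameters $x_i$; the elements $x_i+b_i$ are auxiliary and appear only inside the proof, so they do not enter the final ideal $I=(J,x_2^2,\dots,x_d^2)$ or the element $x=x_2\cdots x_d\delta$. Second, the passage from $J^{(p^e)}$ back to $J^{[p^e]}$ is \emph{not} absorbed by $c$ via a Lemma~\ref{colonsamelimit} argument: it is exactly the equality in Lemma~\ref{colonslemma}(i) that accomplishes this, and this is why one parameter (namely $x_2$) must be chosen so that $J$ itself, not merely a symbolic power $J^{(n)}$, becomes principal upon inverting it. The paper arranges this by taking $U_2$ to be the complement of the minimal primes of $x_1$, so that $U_2^{-1}R$ is a semi-local Dedekind domain and $U_2^{-1}J$ is automatically principal; the remaining $x_3,\dots,x_d$ are then chosen inductively using the $\Q$-Gorenstein hypothesis on the punctured spectrum so that $U_i^{-1}J^{(n)}$ is locally (hence globally, by Kaplansky) principal. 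This is precisely the construction you flagged as the main obstacle.
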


\begin{proof}
(i.) Since $R$ is Gorenstein, we may take $J = (x_1)$ to be the canonical ideal above in Corollary~\ref{reducex1}.  For fixed $e \in \N$, we have that
$I_e^{\mathrm{F-sig}} = ((x_1^{tp^{e}}, \ldots, x_d^{tp^{e}}):(x_1^{(t-1)p^{e}} \cdots x_d^{(t-1)p^{e}} \delta^{p^e}))$ for all $t \gg 0$.  However, since $x_1, \ldots, x_d$ are a regular sequence on $R$, it follows that $((x_1^{tp^{e}}, \ldots, x_d^{tp^{e}}):(x_1^{(t-1)p^{e}} \cdots x_d^{(t-1)p^{e}} \delta^{p^e})) = ((x_1^{p^{e}}, \ldots, x_d^{p^{e}}): \delta^{p^e})$ for any $t \in \N$.  The final statement follows again using the relation in \eqref{relHKtocolon} once more.

For (ii.), we begin with the following construction.  Let $J$ be a canonical ideal of $R$, and choose $0 \neq x_1 \in J$.  Let $U_2$ be the complement of the minimal primes of $x_1$, which are all height one contain the set of minimal primes of $J$.   We have that $U_2^{-1}J$ is principal (as $U_2^{-1}R$ is a semi-local Dedekind domain) and can choose $x_2 \in U_2$ and $a_2 \in J$ so that $J_{x_2} = a_2R_{x_2}$ and $x_2 J \subseteq a_2 R$.

%Assume now as in (ii.) that $R_\p$ is Gorenstein for all $\p \in \Spec(R) \setminus \{\m \}$.  Assuming $x_1, \ldots, x_{i-1}$ have already been chosen, let $U_i$ be the complement of the set of minimal primes of $x_1, \ldots, x_{i-1}$.  Again, $U_i^{-1}J$ is principal (as $U_i^{-1}R$ is a semilocal domain, every locally principal ideal is principal by \cite[Theorem 60]{KaplanskyCommutativeRings}) so we can choose $x_i \in U_i$ and $a_i \in J$ so that $J_{x_i} = a_i R_{x_i}$ and $x_i J \subseteq a_i R$.  Continuing in this manner gives a system of parameters $x_1, \ldots, x_d$ of $R$ with $x_1 \in J$, and a sequence of elements $a_2, \ldots, a_d \in J$ so that $x_i J \subseteq a_i R$ for each $i = 2, \ldots, d$. For fixed $e \in \N$ and $t \gg 0$, we have from Corollary \ref{reducex1} and repeated application of Lemma \ref{colonslemma} (i.) for $x_2, \ldots, x_d$ that
%\[
%\begin{array}{lll}
 %I_e^{\mathrm{F-sig}} & = & ((J^{[p^{e}]},x_2^{t p^e}, x_3^{t p^e}, \ldots, x_d^{t p^e}):(x_2^{(t - 1)p^e} x_3^{(t - 1)p^e} \cdots x_d^{(t - 1)p^e}\delta^{p^e})) \\ &= &
 %((J^{[p^{e}]},x_2^{2 p^e}, x_3^{t p^e}, \ldots, x_d^{t p^e}):(x_2^{p^e} x_3^{(t - 1)p^e} \cdots x_d^{(t - 1)p^e}\delta^{p^e}))\\ &= & \qquad \qquad \qquad \vdots \qquad \qquad \qquad \vdots \qquad \qquad \qquad \vdots \\ &=  & ((J^{[p^{e}]},x_2^{2 p^e}, x_3^{2 p^e}, \ldots, x_d^{2 p^e}):(x_2^{p^e} x_3^{p^e} \cdots x_d^{p^e}\delta^{p^e}))
%\end{array}
%\]
%so setting $I = (J, x_2^2, \ldots, x_d^2)$ and $x = x_2 \cdots x_d \delta$ gives the desired result.

As $R_\p$ is $\Q$-Gorenstein for all $\p \in \Spec(R) \setminus \{\m \}$, we may choose $n \in \N$ so that $J^{(n)}_\p$ is principal for all $\p \in \Spec(R) \setminus \{\m \}$. Assuming $x_1, \ldots, x_{i-1}$ have already been chosen, let $U_i$ be the complement of the set of minimal primes of $x_1, \ldots, x_{i-1}$.  Again, $U_i^{-1}J^{(n)}$ is principal (as $U_i^{-1}R$ is a semilocal domain, every locally principal ideal is principal by \cite[Theorem 60]{KaplanskyCommutativeRings}) so we can choose $x_i \in U_i$ and $a_i \in J^{(n)}$ so that $J_{x_i}^{(n)} = a_i R_{x_i}$ and $x_i^n J^{(n)} \subseteq a_i R$. Continuing in this manner gives a system of parameters $x_1, \ldots, x_d$ of $R$ with $x_1 \in J$, and a sequence of elements $a_3, \ldots, a_d \in J^{(n)}$ so that $x_i^n J^{(n)} \subseteq a_i R$ for each $i = 3, \ldots, d$. For fixed $e \in \N$ and $t \gg 0$, we have from Corollary \ref{reducex1} and repeated application of Lemma \ref{colonslemma} (ii.) for $x_d, \ldots, x_3$ and then Lemma \ref{colonslemma} (i.) for $x_2$ that
\begin{equation}
\label{eq:mess}
\begin{array}{lll}
 I_e^{\mathrm{F-sig}} & = & ((J^{[p^{e}]},x_2^{t p^e},\ldots, x_{d-1}^{t p^e},  x_d^{t p^e}):(x_2^{(t - 1)p^e} \cdots x_{d-1}^{(t - 1)p^e} x_d^{(t - 1)p^e}\delta^{p^e})) \\ 
 & \subseteq & ((J^{(p^{e})},x_2^{t p^e}, \ldots, x_{d-1}^{t p^e}, x_d^{t p^e}):(x_2^{(t - 1)p^e} \cdots  x_{d-1}^{(t - 1)p^e}  x_d^{(t - 1)p^e}\delta^{p^e})) \\
 & \subseteq & ((J^{(p^{e})},x_2^{t p^e}, \ldots, x_{d-1}^{t p^e}, x_d^{2 p^e}):(x_1^n x_2^{(t - 1)p^e} \cdots  x_{d-1}^{(t - 1)p^e}  x_d^{p^e}\delta^{p^e})) \\
 &\subseteq & \qquad \qquad \qquad \vdots \qquad \qquad \qquad \vdots \qquad \qquad \qquad \vdots \\
 & \subseteq & ((J^{(p^{e})},x_2^{t p^e}, x_3^{2p^e}, \ldots, x_d^{2 p^e}):(x_1^{(d-2)n}x_2^{(t - 1)p^e}x_3^{p^e} \cdots  x_d^{p^e}\delta^{p^e})) \\ 
 & = & ((J^{[p^{e}]},x_2^{2 p^e}, x_3^{2p^e}, \ldots, x_d^{2 p^e}):(x_1^{(d-2)n}x_2^{p^e}x_3^{p^e} \cdots  x_d^{p^e}\delta^{p^e})) \\ 
  & = & \left(\left((J^{[p^{e}]},x_2^{2 p^e},  \ldots, x_d^{2 p^e}):(x_2^{p^e} \cdots  x_d^{p^e}\delta^{p^e})\right):x_1^{(d-2)n}\right)
\end{array}
\end{equation}
so setting $I = (J, x_2^2, \ldots, x_d^2)$ and $x = x_2 \cdots x_d \delta$ gives $s(R) = e_{HK}(I) - e_{HK}((I,x))$ using Lemma~\ref{colonsamelimit} (with $I_e = ((J^{[p^{e}]},x_2^{2 p^e},  \ldots, x_d^{2 p^e}):(x_2^{p^e} \cdots  x_d^{p^e}\delta^{p^e}))$, $J_e = I_e^{\mathrm{F-sig}}$, and $c = x_1^{(d-2)n}$).

For the final statement, since $(I_e^{\mathrm{F-sig}})^{[p^{e'}]} \subseteq I_{e+e'}^{\mathrm{F-sig}}$ for all $e,e' \in \N$, it follows from \eqref{eq:mess} that 
%\[
%0 \neq x_1^{(d-2)n} \in \bigcap_{e'\in \N} \left((J^{[p^{e+e'}]},x_2^{2 p^{e+e'}},  \ldots, x_d^{2 p^{e+e'}}):(x_2^{p^{e+e'}} \cdots  x_d^{p^{e+e'}}\delta^{p^{e+e'}}(I_e^{\mathrm{F-sig}})^{[p^{e'}]})\right)
%\]
\[
0 \neq x_1^{(d-2)n} \in \bigcap_{e'\in \N} \left(I^{[p^{e+e'}]}:(x^{p^e}I_e^{\mathrm{F-sig}})^{[p^{e'}]} \right)
\]
so that $x^{p^e}I_e^{\mathrm{F-sig}} \subseteq (I^{[p^e]})^*$ or $I_e^{\mathrm{F-sig}} \subseteq ((I^{[p^e]})^*:x^{p^e})$ as claimed.
% \[
% \begin{array}{c}
% x_2^{p^e} \cdots  x_d^{p^e}\delta^{p^e}I_e^{\mathrm{F-sig}} \subseteq (J^{[p^{e}]},x_2^{2 p^e},  \ldots, x_d^{2 p^e})^*
% \\
% I_e^{\mathrm{F-sig}} \subseteq \left((J^{[p^{e}]},x_2^{2 p^e},  \ldots, x_d^{2 p^e})^*:x_2^{p^e} \cdots  x_d^{p^e}\delta^{p^e} \right)
% \end{array}
% \]
% as desired.  
In particular, if $R$ is weakly $F$-regular, we have that $I_e^{\mathrm{F-sig}} \subseteq ((I^{[p^e]}):x^{p^e})$ and equality follows from Lemma~\ref{Iecontainscolons}.
\end{proof}

% \begin{Remark}
% \label{Relative HK Remark}
% One can push the above analysis further still. In the notation of the previous proof, suppose that $x_2$ can be chosen so that $R_{x_2}$ is Gorenstein; this is possible if $R$ is normal, for example. Using that $JR_{x_2}$ is principal, one checks that 
% $
% (J_t^{p^e}:\delta_t^{p^e}) = ((J, x_2^2,x_3^t, \ldots, x_d^{t})^{[p^e]} : (x_2x_3^{t-1} \cdots x_d^{t-1}\delta)^{p^e}),
% $
% this gives 
% $$
% s(R) = \lim_{t \to \infty} e_{HK}((J, x_2^2, x_3^t, \ldots, x_d^t)) - e_{HK}((J, x_2^2, x_3^t, \ldots, x_d^t, (x_2x_3^{t-1}\cdots x_d^{t-1} \delta))).
% $$
% If additionally $R$ is Gorenstein on the punctured spectrum $\Spec(R) \setminus \{ \m \}$, repeating this argument for $x_3, \ldots, x_d$ gives $s(R) = e_{HK}( (J, x_2^2, \ldots, x_d^2) ) - e_{HK}(J, x_2^2, \ldots, x_d^2, x_2\cdots x_d \delta)$ 
% %generalizing \cite[Theorem 11]{HunekeLeuschke}.  
% More generally, if $R$ is $\Q$-Gorenstein on the punctured spectrum, similar arguments (see \cite{AberbachConditionsforWeakStrong}) can be used to show that there exists an $\m$-primary ideal $I$ and $x \in R$ so that $s(R) = e_{HK}(I) - e_{HK}((I,x))$. 
% \end{Remark}

% {\cb \noindent  \framebox[\textwidth]{\parbox{.95\textwidth}{TO DO: The above remark still needs to be checked carefully ... }
% } \\ }

One can also use Theorem \ref{WY Type result} to show that the length criteria in Corollary~\ref{Length Criterion for at-tight closure} and Corollary~\ref{Length Criterion for Delta closure} also determine the $F$-signatures in those settings.

\begin{Corollary}\label{a^t signature and WY result} Let $(R,\m,k)$ be a complete $F$-finite local domain of dimension~$d$.  Let $\a \subseteq R$ be a non-zero ideal and $\xi \in \R_{\geq 0}$. Suppose $\{J_t\}_{t \in \N}$ is a descending chain of irreducible ideals cofinal with the powers of $\m$, and $\delta_t \in R$  generates the socle of $R/J_t$.  Then $s(R, \a^\xi) = \lim_{t \to \infty} \lim_{e \to \infty} \frac{1}{p^{ed}} \ell_R (R/(J_t^{[p^e]} : \a^{\lceil \xi(p^e-1)\rceil}\delta_t^{p^e}))$.\end{Corollary}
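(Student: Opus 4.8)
The plan is to apply Theorem~\ref{WY Type result} to the doubly indexed family of ideals
\[
I_{t,e} = (J_t^{[p^e]} : \a^{\lceil \xi(p^e-1)\rceil}\delta_t^{p^e}),\qquad t,e\in\N,
\]
and then to identify the quantities it produces with $s(R,\a^\xi)$.  A complete local domain is reduced and excellent, hence approximately Gorenstein, and the chain $\{J_t\}_{t\in\N}$ is given, so Lemma~\ref{approxgorcolons} is available; throughout write $m_e = \lceil\xi(p^e-1)\rceil$.

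The first task is to check the three hypotheses of Theorem~\ref{WY Type result} for a suitable fixed $0\neq c\in R$ depending only on $\a$.  The containment $\m^{[p^e]}\subseteq I_{t,e}$ follows at once from $\m\delta_t\subseteq J_t$ (which holds because $(J_t:\delta_t)=\m$), since then $\m^{[p^e]}\a^{m_e}\delta_t^{p^e}\subseteq\m^{[p^e]}\delta_t^{p^e}=(\m\delta_t)^{[p^e]}\subseteq J_t^{[p^e]}$; in particular each $I_{t,e}$ is $\m$-primary.  For monotonicity in $t$, I would rewrite $I_{t,e}=\big((J_t^{[p^e]}:\delta_t^{p^e}):\a^{m_e}\big)$ and invoke the inclusion $(J_t^{[p^e]}:\delta_t^{p^e})\subseteq(J_{t+1}^{[p^e]}:\delta_{t+1}^{p^e})$ from Lemma~\ref{approxgorcolons} — obtained there from the maps $R/J_t\hookrightarrow R/J_{t+1}$, $\delta_t\mapsto\delta_{t+1}$, realizing $E_R(k)=\varinjlim R/J_t$ — and then observe that colonizing by the fixed ideal $\a^{m_e}$ preserves inclusions.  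For the Frobenius compatibility I would reuse the reverse-inequality argument from the proof of Theorem~\ref{ateefsigexists}: choose $0\neq c\in R$, depending only on $\a$, with $c\,\a^{m_{e+1}}\subseteq(\a^{m_e})^{[p]}$ for all $e\in\N$.  Then for $b\in I_{t,e}$ one has $b^p(\a^{m_e})^{[p]}\delta_t^{p^{e+1}}\subseteq(J_t^{[p^e]})^{[p]}=J_t^{[p^{e+1}]}$, so multiplying by $c$ and using the choice of $c$ gives $c\,b^p\,\a^{m_{e+1}}\delta_t^{p^{e+1}}\subseteq J_t^{[p^{e+1}]}$, i.e.\ $c\,b^p\in I_{t,e+1}$; hence $c(I_{t,e}^{[p]})\subseteq I_{t,e+1}$ with $c$ independent of $t$ and $e$.

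Granting the hypotheses, Theorem~\ref{WY Type result} gives
\[
\lim_{t\to\infty}\lim_{e\to\infty}\frac{1}{p^{ed}}\ell_R(R/I_{t,e}) = \lim_{e\to\infty}\frac{1}{p^{ed}}\ell_R(R/I_e),\qquad I_e=\sum_{t\in\N}I_{t,e}.
\]
It then remains to recognize $I_e$ as the $F$-signature ideal $I_e^{\a^\xi}$ of the pair $(R,\a^\xi)$.  Unwinding the description of $\sC_e^{\a^\xi}$ one sees that $r\in I_e^{\a^\xi}$ if and only if $\a^{m_e}r\subseteq I_e^{\mathrm{F-sig}}$, so $I_e^{\a^\xi}=(I_e^{\mathrm{F-sig}}:\a^{m_e})$.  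By Lemma~\ref{approxgorcolons}, $I_e^{\mathrm{F-sig}}=\bigcup_t(J_t^{[p^e]}:\delta_t^{p^e})$ is an increasing union, and since $\a^{m_e}$ is finitely generated the colon commutes with this union, giving $I_e^{\a^\xi}=\bigcup_t\big((J_t^{[p^e]}:\delta_t^{p^e}):\a^{m_e}\big)=\bigcup_t I_{t,e}=I_e$.  Finally Corollary~\ref{Length Criterion for at-tight closure}~(ii.) identifies $\lim_{e\to\infty}\frac{1}{p^{ed}}\ell_R(R/I_e^{\a^\xi})=s(R,\a^\xi)$, and combining the two displays completes the proof.

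I expect the only real subtlety to be the bookkeeping around the uniform constant $c$: one must verify that a single $0\neq c$ works simultaneously for every $e$ and for the given $\xi$, including when $\lceil\xi(p^e-1)\rceil$ is small compared to $p^e$, which is a routine monomial estimate of the type carried out in Theorem~\ref{ateefsigexists} (taking $c$ to be a sufficiently high power of a product of generators of $\a$ suffices).  Everything else is either a direct appeal to Lemma~\ref{approxgorcolons}, Theorem~\ref{WY Type result}, and Corollary~\ref{Length Criterion for at-tight closure}, or an elementary colon-ideal manipulation; the genuinely substantive ingredient — the uniformity of constants that legitimizes interchanging the limits over $t$ and $e$ — is exactly what Theorem~\ref{WY Type result} supplies.
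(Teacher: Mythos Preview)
Your proof is correct and follows essentially the same route as the paper: define $I_{t,e}=(J_t^{[p^e]}:\a^{\lceil\xi(p^e-1)\rceil}\delta_t^{p^e})$, verify the three hypotheses of Theorem~\ref{WY Type result} using Lemma~\ref{approxgorcolons} for monotonicity in $t$ and the element $c$ from Theorem~\ref{ateefsigexists} for the Frobenius step, then identify $\sum_t I_{t,e}$ with $I_e^{\a^\xi}$ via the splitting criterion. Your exposition is in fact more detailed than the paper's --- in particular, your explicit rewriting $I_e^{\a^\xi}=(I_e^{\mathrm{F\text{-}sig}}:\a^{m_e})$ and the observation that the colon by a finitely generated ideal commutes with an increasing union make transparent what the paper leaves as ``straightforward to check.''
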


\begin{proof}
It is straightforward to check for each $e \in \N$ that
\begin{eqnarray*}
 I_e^{\a^\xi} &=& ( x \in R \mid \phi(x^{1/p^e}) \in \m \mbox{ for all } \phi \in \sC^{\a^\xi}_e ) \\
 & = & ( x \in R \mid (xy)^{1/p^e}R \subseteq R^{1/p^e} \mbox{ is not split for any } y \in \a^{\lceil \xi(p^e-1)\rceil} )
\end{eqnarray*}
 and it follows that $I_e^{\a^\xi} = \sum_{t \in \N} I_{t,e}^{\a^\xi}$ where $I_{t,e}^{\a^\xi} = ( J_t^{[p^e]} : \delta_t^{p^e} \a^{\lceil (p^e-1)\xi \rceil})$ for $t,e \in \N$. Choosing an element $0 \neq c \in R$ such that $c \a^{\lceil \xi (p^{e+1} - 1) \rceil} \subseteq (\a^{\lceil \xi(p^e - 1)\rceil})^{[p]}$ for all $e \in \N$ as in the proof of Theorem~\ref{ateefsigexists}, we have that $\m^{[p^e]} \subseteq I_{t,e}^{\a^\xi}$, $c((I_{t,e}^{\a^\xi})^{[p]}) \subseteq I_{t,e+1}^{\a^\xi}$, and $I_{t,e}^{\a^\xi} \subseteq I_{t+1,e}^{\a^\xi}$ for all $t,e \in \N$.  The result now follows immediately from Theorem~\ref{WY Type result} as we know $s(R, \a^\xi) = \lim_{e \to \infty} \frac{1}{p^{ed}} \ell_R (R/I_e^{\a^\xi})$.
\end{proof}

\begin{Corollary}\label{Delta signature and WY result} Let $(R,\m,k)$ be a complete $F$-finite local normal domain of dimension~$d$.  Suppose $\{J_t\}_{t \in \N}$ is a descending chain of irreducible ideals cofinal with the powers of $\m$, and $\delta_t \in R$  generates the socle of $R/J_t$.  If  $\Delta$ is an effective $\Q$-divisor on $\Spec(R)$, then $s(R, \Delta) = \lim_{t \to \infty} \lim_{e \to \infty} \frac{1}{p^{ed}} \ell_R (R/(J_t^{[p^e]}R(\lceil (p^e - 1) \Delta \rceil) : \delta_t^{p^e}))$.
\end{Corollary}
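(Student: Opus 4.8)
The plan is to follow the proof of Corollary~\ref{a^t signature and WY result} essentially verbatim, with the divisorial module $R(\lceil (p^e-1)\Delta\rceil)$ playing the role of the ideal power $\a^{\lceil \xi(p^e-1)\rceil}$. Write $D_e = \lceil (p^e-1)\Delta\rceil$ and set $I_{t,e} = (J_t^{[p^e]}R(D_e) : \delta_t^{p^e})$ for $t,e\in\N$. The first task is to identify $I_e^{(R,\Delta)} = \sum_{t\in\N} I_{t,e}$, where $I_e^{(R,\Delta)} = (\,r\in R \mid \phi(r^{1/p^e})\in\m$ for all $\phi\in\sC_e^{(R,\Delta)}\,)$; the second is to check that the doubly-indexed family $\{I_{t,e}\}$ satisfies the three hypotheses of Theorem~\ref{WY Type result}. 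Granting these, Theorem~\ref{WY Type result} gives $\lim_{t\to\infty}\lim_{e\to\infty} \frac{1}{p^{ed}}\ell_R(R/I_{t,e}) = \lim_{e\to\infty} \frac{1}{p^{ed}}\ell_R(R/I_e^{(R,\Delta)})$, and the right-hand limit equals $s(R,\Delta)$ by Corollary~\ref{Length Criterion for Delta closure}~(ii); this is exactly the asserted identity.

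For the identification I would argue as in Lemma~\ref{approxgorcolons}. Because $\sC_e^{(R,\Delta)}$ is the image of the restriction map $\Hom_R(R(D_e)^{1/p^e},R)\to\Hom_R(R^{1/p^e},R)$ and $R$ is local, an element $0\neq x\in R$ lies outside $I_e^{(R,\Delta)}$ exactly when the $R$-linear map $R\to R(D_e)^{1/p^e}$ determined by $1\mapsto x^{1/p^e}$ splits. Applying the injective-hull splitting criterion recalled in Lemma~\ref{approxgorcolons} — available here since $R(D_e)^{1/p^e}$ is a finitely generated $R$-module, $R$ being $F$-finite — this happens if and only if $\delta_t\cdot x^{1/p^e}\notin J_t\cdot R(D_e)^{1/p^e}$ for every $t$; unwinding the identities $J_t\cdot R(D_e)^{1/p^e} = (J_t^{[p^e]}R(D_e))^{1/p^e}$ and $\delta_t\cdot x^{1/p^e} = (\delta_t^{p^e}x)^{1/p^e}$, this says $x\notin I_{t,e}$ for every $t$. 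Hence $I_e^{(R,\Delta)}$ is the union, and therefore the sum, of the $I_{t,e}$.

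It then remains to verify the hypotheses of Theorem~\ref{WY Type result}. The containment $\m^{[p^e]} = (J_t:\delta_t)^{[p^e]}\subseteq(J_t^{[p^e]}:\delta_t^{p^e})\subseteq I_{t,e}$ is immediate. For $I_{t,e}\subseteq I_{t+1,e}$, I would reuse the fact from Lemma~\ref{approxgorcolons} that the direct limit $E_R(k) = \varinjlim R/J_t$ furnishes $R$-linear maps $R/J_t^{[p^e]}\to R/J_{t+1}^{[p^e]}$ carrying $\overline{\delta_t^{p^e}}$ to $\overline{\delta_{t+1}^{p^e}}$; applying $R(D_e)\otimes_R(\blank)$ to such a map and tracking the image of the class of $x\delta_t^{p^e}$ yields the inclusion. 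For the remaining condition, take $\overline\Delta = \lceil\Delta\rceil$ and $0\neq c\in R$ with $\mathrm{div}_R(c)\geq p\overline\Delta$, exactly as in the proof of Theorem~\ref{deltafsigexists}; the divisor inequality $pD_e\leq D_{e+1}+\mathrm{div}_R(c)$ proved there gives $c\,R(pD_e)\subseteq R(D_{e+1})$, and combining this with the elementary inclusion $(J_t^{[p^e]}R(D_e))^{[p]}\subseteq J_t^{[p^{e+1}]}R(pD_e)$ (using Frobenius additivity in characteristic $p$) shows that $y\in I_{t,e}$ forces $cy^p\delta_t^{p^{e+1}}\in J_t^{[p^{e+1}]}R(D_{e+1})$, i.e.\ $c(I_{t,e}^{[p]})\subseteq I_{t,e+1}$, with $c$ depending only on $\Delta$.

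The substantive part of the argument is the identification $I_e^{(R,\Delta)} = \sum_t I_{t,e}$ together with the nesting $I_{t,e}\subseteq I_{t+1,e}$ — i.e.\ checking that the local-duality and splitting-criterion bookkeeping of Lemma~\ref{approxgorcolons} transfers to the twisted divisorial module $R(D_e)^{1/p^e}$. Once that is in hand, the statement is a formal consequence of Theorem~\ref{WY Type result} and Corollary~\ref{Length Criterion for Delta closure}, just as in the $\a^\xi$ case.
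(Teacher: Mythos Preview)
Your proposal is correct and follows essentially the same route as the paper: identify $I_e^{(R,\Delta)}$ as the union of the $I_{t,e}$ via the splitting criterion for the map $R \to R(\lceil (p^e-1)\Delta\rceil)^{1/p^e}$, then verify the three hypotheses of Theorem~\ref{WY Type result} using the element $c$ with $\mathrm{div}_R(c) \geq p\lceil \Delta \rceil$ from the proof of Theorem~\ref{deltafsigexists}. You have in fact supplied more detail than the paper does (it merely asserts the identification and the containments as ``straightforward to check''), and your justifications for $I_{t,e} \subseteq I_{t+1,e}$ and $c(I_{t,e}^{[p]}) \subseteq I_{t,e+1}$ are accurate.
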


\begin{proof}
Again it is straightforward to check for each $e \in \N$ that
\begin{eqnarray*}
 I_e^{(R,\Delta)} &=& ( x \in R \mid \phi(x^{1/p^e}) \in \m \mbox{ for all } \phi \in \sC^{(R,\Delta)}_e ) \\
 & = & ( x \in R \mid (x)^{1/p^e}R \subseteq (R(\lceil (p^e - 1)\Delta \rceil))^{1/p^e} \mbox{ is not split} )
\end{eqnarray*}
 and it follows that $I_e^{(R,\Delta)} = \sum_{t \in \N} I_{t,e}^{(R,\Delta)}$ where $I_{t,e}^{(R,\Delta)} = ( J_t^{[p^e]} R(\lceil (p^e - 1)\Delta \rceil) : \delta_t^{p^e} )$ for $t,e \in \N$. 
 Choosing an element $0 \neq c \in R$ 
 so that $\mathrm{div}_R(c) \geq p\lceil \Delta \rceil$ as in the proof of Theorem~\ref{deltafsigexists}, we have that we have that $\m^{[p^e]} \subseteq I_{t,e}^{(R,\Delta)}$, $c((I_{t,e}^{(R,\Delta)})^{[p]}) \subseteq I_{t,e+1}^{(R,\Delta)}$, and $I_{t,e}^{(R,\Delta)} \subseteq I_{t+1,e}^{\a^\xi}$ for all $t,e \in \N$.  The result now follows immediately from Theorem~\ref{WY Type result} as we know $s(R, \Delta) = \lim_{e \to \infty} \frac{1}{p^{ed}} \ell_R (R/I_e^{(R,\Delta)})$.
\end{proof}

Lastly, note that Theorem~\ref{WY Type result} can also be applied outside the context of descending chains of irreducible ideals, and gives yet another perspective on the original proof of the existence of the $F$-signature.

\begin{Corollary}
\cite[Proof of Theorem~4.9]{Tucker2012}
 Let $(R, \m, k)$ be a complete $F$-finite local domain of dimension $d$ and  $I_e^{\mathrm{F-sig}} = ( r \in R \mid \phi(r^{1/p^e}) \in \m \mbox{ for all } \phi \in \Hom_R(R^{1/p^e},R) )$.  Then $s(R) = \lim_{t \to \infty} \frac{1}{p^{td}} e_{HK}(I_t^{\mathrm{F-sig}})$.
\end{Corollary}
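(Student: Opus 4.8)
The plan is to exhibit a single doubly-indexed family of ideals $\{I_{t,e}\}_{t,e\in\N}$ whose two iterated limits, in the precise sense of Theorem~\ref{WY Type result}, are $s(R)$ and $\lim_{t\to\infty}\frac{1}{p^{td}}e_{HK}(I_t^{\mathrm{F-sig}})$, and then to let Theorem~\ref{WY Type result} force these two numbers to agree. First I would recall the facts already established for the sequence $\{I_e^{\mathrm{F-sig}}\}_{e\in\N}$: one has $\frac{1}{[k^{1/p^e}:k]}\frk_R(R^{1/p^e}) = \ell_R(R/I_e^{\mathrm{F-sig}})$, so that $s(R) = \lim_{e\to\infty}\frac{1}{p^{ed}}\ell_R(R/I_e^{\mathrm{F-sig}})$; moreover $\m^{[p^e]}\subseteq I_e^{\mathrm{F-sig}}$ and $(I_e^{\mathrm{F-sig}})^{[p]}\subseteq I_{e+1}^{\mathrm{F-sig}}$, and iterating the latter gives $(I_t^{\mathrm{F-sig}})^{[p^{e-t}]}\subseteq I_e^{\mathrm{F-sig}}$ whenever $e\geq t$.

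Next I would set, for $t,e\in\N$,
\[
I_{t,e} = \begin{cases}(I_t^{\mathrm{F-sig}})^{[p^{e-t}]}, & e\geq t,\\[2pt] I_e^{\mathrm{F-sig}}, & e<t,\end{cases}
\]
and check the three hypotheses of Theorem~\ref{WY Type result} with $c=1$. The containment $\m^{[p^e]}\subseteq I_{t,e}$ follows from $\m^{[p^t]}\subseteq I_t^{\mathrm{F-sig}}$ (hence $\m^{[p^e]} = (\m^{[p^t]})^{[p^{e-t}]}\subseteq(I_t^{\mathrm{F-sig}})^{[p^{e-t}]}$) when $e\geq t$, and from $\m^{[p^e]}\subseteq I_e^{\mathrm{F-sig}}$ when $e<t$. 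The containment $I_{t,e}^{[p]}\subseteq I_{t,e+1}$ is an equality when $e\geq t$ and is the relation $(I_e^{\mathrm{F-sig}})^{[p]}\subseteq I_{e+1}^{\mathrm{F-sig}}$ when $e<t$. Finally $I_{t,e}\subseteq I_{t+1,e}$: for $t<e$ this follows by raising $(I_t^{\mathrm{F-sig}})^{[p]}\subseteq I_{t+1}^{\mathrm{F-sig}}$ to the $[p^{e-t-1}]$-th Frobenius power, while for $t\geq e$ both sides equal $I_e^{\mathrm{F-sig}}$. I would also note that for each fixed $e$ the term $I_{e,e}$ equals $I_e^{\mathrm{F-sig}}$ while every $I_{t,e}$ with $t\neq e$ is contained in it, so that $I_e := \sum_{t\in\N}I_{t,e} = I_e^{\mathrm{F-sig}}$.

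With the family in place, the middle limit of Theorem~\ref{WY Type result} is $\lim_{e\to\infty}\frac{1}{p^{ed}}\ell_R(R/I_e^{\mathrm{F-sig}}) = s(R)$, while for each fixed $t$ the substitution $e' = e-t$ gives
\[
\lim_{e\to\infty}\frac{1}{p^{ed}}\ell_R(R/I_{t,e}) = \frac{1}{p^{td}}\lim_{e'\to\infty}\frac{1}{p^{e'd}}\ell_R\big(R/(I_t^{\mathrm{F-sig}})^{[p^{e'}]}\big) = \frac{1}{p^{td}}e_{HK}(I_t^{\mathrm{F-sig}}),
\]
so the right-hand iterated limit of Theorem~\ref{WY Type result} is $\lim_{t\to\infty}\frac{1}{p^{td}}e_{HK}(I_t^{\mathrm{F-sig}})$. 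Equating the two iterated limits yields the claim. The only real content of the argument is spotting the shift $I_{t,e}=(I_t^{\mathrm{F-sig}})^{[p^{e-t}]}$ --- the exponent $p^{e-t}$ rather than $p^e$ is precisely what produces the normalization $\frac{1}{p^{td}}$ in the limit --- together with the trivial piecewise patch for $e<t$, which is needed only so that $\{I_{t,e}\}$ is monotone in $t$ for each fixed $e$. Once this bookkeeping is arranged Theorem~\ref{WY Type result} does all the work; in particular, unlike the other corollaries in this section, no approximately Gorenstein hypothesis is required here.
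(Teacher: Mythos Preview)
Your proof is correct and is essentially identical to the paper's own argument: the paper defines the same doubly-indexed family $I_{t,e}$ (with the same piecewise formula), verifies the same three containments, and applies Theorem~\ref{WY Type result} with $c=1$ exactly as you do. Your additional commentary on why the shift $p^{e-t}$ is the right exponent and why the patch for $e<t$ is needed is accurate and helpful, but the underlying argument matches the paper line for line.
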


\begin{proof}
 Let $I_{t,e} = \left\{
\begin{array}{c@{\qquad}l}
 (I_t^{\mathrm{F-sig}})^{[p^{e-t}]} & t < e \\
 I_e^{\mathrm{F-sig}} & t \geq e
\end{array}
  \right.$. It is easily checked that $\m^{[p^e]} \subseteq I_{t,e}$, $I_{t,e}^{[p]} \subseteq I_{t, e+1}$, and $I_{t,e} \subseteq I_{t+1,e}$ for all $t,e \in \N$ using that $\m^{[p^e]} \subseteq I_e^{\mathrm{F-sig}}$ and $(I_e^{\mathrm{F-sig}})^{[p]} \subseteq I_{e+1}^{\mathrm{F-sig}}$ for all $e \in \N$.  The result now follows immediately from Theorem~\ref{WY Type result}, as we have
  \begin{eqnarray*}
 s(R) &=& \lim_{e \to \infty} \frac{1}{p^{ed}} \ell_R(I_e^{\mathrm{F-sig}}) =  \lim_{e\rightarrow \infty}\lim_{t\rightarrow \infty}\frac{1}{p^{ed}}\ell_R(R/I_{t,e})  =
 \lim_{t\rightarrow \infty}\lim_{e\rightarrow \infty}\frac{1}{p^{ed}}\ell_R(R/I_{t,e}) \\ &=& \lim_{t\rightarrow \infty}\lim_{e\rightarrow \infty}\frac{1}{p^{ed}}\ell_R(R/(I_t^{\mathrm{F-sig}})^{[p^{e-t}]}) = \lim_{t\rightarrow \infty} \frac{1}{p^{td}} e_{HK}(I_t^{\mathrm{F-sig}})
   \end{eqnarray*}
 as desired.
\end{proof}

\section{Open Questions}\label{Section Open Questions}

In this section, we collect together some information on the important questions left unanswered in this article.  We have seen that Hilbert-Kunz multiplicity and the F-signature enjoy semi-continuity properties for $F$-finite rings; it is not difficult to extend these results to rings which are essentially of finite type over an excellent local ring. More generally, however, this raises the following question.

\begin{Question}
If $R$ is an excellent domain that is not $F$-finite or essentially of finite type over an excellent local ring, do the Hilbert-Kunz multiplicity and $F$-signature determine semicontinuous $\R$-valued functions on $\Spec(R)$?
\end{Question}

\noindent
In the case of $F$-signature, a positive answer would imply the openness of the strongly $F$-regular locus for such a ring.  Note that this question would seem closely related to the existence of (locally and completely stable) test elements for excellent domains, which also remains unanswered.

Perhaps the most important question left open in this article regarding the relationship between Hilbert-Kunz multiplicity and $F$-signature is the following, which (in light of the results of the previous section) is  attributed to Watanabe and Yoshida in \cite{WY}.

\begin{Question}\label{Weak implies Strong} Let $(R,\m,k)$ be a complete local $F$-finite normal domain. Do there exist $\m$-primary ideals $I\subsetneq J$ so that $\e(I)-\e(J)=s(R)$?
%i.e., can we replace ``$\inf$'' by ``$\min$'' in Corollary \ref{WY Type result corollary}?
\end{Question}

\noindent
For any ring such that Question~\ref{Weak implies Strong} has a positive answer, it follows from weak $F$-regularity is equivalent to strong $F$-regularity by the length criterion for tight closure \cite[Theorem~8.17]{HHTams} (Corollary~\ref{Length Criterion for Tight Closure} above).  

We see from Corollary~\ref{QGorAnalysis} that  Question~\ref{Weak implies Strong} is true provided $R$ is $\Q$-Gorenstein on the punctured spectrum. When in addition $R$ is weakly $F$-regular, the stronger condition below is satisfied as well.

% If one could answer the above question under the additional hypotheses that $R$ is a normal Cohen-Macaulay F-finite domain, then one would have a proof that weak F-regularity implies strong F-regularity.  A ring $R$ will be weakly F-regular, respectively strongly F-regular, if and only if $R_\m$ is weakly F-regular, respectively strongly F-regular, for each maximal ideal $\m$ of $R$. See \cite[Corollary 4.15]{HHJams} and \cite[Theorem 5.5(a)]{HHTams}. Therefore the question of whether or not weak F-regularity implies strong F-regularity is a local question. If $(R,\m)$ is local, then the property of being weakly F-regular and strongly F-regular can be checked after taking the completion of $R$. The presence of a completely stable test element, \cite[Corollary 6.19]{HHJams}, allows one to check weak F-regularity after completion. The fact that strong F-regularity can be checked after completion is the observation that a $R$-linear map $ R\rightarrow M$, where $M$ is a finitely generated $R$-module, splits if and only if $\hat{R}\rightarrow \hat{R}\otimes M$ splits.

% Suppose that $(R,\m,k)$ is a complete F-finite and weakly F-regular ring. Suppose that we knew there existed $\m$-primary ideals $I\subsetneq J$ so that $\e(I)-\e(J)=s(R)$. Then $s(R)>0$ by Hochster's and Huneke's length criterion for tight closure, Theorem \ref{Length Criterion for Tight Closure}. Therefore $R$ is strongly F-regular by Aberbach's and Leuschke's theorem, Theorem \ref{Aberbach Leuschke}.

\begin{Question} 
\label{relativeHKfunctiongivessplittingnumbers}
Let $(R,\m,k)$ be a complete local $F$-finite normal domain. Do there exist $\m$-primary ideals $I\subsetneq J$ so that 
\[
\frac{\frk_R(R^{1/p^e})}{[k^{1/p^e}:k]} = \frac{\ell_R(R/I^{[p^e]}) - \ell_R(R/J^{[p^e]})}{\ell_R(J / I)}
\]
for all $e \in \N$?
%i.e., can we replace ``$\inf$'' by ``$\min$'' in Corollary \ref{WY Type result corollary}?
\end{Question}

\noindent
The importance of Question \ref{relativeHKfunctiongivessplittingnumbers} stems from the observation that it allows one to apply the results of Huneke, McDermott, and Monsky \cite{SecondCoefficient}; for an $\m$-primary ideal $I \subseteq R$, they show the existence of a constant $\alpha(I)\in \R$ so that $\ell(R/I^{[p^e]})=\e(I)p^{ed}+\alpha(I)p^{e(d-1)}+O(p^{e(d-2)})$. In other words, Hilbert-Kunz functions in normal local $F$-finite domains are polynomial in $p^e$ to an extra degree.  In particular, for any ring such that Question~\ref{relativeHKfunctiongivessplittingnumbers} has a positive answer, so also does the following question.
% In particular, there is an $\alpha(R)\in\R$ so that $\frac{\mu(R^{1/p^e})}{[k^{1/p^e}:k]}=\e(R)p^{ed}+\alpha(R)p^{e{d-1}}+O(p^{e(d-2)})$.

\begin{Question}
\label{F-sig second coefficient}
 If $(R,\m,k)$ is a complete local $F$-finite normal domain, when does there exist a positive constant $\alpha(R) \in \R$ so that
$\displaystyle
 \frac{\frk_R(R^{1/p^e})}{[k^{1/p^e}:k]} = s(R) p^{ed} + \alpha(R) p^{e(d-1)} + O(p^{e(d-2)})
$?
\end{Question}

Finally, we have tried to emphasize the applicability of our techniques to the settings of divisor and ideal pairs throughout, and the questions above are readily generalized to those settings.  Moreover, it may well be the case that answers to the questions above require the use of such pairs (particularly to remove Gorenstein or $\Q$-Gorensetein hypotheses).  In this direction, and in view of \cite{SchwedeNonQGorTestIdeals}, one could imagine a positive and constructive answer to the question below could prove quite useful.

\begin{Question} 
Suppose $(R, \m, k)$ is an $F$-finite local normal domain and $X = \Spec(R)$.  For each $\epsilon > 0$, does there exist an effective $\Q$-divisor $\Delta$ on $X$ such that $K_X + \Delta$ is $\Q$-Cartier with index prime to $p$ and so that $s(R) - s(R, \Delta) < \epsilon$?  In case $R$ is not local, can $\Delta$ be chosen globally to have this property locally over $\Spec(R)$?
\end{Question}

\bibliographystyle{amsalpha}
\bibliography{References}

\end{document}